\renewcommand{\a}{\mathcal{A}}
\newcommand{\B}{\mathrm{B}}
\newcommand{\bs}[1]{\boldsymbol{#1}}
\newcommand{\CVD}{$\Box$}
\newcommand{\Def}{\mathrm{Def}}
\newcommand{\e}{\mathbf{e}}
\newcommand{\Ed}{\mathcal{E}^{\dag}}
\newcommand{\Fs}{\mathscr{F}}
\renewcommand{\H}{\mathscr{H}}
\newcommand{\Hom}{\mathrm{Hom}}
\newcommand{\Ka}{\widehat{K^{\mathrm{alg}}}}
\newcommand{\lr}[1]{\langle #1\rangle}
\newcommand{\M}{\mathrm{M}}
\newcommand{\Mod}{\mathrm{Mod}}
\newcommand{\N}{\mathrm{N}}
\renewcommand{\O}{\mathcal{O}}
\newcommand{\Od}{\O^\dag(A_0)}
\renewcommand{\P}{\mathscr{P}}
\newcommand{\Q}{\mathcal{Q}}
\newcommand{\R}{\mathcal{R}}
\newcommand{\Ring}{\mathcal{B}}
\newcommand{\simto}{\xrightarrow{\sim}}
\newcommand{\sm}[1]{\begin{smallmatrix}#1\end{smallmatrix}}
\newcommand{\U}{\mathrm{U}}
\renewcommand{\xi}{x}
\newcommand{\comment}[1]{
\begin{color}{red}{ 
\framebox{\framebox{\framebox{
\begin{minipage}{450pt}#1
\end{minipage}}}}}
\end{color}
}
\newcommand{\smallcomment}[1]{
\begin{color}{red}
{ 
\framebox{\framebox{\framebox{
#1}}}}
\end{color}
}
\def\swappedhead#1#2#3{%
  % original definition:
  % \thmnumber{\@upn{\the\thm@headfont#2\@ifnotempty{#1}{.~}}}%
  % change:
  \thmname{#1}\;%
  \thmnumber{\@upn{\the\thm@headfont#2\@ifnotempty{#1}}}%
  \thmnote{\,{\the\thm@notefont(#3)}}{.~}}
\newtheoremstyle{dotless-thm}
  {10pt}
  {10pt}
  {\itshape}
  {}
  {\bfseries}
  {}
  {.0em}
  {}
\theoremstyle{dotless-thm}
\newtheorem{theorem}{\textbf{Theorem}}[subsection]
\newtheorem{thm-intro}{\textbf{\textsc{Theorem}}}
\newtheorem{Def-intro}[thm-intro]{\textbf{\textsc{Definition}}}
\newtheorem{rk-intro}[thm-intro]{\textbf{\textsc{Remark}}}
\newtheorem{cor-intro}[thm-intro]{\textbf{\textsc{Corollary}}}
\newtheorem{ex-intro}[thm-intro]{\textbf{\textsc{Example}}}
\newtheorem{Crit-intro}[thm-intro]{\textbf{\textsc{Criterion}}}
\newtheorem{proposition}[theorem]{\textbf{Proposition}}
\newtheorem{lemma}[theorem]{\textbf{Lemma}}
\newtheorem{corollary}[theorem]{\textbf{Corollary}}
\newtheorem{definition}[theorem]{\textbf{Definition}}
\newtheorem{remark}[theorem]{\textbf{Remark}}
\newtheorem{example}[theorem]{\textbf{Example}}
\newtheorem{hypothesis}[theorem]{\textbf{Hypothesis}}
\newtheorem{setting}[theorem]{\textbf{Setting}}
\newtheorem{conjecture}[theorem]{\textbf{Conjecture}}
\newtheorem{notation}[theorem]{\textbf{Notation}}
\numberwithin{equation}{section}
\begin{document}

\title[Infinitesimal deformation of $p$-adic differential equations on Berkovich curves]% over local fields of characteristic zero]
{Infinitesimal deformation of $p$-adic differential equations on Berkovich curves}% over local fields of characteristic zero}

\author{Andrea Pulita}

\email{pulita@math.univ-montp2.fr}

\address{www.math.univ-montp2.fr/$\sim$pulita/\\
Departement de Mathématique, Université de Montpellier II, Bat 9, CC051, Place Eugène Bataillon, 34095 
Montpellier Cedex 05, France\\
\today.}

\subjclass{Primary 12h25; Secondary 12h05; 12h10; 
12h20; 12h99; 11S15; 11S20; 11S40; 11S80; 11M99; 34M55; 58h15.}

\keywords{$p$-adic differential equations, $p$-adic difference equation, $\sigma$-modules, $p$-adic $L$-functions, 
$p$-adic $q$-difference equation, Deformation, unipotent, $p-$adic local monodromy theorem, canonical extension, 
$(\phi,\Gamma)$-modules}

\begin{abstract}
We show that if a 
differential equation $\Fs$ over a 
quasi-smooth Berkovich curve $X$ has a 
certain compatibility condition with respect to an 
automorphism $\sigma$ of $X$,  
then $\Fs$ acquires a semi-linear action of $\sigma$ 
(i.e. lifting that on $X$). The compatibility 
condition forces the automorphism $\sigma$ to be close 
to the identity of $X$, 
so the above construction applies to a 
certain class of automorphisms called  
\emph{infinitesimal}. 
This generalizes \cite{An-DV} and 
\cite{Pu-q-Diff}. We also obtain an application to Morita's 
$p$-adic Gamma function, and to related values of 
$p$-adic $L$-functions.
%we find expressions 
%similar to  of \cite{Washington} and \cite{Barsky-cong}. 
\end{abstract}

\maketitle

%\begin{center}
%Version of \today
%\end{center}

\if{\makeatletter
\renewcommand\tableofcontents{%
    \subsection*{\contentsname}%
    \@starttoc{toc}%
    }
\makeatother

\begin{small}
\setcounter{tocdepth}{2} \tableofcontents
\end{small}
}\fi

\setcounter{section}{0}
\section*{\textsc{Introduction}}
\if{
\addcontentsline{toc}{section}{\textsc{INTRODUCTION}}
}\fi
Differential equations in the ultrametric world have been 
actively investigated since the proof of the rationality of 
the Zeta function by Dwork \cite{Dwork-zeta}. 
Most of the foundational ideas (as the 
over-convergence, the idea of Frobenius structure, the 
radius of convergence) come from the  
deep pioneering works of B.Dwork and P.Robba.  
Various languages have been used in the past half 
century to deal with the underling topological spaces, 
and  with the several aspects  
of the original ideas of Dwork and Robba. 
We have (among others) 
the rigid geometry of J.Tate, the 
rigid cohomology of P. Berthelot, 
and the language of Meredith.
 
More recently the point of view of Berkovich geometry 
appeared as a 
powerful tool to describe some new features related to 
differential 
equations as shown in the works of F. Baldassarri  
\cite{Balda-Inventiones}, K.Kedlaya \cite{Kedlaya-draft}, 
and the author \cite{NP-I}, \cite{Potentiel}, 
\cite{NP-II}, \cite{NP-III}, \cite{NP-IV}. 

This paper fits in this last sequence of works, 
and it is a generalization of 
\cite{An-DV}, \cite{DV-Dwork}, \cite{Pu-q-Diff} where one deals with 
$q$-difference equations. We generalize these last papers 
from two points of view: 

\begin{enumerate}
\item We consider arbitrary quasi-smooth $K$-analytic 
Berkovich curves;
\item We work with a larger class of automorphisms, 
called $S$-infinitesimal.
\end{enumerate}

Let $K$ be a complete valued ultrametric field, and let $X$ be a 
quasi-smooth\footnote{The terminology quasi-Smooth is that of 
\cite{Duc}, this corresponds to rig-smooth curves in the rigid 
analytic setting.} $K$-analytic Berkovich curve over $K$. 
Such a curve always admits a so called weak 
triangulation $S$, which is a certain locally finite set in 
$X$ such that $X-S$ is a disjoint union of 
virtual open annuli or discs. 
This is a nice way to cut the curve into 
local pieces. 
Each connected component of $X-S$ which is 
isomorphic to an annulus has a skeleton isomorphic to an 
open 
interval. The union of all those intervals in $X$, together 
with the 
points of $S$, form a locally finite graph $\Gamma_S$ 
called the 
$S$-skeleton of $X$. The complement of $\Gamma_S$ in 
$X$ is a 
disjoint union of virtual open discs.

An automorphism $\sigma:X\to X$ of $X$ is called 
\emph{$S$-infinitesimal} if, after all base changes to a 
complete valued field extension 
$L/K$, it induces an automorphism of each such disc. 
We attach to $\sigma$ a function 
$\R_S(-,\sigma):X\to\mathbb{R}_{\geq 0}$ 
that measures how much $\sigma$ is close to the 
identity. More precisely, if $\Omega/K$ is a large field 
extension such that $x\in X$ is $\Omega$-rational, 
and if $t$ is an $\Omega$-rational point of 
$X_\Omega$ over $x$, then $\R_S(x,\sigma)$ controls (aftr a convenient normalization) the radius of 
the smallest closed disc centered at $t$  which is stable under 
$\sigma_\Omega$, and which does not intersect $S$.

A first result is the following:

\begin{thm-intro}[\protect{Thm. \ref{Thm : finiteness sigma}}]
\label{Thm : intro-2}
The function $x\mapsto\R_S(x,\sigma)$ is continuous. Moreover 
there exists a locally finite graph 
$\Gamma_S(\sigma)\subseteq X$, 
containing $\Gamma_S$, such that $\R_S(-,\sigma)$ is 
locally constant outside $\Gamma_S(\sigma)$. 
If $\sigma\neq\mathrm{Id}_X$, the end points of 
$\Gamma_S(\sigma)$ that do not belong to 
$\Gamma_S$ are the rigid points of $X$ that are
fixed by $\sigma$.
\end{thm-intro}
%Roughly speaking, the slopes of $\R_S(-,\sigma)$ 
%are controlled by a locally finite number of data. 
The statement and its proof 
are essentially derived from those of 
\cite{NP-I}, \cite{Potentiel}, and \cite{NP-II} where we have proved that the radius of 
convergence function $\R_{S,1}(-,\Fs):X\to\mathbb{R}_{>0}$ of a 
differential equation $\Fs$ over $X$ satisfies similar finiteness 
properties. We recall that $\R_{S,1}(x,\Fs)$ controls (after normalization) the radius of the largest 
open disc centered at $t$ where $\Fs$ is trivial. 

Now, a differential equation $\Fs$ on $X$ is called 
$\sigma$-compatible if 
for all $x\in X$ we have
\begin{equation}\label{eq : compatibility}
\R_S(x,\sigma)\; < \;\R_{S,1}(x,\Fs)\;.
\end{equation}
This condition 
amounts to say that for all complete valued field extension $L/K$, 
$\sigma$ stabilizes globally each virtual open 
disc $D\subset X_L$ on which $\Fs_L$ is trivial. 

If $\Sigma$ is a family of automorphisms of $X$, a 
$\Sigma$-module over $X$ is a pair 
$(\Fs,\Sigma)$ where $\Fs$ is a locally free 
$\O_X$-module of finite type, 
and $\Sigma$ is a family of $\O_X$-linear isomorphisms 
$\Sigma:=
\{\sigma:\Fs\xrightarrow{\sim}\sigma^*(\Fs)\;,\;
\sigma\in\Sigma\}$. 
We say that a differential equation $(\Fs,\nabla)$ 
is $\Sigma$-compatible if it is 
$\sigma$-compatible for all $\sigma\in\Sigma$.
A second result of this paper is the following:
\begin{thm-intro}[\protect{Thm. \ref{Thm : deformation fredft}}]
\label{Thm : intro-2}
Let $\Sigma$ be a family of $S$-infinitesimal automorphisms of 
$X$, and let $\Fs$ be a $\Sigma$-compatible differential 
equation. 
Then there exists on $\Fs$ a canonical structure of $\Sigma$-module.

If $\alpha:\Fs\to\Fs'$ is an $\O_X$-linear map between 
$\Sigma$-compatible differential equations commuting with 
the connections, then $\alpha$ also commutes with the corresponding 
action of $\Sigma$ on $\Fs$ and $\Fs'$.
\end{thm-intro}\label{Thm-2 intro}
The theorem provides the existence of a faithful functor
\begin{equation}
\Def_{S,\Sigma}\;:\;\{\textrm{$\Sigma$-compatible differential equations}/X\}\;
\xrightarrow{\quad}\;\{\Sigma-\textrm{Modules}/X\}\;.
\end{equation} 
%The functor is not always fully faithful. 
In section \ref{Fully faithfulness and non degeneracy} 
we provide a condition on the 
action of $\Sigma$ on $X$ that guarantee the fully faithfulness.

The existence of the $\Sigma$-module structure on $\Fs$ is obtained 
as a certain pull-back of the stratification associated to it 
(cf. Section \ref{Stratifications.}). 
In down to earth terms if $D\subseteq X$ is a disc stable by $\Sigma$ 
on which the differential equation $(\Fs,\nabla)$ 
admits a complete basis of Taylor solutions $Y\in GL_r(\O(D))$, 
the $\Sigma$-module associated to $\nabla$ also admits the same 
basis of Taylor solutions on $D$.

\begin{ex-intro}
Let $X$ be the open unit disc in 
$\mathbb{A}^{1,\mathrm{an}}_K$, 
with $S=\emptyset$. 
Let $\Fs$ be the differential equation $y'=y$ on $X$. 
For all complete valued field extension $\Omega/K$, and 
all rational point $t$ of $X_\Omega$ 
the solution of $\Fs$ around $t$ is $\exp(T-t)$. Its 
radius function $\R_{S}(-,\Fs):X\to \mathbb{R}_{>0}$ 
is constant, and its value $\omega$ is either 
equal to $|p|^{\frac{1}{p-1}}$ if $K$ is a $p$-adic field, 
or $1$ if the absolute value of $K$ is trivial 
on $\mathbb{Z}$. Now, if $\sigma=\sigma_q:X\to X$ is 
the multiplication by $q\in K$, 
then $\R_{S}(-,\sigma_q):X\to\mathbb{R}_{\geq 0}$ 
coincides with the function $x\mapsto|(q-1)T|(x)$, i.e. 
the function associating to $x\in X$ the 
absolute value of the function 
$(q-1)T:X\to\mathbb{A}^{1,\mathrm{an}}_K$ at $x$. 
Condition \eqref{eq : compatibility} is then equivalent to
$|q-1|\leq \omega$.

The $\sigma_q$-module corresponding to $y'=y$ is 
$y(qT)=A(q,T)y(T)$, where $A(q,T)=\exp((q-1)T)$. 
Indeed $y=\exp(T)$ is a solution, and 
$A(q,T)=\exp((q-1)T)\in\O(X)$ as soon as 
$|q-1|\leq \omega$.
\end{ex-intro}

We also obtain an \emph{analytic} version of the above result.
If we have a $K$-analytic group $G$ acting on $X$ by 
$S$-infinitesimal automorphisms 
(here the general formalism is that of 
\cite[Section 12]{Mumford-AbVar}), 
then the deformation functor furnishes an 
\emph{analytic semi-linear $G$-module} structure of $\Fs$ 
(i.e. lifting that of $G$ on $\O_X$) which is compatible with the connection (cf. Section 
\ref{Analytic semi-linear $G$-modules.} for more details). 
This structure is commonly known as \emph{$G$-equivariant 
$D$-module on $X$} 
(cf. \cite{Mumford-invariants},  
\cite{Kashi-invariant}). 

If the quotient $[X/G]$ of $X$ by $G$ exists, 
Theorem \ref{Thm : intro-2} amounts to 
say that a $G$-compatible differential equation on $X$ gives 
a fiber bundle on $[X/G]$. 
In our context the quotient does not necessarily exist 
in the category of $K$-analytic spaces, 
and it has to be replaced by the usual simplicial 
object attached to the action of $G$ on $X$ 
(cf. \eqref{eq : simplicial object}) i.e. by a Stack 
(as in \cite{Thomason-Alg-K-Th}).

From this point of view the result is similar to that of J.Sauloy 
\cite{Sauloy-elliptic} where in the complex theory of $q$-difference 
equations he shows that a certain class of 
$q$-difference equations is equivalent to a 
category of fibered boundles over the elliptic curve 
$\mathbb{C}^*/q^{\mathbb{Z}}=[X/G]$.

In the case of automorphisms of the 
form $f(T)\mapsto f(qT+h)$ of the affine line 
(this includes as special cases $q$-difference, and difference 
equations) we are able to construct an explicit description 
of the image of the functor $\mathrm{Def}_{S,\Sigma}$, and a 
quasi-inverse functor 
called \emph{Confluence} (cf. Section \ref{confluence}).

As an application, 
in section \ref{Gamma} 
we apply the previous theory to a 
particular difference equation satisfied by $\Gamma_p$. 
We prove the following results:
\begin{thm-intro}[cf. Thm. \ref{Gammmammmma}]
The Morita's $p$-adic Gamma function $\Gamma_p$ is a 
solution of a 
first order differential equation over the open unit disc.   
\end{thm-intro}
Moreover we relate the radius 
of convergence of that differential equation to 
the $p$-adic valuation of certain values at 
\emph{positive integers} of the $L$-functions 
appearing in the Taylor expansion of the function 
$\log(\Gamma_p)$. We then 
prove a 
family of congruences between these values 
(cf. Corollary \ref{Cor : congruences}).

In section \ref{Deformation over the Robba ring}
we focus our attention to a local situation: 
that of differential equations over the Robba ring. 
In this situation an important classification result 
is the so called $p$-adic local monodromy theorem 
\cite{An}, \cite{Ked}, \cite{Me}, saying that each differential equation 
with an (unspecified) Frobenius structure is quasi-unipotent. 

From Theorem \ref{Thm : intro-2} we deduce the 
following analogue of the $p$-adic local monodromy theorem:
\begin{thm-intro}[\protect{Thm. \ref{plmt-diff}}]
\label{Thm-intro : quasi-unipotence}
Each $\Sigma$-module over the Robba ring, 
that is the deformation of a differential equation with a Frobenius 
structure is quasi unipotent.
\end{thm-intro}
In particular this allows us to obtain a characterization of the 
essential image of the deformation functor for equations over the 
Robba ring (cf. Cor. \ref{Corollary : Essential image Case of Robba}). 

We mention that the proof of Theorem 
\ref{Thm-intro : quasi-unipotence} requires a 
$\Sigma$-analogue of the Katz's 
existence of a canonical extension functor \cite{Katz-Can}, 
\cite{Matsuda-unipotent} (cf. Section 
\ref{Section  Katz-Matsuda can ext}). 

In the case of $q$-difference equations, 
Theorem \ref{Thm-intro : quasi-unipotence} is the central 
result of \cite{An-DV}, which is the foundational paper of the 
theory. The difference with \cite{An-DV} is that they deduce the 
existence of the deformation functor from the quasi-unipotence of $q$-difference 
equations by showing that the Tannakian groups of the two categories are isomorphic. 
Our approach goes in the opposite direction, we deduce the quasi 
unipotence from the deformation.

\if{
\subsection*{\textsc{Structure of the paper}} 
In section \ref{Notation} we fix notation about Berkovich 
curves (slopes, graphs, ...). 
In section \ref{Linear differential equations} we recall the 
finiteness results of \cite{NP-I} and \cite{NP-II} about 
differential equations, and the formalism of 
stratifications. 
In section \ref{S-infinitesimal automorphisms} we 
introduce $S$-infinitesimal automorphisms, and we 
prove Theorem \ref{Thm : intro-2}. 
%(cf. Thm. \ref{Thm : deformation fredft}).
In section \ref{Deformation} we construct the 
deformation, we prove Theorem \ref{Thm : intro-2}. 
%(cf. Thm. \ref{Thm : deformation fredft}).
In section \ref{Deformation over the Robba ring} 
we deal with the Robba ring and we 
obtain Theorem \ref{Thm-intro : quasi-unipotence}. 
%(cf. Thm. \ref{plmt-diff}).
In section \ref{Difference equations over the affine line.} 
we specialize the above results to 
difference equations over 
$\mathbb{P}^{1,\mathrm{an}}_K$, and in section 
\ref{Gamma} 
we obtain the application to the $p$-adic Gamma 
function.
}\fi

Finally we wish to quote two recently appeared papers
of B.Le Stum and A.Quiros \cite{LSQ2,LSQ1}, 
where the authors obtain with different methods
similar results, allowing for instance positive 
characteristic. 
The point of view of those papers is more related to
that of Rigid cohomology of varieties of positive
characteristic. 

\subsection*{\textsc{Acknowledgments}} 
We are grateful to D.Barsky for suggesting and helpful 
discussions, and for guidance and advice in the formulas 
of Section \ref{Section : Applications -SP}. 
We also thank Yves André, Gilles Christol, 
Jerôme Poineau, Bernard Le Stum, Michel Gros,
and Bertrand Toen for 
helpful discussions.

\if{

\subsection{Elementary Stratifications and differential modules}
\label{Elementary Stratifications and differential modules-yhn,ko} 
It is known since \cite[Chapitre II]{Berthelot-these} (see also 
\cite{Deligne-Reg-Sing}, 
\cite[Chap VIII]{Illusie-Cotangent-II}, \cite{Katz-Travaux-de-Dwork}, ...) that the data of a homogeneous linear 
differential equation is equivalent to a so called \emph{stratification}. This equivalence has been established in 
great generality in \cite[Chapitre II]{Berthelot-these} in which one gives the axiomatic presentation of this 
correspondence for objects in a general topos. The formalism works as well for complex and ultrametric differential 
equations (but also for much more general frameworks like crystals of a variety of characteristic $p>0$, or sheaves of 
differential equations over a site). This approach goes back to Grothendieck 
\cite[Appendix]{Grothendieck-10-exposes}, but the 
elementary idea seems to have been already known before (as Grothendieck actually affirms). It is essentially a 
``sublimation'' of the Cauchy's existence theorem of solution of differential equations.
In this paper %, in this introduction and in the rest of the paper 
we do not use the formalism of \cite[Chapitre II]{Berthelot-these}, because it is too general and too sophisticated for our 
purposes. In Section \ref{Linear Differential Equations} we simply recall a special case of the above correspondence 
in our context. We also provide all the proofs for the convenience of the reader, in order to make the paper self 
contained, but also because there are many statements that are not explicitly written in the literature. 

Fix a field $K$ of characteristic $0$, complete with respect to an ultrametric absolute value $|.|$. In this 
introduction we will consider (homogeneous linear) differential equations over an affinoid $X = 
\mathrm{D}^+(c_0,R_0)-\cup_{i=1,\ldots,n}\mathrm{D}^-(c_i,R_i)$ (cf. Section \ref{Affionids section ertyxccvza}), but 
everything can be done for more general kinds of spaces that will be used in the text (like open discs, open annuli, 
germs of open annuli, ...). In this context the data of a differential module $(\M,\nabla^{\M})$ 
over $X$ (cf. Section \ref{Linear Differential Equations}) is equivalent to the data of an isomorphism that we call \emph{stratification}
\begin{equation}
\chi_{\M}\;\;:\;\;(p_2^*\M)_{|_V}\;\;\xrightarrow[]{\;\;\sim\;\;}\;\;(p_1^*\M)_{|_V}
\end{equation}
satisfying a certain cocycle property (cf. Section \ref{cocycles}), where $p_i:X\times X\to X$, $i=1,2$, are the 
projections and $V$ is a convenient neighborhood of the diagonal of $X\times X$. A morphism $\alpha:\M\to\N$ commutes 
with the $\nabla$'s if and only if $\chi_{\M}\circ(p_2^*\alpha)_{|_V}=(p_1^*\alpha)_{|_V}\circ\chi_{\N}$. 

The neighborhood $V$ depends on $\M$. 
In the literature one often choses $V$ as a formal neighborhood of the diagonal, or some 
PD-neighborhood (cf. \cite{Katz-Travaux-de-Dwork}). For our purposes we need the largest kind of neighborhood. 
We then consider the following class of neighborhoods: 
we assume that $V$ contains a tube $\mathcal{T}(X,R):=\{(x,y)\in X\times 
X\;|\;|x-y|<R\}$ for an unspecified real number $R>0$. 

In down to earth terms, if a basis of $\M$ is fixed, the data 
of $\nabla^{\M}$ corresponds to a differential equation $Y'=G(T)Y$, $G(T)\in M_n(\H_K(X))$. On the other hand, the data 
of $\chi_\M$ corresponds (in the same basis of $\M$) to a matrix $Y(z,w)$ with coefficients over a tube around the 
diagonal, that will be the Cauchy's solution of that equation around a ``generic point'' $w$. 
This means that  specializing the second variable of the invertible matrix $Y(z,w)$ 
at an arbitrary point $c\in X(\Omega)$, for some complete valued field extension $\Omega/K$, we obtain the Cauchy's 
solution $Y(z,c)$ of the system $\frac{d}{dz}Y(z,c)=G(z)Y(z,c)$ around the point $c$, with initial value 
$Y(c,c)=\mathrm{Id}$. 

One of the major differences 
between ultrametric and archimedean differential equations is that, in the non-archimedean world, 
the radius of convergence of $Y(x,c)$ depends on $\M$, and on the point $c$, while in the archimedean 
world the radius of 
convergence depends only on $c$ but it does not depend on the particular differential module $\M$. 
Another fundamental concept is that the \emph{convergence locus $\mathcal{U}$ of $Y(x,y)$ (i.e. of $\chi_\M$) 
is usually not reduced to a tube $\mathcal{T}(X,R)$}. Indeed, the geometry of $\mathcal{U}$ encodes some important 
invariants of the equation (like for example the irregularity of $p$-adic differential equations in the context of 
Christol-Mebkhout \cite{Ch-Me}). It is actually quite difficult to describe it, 
because it is defined by some \emph{conditions that are \emph{à priori} not within the framework of (rigid) analytic geometry} 
(cf. Sections \ref{Properties of the x-radius and the y-radius.} 
and \ref{uniform neighborhoods of the diagonal}). 
In order to study the convergence locus of the stratification $\chi_\M$ people 
consider sections of $\mathcal{U}$ (following the Dwork's ideas), 
by specializing the second variable $w$ of the matrix $Y(z,w)$ to the points $c$ 
of $X(\Omega)$, for an unspecified arbitrarily large complete valued field extension 
$\Omega/K$.\footnote{It is understood that the absolute value of $\Omega$ extends that of $K$.} 
One then checks, for all such couple $(c,\Omega)$, with $c\in X(\Omega)$, 
the radius of convergence of the matrix solution $Y(z,c)$ of the 
starting differential system $Y'=GY$. An important fact is that the radius of convergence depends only on the 
semi-norm $|.|_c$ defined by $|f|_c:=|f(c)|_{\Omega}$, and not on the point $c$. 
The semi-morm $|.|_c$ is a point of the Berkovich space $\mathscr{M}(X)$ of $X$, 
and reciprocally every point of the Berkovich space of $X$ is of this type for a convenient choice of $c$ and 
$\Omega$ (cf. section \ref{Dwork generic points}).
In other words the radius of convergence of the system $Y'=GY$ is a well defined real valued function on 
$\mathscr{M}(X)$ that is proved to be continuous by \cite{DV-Balda} (cf. Corollary 
\ref{Continuity of the minimum radius of the columnsnnsn}). In the paper we will need an accurate understanding 
of the behavior properties of the radius of convergence function along the Berkovich space of $X$ (cf. section 
\ref{intro : ber sp - x-y-rad} below).

\subsection{Stratified $\sigma$-modules and $\sigma$-Deformation functor} 
Let now $\sigma:X\simto X$ be an automorphism of $X$. 
A (classical) $\sigma$-module is a finite free module $\M$ over the ring $\H_K(X)$ 
of analytic functions of $X$, together with a $\sigma$-semi-linear isomorphism $\sigma^{\M}:\M\simto\M$. 
This means that $\sigma$ verifies $\sigma^{\M}(fm)=\sigma(f)\sigma^{\M}(m)$ for all $f\in\H_K(X)$ 
and all $m\in\M$. The data of $\sigma^{\M}$ is equivalent to the data of a $\H_K(X)$-linear isomorphism 
(still denoted by $\sigma^{\M}$) 
$\sigma^{\M}:\sigma^*\M\simto\M$, where 
$\sigma^*$ is the pull back of $\M$ by the $K$-linear ring automorphism $\sigma:\H_K(X)\simto\H_K(X)$ 
(cf. section \ref{Difference equations}). 
%satisfying  
As for differential equations, once a basis of $\M$ 
is chosen, the data of a $\sigma$-module corresponds to a so called $\sigma$-difference equation 
$\sigma(Y)=A(T)\cdot Y$. %
Now let $(c,\Omega)$, $c\in X(\Omega)$, be as above (cf. section 
\ref{Elementary Stratifications and differential modules-yhn,ko} , 
and section \ref{Affionids section ertyxccvza}). 
We consider a disc $\mathrm{D}^-(c,r)$. 
Contrary to the archimedean framework, if $\sigma$ is close enough to the identity, it happens that  
$\mathrm{D}^-(c,r)\subset X$ is globally stable under the action of $\sigma$ 
(i.e. $\sigma(\mathrm{D}^-(c,r))\subset \mathrm{D}^-(c,r)$, see Lemma 
\ref{invariance of a disc} to have an explicit \emph{characterization} of those $\sigma$ that stabilize a given disc).
It makes hence sense to consider solutions $Y$ of $\sigma$-difference equations with coefficients in 
the algebra of analytic functions over $\mathrm{D}^-(c,r)$. 
Now consider, as above, a differential equation $(\M,\nabla^{\M})$ over $X$,
and its attached stratification $\chi_{\M}$. Fix a basis $\e$ of $\M$ and consider 
the matrix $Y(z,w)$ of $\chi_{\M}$. Roughly speaking the main result of this paper is the following. 
If for all couples $(c,\Omega)$ as above the largest disc $\mathrm{D}^-(c,R)$ contained in $X$ on 
which the function $Y(x,c)$ converges is globally stabilized by $\sigma$, then we prove that 
there is a canonical $\sigma$-semi-linear action $\sigma^{\M}:\M\simto\M$ of $\sigma$ on $(\M,\nabla)$. 
This action is determined by the fact that if $A(x)$ is the matrix of the action of $\sigma$ on 
$\M$, in the chosen basis $\e$, then for a (and hence for every) point $c$ one has $Y(\sigma(x),c)=A(x)\cdot Y(x,c)$. 
In other words, the Taylor solutions of $(\M,\nabla^{\M})$ are also solutions of the 
$\sigma$-difference equation $\sigma(Y)=A(x)Y$ defined by $(\M,\sigma^{\M}:\M\simto\M)$. 
We shall immediately to caution the reader against the fact that it is an open problem to know 
whether a $\sigma$-module $(\M,\sigma^{\M})$ admits an analytic cocycle 
$Y(z,w)$ satisfying the last property (cf. see section \ref{Formal cocycle} 
for a solution of this problem in the case 
of $(q,h)$-difference equations). We call \emph{stratified} the $\sigma$-modules with this property (cf. 
Def. \ref{Stratified sigma modules d}). We have to be careful here since it is actually unknown whether a 
sub-$\sigma$-module of a stratified $\sigma$-module is stratified or not (cf. Remark \ref{sub-strat non strat ?}).
  
In order to stabilize each disc of convergence of the Taylor solution $Y(z,w)$, the minimal assumption that 
we have to consider for $\sigma$ is that at least \emph{each maximal open disc contained in $X$ has 
to be globally stabilized by $\sigma$}. 
This means that $\sigma$ has to be quite close to the identity. We call such kind of automorphisms 
\emph{infinitesimal} 
(cf. Def. \ref{Definition infinitesimal}).\footnote{If $\sigma=\sigma_q$ is the $q$-difference automorphism (i.e. 
$\sigma_q(f(x))=f(qx)$), then infinitesimality implies $|q-1|<1$ (cf. Lemma \ref{infinitesimality of sigma_q,h}).} 
Reciprocally, for a chosen infinitesimal automorphism $\sigma$, we call \emph{$\sigma$-compatible} every differential 
equation such that 
$\sigma$ stabilizes globally each 
disc of convergence of its Taylor solution $Y(z,w)$. In the paper we provide some criteria to test whether an 
automorphism is 
infinitesimal (cf. Lemma \ref{lemma: non so dare nome**}) and if a stratification (or equivalently a cocycle $Y(z,w)$) 
is $\sigma$-compatible (cf. Proposition \ref{compatibility condition lemma}). The principal result of this paper is 
then the following (see  Theorem \ref{Main Theorem over affinoids ....frgh} and 
section \ref{proprreties pf deformation} for a more complete statement.)
\begin{theorem}[(cf. Thm.\ref{Main Theorem over affinoids ....frgh})]\label{th.main intro .}
Let $\sigma$ be an \emph{infinitesimal} automorphism of $X$. 
%\begin{itemize}
%\item 
There exists an exact and faithful functor, called $\sigma$-deformation, associating to a $\sigma$-compatible 
differential module a $\sigma$-module. Its essential image is formed by stratified $\sigma$-modules. 
If moreover $\sigma$ is non degenerate, then the functor is fully faithful and realizes an equivalence between 
stratified $\sigma$-modules and $\sigma$-compatible differential equations. 
%\end{itemize}
\end{theorem}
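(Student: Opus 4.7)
The plan is to realize $\sigma$-deformation as a pull-back of the stratification $\chi_{\M}$ attached to the differential module $(\M,\nabla^{\M})$ (cf.\ Section \ref{Stratifications.}). Concretely, I fix a basis $\e$ of $\M$ and write $Y(z,w)\in GL_r$ for the matrix of $\chi_{\M}$. By Cauchy's existence theorem, $Y(z,w)$ converges on a neighborhood of the diagonal of $X\times X$ that, for every complete valued extension $\Omega/K$ and every $c\in X(\Omega)$, contains the tube over the Taylor-convergence disc of $\nabla^{\M}$ at $c$. The $\sigma$-compatibility assumption \eqref{eq : compatibility} says precisely that $\sigma_{\Omega}$ stabilizes each such disc, so the matrix
\begin{equation}
A(x)\;:=\;Y(\sigma(x),c)\cdot Y(x,c)^{-1}
\end{equation}
makes sense on every such disc. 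The sought isomorphism $\sigma^{\M}:\M\simto\sigma^*\M$ is then defined, in the basis $\e$, by $A(x)$; equivalently, $\sigma^{\M}$ is uniquely characterized by the property that the Taylor solutions of $\nabla^{\M}$ on a $\sigma$-stable disc $D$ are simultaneously solutions of the $\sigma$-difference equation $\sigma(Y)=A(x)Y$ defined by $\sigma^{\M}$.

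The first thing to verify is that $A(x)$ does not depend on the auxiliary point $c$ and that it glues to an element of $GL_r(\H_K(X))$: this follows from the cocycle identity $Y(z_1,c)Y(z_2,c)^{-1}=Y(z_1,c')Y(z_2,c')^{-1}$ satisfied by every stratification, together with the existence of a covering of $X$ by $\sigma$-stable Taylor discs, guaranteed by Theorem \ref{Thm : intro-2} applied to $\sigma$ and to $\nabla^{\M}$ and by the strict inequality \eqref{eq : compatibility}. Faithfulness is then immediate: an $\O_X$-linear morphism $\alpha:\M\to\M'$ intertwining the connections intertwines the stratifications, hence the matrices $Y(z,w)$, hence the matrices $A$, so it automatically intertwines the $\sigma$-actions. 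Exactness is inherited from exactness of the stratification construction, since the relevant pull-backs are $\O_X$-flat. For the essential image, the matrix $Y(z,w)$ attached to $\Def_\sigma(\M,\nabla^{\M})$ serves simultaneously as a solution of the connection and as an analytic cocycle witnessing the $\sigma$-action, so the image lies in stratified $\sigma$-modules, and conversely a stratified $\sigma$-module admits, by its very definition, such a cocycle from which a compatible connection can be reconstructed.

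For full faithfulness in the non-degenerate case, I would proceed as follows. Given $\alpha:\Def_\sigma(\M,\nabla)\to\Def_\sigma(\M',\nabla')$ commuting with $\sigma^{\M}$ and $\sigma^{\M'}$, the matrix $B$ of $\alpha$ satisfies $B(\sigma(x))=A'(x)B(x)A(x)^{-1}$. Iterating this identity along $\sigma$-orbits and invoking non-degeneracy---which ensures that $\sigma$-orbits accumulate at a fixed point with non-vanishing differential and hence probe a full analytic neighborhood---one transports $\sigma$-equivariance into an identity between derivatives that forces $\alpha$ to commute with the connections. Combined with faithfulness, this yields the claimed equivalence. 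The main obstacle, and where the hypotheses on $\sigma$ are genuinely used, is exactly this last step from $\sigma$-equivariance to $\nabla$-equivariance; a subsidiary technical difficulty is the global analytic gluing of the locally defined $A(x)$, which relies on the full strength of the continuity and graph-finiteness statement in Theorem \ref{Thm : intro-2} to produce compatible $\sigma$-stable coverings by discs of Taylor convergence.
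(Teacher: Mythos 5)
Your construction of the deformation functor — pulling back the stratification $\chi$ along $\Delta_\sigma=(\sigma,\mathrm{Id})$, or in a basis forming $A(x)=Y(\sigma(x),x)$ — is the paper's approach (Theorem \ref{Thm : deformation fredft}), and your appeal to the finiteness result for $\R_S(-,\sigma)$ together with the strict inequality \eqref{eq : R_S(x,sigma)<R_S(x,F)} to produce, disc by disc, tubular neighborhoods of the diagonal through which $\Delta_\sigma$ factors is precisely the content of Lemma \ref{Lemma : existence of adm neighb of diag}. Faithfulness and the description of the essential image are as immediate as you say.

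The genuine gap is in your full-faithfulness argument. You assert that non-degeneracy ``ensures that $\sigma$-orbits accumulate at a fixed point with non-vanishing differential'' and that one can then ``transport $\sigma$-equivariance into an identity between derivatives,'' but neither clause is an argument. Non-degeneracy in the paper means $\O(D)^{\Sigma_\Omega=1}=\Omega$ for some open disc $D$ with $D^+(x,\sigma)\subseteq D\subseteq D(x,S)$ (Definition \ref{Def : non degeneracy}); it is an algebraic condition that presupposes no one-parameter family of automorphisms along which to differentiate, and a single non-degenerate $\sigma$ in general lives in no such family. Even granting that iterates $\sigma^n(t_x)$ accumulate at $t_x$, passing from the intertwining identity $B(\sigma^n(x))=A'_n(x)B(x)A_n(x)^{-1}$ for all $n$ to the statement that $B$ commutes with the connection is exactly what has to be proved, and you offer no mechanism. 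The paper's proof (Proposition \ref{Prop : non degeneracy --> fully faith}) is a dimension count rather than a limit: restrict to the disc $D$ above, where $\sigma$-compatibility forces $\Fs_{|D}$ and $\Fs'_{|D}$ (hence their deformations) to be trivial; since $\O(D)^{\Sigma_\Omega}=\Omega$, the space $\mathrm{Hom}^{\Sigma}(\Fs_{|D},\Fs'_{|D})$ is a finite-dimensional $\Omega$-vector space of dimension $\mathrm{rank}(\Fs)\cdot\mathrm{rank}(\Fs')$, which is also the dimension of the subspace $\mathrm{Hom}^{\nabla}(\Fs_{|D},\Fs'_{|D})$, so the inclusion is an equality; finally Lemma \ref{Lemma : alpha commutes with nabla loc then glob} propagates $\nabla$-equivariance from the single point to all of $X$. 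Without some version of this reduction-to-a-trivializing-disc argument, your outline does not close.
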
 
The condition of non degeneracy of $\sigma$ appearing in the theorem is technical (cf. Def. \ref{Definition non deg}). 
It forces the category 
of $\sigma$-modules 
to be exactly $K$-linear, and not $R$-linear for a larger ring $R/K$. We provide a criteria to test quite easily the 
non degeneracy of $\sigma$ (cf. Lemma \ref{Criterion of non degeneracy -fififi}). In the practice the most part of 
automorphisms are  non degenerate. 
For example if $\sigma=\sigma_q$ is the $q$-difference automorphism, then non degenerate means $q$ being different 
from a root of unity.

The proof of the above theorem is the following. The $\sigma$-deformation functor is defined 
as the composite of two functors. We start from a differential module $(\M,\nabla)$, 
we  first consider the equivalence of categories associating to $(\M,\nabla)$ its stratification 
$\chi_\M:p_2^*\M\simto p_1^*\M$, that we assume to be defined over $\mathcal{T}(X,R)$, for some $R>0$. 
Then we consider the pull back functor by a morphism $\Delta_\sigma:X\to X\times X$, 
defined by $\Delta_\sigma(x)=(\sigma(x),x)$:
$$\chi_\M:p_2^*\M\xrightarrow[]{\;\sim\;} p_1^*\M \qquad\stackrel{\Delta_\sigma^*}{\leadsto}\qquad 
%\sigma^*
\M=\Delta_\sigma^*p_2^*\M\;\xrightarrow[\;\Delta^*_\sigma(\chi_\M)\;]{\;\sim\;}\;
\Delta_\sigma^*p_1^*\M=\sigma^{*}\M\;.$$
So we obtain the required isomorphism $\sigma^{\M}:=\Delta^*_\sigma(\chi_\M):\M\simto\sigma^*\M$. Now we encounter the 
following problem. The stratification $\chi_\M$ 
attached to $(\M,\nabla)$ is defined over a tube $\mathcal{T}(X,R)$, and in order to make 
sense to $\Delta_\sigma^*(\chi_\M)$ we need the assumption
$\Delta_\sigma(X)\subset\mathcal{T}(X,R)$, 
otherwise we can not consider the composite function $\Delta^*_\sigma(\chi_\M)$. 
Indeed the matrix $A(x)$ of $\sigma^{\M}:=\Delta_\sigma(\chi_\M)$ is given by the composite of the matrix $Y(x,y)$ of 
$\chi_\M$ with $x\mapsto (\sigma(x),x)$: $A(x)=Y(\sigma(x),x)$.
The difficulty of the proof lies in 
the fact that $\Delta_\sigma(X)$ can be contained in the convergence locus $\mathcal{U}$ of $\chi_\M$, without being 
contained in any tubular neighborhood. 
Indeed, as already mentioned in section \ref{Elementary Stratifications and differential modules-yhn,ko}, 
the convergence locus of $\chi_\M$ is often not reduced to a 
tubular neighborhood, and is a subset of $X\times X$ whose definition \emph{is given by some conditions that are not 
within the framework of rigid analytic geometry}. So we actually  prove the existence of a covering of $X$ by 
sub-affinoids in order that, over each affinoid $X'\subset X$ of the covering, one has 
$\Delta_\sigma(X')\subset\mathcal{T}(X',R')\subset\mathcal{U}$. 
In section \ref{sigma-compatibility cond ition} we provide the existence of such a covering in a greater generality 
for arbitrary analytic 
functions in a neighborhood of the diagonal that are $\sigma$-compatible. 
This provides the good definition of the composite function 
$\Delta_\sigma^*(\chi_\M)$, in other words it proves that the matrix $A(x)=Y(\sigma(x),x)$ is well defined and 
analytic over $X$.

\subsection{Generalized $\Sigma$-modules as glueing data}\label{intro generalized sigma modules sss}
In section \ref{Difference equations} we introduce a generalized formalism that permits to express under 
the same language different structures as $\Sigma$-difference equations, and stratifications. In this way we treat 
under the same formalism stratifications and $\sigma$-modules, so 
that the deformation appears as a pull-back functor by $\Delta_\sigma$ (cf. sections \ref{PULL-BACJK} and 
\ref{Definition of the deformation functor: the tubular case}). We apply this philosophy to the Frobenius automorphism, and in that case we obtain, as an application, 
a sort of ``analytic continuation along Frobenius" that permits to extend the interval of definition of a $(\varphi,\nabla)$-module. 
In this context the data of a Frobenius structure appears to be equal to a certain gluing data 
(see section  \ref{appendix : intro} below, and Appendix \ref{Appendix : Local Frobenius Structure}).

\subsection{The theory over the Robba ring}
We pay particular attention to differential equations over the Robba ring 
(cf. section \ref{sigma-modules and Differential equations over the Robba ring}). One of the most important results 
in $p$-adic differential equation theory is the so called \emph{$p$-adic local monodromy 
theorem} (cf. \cite{An}, \cite{Ked}, \cite{Me}). It affirms that a differential equation over the Robba 
ring (with a Frobenius structure) becomes unipotent after scalar extension to a particular kind of covering called 
étale. We prove, by $\sigma$-deformation, the analogue of that theorem for stratified $\sigma$-modules (cf. 
Thm.\ref{quasi unipotence}). 
The proof needs to state the $\sigma$-analogue of the \emph{Katz's canonical extension functor} (cf. 
\cite{Katz-local-to-global}, \cite{Katz-On-the-calculation}), that has 
been obtained in the $p$-adic context by S.Matsuda, cf. \cite{Matsuda-unipotent}.

\subsection{Analytic dependence on a parameter}
Theorem \ref{th.main intro .} is stated, in the paper, for a family $\Sigma$ of operators 
instead of a single operator $\sigma$, since no additional difficulties are required. 
In particular we take into account the possibility to have an analytic variety 
$\Sigma=\mathcal{G}$ acting continuously by infinitesimal automorphisms on $X$. In this 
case we obtain a functor associating to a $\mathcal{G}$-compatible differential module the same module together with 
a semi-linear action of $\mathcal{G}$. We prove in this context that the action of $\mathcal{G}$ obtained by 
deformation is analytic (cf. Section \ref{analyticity of the action of G et Sigma}). 
In other words the family of $\sigma$-difference equations, 
$\sigma\in\mathcal{G}$ obtained by deformation, varies continuously with $\sigma$. 
As an example we consider $\mathcal{G}$ to be a product of discs 
$G=\Q_{\tau,\nu} := \mathrm{D}_K^-(1,\tau) \times
\mathrm{D}^-_K(0,\nu) \subseteq K^\times$, 
of a convenient radius $(\tau,\nu)\neq(0,0)$, acting on $X$ by  $\sigma_{q,h}(x)= q\cdot x+h$, where 
$q\in\mathrm{D}^-(1,\tau)$, $h\in\mathrm{D}^-(0,\nu)$, $x\in X$. 
Then we recover the theory of $(q,h)$-difference equations. 
The analyticity of the action of $\Q_{\tau,\nu}$ generalizes the result of 
\cite{Pu-q-Diff} about the 
analyticity of the action of the $q$-difference operator $\sigma_q:=\sigma_{q,0}$ with respect to $q$. 
Notice that a stratified $\Q_{\tau,\nu}$-module $(\M,\sigma^{\M})$ 
carries an action of the Lie algebra of $\Q_{\tau,\nu}$.
In section \ref{Germs of stratified Q-actions} we precise 
the relations between the action of the Lie algebra on $\M$ and the differential equation 
associated to $(\M,\sigma^{\M})$ by deformation.

\subsection{Roots of unity}
The deformation functor does not require the family $\Sigma$ of operators to be an analytic variety. An interesting 
application is the case of $p^n$-th 
roots of unity $\Sigma:=\bs{\mu}_{p^{\infty}}=\cup_{n\geq 1}\bs{\mu}_{p^n}$. Let $K_\infty:=K(\bs{\mu}_{p^\infty})$, 
and let $K^{\mathrm{alg}}$ be the algebraic closure of $K$. Then $\Sigma=\bs{\mu}_{p^{\infty}}$ acts on 
the Robba ring $\mathcal{R}_{K_{\infty}}$, by dilatations $\sigma_{\xi}(f(x)):=f(\xi x)$ (the same as a 
$q$-difference operator with $q=\xi$). 
In this case the category of $\sigma_{\xi}$-equations is $R$-linear for a non trivial ring of functions 
$R/K_{\infty}$ so it can not be equivalent to a full sub-category of that of differential equations because the 
latter is $K_\infty$-linear. 
The same is true if we consider the action of the family $\Sigma=\bs{\mu}_{p^n}$ instead of a single operator. 
But actually the $\Sigma$-deformation functor exists at every level $n$. Unfortunately this faithful (but not full) 
functor is somehow  trivial since it sends a differential equation into a $\bs{\mu}_{p^n}$-semilinear-module that is 
always isomorphic to a sum of the unit object (cf. \cite[Prop.8.6]{Pu-q-Diff}). 
Nevertheless if one consider the action of 
the whole group $\bs{\mu}_{p^{\infty}}$ we have the following:
\begin{theorem}[(cf. Cor.\ref{action of mu p infty infty})]
Let $\Sigma:=\bs{\mu}_{p^{\infty}}$. The category of stratified $\Sigma$-modules over $\R_{K_{\infty}}$
(i.e. stratified $\mu_{p^{\infty}}$-semi-linear representations with coefficients in $\R_{K_\infty}$) with an 
unspecified Frobenius structure is equivalent to that of differential equations over $\R_{K_\infty}$ with an 
unspecified Frobenius structure.
\end{theorem}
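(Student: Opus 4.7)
The plan is to use the deformation functor $\Def_{S,\Sigma}$ of Theorem \ref{Thm : deformation fredft} as the direction ``differential equations $\to$ stratified $\Sigma$-modules'', then establish full faithfulness and essential surjectivity using the Frobenius structure. First I would verify the hypotheses of the deformation machinery. For a primitive $p^n$-th root $\xi$, the dilatation $\sigma_\xi:f(x)\mapsto f(\xi x)$ is $S$-infinitesimal on germs near the boundary of the unit disc, with $\R_S(-,\sigma_\xi)$ controlled by $|\xi-1|=|p|^{1/(p-1)p^{n-1}}<1$. For a differential equation $(\Fs,\nabla)$ over $\R_{K_\infty}$ with Frobenius structure, the radius $\R_{S,1}(-,\Fs)$ tends to $1$ near the boundary by the usual Frobenius-regularization bounds; hence the compatibility inequality \eqref{eq : compatibility} holds on a suitable annulus for each individual $\xi$. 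Applying Theorem \ref{Thm : deformation fredft} and assembling over $\xi\in\bs{\mu}_{p^\infty}$, I obtain a faithful Frobenius-compatible functor from Frobenius differential equations into stratified $\bs{\mu}_{p^\infty}$-modules.

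For full faithfulness I would invoke the non-degeneracy criterion from Section \ref{Fully faithfulness and non degeneracy}. A single $\sigma_\xi$ of finite order has a large ring of invariants, but the joint invariants of the entire tower reduce to $K_\infty$: for $f=\sum a_k x^k$ in $\R_{K_\infty}$ to satisfy $f(\xi x)=f(x)$ for every $\xi\in\bs{\mu}_{p^\infty}$ forces $a_k=0$ for $k\neq 0$, so $f\in K_\infty$. This non-degeneracy makes morphisms of $\bs{\mu}_{p^\infty}$-modules $K_\infty$-linear, matching the $K_\infty$-linearity on the differential-equation side, and the functor becomes fully faithful.

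For essential surjectivity I would apply the $\Sigma$-analogue of the $p$-adic local monodromy theorem (Corollary \ref{Corollary : Essential image Case of Robba}): every stratified $\bs{\mu}_{p^\infty}$-module with Frobenius structure is quasi-unipotent, i.e., becomes unipotent after a finite \'etale base change. In the unipotent case I would construct the connection by differentiating the tower action: along a coherent sequence $(\xi_n)$ of primitive $p^n$-th roots, the operators $(\sigma_{\xi_n}-1)/(\xi_n-1)$ should stabilize to a canonical $K_\infty$-linear $\nabla$ satisfying the Leibniz rule with respect to $x\,d/dx$ and commuting with Frobenius; one then checks that the deformation of $(\Fs,\nabla)$ recovers the starting module. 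Descent along the finite cover handles the quasi-unipotent case.

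The main obstacle I expect is the convergence of the limit defining $\nabla$ in the unipotent step: it must exist as an operator on the Robba-ring module (not merely on a smaller dense subring) uniformly in $n$, and the Leibniz rule has to be checked despite the absence of a single submultiplicative norm on $\R_{K_\infty}$. This is essentially a Sen-type regularization argument adapted to stratified modules, and the Frobenius structure is what supplies the analytic control needed to make it go through.
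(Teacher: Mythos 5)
Your argument has a genuine gap, and it is exactly the point the paper flags in the Remark following Corollary~\ref{Cor : Non degeneracy}: the action of $\bs{\mu}_{p^\infty}$ on the Robba ring is \emph{always degenerate}. You compute $\bs{\mu}_{p^\infty}$-invariants of Laurent series in $\R_{K_\infty}$ itself and conclude they reduce to $K_\infty$; that computation is correct, but it is not what Definition~\ref{Def : non degeneracy} asks for. Non-degeneracy is a statement about $\O(D)^{\Sigma_\Omega}$ for a \emph{disc} $D$ with $D^+(x,\sigma)\subseteq D\subseteq D(x,S)$, and on any such disc around a point $t_x$ on the unit circle the function $\ell_x(T)=\log(T/t_x)$ is analytic (it converges on $D^-(t_x,1)$) and satisfies $\ell_x(\xi T)=\ell_x(T)$ for every $\xi\in\bs{\mu}_{p^\infty}$, since $\log(\xi)=0$. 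So $\O(D)^{\Sigma_\Omega}$ strictly contains $\Omega$ and the criterion of Proposition~\ref{Prop : criterion non degeneracy ,gf} fails. Consequently Proposition~\ref{Prop : non degeneracy --> fully faith} does not apply and your full faithfulness step has no support.

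This failure propagates to the essential surjectivity step as well: Corollary~\ref{Corollary : Essential image Case of Robba} is proved under Setting~\ref{Setting : plmt-diff}, which explicitly assumes $\Sigma$ non-degenerate both over $\Od$ and over $\mathfrak{R}$ (and the whole chain through Proposition~\ref{Prop: fully faithfulness over Robba} and Theorem~\ref{plmt-diff} rests on the same hypothesis). Your ``differentiate the tower'' construction for recovering $\nabla$ from the action of a compatible sequence $(\xi_n)$ is a reasonable heuristic, but it is precisely the hard part that must replace the missing non-degeneracy, and you acknowledge the convergence issue without resolving it; the log-invariants are an obstruction you would have to show is killed by the Frobenius structure (or by Tannakian comparison of the two groups, which is the route taken in the $q$-difference case by Andr\'e--Di Vizio). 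Finally, note that the statement you are proving is drawn from a portion of the introduction that is commented out in the source, and the cited label \verb|action of mu p infty infty| does not occur in the compiled paper, so there is no paper proof to compare against; whatever proof is intended must supply a genuine substitute for the non-degeneracy hypothesis, which your proposal does not.
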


The latter category is that appearing in the paper of Y.André (cf. \cite{An}). André proves that this particular 
category of differential equations is actually equivalent to that of $\mathcal{I}_{k^{\mathrm{alg}}((t))}$-linear 
representation, where 
$\mathcal{I}_{k^{\mathrm{alg}}((t))}$ is the inertia of the absolute Galois group of the field of power series 
$k^{\mathrm{alg}}((t))$ 
with coefficients in the residual field $k^{\mathrm{alg}}$ of $K^{\mathrm{alg}}$. 
So as a corollary we have the following
\begin{corollary}[(cf. Cor.\ref{action of mu p infty infty})]
The category of $\mathcal{I}_{k^{\mathrm{alg}}((t))}$-continuous linear representations is equivalent 
to that of stratified 
$\bs{\mu}_{p^{\infty}}$-modules over $\R_{K^{\mathrm{alg}}}$ 
(i.e. finite free $\R_{K^{\mathrm{alg}}}$-modules together with a 
semi-linear action of $\bs{\mu}_{p^{\infty}}$ that is stratified). 
\end{corollary}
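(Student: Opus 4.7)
The plan is to obtain the claim by composing two equivalences of categories. The first is furnished by the theorem stated just above, applied after extension of scalars from $K_{\infty}$ to $K^{\mathrm{alg}}$: it identifies the category of stratified $\bs{\mu}_{p^{\infty}}$-modules over $\R_{K^{\mathrm{alg}}}$ admitting an (unspecified) Frobenius structure with the category of differential equations over $\R_{K^{\mathrm{alg}}}$ admitting an (unspecified) Frobenius structure. The second is Y.~Andr\'e's $p$-adic local monodromy theorem \cite{An}, which identifies the latter category with the category of continuous finite-dimensional linear representations of the inertia subgroup $\mathcal{I}_{k^{\mathrm{alg}}((t))}$ of the absolute Galois group of $k^{\mathrm{alg}}((t))$.

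First I would extend the preceding theorem from the base field $K_{\infty}$ to $K^{\mathrm{alg}}$. Since $\bs{\mu}_{p^{\infty}} \subset K_{\infty} \subset K^{\mathrm{alg}}$, the family of dilatations $\sigma_{\xi}(f(x)) = f(\xi x)$ with $\xi \in \bs{\mu}_{p^{\infty}}$ acts on $\R_{K^{\mathrm{alg}}}$ by $K^{\mathrm{alg}}$-linear automorphisms. The same argument showing that each $\sigma_{\xi}$ is infinitesimal over $\R_{K_{\infty}}$ shows that it remains infinitesimal over $\R_{K^{\mathrm{alg}}}$, so the deformation functor $\mathrm{Def}_{S, \bs{\mu}_{p^{\infty}}}$ is available in this enlarged context and its essential image description (stratified modules) carries over verbatim.

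Then I would invoke Andr\'e's classification directly over the algebraically closed base $K^{\mathrm{alg}}$, exactly as in \cite{An}: differential equations with Frobenius structure over $\R_{K^{\mathrm{alg}}}$ correspond to continuous linear representations of $\mathcal{I}_{k^{\mathrm{alg}}((t))}$ on finite-dimensional $K^{\mathrm{alg}}$-vector spaces. Composing this identification with the deformation equivalence of the previous step immediately gives the corollary.

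The main obstacle, and the delicate point of the argument, is to verify that the preceding equivalence between stratified $\bs{\mu}_{p^{\infty}}$-modules and differential equations is stable under base change from $K_{\infty}$ to $K^{\mathrm{alg}}$. Concretely, one must check that the property of being \emph{stratified}, i.e.~of arising from a Taylor-type cocycle $Y(z,w)$ convergent on a convenient neighborhood of the diagonal, is preserved in both directions under the complete valued extension $K_{\infty} \hookrightarrow K^{\mathrm{alg}}$, and similarly that the presence of a Frobenius structure is preserved. Both preservations follow from the intrinsic nature of the stratification and of its convergence locus, which are entirely determined by the underlying differential equation, a datum obviously stable under arbitrary complete valued extensions of the base field.
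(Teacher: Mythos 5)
Your proof follows the route the paper's introduction implies: compose the equivalence of the preceding theorem (extended along $K_\infty\hookrightarrow K^{\mathrm{alg}}$) with Andr\'e's identification, over $\R_{K^{\mathrm{alg}}}$, of differential modules admitting an unspecified Frobenius structure with continuous $\mathcal{I}_{k^{\mathrm{alg}}((t))}$-representations. This is correct in substance and is essentially the intended argument.

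Two points you should make explicit. First, the corollary as stated does not include the qualifier ``with an unspecified Frobenius structure,'' yet both equivalences you compose are restricted to the Frobenius-equipped full subcategories; you silently insert this hypothesis (as one must: a stratified $\bs{\mu}_{p^{\infty}}$-module without Frobenius is merely the deformation of a solvable differential equation, and solvability alone does not force the existence of a Frobenius structure), so you should note that the statement is to be read with that condition supplied. Second, your justification of the base change appeals to stability under ``arbitrary complete valued extensions of the base field,'' but $K^{\mathrm{alg}}$ is not complete; the precise argument should be phrased at the level of the inductive limit $\R_{K^{\mathrm{alg}}}=\varinjlim_{L/K\ \mathrm{finite}}\R_L$ used in Section \ref{Section : Special ext}: every object over $\R_{K^{\mathrm{alg}}}$ descends to some $\R_L$ with $L/K$ finite and containing sufficiently many roots of unity, and both the deformation functor and the notion of stratification commute with the finite, complete extensions $\R_L\to\R_{L'}$, which is what one actually needs.
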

This approach for the study of the $q$-difference equations, with $q$ equal to a root of unity $\xi_{p^n}$, 
is complementary to that of \cite{Pu-q-Diff}. Indeed in \cite{Pu-q-Diff} the idea consists in considering, 
somehow artificially, an additional data on the $\sigma_{\xi_{p^n}}$-equation: 
the action of a $\sigma_{\xi_{p^n}}$-tangent operator. In this way one constructs, for $q=\xi_{p^n}$, a category which 
is equivalent to that of differential equation as wanted. The objects of this category are 
$q$-difference modules together with an action of the $q$-tangent operator. But if the problem is to understand 
the category of $q$-difference equations with $q$ being a root of unity, one finds no useful statements in 
\cite{Pu-q-Diff}, while the above result gives informations in this direction. Namely the following question arises 
naturally. The above fully-faithful functor identifies the category of differential equations 
to a subcategory of that of $\bs{\mu}_{p^{\infty}}$-modules. As an example it may happen that the image 
of an irreducible differential equation splits into sub-quotients that are not in the essential 
image of the functor. So these sub-quotients are invariants of the starting differential equation. Hence a 
classification of the extension classes of $\bs{\mu}_{p^{\infty}}$-modules would be helpfull in the study of 
differential equations.

\subsection{Applications}
\subsubsection{Finite difference equations.} 
In section \ref{Finite Difference equations} we specialize the above theory to the case of finite 
difference equations in which $\sigma$ is given by $\sigma_{q,h}(T):=qT+h$ for conveniently small values 
of $q\in K^\times$ and $h\in K$. This part contains, clarifies, simplifies, and generalizes the results 
of \cite{Pu-q-Diff} about $q$-difference equations (i.e. where $h=0$). 
In this context we have a richer situation because we dispose of a so called \emph{twisted Taylor formula} for 
$\sigma_{q,h}$-difference modules (cf. formula \eqref{cocycle of q-h equation}). 
This permits to express explicitly the infinitesimality and the non degeneracy of $\sigma_{q,h}$ in terms of $q$ and 
$h$. It also permits to give a criterion for the existence of a stratification of $\Sigma$-modules which is given 
in terms of the powers of the $(q,h)$-twisted derivation $\Delta_{q,h}:=\frac{\sigma_{q,h}-1}{(q-1)T+h}$ (cf. 
Proposition \ref{Convergence of formal cocycle}). In other words we are able to describe the essential image of the 
deformation functor. The existence of such a twisted Taylor formula seems to be a peculiarity of the automorphism 
$\sigma_{q,h}$. In fact for a general automorphism $\sigma$ the existence of such a twisted Taylor formula is very 
improbable because the twisted $\sigma$-derivation $\Delta_\sigma:=\frac{\sigma-1}{\sigma(T)-T}$ presents denominators 
and hence singularities.

\subsubsection{The Morita's $p$-adic Gamma function $\Gamma_p$.} As an application we prove that 
the Morita's $p$-adic Gamma function $\Gamma_p$ verifies a first order linear differential equation 
$\Gamma_p'(T)=g_0(T)\cdot\Gamma_p(T)$, where $g_0(T)=\sum_{i\geq 0}a_iT^i$ is an analytic function 
converging on the open unit disc $\mathrm{D}^-(0,1)$ (cf. Thm. \ref{Gammmammmma}). 
We obtain this result by ``\emph{approaching}'' the differential equation with the family of finite difference 
equations (i.e. functional equations) satisfied by $\Gamma_p$: 
\begin{equation}\label{familyyfhgzi hujih jhgd lkjd }
\Gamma_p(T+p^n)=A(p^n,T)\Gamma_p(T)\;,
\end{equation} for increasing 
values of $n$. 

We then compute the radius of convergence of all these difference equations and of the limit differential equation. 
This radius is in direct relation with the Newton polygon of the function $g_0(x)$ (cf. Thm.\ref{Gammmammmma}). 
The explicit computation is possible because the radius of convergence is actually ``\emph{small}'' (cf. 
Lemma \ref{Small radius}). 

During the accomplishment of this work we benefited of discussions with D.Barsky who suggested to apply the above 
computations about the $p$-adic Gamma function to the Morita's formula (cf. \cite{Morita}). Let $\Gamma_p^{0}$ 
be the analytic development of $\Gamma_p(T)$ at $T=0$. Let $\omega_p$ be the Teichm\"uller character. Then  
\begin{equation}
\log(\Gamma_p^0(T))\;\;=\;\;\lambda_0T+\sum_{m\geq 1}\frac{L_p(2m+1,\omega_p^{2m})}{2m+1}\;T^{2m+1}\;,\qquad |T|\leq 
|p|\;, 
\end{equation}
where $L_p(1+2m,\omega_p^{2m})$ is the value at $1+2m$ of the Kubota-Leopoldt's $p$-adic $L$-function corresponding to 
the Dirichlet character $\omega_p^{2m}$. Indeed, by considering the derivative of this expression 
we find 
\begin{equation}
g_0(T) \;\; = \;\; \lambda_0+\sum_{m\geq 1}L_p(1+2m,\omega_p^{2m})T^{2m}\;.
\end{equation}
The knowledge of the Newton polygon of $g_0(T)$ gives an upper bound on the absolute value of 
$L_p(1+2m,\omega_p^{2m})$, and an exact value of them in correspondence of the breaks  of the Newton polygon (cf. 
Corollaries \ref{Newton Polygon} and \ref{corollary abs value of zeta at 1+e}). 
By this these values together with the Newton polygon of $g_0(T)$ are related to the radius of convergence 
of the differential equation $\Gamma_p^{0}(T)'=g_0(T)\cdot\Gamma_p^{0}(T)$. 

Finally, from the fact that $\Gamma_p^0(T)$ is simultaneously a solution of every equation of the family 
\eqref{familyyfhgzi hujih jhgd lkjd } and of the ``limit'' differential equation, we deduce a family of congruences 
satisfied by the 
values $L_p(1+2m,\omega_p^{2m})$. Similar congruences were already known by \cite{Washington} and \cite{Barsky-cong}. 

The main point here is the fact that these congruences can be 
understood in terms of the above theory. In particular this last fact gives a new proof of the existence of a 
pole of the $p$-adic Zeta function, that finally appears as a consequence of the behavior of the Radius of 
convergence of the differential equation satisfied by the $p$-adic Gamma function. 

%\subsection{Appendix \ref{Barsky-conjecture}: A conjecture of D.Barsky.} 
\subsubsection*{Note.} An idea of D.Barsky suggests the possibility to extend the above computation in order 
to define a new type of $p$-adic Gamma function(s) for any (abelian) number field extension, that 
should satisfy a difference equation together with the fact that their Taylor coefficients compute the 
$L$-functions of these number fields. D. Barsky is working on this direction.

\subsection{Berkovich spaces, $x$-radius and $y$-radius of convergence functions of a function 
around the diagonal}\label{intro : ber sp - x-y-rad}
The first sections (cf. sections \ref{section - notations}, \ref{norms}, \ref{Radius of convergence functions and analytic functions around the diagonal}, \ref{Linear Differential Equations}) 
are intended to be an introduction to the basic notions about Berkovich spaces \emph{adapted to the (local) 
theory of $p$-adic differential equation in one variable} and in particular to the study of the radius of convergence 
function of a differential equation.  
Most part of the notions can be extrapolated from the literature, these sections are intended to 
put them together as exhaustively as possible. 
It has been an occasion for us to list some not always elementary facts that are scattered and 
not always present in the literature. We have included all the proofs in order to make the paper self-contained. 
One finds the notion of \emph{maximal skeleton}  (cf. Section \ref{Skeleton}) 
together with its elementary properties. Roughly this is a finite tree inside 
the Berkovich space on which the radius of convergence function will be minimal. We introduce 
the notion of \emph{critical points} (cf. Def. \ref{crit pojnts defi -nition}), 
that are a finite family of semi-norms belonging to 
the maximal skeleton generalizing the \emph{Shilow boundary}. 
These points will be necessary to test the ``minimal values'' of the radius of 
convergence function of a differential module. 
We are able to provide criteria for the \emph{infinitesimality}, for the \emph{non degeneracy}, and for 
\emph{$\sigma$-compatibility} of families of 
automorphisms (cf. Lemmas \ref{Criterion of non degeneracy -fififi}, \ref{lemma: non so dare nome**}, 
\ref{compatibility condition lemma}) that can be 
tested on the finite set of critical points. In this particular context it results to be convenient to not make any distinction between points of type 1,2,3,4 as in \cite[section 1.4.4]{Ber}.
Our main aim in these first sections has been to introduce the ring of \emph{analytic functions $f(x,y)$ 
converging in a tube around the diagonal} (cf. section 
\ref{Convergent functions on neighborhoods of the diagonal}, and Def. 
\eqref{A delta dag}). Our intention has been to isolate the results that hold for all such functions 
from those that hold only for the Taylor solutions of differential equations.  
We hence define the \emph{radius of convergence function} of such a $f(x,y)$ without assuming it to be 
a solution of a differential equation (cf. Def. \ref{definition of x- and y- radius}). 
There are actually two notions of radius of $f(x,y)$: % of convergence for a function $f(x,y)$ on the diagonal: 
the \emph{$x$-radius} and the \emph{$y$-radius}, corresponding to the $x$-section and the $y$-section 
of its convergence locus as in section \ref{Elementary Stratifications and differential modules-yhn,ko} (cf. Def. 
\ref{definition of x- and y- radius}). 
The radius of convergence function of a differential equation 
that is universally used in the literature is the $x$-radius of its matrix Taylor solutions (i.e. the minimum 
$x$-radius of the solutions). It happens (cf. Lemma \ref{x-radius=y-radius for diff eq}) 
that the $x$-radius function coincides in this case with the $y$-radius 
function i.e. the minimum of the 
$x$-radius functions of the solutions coincides with the minimum of the $y$-radius functions of 
the solutions. Unlikely for a general analytic function $f(x,y)$ around the diagonal, these 
two radii functions do not coincide and do not satisfy any reasonable properties at the $K$-rational 
points (cf. examples of Ramark 
\ref{explicit examples of radius x-y-rad}), nor at the Shilow boundary, 
nor at the critical points (cf. 
the example of Remark \ref{example of non continuity of radius ofofoornhnhgj}). 
Conjecturally also the $x$-radius and the $y$-radius of a single solution of a differential equation 
must be equal (cf. Remark \ref{conjecture x-rad=y-rad for each solty}), 
but \emph{a priori} they satisfy different continuity properties 
(cf. Corollary \ref{corollary the y-rad of a solution is loc c const}). 
For this reason we treat them together along the text, since no additional difficulties arises. 
We then are able to prove the \emph{lower semi continuity} of the $x$-radius and $y$-radius of convergence functions 
of each such $f(x,y)$ (cf. Proposition \ref{lower semi-cont of R(f,-)}) 
outside the points of the Berkovich space $\mathscr{M}(X)$ 
corresponding to the points $|.|$ defined by an element of the completion 
$\widehat{K^{\mathrm{alg}}}$ of the algebraic closure of 
$K$.\footnote{i.e. the semi-norms $|.|$ defined by $|g|:=|g(t)|$, with $t\in \widehat{K^{\mathrm{alg}}}$.} 
In general the $x$-radius and the $y$-radius functions of a general $f(x,y)$ are not continuous.
We provide along the text some explicit examples describing the pathologies of the 
$x$-radius and $y$-radius of convergence functions 
of these functions of two variables (cf. examples of Remarks \ref{explicit examples of radius x-y-rad} and 
\ref{example of non continuity of radius ofofoornhnhgj}).  
Moreover we provide a \emph{criteria of continuity} for their $x$-radius and $y$-radius of convergence functions 
(cf. Proposition \ref{criteria of continuity of the radius ,n}). 
The proof of the continuity of the radius of convergence function of a differential equation
is essentially based on the original idea of Christol-Dwork (cf. 
\cite{Ch-Dw}), ideas that have been taken up also by \cite{DV-Balda}. 
In our restricted context our arguments are analogous, but quite different 
in the presentation with respect to that of \cite{DV-Balda}, because we are in a local framework, 
in one variable, and we strongly make use of the notion of the 
maximal skeleton, together with an accurate local description of the Berkovich space. Our proof is supposed to distinguish the obstructions to the continuity that are of topological nature from those that are of differential nature.

\subsubsection*{Note 1:}
Along the text we often interpret each semi-norm $|.|$ in the Berkovich 
space as the valuation at an $\Omega$-valued point $t\in X(\Omega)$: $|g|:=|g(t)|_{\Omega}$, where $\Omega$ is a 
convenient large complete valued field extension of $K$ (cf. Section \ref{Dwork generic points}). 
This permits to relate the classical language of 
Dwork (Christol,Robba,\ldots), and in particular the notion of generic point, to the language of Berkovich.     
Notice that there are two possible choices for the radius of the ``\emph{generic disc}'' centered at $t$. 
The first choice is $\rho_{t}^{\mathrm{gen}}$ which measures the distance of $t$ to $K^{\mathrm{alg}}$ 
(cf. Section \ref{Continuity of the derivation with respect to a Berkovich seminorm.}). 
The second choice is $\rho_{t,X}\geq \rho_{t}^{\mathrm{gen}}$ 
that measures the distance of $t$ to the complement of $X$ (cf. Section \ref{the number rho_cX}).
These two choices coincide on the points of the maximal skeleton, in particular they 
coincide in the classical cases investigated by Dwork (Christol, Robba,\ldots). 
As for the radius of convergence function these quantities depend only to the semi-norm attached to $t$, and not on 
the particular choice of $t$. So they define functions on the Berkovich space. 
The function $|.|\mapsto\rho_{|.|,X}$ is a continuous function on the Berkovich space (cf. Prop. 
\ref{continuity of rho--}), while 
$|.|\mapsto\rho_{|.|}^{\mathrm{gen}}$ is only upper semi-continuous (cf. Cor. 
\ref{rho gen is upper semi cont ertyui}). 
Both these functions are important since the radius of convergence function acquires more properties inside the disc 
$\mathrm{D}^-(t,\rho_{t}^{\mathrm{gen}})$ (cf. Prop.\ref{x-rad=y-rad on gengen}), but it has to be defined on the 
larger disc $\mathrm{D}^-(t,\rho_{t,X})$ in order to be continuous. 
See section \ref{Comments about the definition of Radius-ertyunedy9} for a more extensive discussion.

\subsubsection{Note 2:} In section \ref{Bell's polynomial} we give the link, 
proved in \cite{Bell}, between the non commutative Bell's polynomials and the 
iterated matrices of the connection.  

\subsection{Appendix \ref{Appendix : Local Frobenius Structure}: Analytic continuation along Frobenius}\label{appendix : intro}
The formalism mentioned in section \ref{intro generalized sigma modules sss} 
includes also the Frobenius structure of a differential module 
and permits to generalize it into a slight more general structure called \emph{local Frobenius 
structure}. Let $q=p^r$, $r\geq 1$. Let $J\subseteq ]\varepsilon,1[$ be a  sub-interval satisfying 
$J^q\cap J\neq \emptyset$. Then $\cup_{n\geq 0}J^{1/q^n}=J^*=]\varepsilon',1[$ for some $\varepsilon'>0$. 
A \emph{local Frobenius structure} on a differential module $(\M,\nabla)$ over $J$ is nothing but a 
gluing data between $(\M,\nabla)$ and its pull-back by Frobenius $\varphi^*\M$ over $J\cap J^{1/q}$. 
This structure can be understood as a generalized difference equation. We define the category of $(\varphi,d)$-modules 
over $J$ as the category of finite free $\a_K(J)$ modules together with a compatible action of a connection and a \emph{local} Frobenius structure. 
We are able to prove the following (cf. Th. \ref{theo. global frob = local frob} for a more precise statement):
\begin{theorem}[(cf. Theorem \ref{theo. global frob = local frob})]\label{Local Frobenius structure = Frob Structure}
If $K$ is spherically complete the category of $(\varphi,d)$-modules over $J$ is equivalent to the category of $(\varphi,d)$-modules 
over $J^*$. The same holds for the categories of $d$-modules with an unspecified Frobenius structure (i.e. the morphisms are not assumed to commute with $\varphi$).  
\end{theorem}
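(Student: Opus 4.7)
The plan is to construct the equivalence by iterated Frobenius pullback, using the local Frobenius isomorphism as gluing data along the sliding cover $\{J^{1/q^n}\}_{n\geq 0}$ of $J^*$.

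Given a $(\varphi,d)$-module $(\M,\nabla,\varphi_{\mathrm{loc}})$ over $J$, where $\varphi_{\mathrm{loc}}:\varphi^*\M|_{J\cap J^{1/q}}\simto \M|_{J\cap J^{1/q}}$ is the local Frobenius structure, I would define a compatible family $(\M_n)_{n\geq 0}$ of $d$-modules where $\M_n:=(\varphi^n)^*\M$ lives on $J^{1/q^n}$. The iterated pullback $(\varphi^k)^*(\varphi_{\mathrm{loc}})$ provides an isomorphism $\M_{n+1}|_{J^{1/q^n}\cap J^{1/q^{n+1}}}\simto \M_n|_{J^{1/q^n}\cap J^{1/q^{n+1}}}$ compatible with the connection. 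Since the hypothesis $J^q\cap J\neq\emptyset$ forces consecutive intervals in the cover to overlap, this gives a candidate for a sheaf on $J^*=\bigcup_n J^{1/q^n}$.

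The first step is to verify the cocycle condition on triple overlaps $J^{1/q^i}\cap J^{1/q^j}\cap J^{1/q^k}$. Since each gluing isomorphism between $\M_m$ and $\M_{m+1}$ is obtained by applying $(\varphi^m)^*$ to a single fixed $\varphi_{\mathrm{loc}}$, the cocycle property on triple intersections reduces, by functoriality of pullback, to the trivial identity $\varphi_{\mathrm{loc}}\circ \varphi_{\mathrm{loc}}^{-1}=\mathrm{Id}$ on appropriate restrictions; this is essentially formal. This yields a locally free coherent sheaf $\M^*$ on $J^*$ equipped with a connection $\nabla^*$ and, by construction, a \emph{global} Frobenius isomorphism $\varphi^*:\varphi^*\M^*\simto \M^*$ whose restriction to $J\cap J^{1/q}$ recovers $\varphi_{\mathrm{loc}}$.

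The main obstacle, and the reason for the hypothesis, is to show that $\M^*$ is free of finite rank over $\a_K(J^*)$, and not merely locally free. The ring $\a_K(J^*)$ is a Fr\'echet--Stein algebra, and its finite projective modules need not be free in general. Over a spherically complete field $K$, however, a theorem of Lazard--Gruson type (in the form used in the theory of $\varphi$-modules over the Robba ring) ensures that any coherent locally free sheaf on the open annulus $J^*$ is globally free; the proof proceeds via a Mittag-Leffler argument on the exhaustion by closed annuli, and the Mittag-Leffler condition is the precise point where spherical completeness is used. This is the technical heart of the theorem.

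The quasi-inverse is the restriction functor: a global $(\varphi,d)$-module $(\M^*,\nabla^*,\varphi^*)$ on $J^*$ restricts to $(\M^*|_J,\nabla^*|_J,\varphi^*|_{J\cap J^{1/q}})$, which is a $(\varphi,d)$-module in the local sense. That restriction followed by the above extension recovers the original module follows by construction, since both sheaves are obtained by gluing along the same isomorphism data; conversely, extension followed by restriction is the identity on $J$. Finally, morphisms correspond by the same argument applied on each piece of the cover: a morphism on $J$ commuting with $\nabla$ and $\varphi_{\mathrm{loc}}$ extends uniquely to all $J^{1/q^n}$ via iterated pullback, and the extensions match on overlaps by the compatibility with $\varphi_{\mathrm{loc}}$. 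For the variant concerning $d$-modules with an unspecified Frobenius structure, the object-level construction is identical; only the class of morphisms is enlarged, and the same argument works verbatim since the functor does not involve any condition on morphisms with respect to $\varphi$.
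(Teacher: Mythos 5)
The object-level construction is the natural one: iterate the Frobenius pullback along the overlapping cover $\{J^{1/q^n}\}_{n\geq 0}$ of $J^*$, glue via $(\varphi^n)^*(\varphi_{\mathrm{loc}})$ (for a chain cover the cocycle condition is automatic once the gluing isomorphism between non-adjacent pieces is \emph{defined} as the composite of the consecutive ones), and then invoke the theorem that over a spherically complete field a coherent locally free sheaf of finite rank on the quasi-Stein annulus $J^*$ is a free $\a_K(J^*)$-module. You have correctly located where spherical completeness enters, and the restriction functor is indeed a quasi-inverse on the category of $(\varphi,d)$-modules, since your morphism extension is well-defined there.

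The genuine gap is your last sentence. Your extension of a morphism $\alpha$ along the cover uses $\varphi$-compatibility in an essential way: requiring that $(\varphi^n)^*\alpha$ and $(\varphi^{n+1})^*\alpha$ agree on $J^{1/q^n}\cap J^{1/q^{n+1}}$, after transport through the gluing isomorphisms, is (after stripping $(\varphi^n)^*$) precisely the identity $\alpha=\varphi^{\N}_{\mathrm{loc}}\circ\varphi^*\alpha\circ(\varphi^{\M}_{\mathrm{loc}})^{-1}$ on $J\cap J^{1/q}$, i.e.\ the condition that $\alpha$ commutes with the local Frobenius. For a $\nabla$-only morphism this fails, and the pulled-back family does not glue. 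So ``the same argument works verbatim'' is false. What actually needs proving for the second sentence of the theorem is that the restriction map $\Hom^{\nabla}_{J^*}(\M^*,\N^*)\to\Hom^{\nabla}_{J}(\M,\N)$ is \emph{surjective}, equivalently that $H^0_\nabla$ of the internal Hom $(\varphi,d)$-module $\Hom(\M,\N)$ is unchanged when passing from $J$ to $J^*$. This is the substantive content of that half of the statement; it is intimately connected to the Christol--Mebkhout theorem that $d$-$\Mod(\R_K)^{(\varphi)}$ is abelian, which the paper draws as a \emph{corollary} of this result rather than using as an ingredient. You would need a genuinely different argument here — for instance a dimension count on $\nabla$-horizontal sections over $J$ and $J^*$ exploiting their finite-dimensionality and the $\varphi$-semilinear action they carry — before that part of the theorem can be considered established.
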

The quasi inverse of the restriction functor from $J^*$ to $J$ is a sort of ``\emph{analytic continuation functor}'' that permits to 
glue all the iterates pull-backs of a $d$-module over $J$ with a local Frobenius structure in order to define a $d$-module over $J^*$.
This result is important because, if $J$ is closed, the ring $\a_K(J)$ of analytic functions over the anulus $\{|T|\in J\}$ is principal, 
so some technical problems are much easier over $\a_K(J)$ than over $\a_K(J^*)$. As a corollary we are able to give another proof 
of the following theorem (cf. \cite[3.2-5]{Ch-Me-3}):
\begin{corollary}[(cf. Cor. \ref{MLFR is abelian})]\label{Intro- cor abelian over R_K sph comple}
If $K$ is spherically complete, then the category $d-\Mod(\R_K)^{(\varphi)}$ of differential modules over the Robba ring with an unspecified 
Frobenius structure is abelian.
\end{corollary}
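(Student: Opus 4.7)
The plan is to leverage Theorem \ref{theo. global frob = local frob} to transport the problem from the Robba ring to a setting where modules are much better behaved. Pick a closed sub-interval $J \subseteq ]\varepsilon,1[$ such that $J^* = ]\varepsilon',1[$ generates (together with a finite number of larger annuli) the Robba ring after passing to the germ at $1$. By the theorem, the restriction functor gives an equivalence between $(\varphi,d)$-modules over $J^*$ and $(\varphi,d)$-modules over $J$, where the morphisms on either side are required to commute with $\nabla$ but not with $\varphi$. So it suffices to prove abelianness of the local category $(\varphi,d)\text{-}\Mod(\a_K(J))$.

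The point of the reduction is that, when $K$ is spherically complete and $J$ is a closed interval, the ring $\a_K(J)$ is a principal ideal domain. Hence, given a morphism $\alpha: (\M,\nabla_\M,\varphi_\M) \to (\N,\nabla_\N,\varphi_\N)$ commuting with the connections, the kernel $\Ker(\alpha)$ is a finitely generated torsion-free $\a_K(J)$-module, hence free; the image $\alpha(\M)$ is likewise free; and after saturating, the cokernel is free as well. The connection $\nabla_\M$ restricts to $\Ker(\alpha)$ (which is $\nabla_\M$-stable by the very definition of being a morphism of $d$-modules), and $\nabla_\N$ descends to the cokernel. This produces kernels and cokernels in the category of $d$-modules; the remaining task is to equip them with local Frobenius structures and to identify image with coimage.

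The main obstacle is the saturation issue for the image. The kernel is automatically saturated (being the kernel of a map into a torsion-free module), and the local Frobenius structure on $\M$ restricts to $\Ker(\alpha)$ because $\varphi_\M^*\Ker(\alpha) = \Ker(\varphi_\M^*\alpha)$ and $\varphi_\M^*\alpha$ is still a $d$-morphism (morphisms are not required to commute with $\varphi$, but the pull-back gluing data is functorial). For the cokernel and for image $=$ coimage, the delicate point is to show that the image $\alpha(\M) \subseteq \N$ is saturated, i.e.\ that if $f\cdot n \in \alpha(\M)$ for some nonzero $f\in \a_K(J)$ and $n\in\N$, then already $n\in\alpha(\M)$. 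Here the local Frobenius structure on $\N$ is used: pulling back by iterated Frobenius multiplies the annihilator of a torsion class in $\N/\alpha(\M)$ by a contracting sequence of functions, and the resulting compatibility across the gluing data $J \cap J^{1/q}$ (together with the principality of $\a_K(J\cap J^{1/q})$) forces the torsion to vanish.

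Once saturation is established, both the kernel and the cokernel inherit local Frobenius structures from $\M$ and $\N$ respectively, the first and second isomorphism theorems hold because we are working inside the category of finite free modules over a PID, and image $=$ coimage follows. Transporting back through Theorem \ref{theo. global frob = local frob} shows $d\text{-}\Mod(\R_K)^{(\varphi)}$ is abelian. I expect the saturation argument, i.e.\ the torsion-killing step that genuinely uses the Frobenius gluing data together with spherical completeness, to be the only non-formal part of the proof.
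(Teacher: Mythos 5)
A preliminary remark: the corollary you were asked to prove, the theorem it invokes (the local-to-global equivalence for Frobenius structures), and the label it cross-references are all enclosed in an inert block of the source that is never compiled, and the label in question resolves nowhere in the document. There is therefore no visible proof in the paper against which to compare your attempt; what follows is an evaluation on the merits.

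Your top-level plan --- pass to the local category of $(\varphi,d)$-modules over a closed interval $J$ via the restriction equivalence and exploit that $\a_K(J)$ is a PID when $K$ is spherically complete --- is the same reduction the author announces. But the proposal locates the difficulty in the wrong place and waves through the genuine one. You assert that the Frobenius structure on $\M$ restricts to $\Ker(\alpha)$ because $\varphi^*\Ker(\alpha)=\Ker(\varphi^*\alpha)$. That identification is true by flatness of $\varphi$, but it produces nothing: since morphisms in $d$-$\Mod(\cdot)^{(\varphi)}$ are \emph{not} required to commute with the Frobenii of the two sides, the isomorphism $\varphi_{\M}\colon\varphi^*\M\simto\M$ carries the sub-$\nabla$-module $\Ker(\varphi^*\alpha)\subset\varphi^*\M$ onto a sub-$\nabla$-module $\mathcal{K}'\subset\M$ of the right rank, but there is no reason to have $\mathcal{K}'=\Ker(\alpha)$. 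Concretely, take $\M=\N$ trivial of rank two with trivial connection, let $\varphi_{\M}$ have the constant unipotent matrix $\bigl(\begin{smallmatrix}1&1\\0&1\end{smallmatrix}\bigr)$, let $\varphi_{\N}=\mathrm{Id}$, and let $\alpha$ be the coordinate projection killing $e_2$: then $\alpha$ is a perfectly good $\nabla$-morphism of objects of the category, yet $\varphi_{\M}$ moves the kernel line off itself. What must actually be shown is that $\Ker(\alpha)$ and $\mathrm{Coker}(\alpha)$ admit \emph{some}, typically new, Frobenius structure; that is precisely the non-formal content of the Christol--Mebkhout result this corollary is meant to reprove, and your sketch does not touch it. Conversely, the image-saturation step you single out as ``the only non-formal part'' is in fact automatic without any appeal to Frobenius: over the PID $\a_K(J)$ the torsion of $\mathrm{Coker}(\alpha)$ is a finite-length $\nabla$-stable module supported at finitely many rigid points, and the usual connection-kills-torsion argument in characteristic $0$ annihilates it --- this is the very mechanism the paper invokes, via \cite{NP-III}, for abelianness of $\nabla$-modules over curves. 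So the proposed proof would fail exactly where it needs to succeed, and succeed where there was nothing left to prove.
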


\subsection*{\textsc{Acknowledgments}} 
Special thank go to D.Barsky for the very suggesting and helpful 
discussions we had that pushed us to perform the computations of 
section \ref{Gamma}. 
We thank Gilles Christol for 
useful discussions and constant encouragements. 
%We had also interesting and useful discussions with A.Escassut.
We also thank Bertrand Toen for helpful discussions.

%\newpage
\begin{center}
\textsc{Berkovich spaces and radii of convergence functions}
\addcontentsline{toc}{section}{\textsc{BERKOVICH SPACES AND RADII OF CONVERGENCE FUNCTIONS}}
\end{center}
\vspace{-0.5cm}
}\fi

\section{Notations}\label{Notation}
%\vspace{0.5cm}
All rings are commutative with unit element. $\mathbb{R}$ is the field 
of real numbers, and 
$\mathbb{R}_{\geq 0}:=\{r\in\mathbb{R}\;|\; r\geq 0\}$.
For all field $L$ we denote its algebraic closure by 
$L^{\mathrm{alg}}$,
by $L[T]$ the ring of polynomial with coefficients in $L$, 
and  by $L(T)$ the fraction field of $L[T]$. If $L$ is valued, $\widehat{L}$ will be its completion.

In all the paper  $(K,|.|)$ will be a complete field of characteristic $0$ 
with respect to an ultrametric absolute value 
$|.|:K\to\mathbb{R}_{\geq 0}$ 
i.e. verifying $|1|=1$, $|a\cdot b|=|a||b|$,  
$|a+b|\leq\max(|a|,|b|)$ for all $a,b\in K$, and $|a|=0$ if and only if 
$a=0$. 
We denote by $|K|:=\{r\in\mathbb{R}_{\geq 0} 
\textrm{ such that }r=|t|,\;\exists t\in K\}$.
The ring of integers of $K$ will be 
$K^{\circ}:=\{x\in K\;,\;\textrm{such that }|x|\leq 1\}$, its maximal 
ideal $K^{\circ\circ}:=\{x\in K\;,\;\textrm{such that }|x|< 1\}$, 
and its residue field $\widetilde{K}:=K^{\circ}/K^{\circ\circ}$. 
We refer to \cite{Ber} for the definition of Berkovich spaces. 
But we chose to use \cite{Duc} as a uniform 
reference for Berkovich curves. 

%We denote by $\mathbb{A}^{1,\mathrm{an}}_K$ 
%the Berkovich affine line.
Let $I\subset\mathbb{R}_{\geq 0}$ be an interval, and let $t\in K$. 
The annulus (resp. disc) centered at $t$ relative to $I$ will be 
denoted by 
\begin{equation}\label{eq :disk}
C(t,I)\;:=\;\{x\in\mathbb{A}^{1,\mathrm{an}}_K\;|\;|T-t|(x)\in I\}\;\qquad
\textrm{(resp. $D^-(t,\rho):=C(t,[0,\rho[)$, 
$D^+(t,\rho):=C(t,[0,\rho])$)}\;.
\end{equation} 
We will use the word \emph{annulus} if and only if 
$0<\inf I$ and $\sup I<+\infty$. Analogously a disc 
will always have a finite radius by definition.
When $\inf I=0$ and $0\notin I$, 
we say that $C(t,I)$ is either a \emph{punctured disc} if 
$\sup I<\infty$, and multiplicative group 
$\mathbb{G}_{m,K}^{\mathrm{an}}$ if $\sup 
I=+\infty$. 

The ring of \emph{analytic functions} $\O(C(t,I))$  over $C(t,I)$  is formed 
by power series 
\begin{equation}
\label{eq : Oct}
\O(C(t,I))\;:=\;
\Bigl\{
\sum_{i\in\mathbb{Z}}a_i (T-t)^i,\;a_i\in K, 
\textrm{ such that }\forall \rho\in I \textrm{ one has } \;\lim_{i\to\pm\infty}|a_i|\rho^i=0 
\Bigr\}\;.
\end{equation}
It is understood that if $0\in I$, 
then $a_i=0$ for all $i\leq -1$.
The ring $\mathcal{B}(C(t,I))$ of \emph{Bounded analytic 
functions} on $C(t,I)$ is formed by analytic functions in $\O(C(t,I))$ 
satisfying $\sup_{i\in\mathbb{Z}}|a_i|\rho^i \leq c < +\infty$ for all 
$\rho\in I$, 
where $c$ is a convenient constant depending on the power series.

A \emph{virtual} open (resp. closed) disc (resp. annulus) is a connected 
subset of $\mathbb{A}^{1,\mathrm{an}}_{K}$
which becomes a finite disjoint union of 
open (resp. closed) discs (resp. annuli whose orientations are 
preserved by $\mathrm{Gal}(\widehat{K^{\mathrm{alg}}}/K)$) 
after base change to $\widehat{K^{\mathrm{alg}}}$. 
The \emph{skeleton} of a virtual annulus  $C$ is the set
$\Gamma_{C}$ of points without neighborhoods in $C$ isomorphic to a virtual disc. 
These points form an interval.

%\subsubsection{}
For $c\in K$, and $\rho\geq 0$ we set 
$\xi_{c,\rho}(f):=
\sup_{n\geq 0}|\frac{f^{(n)}(c)}{n!}|_K\cdot \rho^n$, for 
all $f\in K[T]$. 
For all $x\in\mathbb{A}^{1,\mathrm{an}}_K$ there exists 
a complete valued field extension $\Omega/K$ such that 
$x=\pi_{\Omega/K}(x_{t,\rho})$, where 
$\pi_{\Omega/K}:\mathbb{A}^{1,\mathrm{an}}_\Omega\to
\mathbb{A}^{1,\mathrm{an}}_K$ is the canonical projection, and 
$t\in\Omega$. By an abuse, we write $x=x_{t,\rho}$ if no confusion 
is possible. The choice of $t\in\Omega$ and $\rho\geq 0$ is 
not unique (cf. \cite{NP-I} for more details).

\subsection{Quasi-smooth Berkovich curves}
\label{Quasi-smooth Berkovich curves}
Let $X$ be a quasi-smooth Berkovich curve in the sense 
of \cite{Duc}. This means that  $\Omega_X^1$ is  locally 
free of rank 
one, and it corresponds to a rig-smooth $K$-analytic 
curve in the rigid analytic terminology. 
A \emph{weak triangulation} of $X$ is a locally finite 
subset $S\subset X$ such that each connected 
component of $X-S$ is a virtual open annulus, or a virtual 
open disc.
We denote by $\Gamma_S$ the union of $S$ with all the 
skeletons of the virtual open annuli that are connected 
components of $X-S$. We call $\Gamma_S$ the 
\emph{skeleton} of $S$.

The \emph{analytic skeleton} $\Gamma_X$ of $X$ is the set of points $x$ without neighborhoods in $X$ isomorphic to a virtual open 
disc. It is not always the skeleton of a weak triangulation (e.g. if $X=\mathbb{P}^{1,\mathrm{an}}_K$, then 
$\Gamma_X= \emptyset$). The analytic skeleton $\Gamma_X$ is contained in the skeletons of all weak triangulations. 

Assume that $X$ is connected, and that $S$ is a weak triangulation of 
$X$. Then
\begin{enumerate}
\item If $S$ is the empty set, then $X$ is either a virtual open disc or 
annulus;
\item If $\Gamma_S$ is the empty set, then $X$ is a virtual open 
disc;
\item The curve $X-\Gamma_S$ is a disjoint union 
of virtual open discs. 
If for all connected component $Y$ of $X$, 
$\Gamma_S\cap Y\neq\emptyset$, those discs are 
all relatively compact in $X$. In this case the map 
\begin{equation}\label{eq : retraction}
\tau_{S}\;:\; X\;\xrightarrow{\quad}\;\Gamma_S
\end{equation}
which is the identity on $\Gamma_S$, and which sends the connected 
components of $X-\Gamma_S$ into their boundary in $\Gamma_S$ is 
a continuous open retraction.
\end{enumerate} 

\begin{theorem}[\protect{\cite{Duc}}]
\label{Thm: existence of tri}
Each quasi-smooth $K$-analytic curve admits a weak triangulation. 
\end{theorem}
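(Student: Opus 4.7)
The plan is to combine a local structure result for quasi-smooth Berkovich curves with a compactness/exhaustion argument. By the local classification of points on such curves (see \cite{Duc}), every $x \in X$ admits an analytic open neighborhood $V_x$ which is itself either a virtual open disc, a virtual open annulus, or -- when $x$ is of type $2$ -- a finite ``star-shaped'' union of virtual open discs glued at $x$, so that $\{x\}$ is by inspection a weak triangulation of $V_x$. Granting this local input, the construction of a global $S$ is a patching problem.

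First I would handle the case when $X$ is compact. Extract from $\{V_x\}_{x\in X}$ a finite subcover $V_{x_1},\dots,V_{x_n}$ together with finite weak triangulations $S_i\subset V_{x_i}$. Form the candidate set $S_0:=\bigcup_i S_i$, and then enlarge it by finitely many ``correction'' points to control the overlaps $V_{x_i}\cap V_{x_j}$: the key combinatorial fact is that the connected intersection of two virtual open annuli (resp. of a virtual open annulus with a virtual open disc) is again a virtual open annulus or a virtual open disc, so only finitely many new skeletal points are required in order that each connected component of $X-S$ (with the enlarged $S$) be a virtual open disc or a virtual open annulus. Finiteness of $S$ is automatic since it is built from finitely many finite pieces.

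For the non-compact case I would exhaust $X$ by an increasing sequence $U_1\subset U_2\subset\cdots$ of relatively compact analytic subdomains with $\bigcup_n U_n=X$; such an exhaustion exists because $X$ is $\sigma$-compact as a paracompact Berkovich curve. Applying the compact case to each $\overline{U_n}$ yields a finite weak triangulation $S_n$, and by running the gluing procedure inductively one can arrange $S_n\subset S_{n+1}$ and compatibility of the connected components of $\overline{U_n}-S_n$ with those of $\overline{U_{n+1}}-S_{n+1}$. Then $S:=\bigcup_n S_n$ is locally finite because every point of $X$ lies in the interior of some $U_n$, and by construction each connected component of $X-S$ is a virtual open disc or a virtual open annulus.

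The main obstacle is the local input itself: the existence, around every point of $X$, of a neighborhood of one of the three types above. This is a substantial structural theorem about quasi-smooth Berkovich curves (rigid-smooth in the classical language) and is essentially the heart of \cite{Duc}; by contrast, once that is granted, the compact-to-global assembly via the exhaustion is technical but routine, the only delicate point being the combinatorial verification that the pairwise intersections of local pieces force only finitely many corrections on each compact subdomain.
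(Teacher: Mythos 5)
The paper does not prove this statement; it is quoted verbatim as a result of Ducros's book \cite{Duc}. So there is no ``paper's proof'' to compare against, and I am assessing your sketch on its own merits and against the strategy that \cite{Duc} actually follows.

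Your outline captures the correct high-level shape of the argument --- a local model theorem for quasi-smooth curves, followed by a patching step, followed by an exhaustion for the non-compact case --- and you are right to flag the local structure result (every point of type $2$ or $3$ has a star-shaped neighborhood whose analytic skeleton is a finite tree) as the genuinely hard input. But the patching step, which you describe as ``technical but routine,'' is where most of the remaining difficulty in fact lives, and the way you present it is not quite correct. The union of weak triangulations of two open pieces $V_1,V_2$ is in general \emph{not} a weak triangulation of $V_1\cup V_2$, and the defect is not merely combinatorial bookkeeping of pairwise intersections of annuli and discs. For instance, cover the analytic skeleton of a Tate curve by two open annuli $V_1,V_2$; each carries the empty weak triangulation, their intersection is a pair of annuli, yet $\emptyset$ is certainly not a weak triangulation of $V_1\cup V_2$ --- one must insert a point to break the loop. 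In general the correction points are forced by the global topology (loops and positive genus) of the region one has assembled so far, not only by overlap geometry, and identifying where to place them is essentially the content of showing that the analytic skeleton $\Gamma_X$ is a locally finite graph. Relatedly, your assertion that a connected intersection of two virtual annuli (or of an annulus and a star) is again an annulus or disc is only clear when both sit inside $\mathbb{P}^{1,\mathrm{an}}$; for local pieces of a general quasi-smooth curve it needs justification.

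The exhaustion step also has a gap. You want $S_n\subset S_{n+1}$ and ``compatibility'' of the decompositions of $\overline{U_n}-S_n$ and $\overline{U_{n+1}}-S_{n+1}$, but there is no a priori reason the weak triangulation produced on $\overline{U_{n+1}}$ restricts to the one on $\overline{U_n}$: when the boundary of $U_n$ is absorbed into the interior of $U_{n+1}$, pieces that were discs or annuli relative to $U_n$ may merge, and the naive union can again produce components that are not discs or annuli. The standard fix (and, roughly, what \cite{Duc} does) is to work directly with the analytic skeleton $\Gamma_X$ rather than with ad hoc local triangulations: one proves $\Gamma_X$ is a locally finite, closed subgraph such that $X-\Gamma_X$ is a disjoint union of virtual open discs, and then one chooses $S\subset\Gamma_X$ (adding isolated points in the degenerate cases where $\Gamma_X$ is empty or lacks vertices) so that $\Gamma_X-S$ is a disjoint union of open intervals corresponding to virtual open annuli. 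This sidesteps the need to reconcile independently built local triangulations. Your plan is not wrong in spirit, but as written the ``routine'' steps hide most of the theorem.
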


\subsubsection{Log-linearity, directions, slopes.}
From the existence of weak triangulations, one deduces that every 
point of $X$ has a neighborhood that is uniquely arcwise connected. 
On such a subset, it makes sense to speak of the segment $[x,y]$ 
joining two given points $x$ and $y$, hence of convex subsets 
(see also \cite[Section~2.5]{BR}).
 
A subset $\Gamma$ of $X$ is said to be a \emph{finite (resp. locally finite) 
subgraph} of $X$ if there exists a finite (resp. locally finite)  family 
$\mathscr{V}$ of affinoid domains of $X$ that covers $\Gamma$ and 
such that, for every element $V$ of $\mathscr{V}$, $V$ is uniquely arcwise connected, 
and $\Gamma \cap V$ is the convex hull of a finite number of points.
By Theorem \ref{Thm: existence of tri}, 
$\Gamma_S$ is a locally finite graph.

We now want to define a notion of $\log$-linearity. 
To do so, we first need to explain how to measure 
distances. 
Let~$C$ be a closed virtual annulus over~$K$. Its 
preimage over~$\widehat{K^{\mathrm{alg}}}$ is a 
finite union of closed annuli. 
If $C(c,[R_{1},R_{2}])$ is one of them, we set 
$\textrm{Mod}(C) = R_{2}/R_{1}$.
This is well defined up to the choice of an 
orientation of $C$ (e.g. the inversion of the variable 
$T\mapsto T^{-1}$ changes the sign of the modulus), 
and it is invariant by isomorphisms of 
annuli preserving the orientation (cf. \cite{Duc}).

Let $I=[x,y]$ be a closed segment in $\mathbb{A}^{1,\mathrm{an}}_K$ 
containing only points of type $2$ or $3$. 
Then $I$ is the skeleton of a virtual closed annulus $C\subseteq 
\mathbb{A}^{1,\mathrm{an}}_K$, and we set 
$\ell(I) = \log(\textrm{Mod}(C))$.
Pushing these ideas further, one can show that it is possible to define 
a \emph{canonical length} $\ell$ for any closed segment inside a curve that contains 
only points of type $2$ or $3$ (see \cite[Proposition 4.5.7]{Duc}). 
The definition may actually be extended to any curve, 
see \cite[Corollaire 4.5.8]{Duc}).

\begin{definition}\label{defi:loglinear}
Let $X_{[2,3]}$ be the set of points of $X$ that are of type $2$ or 
$3$. 
Let $J$ be an open segment inside $X_{[2,3]}$ and identify it with a 
real interval. A map $f \colon J \to 
\mathbb{R}_{\geq 0}$ is said to be 
$\log$-linear if there exists $\gamma\in \mathbb{R}$ 
such that, for every $a < b \in J$, we have
\begin{equation}\label{eq : defsl}
\log(f(b)) - \log(f(a)) \;=\; \gamma\cdot \ell([a,b])\;.
\end{equation}
If $J$ is oriented as from $a$ to $b$ (resp. from $b$ to $a$), 
we set $\partial_Jf:=\gamma$ 
(resp. $\partial_Jf:=-\gamma$), 
and we call this number the slope of $f$ along the 
oriented segment $J$.
\end{definition}

We define an equivalence class on the segments out of a point $x$. 
We say that the open segment $]x,y[$ is equivalent to $]x,z[$ 
if there exists a third non empty open segment $]x,t[$ contained in 
$]x,y[\cap]x,z[$. 
We say that a class of germ of segments out of $x$ is a 
\emph{direction} out of $x$, or equivalently a 
\emph{germ of segment} out of $x$, or again a \emph{branch} out 
of $x$.  
A \emph{section of a branch} $b$ out of $x$ is any open 
connected subset $U$ of $X$ having 
$x$ at its boundary, such that $U\cup\{x\}$ is 
topologically a tree (no loops), and $b\subset U$. 
By Theorem \ref{Thm: existence of tri}, every branch out of $x$ admits a 
section  isomorphic to an open annulus.
There are well-defined notions of direct and inverse images of 
branches that correspond to the intuitive ones. 
Let $\varphi\colon X \to Y$ be a morphism of curves and let~$x\in X$ 
be a point such that~$\varphi^{-1}(\varphi(x))$ is finite. Then the 
image of a branch out of~$x$ is a branch out of~$\varphi(x)$ and the 
inverse image of a branch out of~$\varphi(x)$ is a union of branches 
out of some point $y\in \varphi^{-1}(\varphi(x))$. 
In particular for all germs of segments $b$ out of $x\in X$ 
we denote by $\mathrm{deg}(b)$ the number of germs of segments 
in $X_{\widehat{K^{\mathrm{alg}}}}$ over $b$.

If $f:X\to \mathbb{R}$ is a function which is 
$\log$-linear along a direction $b$ out of $x$, 
we denote its slope by
\begin{equation}
\partial_bf(x)\;,
\end{equation}
where by convention $b$ is always oriented as out of 
the point $x$.
\begin{definition}
Assume that $f$ is $\log$-linear along all directions out of $x$, 
and that $\partial_bf(x)=0$ for all, but a finite number of directions.
The Laplacian of $f$ is by definition the sum
\begin{equation}
dd^cf(x)\;:=\;\sum_{b}\mathrm{deg}(b)\cdot \partial_bf(x)
\end{equation}
of all the slopes for all germ of segment $b$ out of $x$. We say that 
$f$ is harmonic (resp. super-harmonic, sub-harmonic) at $x$ 
if $dd^cf(x)=0$ (resp. $dd^cf(x)\leq 0$, $dd^cf(x)\geq 0$).
\end{definition}

\subsubsection{Open boundary.}
\label{Open boundary.}
Assume that $K$ is algebraically closed. 
Let $O$ be a connected component of $X-S$. 
Then $O$ is either an open disc or annulus.

Assume that $O=D^-(0,R)$ is an open disc.
If $O$ is not relatively compact in $X$, then it is a connected 
component of it.

Assume that $O=C^-(0;]R_1,R[)$ is an annulus. If $O$ is not 
relatively compact in $X$, then it is either a connected component 
of it, or its closure $\overline{O}$ in $X$ is of the form 
$\overline{O}=O\cup\{x\}$, where $x\in \overline{O}-O$. So 
$x=\lim_{\rho\to R_1^+}x_{0,\rho}$ or
$x=\lim_{\rho\to R^-}x_{0,\rho}$.
Assume that $x$ lies on the $R_1$'s side.

In both cases (disc or annulus), for all $0<\varepsilon<R-R_1$,  
$S$ is again a weak triangulation of $X-C'$, where
$C':=C^-(0;[R-\varepsilon,R[)$. 
Moreover, if $Y$ is the connected component of $X$ containing $O$, 
then $Y-C'$ is connected too.
We call \emph{germ of segment at the open boundary of $X$} any 
unspecified germ of segment $]x_{0,R-\varepsilon},x_{0,R}[$ 
which is the skeleton of a non relatively compact annulus 
$C^-(0;]R-\varepsilon,R[)$ placed as above inside the curve $X$.

If $K$ is general we define the open boundary of $X$ as the image of the open boundary of $X_{\Ka}$.

As an example if $X$ is a virtual open disc, then its open boundary 
counts one element, if it is a virtual open annulus, then its open 
boundary counts two elements.

\if{
\subsubsection{Basic neighborhoods.}
\label{Basic neighborhoods.}
Let $x\in X$. 
\if{
Let $V(x)$ be the union of $\{x\}$ with all virtual open 
discs with boundary $x$. By Theorem \ref{Thm: existence of tri}, $V(x)$ is an analytic domain of $X$. 
For all direction $b$ out of $x$ not belonging to $V(x)$, 
there exists a virtual open annulus $C_b$ whose skeleton lies in the 
class $b$ (i.e. $C_b$ is a section of $b$). 
We call \emph{basic (open) neighborhood} of $x$ 
the following open subset 
$V(x)\bigcup_{b\notin V(x)}\left(\cup_b C_b\right)$.
More generally 
we say that an analytic neighborhood $U$ of $x$ is a 
\emph{basic neighborhood} if the set of points of $U$ that are not 
contained in some virtual open disc inside $U$ form a finite graph 
which is a finite union of segments of the form $[x,y[$ or $[x,y]$ 
(i.e. they form a star centered at $x$). 
}\fi
We say that a connected analytic neighborhood $U$ of $x$ is a 
\emph{basic neighborhood} if its analytic skeleton $\Gamma_U$ 
is a finite union of segments of the form $[x,y[$ or $[x,y]$ 
(i.e. they form a star centered at $x$) such that $]x,y[$ is the skeleton of a virtual open annulus $C$. 
The union is possibly empty, so the cases $\Gamma_U=\{x\}$ and $\Gamma_U=\emptyset$ are allowed, 
in these cases $U$ is either a virtual open disc, or it is a connected component of $X$.
}\fi

\subsubsection{$\Gamma_S$-coverings.}
Without loss of generality, we can assume that $X$ is connected.

If $\Gamma_S$ is empty, then $X$ is a virtual open disc with empty 
weak triangulation. In this case the unique 
$\Gamma_S$-covering of $X$ is by definition the trivial one given by the 
whole disc $\{X\}$.

We now assume $\Gamma_S\neq\emptyset$. 
In this case, since $X$ is connected, 
we have a retraction \eqref{eq : retraction} of $X$ onto 
$\Gamma_S$. 
For all $x\in\Gamma_S$ we consider a star-shaped 
open neighborhood $\Lambda_x$ of $x$ in $\Gamma_S$.\footnote{We mean that 
$\Lambda_x\subseteq\Gamma_S$ is a simply connected neighborhood of $x$ in $\Gamma_S$ (no loops), and moreover  
that $\Lambda_x-\{x\}$ is a finite disjoint union of segments $[x,y[$ out 
of $x$, all incident upon $x$, such that $]x,y[$ is the skeleton of a 
virtual open annulus in $X$ (this is possible since $\Gamma_S$ is locally finite).} 
Its inverse image 
\begin{equation}
Y_x\;:=\;\tau_S^{-1}(\Lambda_x)
\end{equation}
by the retraction \eqref{eq : retraction} is open analytic domain in $X$ 
such that $\Gamma_S\cap Y_x=\Lambda_x$. 

If, for all $x\in\Gamma_S$, we consider an analytic neighborhood 
$Y_x$ as above, then $X=\cup_{x\in\Gamma_S}Y_x$.

\if{
We now choose more carefully the family $Y_x$. 
If $x\in\Gamma_S-S$, then $x$ lies in a connected component $C$ of 
$X-S$ which is necessarily either a virtual open annulus or disc. 
In this case we set $Y_x:=C$.

For the points in $x\in S$ we choose $\Lambda_x$ small enough in 
order that for all distinct $x,x'\in S$ we have 
$\Lambda_x\cap\Lambda_{x'}=\emptyset$, this amounts to ask 
$Y_x\cap Y_{x'}=\emptyset$. 

With this choice of the family $\{Y_x\}_{x\in\Gamma_S}$, 
if $x_1,x_2\in\Gamma_S$ are distinct points, then 
$Y_{x_1}\cap Y_{x_2}$ is either empty or equal to an annulus 
whose skeleton is contained in $\Gamma_S-S$. 

We summarize all this in the following
}\fi
\begin{definition}[$\Gamma_S$-covering]
\label{Def : Gamma_S-cov}
Assume $X$ connected, and $\Gamma_S\neq\emptyset$.
A $\Gamma_S$-covering of $X$ is a covering formed by 
the family of all connected components $C$ of $X-S$, together with 
an open neighborhood of each point $x\in S$ of the form 
$Y_x=\tau_S^{-1}(\Lambda_x)$. 
We assume moreover that the intersection of three distinct elements of the 
covering is always empty.
In the sequel $C$ (resp. $Y_x$) will be endowed with the empty weak 
triangulation $S_C=\emptyset$ (resp. $S_{Y_x}=\{x\}$), we then have
$\Gamma_{S_C}=\Gamma_C$ 
(resp. $\Gamma_{S_{Y_x}}=\Lambda_x$). 
\end{definition}

If $x\in S$ then $Y_x$ is not necessarily a quasi-Stein\footnote{An example is given by an elliptic curve $X$ with good 
reduction. In this case a weak triangulation of $X$ is given by an individual 
point $S=\{x\}$ which is the unique point of $X$ without neighborhoods isomorphic to an analytic domain of the 
affine line. In this case $\Gamma_S=S=\{x\}=\Lambda_x$, 
and the unique open of the $\Gamma_S$-covering is $Y_x=X$. 
The same happens for $\mathbb{P}^{1,\mathrm{an}}_K$ with a triangulation $S=\{x\}$, with $x$ of type $2$ or $3$.} 
in the sense of 
Kiehl \cite{AB}. 
However, each point $x\in S$ admits a quasi-Stein  
open neighborhood which is 
obtained from some $Y_x=\tau_S^{-1}(\Lambda_x)$
by removing a finite number of virtual discs $D_1,\ldots,D_n$ with 
boundary $\{x\}$, and replacing them by some virtual open 
annuli at the open boundaries of 
$D_1,\ldots, D_n$ (cf. Section \ref{Open boundary.}).

From a $\Gamma_S$-covering we can always obtain a 
quasi-Stein covering of $X$ by shrinking the neighborhoods 
$Y_x$ in this way, and adding the remaining discs 
$D_1,\ldots, D_n$ to the covering.
\begin{definition}[quasi $\Gamma_S$-covering]
\label{Def : quasi Gamma_S-cov}
We call \emph{quasi $\Gamma_S$-covering} a covering of $X$ 
formed by quasi-Stein opens, which is 
obtained from a $\Gamma_S$-covering as above. 
\end{definition}
\if{
\begin{remark}\label{Rk : affinoid domains}
Replacing $D_i$ by one of its 
virtual closed sub-disc, replacing each $Y_x$ by an 
affinoid neighborhood of $x$ such that 
$Y_x=\tau^{-1}_S(\Lambda_x)$ with $\Lambda_x$ closed in 
$\Gamma_S$, and 
replacing each annulus of $X-S$ by a virtual closed sub-annulus, 
we obtain a covering of $X$ by affinoid domains.
\end{remark}
}\fi
\subsubsection{Extension of scalars.}
\label{Extension of scalars.}
Following \cite{NP-II}, we now quickly recall how to extend canonically weak 
triangulations by base change of $K$. More details can be found in \cite{NP-II}.

Let $S$ be a weak triangulation 
of $X$. The inverse image $S_{\widehat{K^{\mathrm{alg}}}}$ of $S$ 
in $X_{\widehat{K^{\mathrm{alg}}}}$ is easily seen to be a weak 
triangulation of $X_{\widehat{K^{\mathrm{alg}}}}$.

Assume now that $K$ is algebraically closed, and let $L/K$ be a 
complete valued field extension. 
Denote the canonical projection by $\pi_{L/K}:X_L\to X$.
By \cite{NP-II}, the fiber $\pi_{L/K}^{-1}(x)$ 
has a canonical point $\sigma_{L}(x)$ of type $2$ such that 
$\pi_{L/K}^{-1}(x)-\{\sigma_{L}(x)\}$ is a disjoint union of open 
discs in $X_L$ all having $\sigma_{L}(x)$ at their boundary.
Moreover the set $S_L:=\{\sigma_{L}(x),\;x\in S\}$ is a weak 
triangulation of $X_L$, and the projection $\pi_{L/K}$ induces an 
isomorphism between $\Gamma_{S_L}$ and $\Gamma_S$.

If $L$ is spherically complete and algebraically closed, the group 
$\mathrm{Gal}^{\mathrm{cont}}(L/K)$, 
of continuous automorphisms of $L$ over $K$, fixes each point of 
$\Gamma_{S_L}$, and permutes transitively the discs of 
$\pi_{L/K}^{-1}(x)-\{\sigma_L(x)\}$, and also the set of $L$-rational points of 
$\pi^{-1}_{L/K}(x)$. In particular these discs are all isomorphic.

\begin{lemma}[\protect{\cite[Prop. 2.1.7]{NP-III}}] 
There exists a complete valued field $\Omega/K$ 
containing isometrically all fields $\mathscr{H}(x)$, $x\in X$. 
In other words all the 
points of $X$ are $\Omega$-rational.\hfill$\Box$
\end{lemma}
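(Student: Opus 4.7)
The plan is to build $\Omega$ by a transfinite amalgamation, using the fact that any two complete valued extensions of $K$ admit a common complete valued overfield.

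First I would bound cardinalities. Since $X$ is a quasi-smooth $K$-analytic curve, the underlying set $X$ has cardinality at most some $\kappa_0$ depending only on $|K|$ (for instance via a weak triangulation $S$ of Theorem \ref{Thm: existence of tri}: $\Gamma_S$ is a locally finite graph, and $X-\Gamma_S$ is a union of virtual open discs, each of cardinality bounded by $\max(|K|,2^{\aleph_0})$). Similarly, each completed residue field $\mathscr{H}(x)$ is a complete valued extension of $K$ whose cardinality is bounded by some $\kappa_1$ depending only on $|K|$. Let $\kappa$ be a cardinal strictly larger than $\kappa_0\cdot\kappa_1$.

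Next I would invoke the classical amalgamation principle: given two complete valued field extensions $L_1/K$ and $L_2/K$, there exists a complete valued field extension $M/K$ containing both $L_1$ and $L_2$ isometrically over $K$. One way is to form the tensor product $L_1\otimes_K L_2$, endow it with a seminorm extending those of $L_1$ and $L_2$ (for example, via the Gauss-type construction using a transcendence basis of $L_2$ over $K$, or using model-theoretic compactness for valued fields), separate, and complete. The cardinality of $M$ is bounded by $(|L_1|\cdot|L_2|)^{\aleph_0}$.

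With these two ingredients, well-order the set $X$ as $(x_\alpha)_{\alpha<\lambda}$ for some ordinal $\lambda<\kappa$. Set $\Omega_0:=K$; at a successor ordinal $\alpha+1$, apply the amalgamation principle to $\Omega_\alpha$ and $\mathscr{H}(x_\alpha)$ to obtain a complete valued extension $\Omega_{\alpha+1}$ containing both isometrically; at a limit ordinal $\alpha$, take $\Omega_\alpha$ to be the completion of $\bigcup_{\beta<\alpha}\Omega_\beta$. A cardinality estimate at each step shows $|\Omega_\alpha|<\kappa$, so the process can be carried through all $\alpha<\lambda$, and $\Omega:=\Omega_\lambda$ is the desired complete valued extension of $K$ containing every $\mathscr{H}(x)$ isometrically.

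The main obstacle is the amalgamation step itself: producing a complete valued field containing $L_1$ and $L_2$ isometrically over $K$ is classical but not quite trivial, as the naive tensor product carries several extending seminorms and one must check that some such extension is indeed a valuation restricting to the correct absolute values on both factors. Alternatively one may simply quote the existence of a spherically complete, algebraically closed complete valued extension $\Omega/K$ of cardinality at least $\kappa$ (which one may construct by iteration of maximal immediate extensions and algebraic closures) and use the universal embedding property of such fields, namely that every complete valued extension $L/K$ with $|L|<|\Omega|$ admits an isometric $K$-embedding into $\Omega$; this bypasses the successor step entirely.
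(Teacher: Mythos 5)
Your argument is correct in outline, and your second (``alternatively'') route is in fact the one the cited proof takes: the paper's own Notation immediately after the lemma insists that $\Omega$ be algebraically closed, spherically complete, and have $|\Omega|=\mathbb{R}_{\geq 0}$, and the isometric embeddings $\mathscr{H}(x)\hookrightarrow\Omega$ are then produced by a Kaplansky-type extension theorem for spherically complete valued fields rather than by transfinite amalgamation. One caveat in the way you state the universal embedding property: it is not purely a cardinality statement. What Kaplansky's theorem actually gives is that a spherically complete $\Omega/K$ admits an isometric $K$-embedding of a valued extension $L/K$ provided the value group $|L^*|$ order-embeds into $|\Omega^*|$ over $|K^*|$ and the residue field $\widetilde{L}$ embeds into $\widetilde{\Omega}$ over $\widetilde{K}$. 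So besides making $\Omega$ large in cardinality, one must arrange $|\Omega^*|=\mathbb{R}_{>0}$ and arrange $\widetilde{\Omega}$ to be (say) algebraically closed of sufficiently large transcendence degree over $\widetilde{K}$; these are easy (Hahn series over such a residue field, then complete), and for a quasi-smooth curve the data to be accommodated is mild (each $\widetilde{\mathscr{H}(x)}$ has transcendence degree at most one over $\widetilde{K}$ and each $|\mathscr{H}(x)^*|\subset\mathbb{R}_{>0}$), but it is this compatibility of residue field and value group, not the cardinality count alone, that makes the embedding exist. Your first route by transfinite amalgamation also works; the step you rightly flag as the main obstacle, nonvanishing of $L_1\widehat{\otimes}_K L_2$ together with isometry of the two factor inclusions, is a theorem of Gruson and standard in Berkovich theory, but it is heavier machinery than the problem needs once one knows the spherically complete shortcut.
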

\begin{notation}\label{Notation : Omega}
We now fix, once for all, a field $\Omega$ containing isometrically 
$\H(x)$, for all $x\in X$. Moreover we assume that $\Omega$ is 
algebraically closed, spherically complete, and that 
$|\Omega|=\mathbb{R}_{\geq 0}$. 
\end{notation}
Notation \ref{Notation : Omega} 
is not strictly necessary, but it simplifies the exposition. 
We notice that a quasi-smooth curve over $\Omega$ has 
no point of type $3$ nor $4$.

\begin{remark}\label{Rk : LIU --> QS or Proj}
We recall that by a result of Q.Liu \cite{Liu} every curve 
with finite 
genus (see \cite{NP-IV} for the definition of genus) 
over $\Omega$ is either projective, 
or quasi-Stein (in the sense of \cite{AB}).  
In particular, all analytic domains $X$ of 
$\mathbb{P}^{1,\mathrm{an}}_\Omega$ 
distincts from it are quasi-Stein of genus zero.
\end{remark}

\begin{definition}[Maximal and generic discs]
For all $x\in X$ we fix an $\mathscr{H}(x)$-rational point 
$t_x\in X_{\mathscr{H}(x)}$, lifting $x\in X$.
We call the 
connected component of 
$X_{\Omega}-\Gamma_{S_{\Omega}}$ 
containing $t_x$ the \emph{maximal disc} 
centered at $x$. We denote it by $D(x,S)$.
We call the disc $D(x)$ 
which is the connected component of 
$\pi_{\Omega/K}^{-1}(x)-
\{\sigma_{\Omega}(x)\}$ containing $t_x$ the \emph{generic disc} of $x$. 
By definition one has $D(x)\subseteq D(x,S)$.
\end{definition}
Concretely, 
if $x\notin\Gamma_S$ and if $D$ is the connected 
component of $X-\Gamma_S$ containing $x$ 
(which is necessarily a virtual open disk), then 
$D(x,S):=D_\Omega$, and if $x\in\Gamma_S$ then
$D(x,S):=D(x)$.

\if{
\subsubsection{Again on affinoid domains of the affine line.}\label{dependence on t,rho}
Chose a coordinate $T$ of the affine line. 
For all $x\in\mathbb{A}^{1,\mathrm{an}}_K$, 
we denote by $r(x)$ the radius of the generic disc $D(x)$ with respect 
to $T$. This is the radius of the point $x$ in the terminology of 
\cite{Ber}.

For all complete valued field extension $L/\mathscr{H}(x)$, and all  
$L$-rational point $t\in\pi_{L/K}^{-1}(x)$, we have a map 
$[0,+\infty[ \to\mathbb{A}^{1,\mathrm{an}}_K$ 
associating to $\rho$ the point $\pi_{L/K}(x_{t,\rho})$. 
The structure of the fiber $\pi^{-1}_{L/K}(x)$ implies that this map is 
constant of value $x$ for all $\rho\in[0,r(x)]$, and that it induces an 
homeomorphism of $[r(x),+\infty [$ with its image in 
$\mathbb{A}^{1,\mathrm{an}}_K$. 

This permits to obtain the following description. 
Assume that $K$ is algebraically closed.
Let $t,t'\in \mathbb{A}^{1,\mathrm{an}}_L$ be two $L$-rational 
points, and set 
$x:=\pi_{L/K}(x_{t,\rho})$ and $x':=\pi_{L/K}(x_{t',\rho'})$ 
for some $\rho\geq r(x)$ and $\rho'\geq r(x')$. 
Then $x=x'$ if and only if $\rho=\rho'$ and there exists $L'/L$ 
together with an automorphism  
$\sigma\in\mathrm{Gal}^{\mathrm{cont}}(L'/K)$ such that 
$|\sigma(t)-t|_{L'}\leq r(x)$.
}\fi

\subsection{Controlling graphs}
\label{Controlling graphs}
Let $\mathcal{T}$ be a set, and let $f:X\to \mathcal{T}$ be any 
function. Following \cite{NP-I} and \cite{NP-III} we now 
introduce a graph inside $X$ that controls 
the locus outside $\Gamma_S$ where $f$ is locally constant. 
\begin{definition}\label{Def : Gamma_S(sigma)}
Let $\Gamma_S(f)$ be the set of points $x\in X$ such that 
there is no open discs $D$ satisfying: 
(i) $x\in D$; 
(ii) $D\cap\Gamma_S=\emptyset$;
(iii) $f$ is constant on $D$.
We call $\Gamma_S(f)$ the controlling graph of $f$.
\end{definition}
By definition we have $\Gamma_S\subseteq\Gamma_S(f)$.
It is also easily seen that if $x\in\Gamma_S(f)-\Gamma_S$, then 
the segment connecting $x$ to $\Gamma_S$ is contained in 
$\Gamma_S(f)$. Indeed if a disc $D$ verifying $(i)$, $(ii)$, $(iii)$ contains 
one of the points of the segment, then it also contains $x$. 
This shows that if $X$ is connected, $\Gamma_S(f)$ is a 
connected sub-graph of $X$, containing $\Gamma_S$, and that 
$X-\Gamma_S(f)$ is a disjoint union of virtual open discs (on which $f$ is constant).
In fact $\Gamma_S(f)$ is also characterized by the fact that it is 
the smallest connected graph containing $\Gamma_S$ such that 
$f$ is locally constant outside 
$\Gamma_S(f)$.

\section{Linear differential equations}
\label{Linear differential equations}

Classical complex differential equations over Riemann 
surfaces have the nice property that their restriction to 
any disk is trivial. 
In other words if $G(z)$ is a $n\times n$ matrix 
whose entries are analytic functions over a 
disk $D=
\{z\in\mathbb{C}\;,\textrm{ such that }|z|<r\}$, 
then the differential system $\frac{d}{dz}(Y(z))=G(z)Y(z)$ admits a full 
basis of analytic solutions converging on $D$. 
Equivalently the radius of convergence of the
Taylor expansion of its solutions at $0$ 
is $r$ (i.e. as large as possible). 

This property is not verified over an ultrametric field $K$ 
as showed by the example of the equation $y'=y$ 
(cf. introduction) which is defined on the whole 
affine line and whose solution $\exp(T)$ does not 
converges on the whole line. 

In the ultrametric context 
one of the major invariants associated to 
$\Fs$ is the radius of convergence of its solutions, 
which is a function on $X$ constructed from the 
default of convergence of Taylor solutions of $\Fs$. 
It will play an important role on this article. 
We now recall its definition from 
\cite{Balda-Inventiones},  
\cite{Potentiel}, \cite{NP-II}. We consider those papers 
as general references.

\subsection{Radius of convergence function}
Let $X$ be a quasi-smooth $K$-analytic Berkovich curve. 
Recall that this is a locally ringed topological space with a 
structural sheaf $\O_X$ of analytic functions on $X$ (cf. 
\cite[Section 3.1]{Ber}) the topology of such a curve is 
described in detail in the book \cite{Duc}.\footnote{In 
this article we do not consider the G-topology (cf. 
\cite[Section 3.3]{Ber}). Opens of $X$ are subsets of 
$X$ that are open with respect to the Berkovich  
topology, coverings are collections of opens of $X$ 
whose union is $X$, and sheafs on $X$ 
are genuine sheafs over this topological space $X$.}

By \emph{differential equation} we mean 
a locally free $\O_X$-module of finite rank $\Fs$, 
together with a connection 
$\nabla:\Fs\to\Omega^1_X\widehat{\otimes}\Fs$ 
i.e. a map satisfying the 
Leibniz rule $\nabla(f\cdot m)=d(f)\otimes m + 
f\nabla(m)$ for all 
$f\in \O_X(U)$, $m\in \Fs(U)$, and for all open 
$U\subset X$ of the Berkovich topology.

Morphisms of differential equations are $\O_X$-linear 
maps commuting with the connections. 

We say that $\Fs$ is trivial if it is isomorphic to a direct 
sum of copies of the equation $d:\O_X\to\Omega^1$.

Consider now a point $x\in X$. 
By Remark \ref{Rk : LIU --> QS or Proj} the 
restriction of $\Fs_\Omega=\Fs\otimes_K\Omega$ 
to the disk $D(x,S)$ is a quasi-Stein space, 
so the locally free 
$\O_{D(x,S)}$-module $(\Fs_\Omega)_{|D(x,S)}$ 
corresponds to a finitely generated projective module 
over the ring 
$\O(D(x,S))=\{f:=\sum_{n\geq 0}a_n(T-t_x)^n\;,\;
\textrm{$f$ converges on $D(x,S)$}\}$ 
(cf. \eqref{eq : Oct}). 
Moreover by a result of Lazard (cf. \cite{Lazard}), the restriction of $(\Fs_\Omega)_{|D(x,S)}$ to 
$D(x,S):=D(t_x,S_\Omega)$ 
is free, since $\Omega$ is spherically complete 
(see also \cite[Théorème 4.40]{Christol-Book}). 

Now, we denote by 
\begin{equation}\label{eq : def of D(x,F)}
D(x,\Fs)
\end{equation} 
the largest disc centered at $t_x$, contained 
in $D(x,S)$, on which $\Fs$ is trivial. 
Such a disc is not empty by the Cauchy existence theorem 
\cite[Appendix III]{DGS}.
Let $T$ be a coordinate on $D(x,S)$. 
We denote by $\R^{\Fs}(x)>0$
the radius of $D(x,\Fs)$ in the coordinate $T$.
If $\rho_{S,T}(x)$ is the radius of $D(x,S)$, the ratio 
\begin{equation}
\R_{S,1}(x,\Fs)\;:=\;\R^{\Fs}(x)/\rho_{S,T}(x)
\end{equation}
 is independent of $T$ by the following 
\begin{lemma}\label{lem:isometry}
Let $R_{1},R_{2} >0$. Up to a translation, any 
$K$-isomorphism $\alpha : D^-(0,R_{1}) 
\xrightarrow[]{\sim} D^-(0,R_{2})$ 
is given by a power series of the form 
$f(T) = \sum_{i\ge 1} a_{i} T^i \in K[[T]]$,
with 
\begin{equation}
|a_{1}| \;=\;  R_{2}/R_{1}\;,\qquad |a_{i}| 
\leq R_{2}/R_{1}\;,\quad\forall\;i\geq 2\;.
\end{equation} 
In particular, it multiplies distances by the constant factor 
$R_{2}/R_{1}$: for any complete valued extension~$L$ of~$K$, and for all $t_1,t_2 \in D^-(0,R_{1})(L)$ we 
have $|\alpha(t_1)-\alpha(t_2)| = \frac{R_{2}}{R_{1}}\cdot |t_1-t_2|$.

As a consequence, such an isomorphism may only exist when 
$R_{2}/R_{1} \in |K^*|$.\hfill$\Box$
\end{lemma}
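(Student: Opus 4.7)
The plan is to write $\alpha$ as a convergent power series, extract sharp bounds on its coefficients from the two-sided isomorphism condition, and then compute the distance distortion directly using the ultrametric triangle inequality.

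First I would reduce to the case $\alpha(0)=0$ by the allowed translation, so that $\alpha$ corresponds to some $f(T)=\sum_{i\ge 1}a_iT^i\in K[[T]]$ in $\mathcal{O}(D^-(0,R_1))$. From the fact that $\alpha$ lands in $D^-(0,R_2)$, i.e.\ $|f(t)|<R_2$ for every $t$ with $|t|<R_1$ (over any complete valued extension), the standard maximum principle for affinoid discs gives $\sup_i |a_i|\rho^i \le R_2$ for every $\rho<R_1$, and letting $\rho\to R_1^-$ yields $|a_i|\le R_2/R_1^i$ for all $i\ge 1$. Symmetrically, the inverse $\alpha^{-1}$ corresponds to $g(U)=\sum_{j\ge 1}b_jU^j$ with $|b_j|\le R_1/R_2^j$.

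Next I would use $g(f(T))=T$: the coefficient of $T$ on the left is $b_1 a_1$, so $|a_1||b_1|=1$. Combined with the two inequalities $|a_1|\le R_2/R_1$ and $|b_1|\le R_1/R_2$, whose product is exactly $1$, equality must hold on both sides, giving $|a_1|=R_2/R_1$. This forces $R_2/R_1=|a_1|\in |K^\times|$, which is the last assertion.

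For the distance statement, I would factor
\begin{equation}
\alpha(t_1)-\alpha(t_2)\;=\;(t_1-t_2)\cdot\Bigl(a_1+\sum_{i\ge 2}a_i\bigl(t_1^{i-1}+t_1^{i-2}t_2+\cdots+t_2^{i-1}\bigr)\Bigr).
\end{equation}
The key observation is that for $i\ge 2$ and $t_1,t_2\in D^-(0,R_1)(L)$ one has the \emph{strict} bound $|t_1^{i-1}+\cdots+t_2^{i-1}|\le\max(|t_1|,|t_2|)^{i-1}<R_1^{i-1}$, hence each term in the sum has absolute value strictly smaller than $R_2/R_1=|a_1|$. By the ultrametric inequality the inner factor has absolute value exactly $|a_1|=R_2/R_1$, so $|\alpha(t_1)-\alpha(t_2)|=(R_2/R_1)|t_1-t_2|$.

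The only slightly delicate point is ensuring the strict inequality for the higher order terms so that $a_1$ dominates, but this is immediate from the fact that the discs are \emph{open}, which forces $|t_j|<R_1$ strictly; the rest is bookkeeping.
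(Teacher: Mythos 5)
Your proof is correct, and the paper itself gives no argument (the lemma is stated with a $\Box$), so there is nothing in the source to compare against. The chain — reduce to $\alpha(0)=0$, apply the maximum principle to get $|a_i|\le R_2/R_1^i$, use $g\circ f=\mathrm{id}$ to pin down $|a_1|=R_2/R_1$ (hence $R_2/R_1\in|K^\times|$), and then factor $\alpha(t_1)-\alpha(t_2)=(t_1-t_2)\bigl(a_1+\sum_{i\ge 2}a_i(t_1^{i-1}+\cdots+t_2^{i-1})\bigr)$ with the higher terms strictly dominated — is exactly the standard route, and you handle the one delicate point (strictness of the domination, coming from the discs being open) explicitly.

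One remark worth making: the coefficient bound you actually derive is $|a_i|\le R_2/R_1^i$, not the $|a_i|\le R_2/R_1$ written in the lemma's display. Yours is the correct one: if $R_1<1$ the two differ, and the stated bound is in fact false. For instance, over $\mathbb{Q}_p$ ($p>2$) the map $f(T)=T+p^{-1}T^2$ is an isometric automorphism of $D^-(0,1/p)$ (so $R_1=R_2=1/p$, $R_2/R_1=1$), yet $|a_2|=p>1$; it does satisfy $|a_2|\le R_2/R_1^2=p$. The display in the lemma should read $|a_i|\le R_2/R_1^i$ (equivalently $|a_i|R_1^{i-1}\le R_2/R_1=|a_1|$), and that is precisely the bound needed for the ultrametric domination in the distance computation. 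So you have proved the right statement; the lemma as printed has a typo, and you need not try to force the weaker-looking printed inequality.
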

\begin{definition}[Radius of convergence]
We call $\R_{S,1}(x,\Fs)$ the radius of convergence of $(\Fs,\nabla)$ 
at $x\in X$. Following \cite{NP-I} and \cite{NP-III} its controlling 
graph (cf. Section \ref{Controlling graphs}) 
will be denoted by 
\begin{equation}
\Gamma_{S,1}(\Fs)\;:=\;\Gamma_S(\R_{S,1}(-,\Fs))\;.
\end{equation}
\end{definition}

\begin{theorem}[\protect{\cite{NP-I},\cite{NP-II}}]
\label{Thm : continuity}
The function $x\mapsto\R_{S,1}(x,\Fs)$ enjoys the following 
properties:
\begin{enumerate}
\item The controlling graph $\Gamma_{S,1}(\Fs)$ 
of $\R_S(-,\Fs)$ is locally finite;
\item $\R_{S,1}(-,\Fs)$ is a continuous function on $X$, 
which is independent of the choice of $t_x$, and of $\Omega/K$. 
It is moreover piecewise $\log$-linear along each segment in $X$, and 
its slopes belong to $\frac{1}{r!}\mathbb{Z}$, where $r$ is the local 
rank of $\Fs$;
\item Let $D$ be a virtual open disc which is a connected component of 
$X-\Gamma_S$. Let $C$ be any open annulus in $D$, and let 
$I:=\Gamma_C$ be its skeleton.
If $I$ is oriented as out of $D$, then the function 
$y\mapsto\R_{S,1}(y,\Fs)$ is $\log$-decreasing and $\log$-concave 
along $I$;
\item Let $C$ be a virtual open annulus which is a connected 
component of $X-S$. Let $I$ be its skeleton. 
Then $y\mapsto\R_{S,1}(y,\Fs)$ is $\log$-concave along $I$. 
\hfill$\Box$
\end{enumerate}
\end{theorem}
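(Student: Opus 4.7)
The plan is to work locally on a $\Gamma_S$-covering (Definition~\ref{Def : Gamma_S-cov}) and to reduce each of the four assertions to a statement about a differential equation over a virtual open disc or annulus. Once this reduction is made, one studies $\R_{S,1}(-,\Fs)$ along a segment of the skeleton by choosing a coordinate and analysing how the Taylor solution at a generic point $x_{t,\rho}$ evolves with $\rho$; since the construction is intrinsic and behaves well under the canonical extension of scalars (Section~\ref{Extension of scalars.}), we may moreover assume $K=\Omega$ throughout the local analysis.

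I would first establish assertions~(3) and~(4), from which the remaining statements will follow. Let $C$ be a virtual open annulus arising as a connected component of $X-S$; after base change and choice of coordinate, its skeleton is identified with an interval $]\rho_{1},\rho_{2}[$ on which $\R_{S,1}(x_{0,\rho},\Fs)$ equals, up to the obvious normalization by $\rho_{S,T}(x_{0,\rho})$, the classical generic radius of convergence at the closed annulus of radius $\rho$. The Dwork--Robba--Christol--Mebkhout decomposition theorem over the Robba ring then expresses $\Fs$, after pullback along a finite cover, as a direct sum of irreducible pieces whose generic radii are explicit log-linear functions of $\log \rho$ with slopes in $\frac{1}{r!}\mathbb{Z}$ (the denominator $r!$ arising from the degree of the covers required to trivialise the irreducible components). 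The minimum of such log-linear functions is log-concave with slopes in $\frac{1}{r!}\mathbb{Z}$, giving~(4). For~(3), inside an open disc $D \subset X-\Gamma_S$ the transfer theorem bounds the radius above by the distance to the boundary of $D$; combined with the log-concavity obtained on sub-annuli of $D$ and with the fact that $\R_{S,1}$ attains its maximal value in the deep interior of $D$, this forces $\log$-decrease along any segment oriented out of $D$.

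Next, the continuity of $\R_{S,1}(-,\Fs)$ in~(2) follows by gluing: along each open sub-segment of $\Gamma_S$ and along each branch out of a point of $\Gamma_S$ the function is piecewise log-linear with slopes in $\frac{1}{r!}\mathbb{Z}$ by~(4) and~(3), so continuity reduces to matching one-sided limits at the finitely many branch points of a star-shaped neighbourhood $\Lambda_x$. The independence of $t_x$ and of $\Omega$ is clear from the fact that $D(x,\Fs)$ is invariant under $\mathrm{Gal}^{\mathrm{cont}}(\Omega/K)$, and that isomorphisms of discs are isometries up to the explicit scaling of Lemma~\ref{lem:isometry}, so neither the choice of lift $t_x$ nor the enlargement of $\Omega$ affects the ratio defining $\R_{S,1}$.

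The main obstacle, and the heart of~(1), is the local finiteness of $\Gamma_{S,1}(\Fs)$. I would argue as follows: on each virtual open disc $D$ which is a connected component of $X-\Gamma_S$, assertion~(3) shows that once $\R_{S,1}(-,\Fs)=1$ on some sub-disc, it remains identically $1$ on the maximal open sub-disc containing it; hence the locus in $D$ where $\R_{S,1}<1$ is carried by a sub-tree of $D$ whose breakpoints are slope changes, and the rationality of slopes in $\frac{1}{r!}\mathbb{Z}$ bounds the number of such changes along any segment of bounded length. A compactness argument over each quasi-Stein neighbourhood $Y_x$ of Definition~\ref{Def : quasi Gamma_S-cov} upgrades this to local finiteness away from $\Gamma_S$. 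The delicate point is to exclude the accumulation of non-constant branches at a single $x\in \Gamma_S$: this is done by a super-harmonicity / Laplacian inequality for $\R_{S,1}$ at $x$ (of the same nature as~(4)), together with the constraint that the slopes are bounded by the rank of $\Fs$; only finitely many branches out of $x$ can carry a strictly positive slope, which is exactly what is needed to ensure that $\Gamma_{S,1}(\Fs)$ is locally finite at every point of $\Gamma_S$.
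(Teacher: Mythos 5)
The paper does not actually prove Theorem~\ref{Thm : continuity}: the terminal $\Box$ marks it as an external citation to~\cite{NP-I} and~\cite{NP-II}, so there is no proof in this source to compare against. Your attempt is therefore a reconstruction of an argument from the Poineau--Pulita papers, and as such it has the right shape in places (localization along a $\Gamma_S$-covering, reduction to discs and annuli, concavity and slope arithmetic feeding the finiteness), but there are genuine gaps in the chain of reasoning.

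First, you invoke the Christol--Mebkhout decomposition to get explicit $\log$-linear formulas for the generic radius over an annulus and derive~(3),~(4) from that. The CM decomposition (exponents, $p$-adic slope filtration, irregularity) is a theory of \emph{solvable} modules over the Robba ring, i.e.\ modules whose radius tends to~$1$ at the boundary; Theorem~\ref{Thm : continuity} holds for arbitrary $\Fs$ on an arbitrary annulus, where no solvability is assumed. The actual input at this step is Dwork's transfer principle, the behavior of the radius under finite \'etale covers of degree prime to~$p$, and Young/Robba-type estimates on the Taylor recursion $G_{n+1}=\tfrac{d}{dT}(G_n)+G_n G$; these give log-concavity and a bound on slopes without invoking any decomposition of $\Fs$.

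Second, the sentence ``the rationality of slopes in $\frac{1}{r!}\mathbb{Z}$ bounds the number of slope changes along any segment of bounded length'' is false as written. Rationality alone bounds nothing: one needs simultaneously (a) concavity, so that the slope is monotone along the segment; (b) rationality in $\frac{1}{r!}\mathbb{Z}$, so that consecutive distinct slopes differ by at least $\frac{1}{r!}$; and (c) an a~priori \emph{bound on the slopes themselves} in terms of the rank $r$, which is a nontrivial theorem in its own right. Only the conjunction of these three facts forces finiteness of the break locus along a bounded segment. Your sketch has (a) and (b) but silently omits (c).

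Third, the exclusion of an accumulation of non-locally-constant branches at a point $x\in\Gamma_S$ --- which is the real content of~(1) --- is dispatched in one sentence via ``super-harmonicity and bounded slopes.'' In~\cite{NP-II} this step is where the heavy machinery enters: one needs a result of the type of Theorem~\ref{thm:bonvois} (existence of a nice finite \'etale map $\psi:Z\to\mathbb{P}^{1,\mathrm{an}}_K$ of degree prime to~$p$ inducing isomorphisms on almost all residual discs) to reduce the behavior at~$x$ to the one-dimensional case via pushforward, and then to control the Laplacian of the pushed-forward radius function. Without a statement of this form, the super-harmonicity inequality you appeal to at~$x$ is not established, and the claimed local finiteness at points of $\Gamma_S$ remains open.
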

\begin{remark}\label{Remark : trivializing Radius over A^1}
If $X$ is an analytic domain of $\mathbb{A}^{1,\mathrm{an}}_K$, 
and let $T$ be a global coordinate on $X$. 
Let $\R^{\Fs}(x)$ and $\rho_{S,T}(x)$ be the radii of $D(x,\Fs)$ 
and $D(x,S)$ respectively in that coordinate.
Then $\R_{S,1}(x,\Fs)=\R^{\Fs}(x)/\rho_{S,T}(x)$.
The function $x\mapsto\rho_{S,T}(x)$ can be easily 
described: it is continuous, constant on each disc of 
$X-\Gamma_S$, and if $I\subseteq\Gamma_S$ is a 
segment, its slope along $I$ is $1$, if $I$ is oriented 
toward the point 
$\infty=\mathbb{P}^{1,\mathrm{an}}_K-\mathbb{A}^{1,\mathrm{an}}_K$.
In this case the function $\R^{\Fs}$ also enjoys the 
properties of Theorem \ref{Thm : continuity}.
\end{remark}
A morphism between differential equations is an 
$\O_X$-linear map 
commuting with the connections. 
We recall that its Kernel and  
Cokernel are locally free $\O_X$-modules on $X$ 
(cf. \cite[1.0.2]{NP-III}).  The category of 
differential equations is hence abelian. We denote by 
$\mathrm{Hom}^{\nabla}(\Fs,\Fs')$ the group of morphisms.

\begin{lemma}
\label{Lemma : alpha commutes with nabla loc then glob}
Let $\Fs,\Fs'$ be differential equations over $X$.
Let $\alpha:\Fs\to\Fs'$ be an $\O_X$-linear morphism. 
The following conditions are equivalent
\begin{enumerate}
\item $\alpha$ commutes with the connection;
\item For all connected component $X'$ of $X$, 
there exists a point $x\in X'$ of type $2$, $3$, or $4$, such that 
$\alpha(x):\Fs(x)\to\Fs'(x)$ commutes with the connections over 
$\H(x)$.
\end{enumerate}
\end{lemma}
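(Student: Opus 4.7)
The implication $(1)\Rightarrow(2)$ is trivial, so the content is $(2)\Rightarrow(1)$. I would work one connected component at a time and therefore assume $X$ is connected. The standard device is to introduce the obstruction
\begin{equation*}
\beta \;:=\; \nabla'\circ\alpha - (\mathrm{id}_{\Omega^1_X}\otimes\alpha)\circ\nabla \;\colon\; \Fs \to \Omega^1_X\widehat{\otimes}\Fs'.
\end{equation*}
Using the Leibniz rules for $\nabla$ and $\nabla'$ one checks that the $df\otimes\alpha(m)$ terms cancel, so $\beta(fm)=f\beta(m)$; hence $\beta$ is $\O_X$-linear and thus a global section of the locally free sheaf $\Hom_{\O_X}(\Fs,\Omega^1_X\widehat{\otimes}\Fs')$. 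Condition $(1)$ is equivalent to $\beta\equiv 0$.

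Next I would show $\beta$ vanishes on a connected open neighborhood of the point $x$ supplied by $(2)$. Choose $U\ni x$ trivializing $\Fs$, $\Fs'$, and $\Omega^1_X$; in the chosen bases, $\beta|_U$ is encoded by a matrix with entries $\beta_{ij}\in\O_X(U)$, and the hypothesis that $\alpha(x)$ commutes with connections over $\H(x)$ is precisely that the image of this matrix under the canonical map $\O_X(U)\to\H(x)$ is zero, i.e.\ $|\beta_{ij}(x)|=0$ for all $i,j$. The decisive point, specific to points of type $2$, $3$, or $4$ and in contrast with rigid points, is that the seminorm $|\cdot|_x$ is in fact a norm on $\O_X(U)$: the character $\O_X(U)\to\H(x)$ is injective on any connected neighborhood $U$ of $x$. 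Hence each $\beta_{ij}$ vanishes identically on $U$.

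Finally, I would propagate $\beta\equiv 0$ from $U$ to all of $X$ by the identity principle for analytic functions on a connected quasi-smooth Berkovich curve: the locus on which $\beta$ vanishes is open by definition, and closed because any boundary point $y$ admits in its closure some point of $\Gamma_S$ of type $2$ or $3$ at which the norm argument of the previous paragraph reapplies. Nonemptiness of the vanishing locus then forces it to equal $X$, so $\alpha$ commutes with the connections.

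The conceptual core of the argument is the introduction of $\beta$ and the observation that condition $(2)$ is really a norm-zero statement at a point whose generic seminorm is a genuine norm. The main obstacle, and the only non-formal input, is the identity principle for sections of coherent sheaves on a connected quasi-smooth Berkovich curve; but this is an ambient property of such curves and does not involve the differential structure.
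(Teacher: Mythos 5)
Your argument is correct and rests on the same two points as the paper's: the injectivity of the character $\O_X(U)\to\H(x)$ for $x$ of type $2$, $3$, or $4$ and $U$ a connected neighborhood (which upgrades the fiberwise vanishing of the obstruction at $x$ to vanishing on $U$), and the identity principle on a connected curve (which propagates the vanishing to all of $X$). The packaging differs: the paper first passes to a quasi $\Gamma_S$-covering by quasi-Stein opens on which $\Fs$, $\Fs'$, $\Omega^1_X$ become free, writes the commuting condition as the system $d(H)=GH$ in the matrix $H$ of $\alpha$, and then glues across overlaps through type $2$ points; you instead bundle the obstruction $\beta$ into a global $\O_X$-linear section of $\Hom_{\O_X}(\Fs,\Omega^1_X\widehat{\otimes}\Fs')$ and apply the norm argument directly to its local matrix entries, which is a little more economical since it avoids the covering reduction. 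One small caveat: the open-and-closed sketch in your last paragraph is not right as literally stated (Berkovich spaces are Hausdorff, so the closure of a boundary point is that point alone, not something meeting $\Gamma_S$); the clean way to finish is simply to invoke the identity principle, which is what you name and what the paper cites via \cite[3.3.21]{Ber}.
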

\begin{proof}
We can assume $X$ connected. i) $\Rightarrow$ ii) is evident. 
Assume that ii) holds.
Consider a quasi $\Gamma_S$-covering $\{U_i\}_i$ of $X$ formed 
by quasi-Stein domains on which 
$\Fs$, $\Fs'$, and $\Omega^1_{X/K}$ are all free. 
So $\alpha$ commutes with the 
connections if and only if so does each $\alpha_{|U_i}$. 
Assume that the claim is proved for quasi-Stein curves, then it holds for the opens 
$U_i$ containing $x$. Now if $U_j$ is another open of the covering such that $U_i\cap U_j$ is not empty, 
the intersection always contains a point of type $2$, 
so $\alpha_{|U_j}$ commutes with the connection. Since $X$ is connected this proves that ii) $\Rightarrow$ i).

Hence we can assume $X$ quasi-Stein, and that $\Fs$, $\Fs'$, and 
$\Omega^1_X$ are all free. Let $d:\O(X)\to\O(X)$ be a derivation 
generating $\Omega^1_X$. 
In some bases, $\alpha$ is given by a matrix $H$, and we have to 
prove that it is solution of the differential equation $d(H)=GH$,  
associated to the differential module 
$\mathrm{Hom}_{\O_X}(\Fs,\Fs')$. 
Here $H$ is seen as a vector 
with entries in $\O(X)$ and $G$ as a square matrix with entries in 
$\O(X)$. We know that its specialization 
$H(x)$ over $\mathscr{H}(x)$ is solution of $d(H(x))=G(x)H(x)$.
Since 
$\O_{X,x}\subset\mathscr{H}(x)$ is injective, the equality 
$d(H)=GH$ holds over some affinoid neighborhood of $x$. Hence it 
also holds over $X$, by analytic continuation 
(cf. \cite[3.3.21]{Ber}).
\end{proof}

\subsection{Stratifications.}
\label{Stratifications.}
It follows from \cite{Grothendieck-10-exposes}, \cite{Berthelot-these}, 
\cite{Illusie-Cotangent-II}, \cite{Berthelot-preprint}, 
(and others), that the 
category of differential equations over $X$ is equivalent to that of 
so called \emph{stratifications}. 
We here quickly recall the definitions.

\subsubsection{}\label{eq : tubular n}
Let $\Delta:X\to X\times X$ be the diagonal closed immersion.
Let $\mathscr{I}\subset\O_{X\times X}$ be the ideal  corresponding to 
$\Delta$. Set $\mathscr{P}^n_{X/K}:=\O_{X\times X}/\mathscr{I}^{n+1}$, and 
$\mathscr{P}^\infty_{X/K}:=\varprojlim_{n}\mathscr{P}^n_{X/K}$.
We will say that the elements of $\mathscr{P}^n_{X/K}$ are 
convergent functions on the \emph{$n$-th 
infinitesimal neighborhood} of the diagonal, while those in
$\mathscr{P}^\infty_{X/K}$ corresponds to the  
\emph{formal neighborhood} of the diagonal. 

We now trivialize these notions locally on $X$.
Let $x\in X$ and let $U$ be an analytic 
neighborhood of $x$. 
Up to shrinking $U$ we may assume that $U$ is quasi-Stein, 
that $\Omega^{1}_U$ is free, and that there is an 
étale map 
$T:U\to \mathbb{A}^{1,\mathrm{an}}_K$. 
Let $p_1,p_2 : X \times X \to X$ be the canonical 
projections. Denote by 
$T_i:=p_i^*(T)=T\circ p_i\in\O_{U\times U}$.
The image of 
$T_1-T_2\in \mathscr{I}$ in 
$\Omega^1_X=\mathscr{I}/\mathscr{I}^2$ is the generator 
$dT$ of $\Omega^1_X$. 
Consider now $\O_{U\times U}=\O_U\widehat{\otimes}_K\O_U$ as 
an $\O_U$-ring via  
$p_2^*:\O_U\to\O_U\widehat{\otimes}_K\O_U$, 
$p_2^*(g)=1\otimes g$. 
Since $X$ is quasi-smooth, a classical 
computation shows that we have a (non canonical) isomorphism
$\mathscr{P}^n_{U/K}\simto\O_U[T_1-T_2]/(T_1-T_2)^{n+1}$
associating to $f\otimes g$ the Taylor expansion 
$(1\otimes g)\cdot\sum_{k=0}^n f^{(k)}(T_2)\frac{(T_1-T_2)^k}{k!}$,
where $f^{(k)}(T_2)$ means 
$\Delta^*\Bigl(\bigl((\frac{d}{dT})^kf\bigr)\otimes 1\Bigr)\in\O_U$. 
%Here we identify $\Omega^1_U\simto
%\O_U \cdot dT$ with $\O_U$, by sending $f\cdot dT$ into $f$. With 
%this identification $dT$ corresponds to a $K$-derivation 
%$d/dT:\O_U\to\O_U$, 
%and it makes sense to consider $(\frac{d}{dT})^i$. 
It follows that $\mathscr{P}^\infty_{U/K}\simto\O_U[[T_1-T_2]]$.

In this situation, we call \emph{tubular neighborhood of 
the diagonal of $U$} a Weierstrass domain of 
$U\times U$ of the form
$\mathcal{T}(U,T,R):=\{|T_1-T_2|\leq R\}$.

The ring $\mathscr{P}^\infty_{U/K}$, is a natural place 
where searching solutions of differential equations. In fact 
all differential solutions are trivialized by $\mathscr{P}^\infty_{U/K}$. 
We now recall quickly this fact. 

By the above local description  
of $\mathscr{P}_{U/K}^\infty$, the diagram
\begin{equation}
\xymatrix{
\O_U\ar[r]^-{p_1^*}&\mathscr{P}^\infty_{U/K}\\
K\ar[u]\ar[r]&\O_U\ar[u]_-{p_2^*}}
\end{equation}
provides a natural identification 
$(\Omega^1_{U/K}\widehat{\otimes}_{\O_U,p_1^*}
\mathscr{P}^\infty_{U/K})
\xrightarrow[]{\;\sim\;}
\Omega^1_{\mathscr{P}^\infty_{U/K}/\O_U}$,
where in the tensor product
$\mathscr{P}^\infty_{U/K}$ is considered as an $\O_U$-ring with 
respect to $p_1^*$, while the 
module of differentials 
$\Omega^1_{\mathscr{P}^\infty_{U/K}/\O_U}$ 
represents the $\O_U$-linear derivations of 
$\mathscr{P}^\infty_{U/K}$ with respect to the $\O_U$-ring structure 
given by $p_2^*$ (not $p_1^*$). By this identification the derivation 
$d/dT:\O_U\to\O_U$ corresponds to a $\O_U$-linear derivation of 
$\mathscr{P}^\infty_{U/K}\simto\O_U[[T_1-T_2]]$ which acts as $d/dT_1$. 

Now consider $\mathscr{P}^\infty_{U/K}$ as an $\O_U$-ring via 
$p_1^*$. The above identification of differentials  
permits to consider the \emph{scalar extension} to 
$\mathscr{P}^\infty_{U/K}$ of any differential equation 
$\Fs$ over $U$. 
In fact all differential equation  
%which are free as $\O_U$-modules 
become trivial over 
$\mathscr{P}^\infty_{U/K}$. More precisely there is an 
$\mathscr{P}^\infty_{U/K}$-linear isomorphism
\begin{equation}\label{eq : chi}
\chi\;:\;
\Fs\widehat{\otimes}_{\O_U, p_1^*}\mathscr{P}^\infty_{U/K}
\;\xrightarrow{\;\sim\;}\; 
\Fs\widehat{\otimes}_{\O_U, p_2^*}\mathscr{P}^\infty_{U/K}
\end{equation}
which commutes with structure of trivial differential equation over 
$\mathscr{P}^\infty_{U/K}$ of the right hand side.
Loosely speaking this means that $\Fs$ has a basis of solutions over
$\mathscr{P}^\infty_{U/K}$. Namely, up to shrinking $U$, $\Fs$ is free 
and we can consider an isomorphism $\Fs\simto\O_U^r$, i.e. a basis 
of $\Fs$. So the connection of $\Fs$ corresponds to 
a $K$-linear endomorphism $\nabla:\Fs\to\Fs$. 
We associate to it a matrix 
$G\in M_{r\times r}(\O_U)$ whose columns are the images of 
the chosen basis of $\Fs$. With these choices we have 
the following explicit expression of the matrix of $\chi$
\begin{equation}\label{eq : Y_chi}
Y_\chi\;:=\;\sum_{n\geq 0}G_n(T_2)\frac{(T_1-T_2)^n}{n!}\;\in\; 
GL_{r\times r}(\mathscr{P}^\infty_{U/K})\;,
\end{equation}
where $G_n(T_2)\in M_{r\times r}(\O_U)$ is inductively defined by 
the relations $G_0=\mathrm{Id}$, $G_1=G$, 
$G_{n+1}=\frac{d}{dT}(G_n)+G_n\cdot G$.
One verifies easily that $\frac{d}{dT_1}
(Y_\chi)=G(T_1)\cdot Y_\chi$,
because $G(T_1)\;=\; \sum_{k\geq 0}G^{(k)}(T_2)
(T_1-T_2)^k/k!$, 
and $G_{n+1}=
\sum_{k=0}^n\tbinom{n}{k}\cdot G_1^{(k)}\cdot 
G_{n-k}$.

\subsubsection{}\label{eq : rough estimation}
If $U$ is an affinoid domain, an induction gives 
$\|G_n\|_U\leq\max(\|d/dT\|_U,\|G\|_U)^n$,
where $\|\cdot\|_U$ is the sup-norm on $U$ 
(the norm of a matrix is by definition the sup of the norms of its entries), 
and $\|d/dT\|_U=\sup\{\frac{\|d/dT(f)\|_U}{\|f\|_U}, f\neq 0\}$ is 
the norm operator of $d/dT:\O_U\to\O_U$.
From this we deduce that $Y_\chi$ belongs to $\mathcal{T}(U,T,R)$ 
for all $R<\frac{\omega}{\max(\|d/dT\|_U,\|G\|_U)}$,
%\begin{equation}\label{eq : rough estimation}
%R\;<\;\frac{\omega}{\max(\|d/dT\|_U,\|G\|_U)}\;
%\end{equation}
where $\omega:=\lim_n|n!|^{1/n}$.
If the valuation of $K$ is trivial on $\mathbb{Z}$, then 
$\omega=1$, otherwise $\omega=|p|^{\frac{1}{p-1}}$ 
where $p$ is the characteristic of $\widetilde{K}$.

\subsubsection{}\label{eq : cocycle condition stratification}
We now come back to the global curve $X$. We say that an open 
neighborhood $\mathcal{T}$ of the diagonal of $X\times X$ is 
\emph{admissible} if, for all $x\in X$, there exists an 
affinoid neighborhood 
$U$ of $x$ in $X$ as above, and a neighborhood 
$\mathcal{T}(U,T,R)$ of the diagonal of $U\times U$ 
such that 
$\mathcal{T}(U,T,R)\subseteq\mathcal{T}$.
A \emph{stratification} over $X$ is a locally free $\O_X$-module of finite 
rank $\Fs$ together with an $\O_{\mathcal{T}}$-linear isomorphism
\begin{equation}\label{eq : stratif :associated to F}
\chi\;:\;(p_2^*\Fs)_{|\mathcal{T}}\;
\xrightarrow[]{\;\sim\;}\;
(p_1^*\Fs)_{|\mathcal{T}}
\end{equation}
for some unspecified admissible neighborhood of the diagonal 
$\mathcal{T}$. 
The isomorphism $\chi$ is moreover 
subjected to the cocycle conditions:
\begin{enumerate}
\item If $p_{i,j}:X\times X\times X\longrightarrow X\times X$ is the 
projection on the $(i,j)$-factor, then over $p_{1,2}^{-1}(\mathcal{T})\cap p_{2,3}^{-1}(\mathcal{T})
\cap p_{1,3}^{-1}(\mathcal{T})$ one has 
$p_{1,2}^*(\chi)\circ p_{2,3}^*(\chi)=p_{1,3}^*(\chi)$.
%\begin{equation}
%p_{1,2}^*(\chi)\circ 
%p_{2,3}^*(\chi)\;=\;p_{1,3}^*(\chi)\;.
%\end{equation}
\item $\Delta^*(\chi)=\mathrm{Id}_\Fs$ (here we canonically identify 
$\Delta^*p_{i}^*\Fs$ with $\Fs$).
\end{enumerate}

A morphism $\alpha:(\Fs_1,\chi_1)\to(\Fs_2,\chi_2)$ between 
stratifications is an $\O_X$-linear morphism $\alpha:\Fs_1\to\Fs_2$ 
such that $p_1^*\alpha\circ\chi_1=\chi_2\circ p_2^*\alpha$.
The following result is classical (e.g. see
\cite[Section 4.1.3]{Le-Stum-Book}).
\begin{theorem}[\protect{\cite{Berthelot-preprint}}]
The category of 
differential equations over $X$ is equivalent to 
the category of stratifications over $X$.
\end{theorem}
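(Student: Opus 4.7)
The plan is to construct two quasi-inverse functors between the category of differential equations and the category of stratifications, with the Taylor solution \eqref{eq : Y_chi} providing the bridge. Since the statement is local on $X$ (both notions are defined via locally free sheaves plus an $\O$-linear datum), I would first prove the equivalence on small étale charts $U\to \mathbb{A}^{1,\mathrm{an}}_K$ where $\Omega^1_U$ is free, then show that the construction glues.

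For the functor from equations to stratifications, given $(\Fs,\nabla)$ and a local situation as in \ref{eq : tubular n} (chart $T:U\to \mathbb{A}^{1,\mathrm{an}}_K$, basis of $\Fs$ giving connection matrix $G$), I would define $\chi$ by the explicit matrix $Y_\chi$ of \eqref{eq : Y_chi}. The rough estimate in \ref{eq : rough estimation} shows $Y_\chi$ converges on $\mathcal{T}(U,T,R)$ for $R<\omega/\max(\|d/dT\|_U,\|G\|_U)$, which is admissible. The cocycle condition $\Delta^*(\chi)=\mathrm{Id}$ is immediate from $G_0=\mathrm{Id}$. The associativity cocycle $p_{12}^*\chi\circ p_{23}^*\chi=p_{13}^*\chi$ amounts to an identity of Taylor expansions, which I would verify by checking that both sides satisfy the same ODE in the first variable with the same initial value on the diagonal $T_1=T_3$, and invoke uniqueness of Taylor solutions. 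To see that $\chi$ is independent of the basis and coordinate, I would check that a change of basis by $H\in GL_r(\O_U)$ transforms $Y_\chi$ by the corresponding Taylor expansion of $H$, and similarly for a change of étale coordinate, so the stratification is intrinsic and glues.

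For the reverse functor, given $(\Fs,\chi)$, reduce modulo $\mathscr{I}^2$ to get an isomorphism of $\mathscr{P}^1_{X/K}$-modules, and define
\[
\nabla(m)\;:=\;p_1^*(m)-\chi(p_2^*(m))\;\in\; \Fs\otimes_{\O_X}(\mathscr{I}/\mathscr{I}^2)\;=\;\Fs\otimes_{\O_X}\Omega^1_X\;.
\]
The normalization cocycle $\Delta^*(\chi)=\mathrm{Id}$ ensures the image lies in $\Fs\otimes \mathscr{I}/\mathscr{I}^2$, and the Leibniz rule is a direct consequence of $\chi$ being $\O_{\mathcal{T}}$-linear. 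The associativity cocycle in degree two gives the flatness/integrability of $\nabla$, which in the one-dimensional setting is automatic.

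Finally, for morphisms, an $\O_X$-linear map $\alpha$ commutes with $\nabla$ iff, in local coordinates, its matrix $H$ satisfies $d(H)=G_2 H-HG_1$. Expanding the compatibility $p_1^*\alpha\circ\chi_1=\chi_2\circ p_2^*\alpha$ of matrices in $\mathscr{P}^\infty_{U/K}$ and comparing Taylor coefficients reduces precisely to this differential identity, which also shows full faithfulness. The main obstacle I anticipate is the globalisation: on a curve equipped only with the Berkovich topology (no G-topology in use here), one needs an admissible neighborhood $\mathcal{T}$ of the diagonal large enough to make sense of $\chi$ globally. I would handle this by taking a quasi $\Gamma_S$-covering $\{U_i\}$ as in Definition \ref{Def : quasi Gamma_S-cov} where each $U_i$ is quasi-Stein with free $\Omega^1_{U_i}$, producing local $\chi_i$'s on admissible tubes $\mathcal{T}(U_i,T_i,R_i)$, and then using the coordinate-independence of the construction together with Lemma \ref{Lemma : alpha commutes with nabla loc then glob} applied to the identity (i.e., to the difference of the two stratifications on overlaps) to glue them into a single stratification on an admissible $\mathcal{T}\subseteq X\times X$.
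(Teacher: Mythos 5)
Your proposal follows the route the paper itself outlines (and attributes to Berthelot): the forward functor is built from the local Taylor matrix $Y_\chi$ of \eqref{eq : Y_chi} with the convergence bound of Section \ref{eq : rough estimation}, and the reverse functor is obtained by reducing $\chi$ modulo $\mathscr{I}^2$ and projecting onto the $\Omega^1_X$-component, exactly as in the paper's second description. So the strategy is the standard one and essentially matches.

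Two small repairs are needed as written. In the three-point cocycle check you propose comparing the two sides at $T_1=T_3$; but there $p_{1,3}^*(\chi)$ reduces to the identity while the left side reduces to $Y_\chi(T_3,T_2)\,Y_\chi(T_2,T_3)$, and equating these would require $Y_\chi(T_3,T_2)=Y_\chi(T_2,T_3)^{-1}$, which is itself a degenerate case of the cocycle you are proving — so the argument is circular. Compare instead at $T_1=T_2$, where both sides collapse to $Y_\chi(T_2,T_3)$ because $Y_\chi(T_2,T_2)=\mathrm{Id}$; alternatively, fix $T_1,T_3$ and differentiate the product in the middle variable $T_2$ to see it is constant. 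Second, your appeal to Lemma \ref{Lemma : alpha commutes with nabla loc then glob} in the gluing step is off-target: that lemma compares morphisms of differential equations, not two stratifications on an overlap. What you actually need, and what does work, is that on the connected overlap of two tubes $\mathcal{T}(U_i,T_i,R_i)\cap\mathcal{T}(U_j,T_j,R_j)$ both $\chi_i$ and $\chi_j$ satisfy the same Cauchy problem $\tfrac{d}{dT_1}Z=G(T_1)\,Z$ with initial value $\mathrm{Id}$ on the diagonal; uniqueness of the local solution then forces $\chi_i=\chi_j$ there, and the union $\mathcal{T}=\bigcup_i \mathcal{T}(U_i,T_i,R_i)$ is an admissible neighbourhood in the paper's sense.
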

The above equivalence roughly goes as follows. If $(\Fs,\nabla)$ is a 
differential equation, the corresponding stratification consists in the 
same $\O_X$-module $\Fs$ together with the stratification 
whose local expression is given by \eqref{eq : chi}. 

If $\alpha: (\Fs_1,\nabla_1)\to(\Fs_2,\nabla_2)$ is a 
morphism of differential equations, then  $\alpha$ commutes also with 
the corresponding stratifications, so that the equivalence 
is the identity on the morphisms.

We now want to recover the connection $\nabla$ 
from the stratification $\chi$. 
This can be done by showing that the matrix 
$G:=\frac{d}{dT_1}(Y_\chi)\cdot Y_\chi^{-1}$ 
actually lies in $\O_U$. Or 
we can consider consider the reduction of $\chi$ in 
$\mathscr{P}_{U/K}^2\cong\O_U\oplus 
\mathscr{I}_U/\mathscr{I}_U^2$, 
and consider its retraction onto 
$\mathscr{I}_U/\mathscr{I}^2_U$.

\section{S-infinitesimal automorphisms}
\label{S-infinitesimal automorphisms}
Let $X$ be a quasi-smooth curve, and let $S$ be a weak triangulation 
of $X$. 
%We now introduce a class of automorphisms of $X$ that are 
%close to the identity and that respect $S$. 

\begin{definition}
Let $\sigma:X\xrightarrow[]{\;\sim\;} X$ be a $K$-isomorphism. 
We say that $\sigma$ is an $S$-infinitesimal automorphism of $X$ 
if $\sigma_\Omega:X_\Omega\xrightarrow[]{\;\sim\;}X_\Omega$ 
induces an automorphism of each maximal disc 
$D(x,S)\subseteq X_\Omega$, for all $x\in X$. We often say infinitesimal instead of 
$S$-infinitesimal if no confusion is possible.
\end{definition}
Here and below $\sigma_\Omega$ means 
$\sigma\widehat{\otimes} \mathrm{Id}_\Omega$.
By definition, an $S$-infinitesimal automorphism fixes all the 
points of $\Gamma_S$. 
We denote by $\mathfrak{S}(X,S)$
the group of $S$-infinitesimal automorphisms of $X$.

An $S$-infinitesimal automorphism is often not $S'$-infinitesimal with 
respect to another weak triangulation $S'$.
However if $\Gamma_{S}=\Gamma_{S'}$, then $\sigma$ is 
$S$-infinitesimal if and only if it is $S'$-infinitesimal. 
This is because $D(x,S)=D(x,S')$ for all $x\in X$. A similar consideration gives the following

\begin{lemma}\label{Lemma : localization}
Let $\sigma$ be an $S$-infinitesimal of $X$. Then:
\begin{enumerate}
\item If $Y\subseteq X$ is a connected 
analytic domain admitting a \emph{non empty} weak 
triangulation $S_Y$ such that $\Gamma_S\cap Y=\Gamma_{S_Y}$, 
then $\sigma$ induces an 
$S_Y$-infinitesimal automorphism of $Y$.  
\item
If $Y$ is a connected component of $X-S$ or $X-\Gamma_S$ 
(necessarily a virtual open disc or annulus) together 
with the empty weak-triangulation 
$S_Y=\emptyset$, 
then $\sigma$ induces an $S_Y$-infinitesimal 
automorphism of $Y$.\hfill$\Box$
\end{enumerate}
\end{lemma}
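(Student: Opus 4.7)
Both parts rest on the same principle: I claim that for every $y\in Y$, the maximal disc $D(y,S_Y)\subseteq Y_\Omega$ coincides with the maximal disc $D(y,S)\subseteq X_\Omega$. Once this is granted, the $S$-infinitesimality of $\sigma$ makes $\sigma_\Omega$ stabilize each $D(y,S_Y)$, and $\sigma$ fixes $\Gamma_{S_Y}\subseteq\Gamma_S$ pointwise. Since over $\Omega$ we have the decomposition $Y_\Omega=\Gamma_{S_Y,\Omega}\cup\bigcup_{y}D(y,S_Y)$, it follows that $\sigma_\Omega(Y_\Omega)=Y_\Omega$, hence $\sigma(Y)=Y$, and the restriction is then $S_Y$-infinitesimal by definition.

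The coincidence $D(y,S_Y)=D(y,S)$ is immediate for $y\in\Gamma_{S_Y}$, in which case both sides equal the generic disc $D(y)$. For $y\notin\Gamma_{S_Y}$, let $D$ (resp.\ $D'$) be the connected component of $X-\Gamma_S$ (resp.\ of $Y-\Gamma_{S_Y}$) containing $y$. Since the hypothesis $\Gamma_S\cap Y=\Gamma_{S_Y}$ gives $\Gamma_{S_Y}\subseteq\Gamma_S$, the set $D'$ is a connected subset of $X-\Gamma_S$ containing $y$, so $D'\subseteq D$. For the reverse inclusion I would extend scalars to $\Omega$ and work with the concrete disc models. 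Writing $D=D^-(c,r)$ with attachment point $x_{c,r}\in\Gamma_{S,\Omega}$, and $D'=D^-(c,r')$ with $r'\leq r$, the attachment of $D'$ in $Y_\Omega$ is the type-$2$ point $x_{c,r'}$, which therefore lies in $\Gamma_{S_Y,\Omega}\subseteq\Gamma_{S,\Omega}$. But $x_{c,r'}$ also lies in the open disc $D$, hence outside $\Gamma_{S,\Omega}$, unless $r'=r$. This forces $D'=D$.

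For part (2), case (2a) with $Y$ an annulus component of $X-S$ falls under the same principle: the skeleton of $Y$ is by definition part of $\Gamma_S$, so $\Gamma_S\cap Y=\Gamma_{S_Y}$ holds even with $S_Y=\emptyset$, and the argument above applies verbatim. Case (2b), where $Y$ is a virtual open disc component of $X-\Gamma_S$ with $S_Y=\emptyset$ and $\Gamma_{S_Y}=\emptyset$, is essentially tautological: $Y_\Omega$ is a disjoint union of open discs permuted by $\mathrm{Gal}(\Omega/K)$, each of which is visibly one of the discs $D(y,S)$ for $y\in Y$, so $D(y,S_Y)=D(y,S)$ by inspection and $\sigma_\Omega$ preserves each component individually.

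The main obstacle is the identification $D'=D$ in the case $y\notin\Gamma_{S_Y}$: one must rule out the possibility that $D'$ is a proper virtual open subdisc of $D$. The argument above works at the level of $\Omega$-rational discs and requires using that $\Gamma_{S,\Omega}\cap Y_\Omega=\Gamma_{S_Y,\Omega}$, which follows from the compatibility of weak triangulations with extension of scalars recalled in Section~\ref{Extension of scalars.}. Additionally, one should verify that the bijection between directions out of the attachment point $z_0\in\Gamma_{S_Y}$ in $X$ and in $Y$ is compatible with the two retractions $\tau_S$ and $\tau_{S_Y}$, so that no disc direction is "shared" between $D$ and some other component of $Y-\Gamma_{S_Y}$.
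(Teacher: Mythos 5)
Your argument is correct and matches the paper's intended approach: the lemma is stated with a $\Box$, and the paragraph immediately preceding it announces that the key point is the equality of maximal discs $D(y,S_Y)=D(y,S)$, which is precisely what you establish. The nontrivial direction $D\subseteq D'$ is correctly handled by your attachment-point argument over $\Omega$, using $\Gamma_{S_{Y,\Omega}}\subseteq\Gamma_{S,\Omega}$ together with $D\cap\Gamma_{S,\Omega}=\emptyset$; the only small imprecision is that the cover $Y_\Omega=\Gamma_{S_{Y,\Omega}}\cup\bigcup_y D(y,S_Y)$ only holds up to the $\mathrm{Gal}^{\mathrm{cont}}(\Omega/K)$-action on components, which is harmless since $\sigma_\Omega$ commutes with that action.
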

%\begin{proof}
%Since $\sigma$ is the identity on $\Gamma_{S}$, then so is on 
%$\Gamma_{S_Y}$. By continuity $\sigma$ sends a disc with 
%boundary $x$ into a disc with boundary $\sigma(x)$. 
%So for all $y\in Y$, the disc $D(y,S_Y)=D(y,S)$ is stable by $\sigma$. 
%In particular $Y$ is sent into itself, and since its inverse of $\sigma$ 
%has the same properties, $\sigma$ gives an automorphism of $Y$. 
%\end{proof}
\begin{remark}
Lemma \ref{Lemma : localization} 
applies to all opens of a 
$\Gamma_S$-covering of $X$  (cf. Def. \ref{Def : Gamma_S-cov}).
\end{remark}
\begin{remark}\label{Rk : reduction to Kalg}
An automorphism $\sigma$ is $S$-infinitesimal if and only if 
$\sigma_{\widehat{K^{\mathrm{alg}}}}$ is 
$S_{\widehat{K^{\mathrm{alg}}}}$-infinitesimal.
\end{remark}
\subsection{The function $\R_S(-,\sigma)$ and its controlling 
graph.}
We now define a function 
$\R_S(-,\sigma):X\to\mathbb{R}_{\geq 0}$
that controls how an infinitesimal automorphism $\sigma$ is close to 
the identity. For this we need the following straightforward 
consequence of Lemma \ref{lem:isometry}:
\begin{lemma}\label{Lemma: D(t,sigma)}
Let $\sigma$ be any automorphism of an open disc $D$. Let 
$t\in D$ be a rational point. Then
\begin{enumerate}
\item There exists a smallest closed disc\footnote{Notice that 
$D^+(t,\sigma)$ is allowed to be equal to the individual point $\{t\}$.} 
$D^+(t,\sigma)\subset D$ %
%\begin{equation}
%D^+(t,\sigma)\;\subset\; D
%\end{equation} 
centered at $t$ which is globally stable by $\sigma$. 
Moreover $D^+(t,\sigma)$ is the smallest closed disc containing $t$ 
and $\sigma(t)$;
\item Each (open or closed) disc $D'$ satisfying $D^+(t,\sigma)\subseteq D'\subseteq D$ 
is globally stable by $\sigma$.
\item For all disc $D'$ as in ii), the 
annulus $C:=D-D'$ is globally stable under $\sigma$.
\hfill$\Box$
\end{enumerate}
\end{lemma}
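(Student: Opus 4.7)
The main tool will be Lemma \ref{lem:isometry}: applied with equal radii $R_{1}=R_{2}$, it shows that any $K$-automorphism $\sigma$ of an open disc $D=D^-(t,R)$ is an \emph{isometry}, i.e. $|\sigma(x)-\sigma(y)|=|x-y|$ for all $x,y\in D$ (valued in any complete valued extension of $K$). Once this is established, the three claims reduce to elementary ultrametric manipulation of nested closed discs.

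For (i), I will work with $D=D^-(t,R)$. By the isometric property, $\sigma(D^+(t,r))=D^+(\sigma(t),r)$ for every $0\leq r<R$. In the ultrametric setting two closed discs of equal radius are either disjoint or equal, so the inclusion $\sigma(D^+(t,r))\subseteq D^+(t,r)$ is equivalent to $D^+(\sigma(t),r)=D^+(t,r)$, hence to $|\sigma(t)-t|\leq r$. Therefore $r_{0}:=|\sigma(t)-t|$ is the minimal admissible radius, and $r_{0}<R$ since $\sigma(t)\in D$. Setting $D^+(t,\sigma):=D^+(t,r_{0})$, the same reasoning applied to $\sigma^{-1}$ (which is again an isometry of $D$, with $|\sigma^{-1}(t)-t|=r_{0}$) yields the reverse inclusion, hence global stability. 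The ``moreover'' clause is then immediate: in ultrametric geometry the smallest closed disc containing two points $t,\sigma(t)$ has radius $|\sigma(t)-t|$ and can be taken centered at either one.

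Parts (ii) and (iii) will follow as short corollaries. Any disc $D'$ with $D^+(t,\sigma)\subseteq D'\subseteq D$ necessarily contains $t$, hence is of the form $D^\pm(t,r')$ with $r'\geq r_{0}$; the isometric formula gives $\sigma(D')=D^\pm(\sigma(t),r')$, and the inequality $|\sigma(t)-t|\leq r_{0}\leq r'$ together with the ultrametric property forces $D^\pm(\sigma(t),r')=D^\pm(t,r')=D'$. Part (iii) is then automatic: since $\sigma$ is a bijection of $D$ stabilizing $D'$, it also stabilizes the complement $C=D-D'$. There is no real obstacle; the only mild point to watch is that the isometric conclusion of Lemma \ref{lem:isometry} has to be applied at points of $D$ valued in arbitrary complete valued extensions of $K$ (not just at $K$-rational points), which is guaranteed by the explicit power-series description in that lemma.
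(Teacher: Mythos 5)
Your proof is correct and follows exactly the route the paper intends: it derives the isometry of $\sigma$ from Lemma \ref{lem:isometry} with $R_1=R_2$, then reduces all three claims to the elementary ultrametric facts that an isometry maps $D^+(t,r)$ onto $D^+(\sigma(t),r)$ and that two discs of the same radius are equal or disjoint. The paper itself gives no proof, marking the lemma as a ``straightforward consequence of Lemma \ref{lem:isometry},'' so your write-up simply supplies those expected details (one small point worth sharpening: for an \emph{open} $D'=D^-(t,r')$ in part ii) one actually needs $r_0<r'$, which does hold because the inclusion $D^+(t,r_0)\subseteq D^-(t,r')$ forces strictness).
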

\begin{remark}
\label{Rk : restriction of sigma to the wedge of D}
Let $C$ be an annulus as in Lemma \ref{Lemma: D(t,sigma)}. 
Then $\sigma$ does not necessarily 
induce a $S$-infinitesimal automorphism on $C$ with respect to the empty weak triangulation. 
As an example, if $\sigma$ is the multiplication by $q\in K$ with $|q-1|=1$ acting on the open unit disc $D$. 
Then $\sigma$ is infinitesimal with respect to the empty weak triangulation of 
$D$ because $D(x,S)=D$ for all $x\in D$. In this case 
$D^+(0,\sigma)$ is reduced to $\{0\}$ and no discs in 
$C\subset D-\{0\}$ are stable by $\sigma$.
\end{remark}
Let $X$ be a quasi smooth curve. For all $x\in X$, we denote by 
\begin{equation}\label{eq : def of D^+(x,sigma)}
D_S^+(x,\sigma)
\end{equation}
the smallest closed disc in $D(x,S)$ 
containing $t_x$ and $\sigma_\Omega(t_x)$.
Let $T$ be a coordinate on $D(x,S)\subseteq X_\Omega$. 
Denote by  $\R^\sigma(x)\geq 0$ 
the radius of $D_S^+(x,\sigma)$ in the coordinate $T$.
If $\rho_{S,T}(x)$ is the radius of $D(x,S)$ with respect to the same coordinate, the ratio 
\begin{equation}\label{eq : R_S(x,sigma)}
\R_S(x,\sigma)\;:=\;\R^{\sigma}(x)/\rho_{S,T}(x)
\end{equation}
is independent of $T$ by Lemma \ref{lem:isometry}. 
Moreover, by section \ref{Extension of scalars.}, the action of 
$\mathrm{Gal}^{\mathrm{cont}}(\Omega/K)$ on 
$\pi_{\Omega/K}^{-1}(x)$ is transitive on the $\Omega$-rational 
points, and it preserves the modulus of the discs.
Since it commutes with $\sigma_\Omega$, 
the function $\R_S(-,\sigma)$ is independent of the choice of $t_x$ and $\Omega$.

\begin{definition}
We call $\R_S(-,\sigma)$ the radius of $\sigma$. Its 
controlling graph will be denoted (with an abuse)
by 
\begin{equation}
\Gamma_S(\sigma)\;:=\;\Gamma_S(\R_S(-,\sigma))\;.
\end{equation}
\end{definition}
\begin{remark}
If $x\in\Gamma_S$, then $\sigma(x)=x$. Nevertheless 
$\sigma_\Omega$ is not necessarily the identity on $D(x,S)$, so the 
function $\R_S(-,\sigma)$ is not necessarily equal to $0$ on the 
points of $\Gamma_S$.
\end{remark}
\subsection{Compatibility with the restriction to 
an analytic domain.}
\label{Section : Localization works well}
Let $Y$ be an analytic domain of $X$, together with a 
weak triangulation $S_Y$. 
Assume that $\sigma$ induces by restriction 
an automorphism $\sigma_{|Y}$ of $Y$. 
Remark \ref{Rk : restriction of sigma to the wedge of D} 
shows that $\sigma_{|Y}$ is not necessarily 
$S_Y$-infinitesimal.
It may also arises that $\sigma_{|Y}$ is 
$S_Y$-infinitesimal, but the restriction of the function 
$\R_S(-,\sigma)$ to the set $Y$ does not coincide 
with the function $\R_{S_Y}(-,\sigma_{|Y})$. 
The following remark is an example of this phenomenon.
\begin{remark}
Let $X=D^-(0,1)$ be the open unit disc 
together with the empty weak-triangulation $S_Y$, and 
let $\sigma_q$ be the multiplication by $q\in K$ with 
$|q-1|<1$. Then $\R_S(x,\sigma_q)=|q-1||T|(x)$ for all 
$x\in X$. If $Y=D^-(0,\rho)$ with $\rho<1$, together 
with the empty weak-triangulation $S_Y=\emptyset$, 
then $\sigma_q$ restricts to $Y$ 
and it induces an $S_Y$-infinitesimal automorphism 
$(\sigma_{q})_{|Y}$ of $Y$. 
But for all $x\in Y$ we have 
$\R_{S_Y}(x,(\sigma_{q})_{|Y})=\R_S(x,\sigma_q)/\rho$.
\end{remark}
\begin{proposition}
\label{Proposition : Localization works well}
Let $(Y,S_Y)$ be a pair as in 
point i) or ii) of 
Lemma \ref{Lemma : localization}, then $\sigma$ 
induces an $S_Y$-infinitesimal automorphism of $Y$, 
and
\begin{equation}\label{eq : restriction of R_S(-,sigma)}
\R_S(y,\sigma)_{|Y}\;=\;
\R_{S_Y}(y,\sigma_{|Y})\;.
\end{equation}
Moreover
\begin{equation}\label{eq : ytt}
\Gamma_S(\sigma)\cap Y\;=\;\Gamma_{S_Y}(\sigma_{|Y})\;.
\end{equation}
\end{proposition}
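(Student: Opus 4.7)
First I would reduce to the case where $K$ is algebraically closed via Remark \ref{Rk : reduction to Kalg} and the parallel statement for $S$-infinitesimality, since the function $\R_S(-,\sigma)$ is defined through its values over $\Omega$ and since base change to $\widehat{K^{\mathrm{alg}}}$ is compatible with Lemma \ref{Lemma : localization}. From here on every virtual disc is honestly a disc over $\Omega$.

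The heart of the argument is the identity $D(y,S_Y)=D(y,S)$ for every $y\in Y$. Once this is established, both the $S_Y$-infinitesimality of $\sigma_{|Y}$ and the equality \eqref{eq : restriction of R_S(-,sigma)} follow immediately: by Lemma \ref{Lemma: D(t,sigma)}, $D_S^+(y,\sigma)$ and $D_{S_Y}^+(y,\sigma_{|Y})$ are both characterized as the smallest closed disc inside $D(y,S)=D(y,S_Y)$ containing the pair $\{t_y,\sigma_\Omega(t_y)\}$, hence they coincide, and the two normalizing radii $\rho_{S,T}(y)$ and $\rho_{S_Y,T}(y)$ also coincide. To prove $D(y,S_Y)=D(y,S)$, consider first case (ii): if $Y$ is a connected component of $X-\Gamma_S$ with $S_Y=\emptyset$ then $Y$ is a virtual open disc, so $\Gamma_{S_Y}=\emptyset$ and $D(y,S_Y)=Y_\Omega$, which is precisely the connected component of $X_\Omega-\Gamma_{S_\Omega}$ containing $t_y$, i.e.\ $D(y,S)$. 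If instead $Y$ is a connected component of $X-S$ with $S_Y=\emptyset$, then $\Gamma_{S_Y}$ is the skeleton of the virtual annulus or disc $Y$, and the connected components of $Y_\Omega-\Gamma_{(S_Y)_\Omega}$ are exactly the connected components of $X_\Omega-\Gamma_{S_\Omega}$ contained in $Y_\Omega$, so the two maximal discs again coincide.

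In case (i), for $y\in Y$ the maximal disc $D(y,S)$ is the connected component of $X_\Omega-\Gamma_{S_\Omega}$ containing $t_y$. The hypothesis $\Gamma_S\cap Y=\Gamma_{S_Y}$ together with the fact that $Y$ is an analytic domain implies that the topological boundary of $Y_\Omega$ in $X_\Omega$ is contained in $\Gamma_{S_\Omega}$: indeed any branch out of $\partial Y$ leaving $Y$ would otherwise produce points of $\Gamma_S\cap Y$ not in $\Gamma_{S_Y}$. Consequently a connected open subset of $X_\Omega-\Gamma_{S_\Omega}$ meeting $Y_\Omega$ cannot cross $\partial Y_\Omega$ and must lie entirely inside $Y_\Omega$; this forces $D(y,S)\subseteq Y_\Omega$, after which $D(y,S)$ is a maximal connected component of $Y_\Omega-\Gamma_{(S_Y)_\Omega}$ through $t_y$, i.e.\ $D(y,S_Y)$.

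Finally for \eqref{eq : ytt} I would use the characterization given right after Definition \ref{Def : Gamma_S(sigma)}: the complement of $\Gamma_S(\sigma)$ in $X$ is the union of all virtual open discs $D\subseteq X-\Gamma_S$ on which $\R_S(-,\sigma)$ is constant. Since $\Gamma_S\cap Y=\Gamma_{S_Y}$ and since \eqref{eq : restriction of R_S(-,sigma)} gives $\R_S(-,\sigma)_{|Y}=\R_{S_Y}(-,\sigma_{|Y})$, the open discs in $Y-\Gamma_{S_Y}$ witnessing the complement of $\Gamma_{S_Y}(\sigma_{|Y})$ are exactly those witnessing the complement of $\Gamma_S(\sigma)$ that happen to lie in $Y$; combined with the boundary argument of the previous paragraph (a disc in $X-\Gamma_S$ meeting $Y$ is contained in $Y$), this yields $\Gamma_S(\sigma)\cap Y=\Gamma_{S_Y}(\sigma_{|Y})$. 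The main obstacle in this plan is the geometric verification in case (i) that $D(y,S)$ cannot escape $Y$; everything else is then a bookkeeping application of Lemma \ref{Lemma: D(t,sigma)} and Lemma \ref{lem:isometry}.
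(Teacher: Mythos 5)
Your proof is correct and follows the same route as the paper's one-line argument, which simply records the identity $D(y,S_Y)=D(y,S)$ for all $y\in Y$ and then invokes the definitions. You have usefully unpacked why this identity holds in each case, including the key geometric point (which the paper leaves implicit) that under the hypothesis $\Gamma_S\cap Y=\Gamma_{S_Y}$ every connected component of $X-\Gamma_S$ meeting $Y$ is actually contained in $Y$, so the maximal discs cannot escape.
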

\begin{proof}
For all $y\in Y$ we have $D(y,S_Y)=D(y,S)$. 
This together with Def. \ref{Def : Gamma_S(sigma)} imply \eqref{eq : ytt}.
\end{proof}

\begin{remark}
Proposition \ref{Proposition : Localization works well} 
applies in particular to all opens of a 
$\Gamma_S$-covering of $X$ 
(cf. Def. \ref{Def : Gamma_S-cov}), but not to quasi 
$\Gamma_S$-coverings.
Indeed point iii) 
of Lemma \ref{Lemma: D(t,sigma)} ensures the 
existence of a quasi 
$\Gamma_S$-covering of $X$ stable by $\sigma$. 
Notice however that the action of $\sigma$ on the opens of such a 
covering is not necessarily $S$-infinitesimal as showed in the 
Remark \ref{Rk : restriction of sigma to the wedge of D}.
\end{remark}

%
%if $C$ is a connected component of $X-S$ 
%(necessarily a virtual open disc or annulus), or if $C$ is a 
%connected component of $X-\Gamma_S$ 
%(necessarily a virtual open disc), and if $C$ is
%considered with the empty weak triangulation $S_C=\emptyset$, 
%then 
%\begin{equation}
%\Gamma_S(\sigma)\cap C\;=\;\Gamma_{S_C}(\sigma_{|C})\;.
%\end{equation}
%In other words, the localization to $C$ works well.
%

\subsection{A finiteness result}
The aim of this section is to prove 
the following analogue 
of Theorem \ref{Thm : continuity} :
\begin{theorem}\label{Thm : finiteness sigma}
The function $x\mapsto\R_S(x,\sigma)$ 
enjoys the following properties: 
\begin{enumerate}
\item $\R_{S}(x,\sigma)$ is a continuous function 
on $X$. It is moreover piecewise $\log$-linear along 
each segment in $X$, and its slopes belong to 
$\mathbb{Z}$;
\item If $C$ is a connected component of $X-S$ 
(either a virtual open disc or annulus) there exist
an analytic 
function $f_\sigma\in\O(C)$, and a real number 
$\alpha\in\mathbb{R}_{\geq 0}$, such that 
\begin{equation}
\R_S(x,\sigma)\;=\;\alpha\cdot|f_\sigma|(x)\;.
\end{equation}
for all $x\in C$. 
In particular $\R_S(x,\sigma)$ is harmonic outside 
$S$.
\item Its controlling graph $\Gamma_S(\sigma)$ is 
locally finite. Moreover, if $\sigma\neq\mathrm{Id}_X$, 
the end points of 
$\Gamma_S(\sigma)$ that do not belong to 
$\Gamma_S$ are exactly the rigid points of $X$ that are 
fixed by $\sigma$. 
\end{enumerate}
\end{theorem}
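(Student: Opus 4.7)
The plan is to reduce to a local computation on each component of $X-S$, on which $\R_S(-,\sigma)$ admits an explicit analytic expression. By Remark \ref{Rk : reduction to Kalg} I first pass to $\widehat{K^{\mathrm{alg}}}$ and assume $K$ algebraically closed, so that each connected component $C$ of $X-S$ is a genuine open disc or annulus equipped with a canonical coordinate $T$. By Proposition \ref{Proposition : Localization works well}, the restriction of $\R_S(-,\sigma)$ to $C$ coincides with $\R_{S_C}(-,\sigma|_C)$ for the empty weak triangulation $S_C = \emptyset$ on $C$, so it suffices to prove the theorem on each such $C$ and then establish global continuity across $\Gamma_S$ separately.

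On such a $C$, the $S$-infinitesimality of $\sigma$ forces $\sigma(C)=C$, so I may form
\begin{equation*}
f_\sigma \;:=\; \sigma^*(T) - T \;\in\; \O(C).
\end{equation*}
For any $\Omega$-rational lift $t_y$ of $y\in C$, Lemma \ref{Lemma: D(t,sigma)} identifies the smallest $\sigma_\Omega$-stable closed disc containing $t_y$ with the closed disc around $t_y$ of radius $|\sigma_\Omega(t_y)-t_y|_\Omega = |f_\sigma|(y)$, yielding $\R^\sigma(y)=|f_\sigma|(y)$. Combined with the standard description of $\rho_{S,T}$ (constant equal to $R$ on a disc $C=D^-(0,R)$; equal to $|T|(y)$ on an annulus $C = \{R_1 < |T| < R_2\}$), one obtains
\begin{equation*}
\R_S(y,\sigma) \;=\; \alpha_C\cdot|h_C|(y), \qquad y\in C,
\end{equation*}
with $h_C = f_\sigma$ and $\alpha_C = R^{-1}$ in the disc case, and $h_C=f_\sigma/T$ with $\alpha_C=1$ in the annulus case. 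This is statement (ii), and the harmonicity of $\log|h_C|$ outside its zeros gives the claimed harmonicity outside $S$.

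Statements (i) and (iii) then reduce to classical properties of $|h_C|$. The absolute value of an analytic function is continuous on $C$ and piecewise $\log$-linear along segments with integer slopes (read off the Newton polygon of $h_C$); its controlling graph inside $C$ is the convex hull, relative to the skeleton of $C$, of its zero set, and local finiteness of the zero set on relatively compact subsets of $C$ gives the local finiteness of $\Gamma_S(\sigma)$ asserted in (iii). The zeros of $h_C$ at rigid points of $C$ are precisely the rigid fixed points of $\sigma$ in $C$, since $h_C(t)=0 \Leftrightarrow T(\sigma(t))=T(t) \Leftrightarrow \sigma(t)=t$. The hypothesis $\sigma\neq\mathrm{Id}_X$ ensures $h_C\not\equiv 0$ on every component where $\sigma$ acts nontrivially, so that these zeros indeed produce honest endpoints of $\Gamma_S(\sigma)$ outside $\Gamma_S$.

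The main obstacle is global continuity at points of $\Gamma_S$, especially at points of $S$ where adjacent components of $X-S$ meet in different coordinate charts. To handle it I would choose, for each $x\in S$, an affinoid neighborhood $U_x$ admitting an étale map $T_x:U_x\to\mathbb{A}^{1,\mathrm{an}}_K$; the same computation then gives $\R_S(-,\sigma)|_{U_x}=|f_{\sigma,x}|/\rho_{S,T_x}$ for $f_{\sigma,x}:=\sigma^*T_x-T_x\in\O(U_x)$, and continuity on $U_x$ is immediate from that of $f_{\sigma,x}$ and $\rho_{S,T_x}$. One then verifies that the restriction of this formula to each component of $U_x\cap(X-S)$ agrees, up to the canonical change of coordinate, with the formula on $C$, so that everything glues to a continuous, piecewise $\log$-linear function on $X$ with the stated slope, harmonicity, and finiteness properties.
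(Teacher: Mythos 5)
Your strategy is essentially the paper's: reduce to $K$ algebraically closed, compute $\R_S(-,\sigma)$ explicitly on components of $X-S$ via $\delta_{\sigma,T}=T\circ\sigma-T$, and then patch across $\Gamma_S$ using \'etale coordinate charts. Statements (ii) and the log-linearity/slope parts of (i) are handled correctly on each component, exactly as in the paper. The problem is the patching step, which you treat as routine but which is actually the technical heart of the proof.

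The gap: you write that for an affinoid neighborhood $U_x$ of $x\in S$ ``admitting an \'etale map $T_x:U_x\to\mathbb{A}^{1,\mathrm{an}}_K$, the same computation gives $\R_S(-,\sigma)|_{U_x}=|f_{\sigma,x}|/\rho_{S,T_x}$''. This is false for an arbitrary \'etale chart. The identity $\R^\sigma(y)=|T_x\circ\sigma-T_x|(y)$ requires that $T_x$ restricts to an \emph{isomorphism} on each maximal disc $D(y,S)\subset (U_x)_\Omega$. A general finite \'etale map of degree $d>1$ may be ramified, or may be a non-trivial degree-$d$ cover, on the generic disc $D(x)$; in that case $|T_x\circ\sigma-T_x|(y)$ has nothing to do with the distance $|\sigma_\Omega(t_y)-t_y|$ measured in a coordinate on $D(y,S)$ (think of $T_x=z^2$). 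The paper needs two non-trivial inputs to get around this: Theorem~\ref{thm:bonvois} to manufacture an \'etale map of degree prime to $p$ with the right splitting behavior on the branches, and Lemma~\ref{Lemma : trivial covering} to deduce that such a map induces a trivial cover, hence an isomorphism, on each maximal disc. Quasi-smoothness alone gives you \emph{some} \'etale chart near $x$, but not one with these properties, and ``the same computation'' does not go through without them.

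There is a second, related omission. When the branch $b$ out of $x\in S$ points into a virtual open disc $O_b$ which is a connected component of $X-\Gamma_S$, a boundary annulus $C_b\subset O_b$ is $\sigma$-stable but $\sigma|_{C_b}$ is generally \emph{not} $\emptyset$-infinitesimal on $C_b$ (Remark~\ref{Rk : restriction of sigma to the wedge of D}); here the restriction identity \eqref{eq : restriction of R_S(-,sigma)} fails, and $D(y,S)$ is the large disc $O_b$ rather than the generic disc of $y$ in $C_b$. You cannot simply apply your annulus formula to $C_b$. The paper closes this gap in Proposition~\ref{Prop : finiteness on a branch} by normalizing the \'etale map so that it is an isometry of annuli (Lemma~\ref{Lemma : psi isometric}), after which $\R_S(y,\sigma)=|\delta_{\sigma,\psi}|(y)$ with no radius correction. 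Your proof needs an analogue of both of these steps before the gluing and continuity claims become legitimate.
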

\begin{remark}\label{Rk : convexity of R_S(-,sigma)}
In analogy with Theorem \ref{Thm : continuity} 
one has the following immediate 
consequences:
\begin{enumerate}
\item[v)] 
Let $D$ be a virtual open disc which is a connected component of 
$X-\Gamma_S$. Let $C$ be any open annulus in $D$, and let 
$I:=\Gamma_C$ be its skeleton.
If $I$ is oriented as out of $D$, then the function 
$y\mapsto\R_{S}(y,\sigma)$ is $\log$-increasing and $\log$-convex 
along $I$;

\item[vi)] Let $C$ be a virtual open annulus which is a connected 
component of $X-S$. Let $I:=\Gamma_C$ be its skeleton. 
Then $y\mapsto\R_{S}(y,\sigma)$ is $\log$-convex along $I$. 
\end{enumerate}
\end{remark}
\begin{proof}[\protect{Proof of Theorem 
\ref{Thm : finiteness sigma}}]
The claims hold for $X$ if and only if they hold for 
$X_{\widehat{K^{\mathrm{alg}}}}$ (cf. 
Remark \ref{Rk : reduction to Kalg}). So, without loss of generality, we 
may assume that $K=\widehat{K^{\mathrm{alg}}}$, 
and that $X$ is connected.
\begin{lemma}
\label{Thm : Finiteness : the case of the line}
Theorem \ref{Thm : finiteness sigma} 
holds if $X$ is an analytic domain of 
$\mathbb{A}^{1,\mathrm{an}}_K$.
\end{lemma}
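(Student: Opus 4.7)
Since the statement has already been reduced to $K$ algebraically closed and $X$ connected, we use the global coordinate $T$ on $\mathbb{A}^{1,\mathrm{an}}_K$. The whole argument rests on one explicit formula: set
\[
g_\sigma \;:=\; \sigma^*(T) - T \;\in\; \O(X).
\]
For any $x \in X$ with $\Omega$-rational lift $t_x$, the smallest closed disc centered at $t_x$ containing $\sigma_\Omega(t_x)$ has, in the coordinate $T$, radius $|T(\sigma_\Omega(t_x)) - T(t_x)|_\Omega = |g_\sigma|(x)$. Since $\sigma$ is $S$-infinitesimal, this disc sits inside $D(x,S)$, hence $\R^\sigma(x) = |g_\sigma|(x)$. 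Combined with Remark \ref{Remark : trivializing Radius over A^1} this yields
\[
\R_S(x,\sigma) \;=\; \frac{|g_\sigma|(x)}{\rho_{S,T}(x)}.
\]

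I would first prove (ii). Let $C$ be a connected component of $X - S$. If $C$ is an open disc contained in a component of $X-\Gamma_S$, then $D(x,S) = C_\Omega$ for all $x \in C$, so $\rho_{S,T}$ is constant of some value $\rho_C$; taking $f_\sigma := g_\sigma$ and $\alpha := 1/\rho_C$ gives the asserted expression. If instead $C = C(c,I)$ is an open annulus (so $c \notin C$), a direct check on the skeleton $\Gamma_C$ shows $\rho_{S,T}(x) = |T-c|(x)$ there; the identity extends to all of $C$ because both sides are constant on every open sub-disc of $X - \Gamma_S$ attached to $\Gamma_C$. Hence $\R_S(x,\sigma) = |g_\sigma/(T-c)|(x)$, and $f_\sigma := g_\sigma/(T-c)$ belongs to $\O(C)$ since $T - c$ is invertible on $C$ (if $\infty$ lies ``inside'' the annulus $C$ in $\mathbb{P}^{1,\mathrm{an}}_K$, one switches to the local coordinate $1/T$ and argues identically). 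The harmonicity claim then follows since $\log|f_\sigma|$ is harmonic away from the zeros of $f_\sigma$.

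Item (i) is then formal: for any $h \in \O(X)$ the function $|h|(-)$ is continuous on $X$ and piecewise $\log$-linear with integer slopes along any segment in $X_{[2,3]}$, by a Newton-polygon analysis of Laurent expansions; $\rho_{S,T}$ is continuous and piecewise $\log$-linear with slopes in $\{-1,0,1\}$ by the cited remark, so the quotient $\R_S(-,\sigma)$ has the asserted properties. For item (iii), note that on any open disc $D$ sitting inside a component of $X - \Gamma_S$, both $\rho_{S,T}$ and $|T-c|$ (when relevant) are constant on $D$, so $\R_S(-,\sigma)$ is constant on $D$ if and only if $|g_\sigma|$ is constant on $D$, which by the maximum modulus principle on an open disc is equivalent to $g_\sigma$ having no zero in $D$. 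Thus $\Gamma_S(\sigma)$ is the union of $\Gamma_S$ with the segments in $X - \Gamma_S$ joining $\Gamma_S$ to the zeros of $g_\sigma$; since for a rigid $x_0 \in X$ one has $g_\sigma(x_0) = 0 \iff \sigma(t_{x_0}) = t_{x_0} \iff \sigma(x_0)=x_0$, these end points are exactly the rigid fixed points of $\sigma$. If $\sigma \neq \mathrm{Id}_X$ then $g_\sigma$ is not identically zero on $X$ (hence has discrete zero locus by the standard zero-principle for analytic functions on a connected quasi-smooth curve), so $\Gamma_S(\sigma)$ is locally finite.

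The main delicate step is the identification $\rho_{S,T}(x) = |T-c|(x)$ uniformly on an annulus component and its adjacent discs, together with the need to change coordinate locally near the point at infinity when it lies in ``the hole'' of an annulus component; once this, and the explicit formula $\R^\sigma = |g_\sigma|$, are established, the three conclusions reduce to standard properties of absolute values of analytic functions and of $\rho_{S,T}$ recorded in Remark \ref{Remark : trivializing Radius over A^1}.
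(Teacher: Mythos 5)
Your proposal follows the same line as the paper's proof: introduce $\delta_{\sigma,T}=T\circ\sigma-T\in\O(X)$ (your $g_\sigma$), check that $\R^\sigma(x)=|\delta_{\sigma,T}|(x)$ via Lemma \ref{Lemma: D(t,sigma)}, obtain the key identity $\R_S(x,\sigma)=|\delta_{\sigma,T}|(x)/\rho_{S,T}(x)$, and then deduce everything from the structure of $\rho_{S,T}$ recorded in Remark \ref{Remark : trivializing Radius over A^1}. The paper stops here with ``the claim follows from the properties of $x\mapsto\rho_{S,T}(x)$,'' whereas you spell out the deduction of (i)--(iii): the $f_\sigma$ in (ii) being $g_\sigma$ on disc components (where $\rho_{S,T}$ is constant) and $g_\sigma/(T-c)$ on annulus components (where $\rho_{S,T}=|T-c|$), and the identification of $\Gamma_S(\sigma)-\Gamma_S$ with the segments joining $\Gamma_S$ to the zeros of $g_\sigma$, which are exactly the rigid fixed points. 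This added explicitness is sound and is precisely what the paper leaves to the reader; no genuinely different idea is involved. Two small wording points: a disc component $C$ of $X-S$ is automatically a component of $X-\Gamma_S$ (since $S\subseteq\Gamma_S$ and $C\cap\Gamma_S=\emptyset$), so ``contained in'' should be ``equal to''; and since $X\subset\mathbb{A}^{1,\mathrm{an}}_K$ by hypothesis, every annulus component has a center $c$ in $\mathbb{A}^{1}$, so the digression about $\infty$ lying in the hole is unnecessary here.
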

\begin{proof}
Let $T:X\hookrightarrow\mathbb{A}_K^{1,\mathrm{an}}$ 
be a global coordinate on $X$. 
Set $\delta_{\sigma,T}:=T\circ\sigma-T\in\O(X)$.
The value of $(\delta_{\sigma,T})_\Omega$ at $t_x$ is 
$T(\sigma_\Omega(t_x))-T(t_x)$. For all $f\in \O_X$ we have 
$|f|(x)=|f_\Omega|(t_x)$, so the norm $|\delta_{\sigma,T}|(x)$ 
equals the distance $|\sigma(t_x)-t_x|_\Omega$ in the coordinate $T$. 
By Lemma \ref{Lemma: D(t,sigma)}, we then have 
$\R^\sigma(x)=|\delta_{\sigma,T}|(x)$. 
If $\rho_{S,T}(x)$ denotes the radius of $D(x,S)$ in the coordinate 
$T$, the claim follows from the properties of $x\mapsto\rho_{S,T}(x)$
(cf. Remark \ref{Remark : trivializing Radius over A^1}) since
$\R_S(x,\sigma)=|\delta_{\sigma,T}|(x)/\rho_{S,T}(x)$.
\end{proof}

Let now $X$ be a general quasi-smooth curve.
We consider a $\Gamma_S$-covering of $X$. If $O$ is an open of the 
covering we call $S_O$ a weak triangulation of $O$ as in 
Definition \ref{Def : Gamma_S-cov}. 
By Proposition \ref{Proposition : Localization works well} 
we are reduced to proving the claim for an individual 
open $O$ of the covering. 
\if{
By Proposition \ref{Proposition : Localization works well}
one has $\R_S(-,\sigma)_{|O}=\R_{S_{O}}(-,\sigma_{|O})$, and 
$\Gamma_S(\sigma)\cap O=\Gamma_{S_O}(\sigma_{|O})$.
%\begin{equation}
%\R_S(-,\sigma)_{|O}\;=\;
%\R_{S_{O}}(-,\sigma_{|O})\;,\qquad% \textrm{ and }
%\Gamma_S(\sigma)\cap O\;=\;\Gamma_{S_O}(\sigma_{|O})\;.
%\end{equation}
Now the intersection of three distinct opens of the covering is empty, 
and the intersection of two of them is an open annulus $C$ 
such that $\Gamma_S\cap C=\Gamma_C$. 
The localization to $C$ behave again well, 
by Proposition \ref{Proposition : Localization works well}.
This implies that we are reduced to prove the result for an individual 
open of the covering.
}\fi

If $O$ is a connected component of $X-S$ 
(which is necessarily either an open annulus or disc) 
Theorem \ref{Thm : finiteness sigma} is a consequence of 
Lemma \ref{Thm : Finiteness : the case of the line}.
Claims ii) and iii) are then clear. 
Since a germ of segment always belongs to a connected component of $X-S$, the claims about 
the $\log$-linearity and the slopes are also consequence of Lemma \ref{Thm : Finiteness : the case of the line}.

It remains to prove the continuity and local finiteness of $\Gamma_S(\sigma)$ at a point $x\in S$.
We have to find a neighborhood of $x$ in $X$ of the form 
$Y_x=\tau^{-1}_S(\Lambda_x)$ (cf. Def. \ref{Def : Gamma_S-cov}) on which the claims hold. 
We can exclude points of type $3$ since, by \cite[Thm.4.3.5]{Duc}, 
$Y_x$ can be chosen either as a closed disc containing $x$, or as a closed annulus containing $x$ in its skeleton 
$\Gamma_C$. 

\if{\begin{lemma}[\protect{\cite[Thm.4.3.5]{Duc}}]
\label{Lemma: neighb of a point of type 3}
Let $x\in S$ be a point of type $3$. 
Then we can choose the neighborhood $Y_x$ of $x$ 
either as a closed disc containing $x$ in its interior, or a closed 
annulus containing $x$ in its skeleton $\Gamma_C$. \hfill$\Box$
\end{lemma}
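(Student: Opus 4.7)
The plan is to unpack the local structure of a quasi-smooth Berkovich curve at a point of type $3$ and reduce the statement to the analogous (and trivial) description on the affine line. Since $X$ is quasi-smooth, near $x$ there is an analytic neighborhood $U$ of $x$ together with an \'etale morphism $\varphi\colon U\to\mathbb{A}^{1,\mathrm{an}}_K$: this is the working definition of quasi-smoothness for curves, equivalent to the existence of a local parameter generating $\Omega^1_{X,x}$. The image $\varphi(x)$ is again of type~$3$, so after a finite extension of $K$ if necessary it takes the form $x_{c,\rho}$ for some $c$ and some $\rho\in\mathbb{R}_{>0}\setminus\sqrt{|K^*|}$.

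On $\mathbb{A}^{1,\mathrm{an}}_K$ the point $x_{c,\rho}$ admits a cofinal system of affinoid neighborhoods of two explicit shapes: the closed annuli $C(c,[\rho_1,\rho_2])$ with $\rho_1<\rho<\rho_2$ and $\rho_1,\rho_2\notin\sqrt{|K^*|}$, which contain $x_{c,\rho}$ on their skeleton; and the closed discs $D^+(c,\rho_2)$ with $\rho<\rho_2\notin\sqrt{|K^*|}$, which contain $x_{c,\rho}$ in their topological interior. The reason this works for type~$3$ and not in general is precisely that the value group $|\mathscr{H}(x)^*|$ is obtained from $|K^*|$ by adjoining one free generator, while $\widetilde{\mathscr{H}(x)}=\widetilde K$; this rules out both residual branching and ramification at $x$.

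Next I would pull these model neighborhoods back through $\varphi$. The key point is that an \'etale morphism of quasi-smooth curves is a local isomorphism at each point of type~$3$ on a suitable connected component of the fiber: any branching of $\varphi$ is concentrated at rigid points or at type~$2$ points (where residue field extensions could occur), and here both phenomena are absent. Shrinking $\rho_1,\rho_2$ close enough to $\rho$ and taking the connected component of $\varphi^{-1}(C(c,[\rho_1,\rho_2]))$ (respectively of $\varphi^{-1}(D^+(c,\rho_2))$) that contains $x$, we obtain a closed annulus (respectively a closed disc) in $X$ with $x$ on its skeleton (respectively in its interior).

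The last step is to match this neighborhood with the combinatorial form $Y_x=\tau_S^{-1}(\Lambda_x)$ required by Definition~\ref{Def : Gamma_S-cov}. Since $\Gamma_S$ is locally finite and $x$ is of type~$3$, the germ of $\Gamma_S$ at $x$ has at most two analytic branches, so $\Lambda_x$ is either the point $\{x\}$ (yielding the closed disc case) or a short closed subinterval of $\Gamma_S$ (yielding the closed annulus case), and its retraction preimage is exactly the neighborhood produced above. The main obstacle throughout is bookkeeping: one must arrange that the chosen radii $\rho_1,\rho_2$ avoid $\sqrt{|K^*|}$, stay inside the domain of the \'etale chart, and are compatible with a star-shaped $\Lambda_x$ contained in $\Gamma_S$. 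This is essentially the content of \cite[Thm.~4.3.5]{Duc}, from which the statement can be quoted directly.
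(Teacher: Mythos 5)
The statement is quoted in the paper with a reference to \cite[Thm.\ 4.3.5]{Duc} and no proof beyond that citation (plus the bookkeeping that the disc or annulus so obtained can be taken of the form $Y_x=\tau_S^{-1}(\Lambda_x)$, which your last paragraph reproduces correctly). Your closing remark that the statement ``can be quoted directly'' from Ducros is therefore exactly what the paper does, and is fine as a proof.

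However, the argument you sketch \emph{in place of} that citation has a genuine gap at the step ``an \'etale morphism of quasi-smooth curves is a local isomorphism at each point of type $3$.'' This is false. Take $\varphi:\mathbb{G}_{m,K}^{\mathrm{an}}\to\mathbb{G}_{m,K}^{\mathrm{an}}$, $T\mapsto T^2$, in residue characteristic $\neq 2$: it is finite \'etale, yet at a type $3$ point $x_{0,\rho}$ with $\rho\notin\sqrt{|K^*|}$ the fibre over $x_{0,\rho^2}$ is the single point $x_{0,\rho}$ with $[\mathscr{H}(x_{0,\rho}):\mathscr{H}(x_{0,\rho^2})]=2$, the value group being doubled. \'Etaleness controls $\Omega_{Y/W}$, i.e.\ ramification at rigid points; it does not prevent the extension $\mathscr{H}(y)/\mathscr{H}(\varphi(y))$ at a type $3$ point from being nontrivial, either through the value group (as above) or, for non-algebraically-closed $K$, through the residue field. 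Consequently the connected component of $\varphi^{-1}(C(c,[\rho_1,\rho_2]))$ through $x$ is in general a nontrivial finite cover of an annulus, and such covers need not be annuli at all (wild covers in positive residue characteristic can acquire genus or extra ends). So the pullback step does not produce the desired neighborhood, and repairing it amounts to proving the structure theorem you are trying to derive; that is precisely the content of \cite[Thm.\ 4.3.5]{Duc}, whose proof goes through the triangulation/semistable-reduction theory of the curve rather than through an \'etale chart. (Compare Theorem \ref{thm:bonvois} and Lemma \ref{Lemma : trivial covering} in the paper, where considerable effort is spent choosing $\psi$ of degree prime to $p$ and controlling its behaviour on discs exactly because \'etale maps are not local isomorphisms at non-rigid points.)
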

\begin{proof}
By \cite[Théorème 4.3.5]{Duc}, $x$ admits a neighborhood $C$ in $X$ 
which is either a closed disc containing $x$ in its interior, or a closed 
annulus containing $x$ in its skeleton $\Gamma_C$. 
More precisely if $x$ is an end point of $\Gamma_S$, and if 
$x\notin\partial X$, then $x$ admits a neighborhood which is 
isomorphic to a closed disc $D$ containing $x$ in its interior. 
Hence $\Gamma_S\cap D$ is the skeleton of a weak 
triangulation of $D$, so we may choose $Y_x=D$.

If $x\in\partial X$, then $C$ is a closed annulus such that 
$\Gamma_C\subseteq\Gamma_S$ and $x$ lies in its 
boundary. In this case $\Gamma_S\cap C$ is the skeleton of a weak 
triangulation of $C$. So we can assume $Y_x=C$.

In the other cases we can choose $C$ as an open annulus such that 
$\Gamma_C\subseteq\Gamma_S$ and such that $x$ is an internal 
point of $\Gamma_C$. As above $\Gamma_S\cap C$ is the skeleton of 
a weak triangulation on $C$, so we can assume $Y_x=C$.
\end{proof}

\begin{lemma}\label{Lemma : finiteness around type 3}
Let $x\in S$ be a point of type $3$. Let $Y_x$ be a disc or annulus
as in Lemma \ref{Lemma: neighb of a point of type 3}. 
Then the restriction 
$\R_{S_{Y_x}}(-,\sigma_{|Y_x})=\R_S(-,\sigma)_{|Y_x}$ 
to $Y_x$ verifies the claims of Theorem \ref{Thm : finiteness sigma}. 
\end{lemma}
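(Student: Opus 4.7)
The plan is to reduce this local statement to the already-treated affine line case. Since $Y_x$ is, by hypothesis, either a closed disc containing $x$ in its interior or a closed annulus containing $x$ in its skeleton, the choice of a coordinate identifies $Y_x$ with an affinoid domain of $\mathbb{A}^{1,\mathrm{an}}_K$. This is exactly the setting in which Lemma~\ref{Thm : Finiteness : the case of the line} applies.

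First I would equip $Y_x$ with a natural weak triangulation $S_{Y_x}$: when $Y_x$ is a closed disc, let $S_{Y_x}:=\{x\}$, where $x$ is its Shilov boundary point; when $Y_x$ is a closed annulus with skeleton $\Gamma_{C}$, let $S_{Y_x}$ be a finite subset of $\Gamma_C\cap Y_x$ containing $x$ and containing the two end points of the skeleton of the annulus, so that $\Gamma_{S_{Y_x}}=\Gamma_C\cap Y_x$. In either case, by the way $Y_x$ was chosen (as the inverse image under $\tau_S$ of a star-shaped neighborhood $\Lambda_x\subseteq\Gamma_S$, possibly shrunk so that no other portion of $\Gamma_S$ enters $Y_x$), one has $\Gamma_S\cap Y_x=\Gamma_{S_{Y_x}}$. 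Thus the pair $(Y_x,S_{Y_x})$ fulfills the hypothesis~i) of Lemma~\ref{Lemma : localization}.

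Next I would invoke Proposition~\ref{Proposition : Localization works well} to deduce simultaneously that $\sigma$ restricts to an $S_{Y_x}$-infinitesimal automorphism $\sigma_{|Y_x}$ of $Y_x$, that
\begin{equation}
\R_{S}(-,\sigma)_{|Y_x}\;=\;\R_{S_{Y_x}}(-,\sigma_{|Y_x})\;,
\end{equation}
and that $\Gamma_S(\sigma)\cap Y_x=\Gamma_{S_{Y_x}}(\sigma_{|Y_x})$. Applying Lemma~\ref{Thm : Finiteness : the case of the line} to the analytic domain $Y_x\subset\mathbb{A}^{1,\mathrm{an}}_K$ with weak triangulation $S_{Y_x}$ then provides the continuity, piecewise $\log$-linearity and integral slopes of $\R_{S_{Y_x}}(-,\sigma_{|Y_x})$, its harmonicity and local expression as $\alpha\cdot|f_\sigma|$ on each connected component of $Y_x-S_{Y_x}$, and the local finiteness of $\Gamma_{S_{Y_x}}(\sigma_{|Y_x})$ together with the characterization of its end points outside $\Gamma_{S_{Y_x}}$ as fixed rigid points of $\sigma_{|Y_x}$. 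Transferring via the identifications above yields exactly the claims of Theorem~\ref{Thm : finiteness sigma} for $\R_S(-,\sigma)_{|Y_x}$.

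The only delicate point is the verification that $Y_x$ can be chosen both as a disc or annulus in the sense of \cite[Thm.~4.3.5]{Duc} \emph{and} so that $\Gamma_S\cap Y_x=\Gamma_{S_{Y_x}}$: this is where the type~$3$ hypothesis is essential, since it is precisely at type~$3$ points that the local model provided by Ducros is of this simple disc/annulus form, and the local finiteness of $\Gamma_S$ permits us to shrink $\Lambda_x$ (and hence $Y_x$) without losing these properties. All the other verifications are mechanical consequences of Lemma~\ref{Thm : Finiteness : the case of the line} and the localization principle of Proposition~\ref{Proposition : Localization works well}.
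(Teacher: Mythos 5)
Your proof is correct and follows essentially the same route as the paper's: endow $Y_x$ with a weak triangulation $S_{Y_x}$ whose skeleton is $\Gamma_S\cap Y_x$, localize via Proposition \ref{Proposition : Localization works well}, and conclude by applying Lemma \ref{Thm : Finiteness : the case of the line} to $Y_x$ viewed as an analytic domain of the affine line. (One cosmetic slip: when $Y_x$ is a closed disc it contains $x$ in its \emph{interior}, so $x$ need not be its Shilov point; but $S_{Y_x}$ can still be chosen, possibly adding the Shilov point, so that $\Gamma_{S_{Y_x}}=\Gamma_S\cap Y_x$, which is all your argument actually uses.)
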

\begin{proof}
By Lemma \ref{Lemma: neighb of a point of type 3} 
there exists a weak triangulation of $Y_x$ with 
skeleton $\Gamma_S\cap Y_x$, so 
by Proposition 
\ref{Proposition : Localization works well} we can 
restrict to $Y_x$. The claim the follows directly from Lemma 
\ref{Thm : Finiteness : the case of the line}.
\end{proof}
}\fi

We now study the behavior of $\R_S(-,\sigma)$ in the neighborhood of 
a point of type $2$ of $S$. To continue our proof we need the 
following results:

\begin{theorem}[\protect{\cite{NP-II}}]
\label{thm:bonvois}
Assume $K=\Ka$.
Let $x$ be a point of~$X$ of type $2$. 
Let $b_{1},\dotsc,b_{n},c$ be distinct directions out of~$x$. 
Let $N$ be a positive integer.  There exists an affinoid 
neighborhood~$Z$ of~$x$ in~$X$, a quasi-smooth affinoid 
curve~$Y$, an affinoid domain~$W$ of 
$\mathbb{P}^{1,\textrm{an}}_{K}$ 
and a finite \'etale map $\psi \colon Y \to W$ such that
\begin{enumerate}
\item $Z$ is isomorphic to an affinoid domain of~$Y$ and~$x$ lies in 
the interior of $Y$;
\item\label{i:deg} the degree of~$\psi$ is prime to~$N$;
\item\label{i:pointx} $\psi^{-1}(\psi(x))=\{x\}$;
\item\label{i:compx} almost every connected component of 
$Y\setminus\{x\}$ is an open unit disc with boundary~$\{x\}$;
\item\label{i:compfx} almost every connected component of 
$W-\{\psi(x)\}$ is an open unit disc with 
boundary~$\{\psi(x)\}$;
\item\label{i:compiso} for almost every connected component~$C$ of 
$Y-\{x\}$, the induced morphism $C \to \psi(C)$ is an 
isomorphism;
\item\label{i:isobi} for every $i \in \{1,\ldots,n\}$, 
the morphism~$\psi$ induces an isomorphism between a section 
of~$b_{i}$ and a section of~$\psi(b_{i})$ and we have 
$\psi^{-1}(\psi(b_{i})) \subseteq Z$;
\item\label{i:branchec} $\psi^{-1}(\psi(c)) = \{c\}$. 
\hfill$\Box$
\end{enumerate}
\end{theorem}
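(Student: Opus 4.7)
The plan is as follows. Since $K = \widehat{K^{\mathrm{alg}}}$ and $x$ is of type~$2$, the residue field $\widetilde{\mathscr{H}(x)}$ has transcendence degree one over $\widetilde{K}$, so it is the function field of a unique smooth projective curve $\mathcal{C}_{x}$ over $\widetilde{K}$. The closed points of $\mathcal{C}_{x}$ are canonically in bijection with the branches out of~$x$ in~$X$; let $\xi_{1},\dotsc,\xi_{n},\xi_{c}$ denote the closed points corresponding to $b_{1},\dotsc,b_{n},c$ respectively. The strategy is to construct at the residual level a rational map $\bar{\psi} : \mathcal{C}_{x} \to \mathbb{P}^{1}_{\widetilde{K}}$ with the desired properties, and then to lift it to a finite étale morphism $\psi : Y \to W$ between analytic neighborhoods.

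To build $\bar{\psi}$ I would invoke Riemann-Roch on $\mathcal{C}_{x}$. For any integer $d \geq 2g$, where $g$ is the genus of $\mathcal{C}_{x}$, the linear system $|d\xi_{c}|$ has dimension $d-g+1$ and contains functions with pole divisor exactly $d \cdot \xi_{c}$. Among such~$d$ there are infinitely many coprime to~$N$, so I fix one. Within $|d\xi_{c}|$, a Zariski-dense open subset of functions $\bar{f}$ satisfies simultaneously that $\bar{f}^{-1}(\infty)=\{\xi_{c}\}$ (set-theoretically, with ramification index~$d$), that $\bar{f}$ is étale at each $\xi_{i}$ with residual degree one, and that the values $\bar{f}(\xi_{1}),\dotsc,\bar{f}(\xi_{n})$ are pairwise distinct and finite. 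Openness of each of these conditions, together with the abundance of $\widetilde{K}$-rational points coming from $\widetilde{K} = \widetilde{K}^{\mathrm{alg}}$, yields such an $\bar{f}$, hence a finite morphism $\bar{\psi}: \mathcal{C}_{x}\to \mathbb{P}^{1}_{\widetilde{K}}$ of degree~$d$ coprime to~$N$.

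Next I would lift $\bar{f}$ to an element $f \in \mathscr{H}(x)^{\circ}$ and use that $x$ admits a formal affinoid neighborhood whose canonical reduction has $\mathcal{C}_{x}$ as an irreducible component at the image of $x$. By the formal-model machinery expounded in \cite{Duc}, such an $f$ extends to an analytic function on an affinoid neighborhood $Z_{0}$ of $x$ whose reduction is $\bar{f}$; this defines a morphism $\psi : Z_{0} \to \mathbb{P}^{1,\mathrm{an}}_{K}$ whose reduction at $x$ coincides with $\bar{\psi}$. After possibly shrinking $Z_{0}$ to an affinoid $Z$ and taking $Y$ to be a slightly larger affinoid with $x$ in its interior, the morphism $\psi: Y \to W := \psi(Y)$ is finite étale of degree $d$.

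Verification of (i)--(viii) then reduces to the residual picture: (ii), (iii), (viii) follow respectively from $\deg(\bar\psi)=d$ coprime to $N$, from $\widetilde{\mathscr{H}(x)}$ being of degree $d$ over the image residue field (forcing a single type-$2$ preimage of $\psi(x)$), and from $\bar{f}^{-1}(\infty)=\{\xi_{c}\}$; (iv), (v), (vi) hold outside the finitely many branches meeting the branch locus of $\bar{\psi}$ or the preimages of critical values; (vii) combines étaleness with residual degree one at the $\xi_{i}$ and the distinctness of $\bar{f}(\xi_{i})$, which together force the preimages $\psi^{-1}(\psi(b_{i}))$ to lie inside $Z$. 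The hardest part will be the lifting step: one must control the formal fibers of $\psi$ carefully to ensure $Z$ is large enough to contain all preimages of the branches $\psi(b_{i})$, and to ensure that the handful of branches of $Y$ above the residually ramified or critical points are \emph{all} placed inside the same affinoid~$Z$ — this is precisely where the explicit structure theory of quasi-smooth Berkovich curves in \cite{Duc} is essential.
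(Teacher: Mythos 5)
The paper does not prove this statement: it is imported verbatim from \cite{NP-II}, and the terminal $\Box$ marks it as cited without proof. Your strategy --- pass to the residual curve $\mathcal{C}_x$ of the type-$2$ point, identify branches out of $x$ with closed points of $\mathcal{C}_x$, use Riemann--Roch to produce a function of degree $d$ prime to $N$ with pole divisor supported at the point attached to $c$, unramified and separating at the points attached to the $b_i$, then lift --- is exactly the strategy of \cite{NP-II}, so the approach is right. But there is a genuine gap at the sentence ``the morphism $\psi:Y\to W:=\psi(Y)$ is finite \'etale of degree $d$'', and it is precisely the point that the ``almost every'' clauses of (iv)--(vi) exist to absorb. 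The residual map $\bar\psi$ is \emph{necessarily} ramified: totally at the point attached to $c$, and at further points by Riemann--Hurwitz as soon as $d\geq 2$ and the genus permits. A lift of a residually ramified map acquires honest ramification: rigid points of $Y$, lying in the open discs of $Y\setminus\{x\}$ that retract to the residually ramified closed points, at which $\psi$ is not unramified (over the point attached to $c$ the map looks like $T\mapsto T^d$ on a disc, which is ramified inside the disc). Since $\mathrm{char}(K)=0$, \'etale means unramified at every rigid point, so the naive $Y$ fails. The correct order of operations is to fix the target $W$ first, deleting from $\mathbb{P}^{1,\mathrm{an}}_K$ small closed discs around the images of the residually critical points, and to set $Y:=\psi^{-1}(W)$; this saturation is also what makes $\psi$ finite (defining $W:=\psi(Y)$ for an arbitrarily chosen affinoid $Y$ does not), and it is why finitely many components of $Y\setminus\{x\}$ are annuli rather than discs, and why the direction $c$ must be singled out in (viii). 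Your closing paragraph treats the exceptional branches only as a bookkeeping problem for (vii); in fact they are the obstruction to \'etaleness itself.

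Two secondary points. First, an element of $\mathscr{H}(x)^{\circ}$ is not a germ of analytic function; to lift $\bar f$ you must use that, at a type-$2$ point, the residue field $\kappa(x)$ of $\mathcal{O}_{X,x}$ is dense in $\mathscr{H}(x)$, so that $\bar f$ lifts to an actual element of $\mathcal{O}_{X,x}$ defined on some affinoid neighbourhood. Second, if $x$ lies on the analytic boundary of $X$, the branches out of $x$ inside $X$ account for only some of the closed points of $\mathcal{C}_x$, so $Y$ cannot be obtained by ``slightly enlarging'' a neighbourhood of $x$ inside $X$: it must be manufactured from the whole projective curve $\mathcal{C}_x$, with $Z$ then embedded into it --- this is why the statement reads ``$Z$ is isomorphic to an affinoid domain of $Y$'' rather than asserting that $Y$ is a neighbourhood of $x$ in $X$. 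The Riemann--Roch portion of your argument is correct as written.
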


\begin{lemma}[\protect{\cite{NP-II}}]
\label{Lemma : trivial covering}
Let $K$ be an algebraically closed field.
Let $Z$ be a quasi-smooth $K$-analytic curve. 
Let $\psi: X \to Z$ be a finite morphism. 
Let $x\in X$ be a point of type $2$ or $3$. 
Assume that $d = [\mathscr{H}(x) : \mathscr{H}(\psi(x))]$ 
is prime to $p$.\footnote{If 
the residual field $\widetilde{K}$ has characteristic $0$, 
then $p=1$ and this condition is always satisfied.} 
Then every connected component of 
$\pi_{\Omega/K}^{-1}(\psi(x))\setminus\{\sigma_\Omega(\psi(x))\}$ 
is a disc and the morphism $\psi_\Omega$ induces a trivial cover of 
degree $d$ over it.
\hfill$\Box$
\end{lemma}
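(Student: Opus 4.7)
The plan is to combine Galois symmetry on the fiber $\pi_{\Omega/K}^{-1}(\psi(x))$, the fact that ramification of a finite morphism between quasi-smooth curves is concentrated at rigid points, and the tame Kummer structure guaranteed by the hypothesis $\gcd(d,p)=1$.

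First I would reduce to a single disc. By the discussion in Section \ref{Extension of scalars.} (applied with $K$ algebraically closed), the group $\mathrm{Gal}^{\mathrm{cont}}(\Omega/K)$ fixes $\sigma_\Omega(\psi(x))$ and permutes transitively the disc components of $\pi_{\Omega/K}^{-1}(\psi(x))\setminus\{\sigma_\Omega(\psi(x))\}$, while also acting transitively on their $\Omega$-rational points. Since $\psi$ is defined over $K$, the base change $\psi_\Omega$ is equivariant for this action, so it suffices to prove the trivialization claim for a single such disc $D$. Moreover, as $\psi$ is finite, I may shrink $Z$ to an analytic neighborhood of $y:=\psi(x)$ (and $X$ correspondingly) so that $\psi^{-1}(y)=\{x\}$; this preserves the local degree $d=[\mathscr{H}(x):\mathscr{H}(y)]$.

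Next I would show that $\psi_\Omega$ is finite étale over $D$ of total degree $d$. The ramification locus of a finite morphism of quasi-smooth $K$-analytic curves is a discrete set of rigid (type $1$) points; any rigid point of $X_\Omega$ mapping into $\pi_{\Omega/K}^{-1}(y)$ must project under $\pi_{\Omega/K}$ to a rigid point of $X$ mapping to $y$, hence to $x$ itself, which is impossible because $x$ is of type $2$ or $3$. Consequently no rigid point of $\pi_{\Omega/K}^{-1}(x)$ lies in the ramification locus of $\psi_\Omega$, so $\psi_\Omega$ is finite étale over the complement $\pi_{\Omega/K}^{-1}(y)\setminus\{\sigma_\Omega(y)\}$, in particular over $D$, of total degree $d$.

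Finally I would trivialize this étale cover using tameness. Since $d$ is prime to $p$, the extension $\mathscr{H}(x)/\mathscr{H}(y)$ is a tame extension of complete valued fields; after scalar extension to the algebraically closed, spherically complete field $\Omega$ it splits as a product of $d$ copies. This gives the splitting of the geometric fiber of $\psi_\Omega$ above the Shilov boundary point $\sigma_\Omega(y)$ of $D$. The main obstacle is to propagate this splitting from the boundary point across the whole open disc $D$; I would handle this by invoking the vanishing of the tame fundamental group of an open disc over an algebraically closed non-archimedean field, which implies that every finite étale cover of $D$ of degree prime to $p$ is a trivial disjoint union of copies of $D$. Granted this, $\psi_\Omega^{-1}(D)$ decomposes as a disjoint union of $d$ copies of $D$, each mapping isomorphically, as required.
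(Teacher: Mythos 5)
The paper offers no proof of this lemma to compare against: it is imported from \cite{NP-II} and stated with an end-of-proof box. Judged on its own, your architecture (Galois reduction to a single disc $D$, étaleness of $\psi_\Omega$ over $D$ of total degree $d$, then triviality of prime-to-$p$ finite étale covers of a disc over an algebraically closed field) is a viable route, but two of your supporting assertions are false as written, and one of them carries weight.

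First, a rigid point of $X_\Omega$ does \emph{not} project under $\pi_{\Omega/K}$ to a rigid point of $X$; the entire mechanism of generic points rests on the fact that $\pi_{\Omega/K}^{-1}(x)$ contains many $\Omega$-rational points when $x$ is of type $2$ or $3$ (e.g.\ $t_x$ itself). So your chain ``rigid ramification point over $\pi_{\Omega/K}^{-1}(y)$ $\Rightarrow$ rigid point of $X$ over $y$ $\Rightarrow$ contradiction with $\psi^{-1}(y)=\{x\}$'' breaks at its first link. The conclusion (no ramification of $\psi_\Omega$ over $D$) is true and reparable: in residue-characteristic-free terms, since $\mathrm{char}(K)=0$ the non-étale locus of $\psi$ is the zero set of an analytic function $h$ on $X$, that of $\psi_\Omega$ is the zero set of $h_\Omega$, and a zero $t$ of $h_\Omega$ satisfies $|h|(\pi_{\Omega/K}(t))=0$, forcing $\pi_{\Omega/K}(t)$ to be a (rigid) zero of $h$ lying in $\psi^{-1}(y)=\{x\}$ --- impossible since $x$ is of type $2$ or $3$. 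Second, the fiber of $\psi_\Omega$ over $\sigma_\Omega(y)$ does \emph{not} split into $d$ points: the extension $\mathscr{H}(x)/\mathscr{H}(y)$ of degree $d$ is reproduced at the canonical point (for $T\mapsto T^2$ with $p\neq 2$, the fiber of $\psi_\Omega$ over the Gauss point is a single point of degree $2$), so the splitting happens over $D$ and \emph{not} at its boundary; your picture of propagating a splitting from the boundary into the disc is backwards. This second claim is fortunately not load-bearing: once finite étaleness of degree $d$ over the open disc $D$ is secured, the statement that every finite étale cover of degree prime to $p$ of an open disc over an algebraically closed complete field is split does the whole job --- but note that this is itself a nontrivial theorem requiring a precise reference (it does not reduce directly to Kummer theory for units, since the Galois closure of a degree-$d$ cover may have order divisible by $p$). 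With the first step repaired and the boundary-splitting claim deleted, your proof is correct.
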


Let $x$ be a point of $S$ of type $2$. 
By Theorem \ref{thm:bonvois} there is an affinoid domain $V$ of $X$, 
containing $x$, and an affinoid domain $W'$ of 
$\mathbb{A}^{1,\mathrm{an}}_K$, together with a finite étale map 
$\psi:V\to W'$ such that 
\begin{enumerate}
\item[(a)] $V-\{x\}$ and $W'-\{\psi(x)\}$ are both  
disjoint union of open discs;
\item[(b)] $V\cap \Gamma_S=\{x\}$;
\item[(c)] $\psi$ induces an isomorphism $D\simto\psi(D)$ 
on each disc $D$ in $V-\{x\}$. 
\end{enumerate} 

We endow $V$ with the weak triangulation $S_V:=\{x\}$. We 
can localize on $V$ as in Proposition \ref{Proposition : Localization works well}.

We may also assume that the degree 
$d:=[\mathscr{H}(x) : \mathscr{H}(\psi(x))]$ is prime to $p$, so that 
the map $\psi$ induces an isomorphism on the generic 
discs $\psi_\Omega:D(x)\simto D(\psi(x))$, 
by Lemma \ref{Lemma : trivial covering}. 

Since $x\in S$ we have $D(x)=D(x,S)$. Moreover by choosing on $W'$ 
the weak triangulation given by $S':=\{\psi(x)\}$ we also have 
$D(\psi(x))=D(\psi(x),S')$. So $\psi$ induces an isomorphism on the 
maximal discs 
$\psi_\Omega:D(x,S)\xrightarrow[]{\sim} D(\psi(x),S')$.
In the other cases the maximal disc is just the 
connected component of $V-\{x\}$ 
(resp. $W'-\{\psi(x)\}$) containing the point. 
So, by point $(c)$,  
we also have an isomorphism  for all point $y\in V$: 
\begin{equation}\label{eq : psi: D(x,S)-->D(psi(x),S')}
\psi_\Omega\;:\;D(y,S)\xrightarrow{\;\sim\;}D(\psi(y),S')\;.
\end{equation} 

Now we consider $\psi_\Omega$ as a simultaneous coordinate 
on each $D(y,S)$, for all $y\in V$, and we define
\begin{equation}
\delta_{\sigma,\psi}\;:=\;\psi\circ\sigma-\psi\;\in\; \O(V)\;.
\end{equation}
The map $y \mapsto |\delta_{\sigma,\psi}|(y)$ is continuous at $x$, it 
is locally constant outside a finite sub-graph of $V$,
and it controls the distance $|\sigma(t_y)-t_y|_\Omega$ measured 
with the coordinate $\psi_\Omega$. Let $\rho_{S,\psi}(y)$ be the 
radius of $D(y,S)$ measured with the same coordinate on it. 
Then by Lemma \ref{lem:isometry} we have
\begin{equation}\label{eq : R_S(y,sigma)=delta/rho_psi}
\R_S(y,\sigma)\;=\;|\delta_{\sigma,\psi}|(y)/\rho_{S,\psi}(y)\;.
\end{equation}
The function $y\mapsto\rho_{S,\psi}(y)$ is constant on $V$ since the 
radius of $D(y,S)=D(y,S_V)$ 
with respect to $\psi$ coincides by definition 
with the radius of $D(\psi(y),S')$. 
This proves the continuity of $\R_S(-,\sigma)_{|V}=
\R_{S_V}(-,\sigma_{|V})$ on $V$, and the finiteness of 
$\Gamma_S(\sigma)\cap V=\Gamma_{S_V}(\sigma_{|V})$.\\

There is a finite number of branches
$b_1,\ldots,b_n$ out of $x\in S$ that do not belong to 
$V$. 
%We now  focus on these branches. 
Let $b$ be one of them, 
$O_b$ be the connected component of $X-S$ 
(either an open disc or annulus) containing $b$. 
We already know that  
$\R_{S}(-,\sigma)_{|O_{b}}=
\R_{\emptyset}(-,\sigma_{|O_b})$ is continuous on $O_b$, 
and that $\Gamma_S(\sigma)\cap O_b=
\Gamma_{\emptyset}(\sigma_{|O_b})$ is locally finite. 
Theorem \ref{Thm : finiteness sigma} 
then follows from Proposition 
\ref{Prop : finiteness on a branch} below.
\end{proof}

\begin{proposition}\label{Prop : finiteness on a branch}
$\R_S(-,\sigma)$ is continuous on the closure 
$\overline{O}_b$ of $O_b$ in $X$, 
and $\Gamma_S(\sigma)\cap O_b$ is 
\emph{finite} around $x$, i.e.  there exists a neighborhood $U$ of $x$ 
in $\overline{O}_b$ 
such that $\Gamma_S(\sigma)\cap U$ is finite.
\end{proposition}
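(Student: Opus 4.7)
The plan is to apply Theorem \ref{thm:bonvois} a second time, now including the branch $b$ among the distinguished branches $b_1,\ldots,b_n$ and choosing a direction $c \neq b$. This provides an affinoid neighborhood $V_b$ of $x$ in $X$ and a finite \'etale map $\psi_b \colon V_b \to W_b'$ of degree prime to $p$ onto an affinoid domain of $\mathbb{P}^{1,\mathrm{an}}_K$, together with a section $U_b$ of $b$ contained in $V_b$ on which $\psi_b$ restricts to an isomorphism onto a section of $\psi_b(b)$, and satisfying $\psi_b^{-1}(\psi_b(b)) \subseteq V_b$. After shrinking, $U_b$ may be taken to be an open sub-annulus of $O_b$ having $x$ on its boundary, so that $U_b \cup \{x\}$ is a neighborhood of $x$ within $\overline{O}_b$.

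On $U_b \cup \{x\}$ we reproduce the argument already carried out for $V$. By Lemma \ref{Lemma : trivial covering} together with the description \eqref{eq : psi: D(x,S)-->D(psi(x),S')}, the map $\psi_{b,\Omega}$ induces an isomorphism of maximal discs $D(y,S) \xrightarrow{\sim} D(\psi_b(y), S_b')$ for every $y \in U_b$, where $S_b' := \{\psi_b(x)\}$. Since $\sigma$ is $S$-infinitesimal we have $\sigma_\Omega(t_y) \in D(y,S)$, so the function $\delta_{\sigma, \psi_b} := \psi_b \circ \sigma - \psi_b$ is well defined as an element of $\O(U_b)$ through this identification, and exactly as in \eqref{eq : R_S(y,sigma)=delta/rho_psi} one obtains
\begin{equation*}
\R_S(y, \sigma) \;=\; |\delta_{\sigma, \psi_b}|(y)/\rho_{S, \psi_b}(y), \qquad y \in U_b,
\end{equation*}
where $\rho_{S, \psi_b}(y)$ is the constant radius of $D(y,S)$ in the coordinate $\psi_b$.

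Applying Lemma \ref{Thm : Finiteness : the case of the line} to the image $\psi_b(U_b) \subseteq W_b'$ endowed with the automorphism that $\sigma$ induces on maximal discs then yields the continuity of $|\delta_{\sigma, \psi_b}|$ at $x$ and the local finiteness of its locally non-constant locus near $x$. Pulling this information back through $\psi_b$ gives the continuity of $\R_S(-, \sigma)$ at $x$ from the $O_b$-side, as well as the existence of a neighborhood $U$ of $x$ in $\overline{O}_b$ for which $\Gamma_S(\sigma) \cap U$ is finite.

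The main obstacle, paralleling the analogous step for $V$, will be to make sense of $\psi_b \circ \sigma$ as an analytic function on $U_b$: the automorphism $\sigma$ need not globally stabilize $V_b$, so $\psi_b$ must be extended along $\sigma(y)$ by means of the isomorphism $D(y,S) \simeq D(\psi_b(y), S_b')$ supplied by Lemma \ref{Lemma : trivial covering}. This is precisely where the $S$-infinitesimality hypothesis is used, since it is what guarantees $\sigma_\Omega(t_y) \in D(y,S)$ for every $y \in U_b$, allowing the coordinate $\psi_b$ to record the displacement produced by $\sigma$ inside a single identified disc.
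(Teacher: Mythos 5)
Your proposal breaks down precisely in the case the paper treats separately: when the connected component $O_b$ of $X-S$ containing $b$ is a virtual open \emph{disc} (the case $C_b\cap\Gamma_S=\emptyset$), rather than an annulus. In that situation, for $y\in U_b$ the maximal disc $D(y,S)$ is the whole $(O_b)_\Omega$, not a small disc around $t_y$. Theorem~\ref{thm:bonvois}(vii) only guarantees that $\psi_b$ restricts to an isomorphism on a \emph{section} of $b$, i.e.\ a sub-annulus $U_b\subset O_b$, and says nothing about $\psi_b$ on the full $O_b$ — indeed $\psi_b$ may be ramified there, or $O_b$ may not even be contained in the affinoid domain $V_b$. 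So the claimed identification $\psi_{b,\Omega}\colon D(y,S)\xrightarrow{\sim}D(\psi_b(y),S'_b)$ is not established; in particular $\rho_{S,\psi_b}(y)$ is not even a well-defined quantity, since $\psi_b$ is not a coordinate on $D(y,S)=(O_b)_\Omega$. Lemma~\ref{Lemma : trivial covering} does not rescue this either, as it concerns only the \emph{generic} disc $D(y)$, which is strictly smaller than $D(y,S)$ for $y\notin\Gamma_S$.

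The paper's proof circumvents this by distinguishing the two cases. When $C_b\cap\Gamma_S=\Gamma_{C_b}$ (annulus case) it localizes via Proposition~\ref{Proposition : Localization works well} and proceeds as you do; there $D(y,S)$ are small discs inside $U_b$ on which $\psi_b$ is indeed an isomorphism, so your argument is sound. When $C_b\cap\Gamma_S=\emptyset$ (disc case) the paper makes no claim about $\psi$ on $O_b$. Instead it normalizes $\psi$ (translation, rescaling, possibly inversion) so that the induced map becomes an automorphism of $C^-(0;]R,1[)$ fixing the skeleton, and then invokes Lemma~\ref{Lemma : psi isometric} to conclude that this map is an \emph{isometry} on the annulus $C_b$. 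That isometry property alone suffices to show $\R_S(y,\sigma)=|\delta_{\sigma,\psi}|(y)$ for $y$ in $C_b\cup\{x\}$, which is what you need; the maximal-disc identification you posit is both unavailable and unnecessary. Your proof as written would only establish the proposition when $O_b$ is an annulus, which is the easy half of the statement.
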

\begin{proof}
\if{It remains to prove that $\Gamma_S(\sigma)$ 
is \emph{finite} around $x$, and the continuity of $\R_{S}(-,\sigma)$ 
around $x$. 
In particular if $b=]x,y[$, 
$\R_S(-,\sigma)$ is also continuous as a function on the segment 
$[x,y[$. 
}\fi
By Theorem \ref{thm:bonvois} 
we may find an affinoid neighborhood $Z$ of $x$ and an 
étale map $\psi:Z\to \mathbb{A}^{1,\mathrm{an}}_K$ of degree 
prime to $p$, verifying point vii) of Theorem \ref{thm:bonvois} at 
the branch $b$. 
More precisely we may find sections  
$C_{b}$ and $C_{\psi(b)}$ of $b$ and $\psi(b)$ respectively 
that are open annuli, and such that $\psi$ induces an isomorphism 
between annuli:
\begin{equation}
\psi\;:\;C_{b}\;\xrightarrow{\;\sim\;}\;C_{\psi(b)}\;.
\end{equation}
Up to shrinking $Z$ we may assume that its closure verifies 
$\overline{C}_{b}\cap S=\{x\}$, 
and that  $\Gamma_S\cap C_b$ is either empty or 
equal to $\Gamma_{C_{b}}$. Now, define as above 
$\delta_{\sigma,\psi}:=\psi\circ\sigma-\psi\in\O(Z)$.

We now distinguish two situations 
$C_{b}\cap\Gamma_S=\Gamma_{C_b}$ and 
$C_{b}\cap\Gamma_S=\emptyset$.

If $C_{b}\cap\Gamma_S=\Gamma_{C_b}$, we can 
localize to $C_b$ 
as in 
Proposition \ref{Proposition : Localization works well}. 
We proceed then similarly as in 
\eqref{eq : R_S(y,sigma)=delta/rho_psi} to 
show, by Lemma \ref{lem:isometry}, 
that for all $y\in C_b\cup\{x\}$ one 
has 
$\R_S(y,\sigma):=
|\delta_{\sigma,\psi}|(y)/\rho_{S,\psi}(y)$.
So we are done in this case.
\if{
In this case one sees that for all $y\in C_b\cup\{x\}$, 
$\psi$ 
induces an isomorphism between the maximal discs 
$\psi:D(y,S)\xrightarrow{\sim}
D(\psi(y),S')$,\footnote{Indeed if 
$y\in\Gamma_{C_b}\cup\{x\}$, then 
$D(y)=D(y,S)$, and if $y\not\in\Gamma_{C_b}$, then 
$D(y,S)$ is the connected component of 
$C_{b}-\Gamma_{C_b}$ containing it.} 
where $S'$ is a weak triangulation of $\psi(Z)$ such that 
$\Gamma_{S'}\cap C_{\psi(b)}=\Gamma_{C_{\psi(b)}}$. 

As in \eqref{eq : R_S(y,sigma)=delta/rho_psi}, 
Lemma \ref{lem:isometry} shows that for all $y\in C_b$ one 
has $\R_S(y,\sigma):=|\delta_{\sigma,\psi}|(y)/\rho_{S,\psi}(y)$.
So we are done in this case.
}\fi

Assume now that $C_b\cap\Gamma_S=\emptyset$. 
Since $X-\Gamma_S$ is a disjoint union of discs, 
$O_b$ is one of them, so the localization to $C_b$ 
affects $\R_{S}(-,\sigma)$ 
(cf. Remark 
\ref{Rk : restriction of sigma to the wedge of D}). 
In order to describe the link between
$\R_S(-,\sigma)$ and the norm of 
$\delta_{\sigma,\psi}$ 
we need the following Lemma which is deduced from 
\cite[9.7.1/2]{BGR}:

\begin{lemma}\label{Lemma : psi isometric}
Let $T$ be a coordinate on $\mathbb{A}_K^{1,\mathrm{an}}$.
Let $D^-(0,1)$ be the open unit disc, and let 
\begin{equation}
C\;:=\;C^-(0;]R,1[)\;=\;\{x\textrm{ such that }R<|T(x)|<1\}\;, \quad0<R<1\;.
\end{equation}

Let $\psi:C^-(0;]R,1[)\xrightarrow{\;\sim\;}C^-(0;]R,1[)$ be an 
isomorphism. Then
\begin{enumerate}
\item $\psi$ permutes the set of maximal discs in $C$ (i.e. if $D$ is a 
connected component of $C-\Gamma_C$, then 
$\psi$ induces an isomorphism of $D$ with another 
connected component $D'$).
\item $\psi$ is either the identity on the skeleton $\Gamma_C$ or it is 
the map sending $x_{0,\rho}$ into $x_{0,\rho^{-1}R}$.
\end{enumerate}
Moreover if $\psi$ induces the identity on $\Gamma_C$, 
then it is also isometric : for all $L/K$ and 
all $L$-rational points $t_1,t_2\in C(0;]R,1[)$ one has 
$|\psi(t_1)-\psi(t_2)|_L=|t_1-t_2|_L$.\hfill$\Box$
%\begin{equation}
%\qquad\qquad\qquad
%|\psi(t_1)-\psi(t_2)|_L\;=\;|t_1-t_2|_L\;.\qquad\qquad\qquad\Box
%\end{equation}
\end{lemma}
Let us come back to our situation: $x$ is a point of type 
$2$, so $O_b\cong D^-(0,1)$, 
and $C_b\cong C^-(0;]R,1[)$ for some $0<R<1$, 
as in Lemma \ref{Lemma : psi isometric}. 
\if{
Notice that in this case $\sigma$ stabilizes globally $C_b$, and it fixes 
the points of $\Gamma_{C_b}$, but it does 
not necessarily stabilizes its maximal discs (i.e. the connected 
components of $C_b-\Gamma_{C_b}$).
In other words $\sigma$ does not necessarily induces on $C_b$ an 
infinitesimal automorphism with respect to any weak triangulations of 
it.
Nevertheless to prove our statement we only need an explicit 
expression of $\R_S(-,\sigma)$ on $O_b\cup\{x\}$ expressing it as a 
function which is visibly constant outside some unspecified finite graph.
}\fi
We can assume that $\psi(x)=x_{0,1}$, and that 
$C_{\psi(b)}$ is an annulus with $x_{0,1}$ in its boundary. 
By translating, rescaling, and possibly considering the inversion 
$x\mapsto x^{-1}$ of $\mathbb{G}^{\mathrm{an}}_{m,K}$ 
we can assume that 
$\psi(C_b)=C_{\psi(b)}=C^-(0;]R,1[)$ with the same 
$R$ of  $C_b$.

So by Lemma \ref{Lemma : psi isometric}, $\psi$ is isometric. 
With these normalizations, by the above arguments, 
for all $y\in C_b\cup\{x\}$ we have 
$\R_S(y,\sigma)=|\delta_{\sigma,\psi}|(y)$.
This concludes the proof of Proposition 
\ref{Prop : finiteness on a branch}.
\end{proof}
\if{
Now, just after Lemma \ref{Lemma : trivial covering}
we have found an affinoid domain $V$ 
containing our point $x\in S$ of type $2$, 
such that $V\cap\Gamma_S=\{x\}$, on which 
$\R_S(-,\sigma)$ verifies Theorem \ref{Thm : finiteness sigma}.

Proposition \ref{Prop : finiteness on a branch} ensures that if $V'$ is 
the union of $V$ together with each connected component of 
$X-\Gamma_S$ (necessarily an open disc) having $x$ at its 
boundary, then $\R_S(-,\sigma)$ verifies again 
Theorem \ref{Thm : finiteness sigma} on $V'$. 

Now the remaining branches out of $x$ corresponds to the directions 
$b:=]x,y[$ belonging to $\Gamma_S$. Again 
Proposition \ref{Prop : finiteness on a branch} (together with 
Proposition \ref{Proposition : Localization works well}) 
shows that $\R_S(-,\sigma)$ verifies the claims of Theorem 
\ref{Thm : finiteness sigma} 
on the union of $V'$ with the annulus whose skeleton is $]x,y[$. 
If necessary we are allowed to shrinking $]x,y[$.

Doing so for all directions out of $x$ that belong to $\Gamma_S$ we 
obtain an open neighborhood $Y_x$ of $x$ of the type 
$Y_x=\tau_S^{-1}(\Lambda_x)$ as in Definition 
\ref{Def : Gamma_S-cov} on which Theorem 
\ref{Thm : finiteness sigma} holds.

Theorem \ref{Thm : finiteness sigma} is now proved.
\end{proof}
}\fi

\if{\subsection{Generalized $\Sigma$-algebras and generalized $\Sigma$-modules}

\comment{Credo che questa sezione sia meglio sopprimerla ?}
\subsubsection{Generalized $\Sigma$-algebras.} A \emph{generalized $\sigma$-algebra} is a ring $\B$ together with a collection of pairs of ring morphisms 
\begin{equation}
\{\;\B\begin{picture}(23,0)
\put(9,-5.5){\begin{tiny}$i_{\sigma}$\end{tiny}}
\put(3,-2.8){$\longrightarrow$}
\put(9,8.5){\begin{tiny}$\sigma$\end{tiny}}
\put(3,3.5){$\longrightarrow$}\end{picture}
\B_{\sigma}\;\}_{\sigma\in\Sigma}\;,
\end{equation}
where $\{\B_{\sigma}\}_{\sigma\in\Sigma}$ is a family of rings.
\subsubsection{$\Sigma$-constants.} We denote by $\B^{\Sigma}$, 
or $\B^{\Sigma=1}$, 
the sub-ring of $\B$ formed by elements $b\in\B$ 
verifying $\sigma(b)=i_{\sigma}(b)$, for all $\sigma\in\Sigma$. We call its elements \emph{$\Sigma$-constants} of $\B$ 
or equivalently \emph{$\Sigma$-invariant elements}. 
\subsubsection{Generalized $\Sigma$-modules.} A \emph{generalized $\Sigma$-module} is a finite free $\B$-module $\M$ 
together with a collection of isomorphisms
\begin{equation}
\{\;\sigma^{\M}:i_{\sigma}^*\M\simto \sigma^*\M\;\}_{\sigma\in\Sigma}\;,
\end{equation}
where $\sigma^*\M$ and $i^*_{\sigma}\M$ denote the scalar extension of $\M$ to $\B_{\sigma}$ via $\sigma$ and 
$i_\sigma$ respectively. 
A morphism between two generalized $\Sigma$-modules is a $\B$-linear map $\alpha\in\Hom_{\B}(\M,\N)$ satisfying 
$\sigma^*(\alpha)\circ\sigma^{\M}=\sigma^{\N}\circ i_{\sigma}^*(\alpha)$, for all $\sigma\in\Sigma$:
\begin{equation}
\xymatrix{
\ar@{}[dr]|{\odot}\sigma^*\M\ar[d]_{\sigma^*(\alpha)}&\ar[l]^-{\sim}_-{\sigma^{\M}} 
i_{\sigma}^*\M\ar[d]^{i_\sigma^*(\alpha)}\\
\sigma^*\N&\ar[l]_-{\sim}^-{\sigma^{\N}}i_{\sigma}^*\N&}\quad.
\end{equation}

Let $\Hom_{\B}^{\Sigma}(\M,\N)$ denote the set of \emph{morphisms}, it is canonically a $\B^{\Sigma}$-module. The 
category of generalized $\Sigma$-modules will be denoted by $\Sigma-\Mod(\B)$. 
This is a tensor category. 
The \emph{unit object} is $\mathbb{I}_{\B}:=(\B,\sigma^{\B})$, with $\sigma^{\B}=\mathrm{Id}_{\B_{\sigma}}$, where we 
identify $\B\otimes_{\B,\sigma}\B_\sigma$ and $\B\otimes_{\B,i_\sigma}\B_\sigma$ with $\B_\sigma$. The ring of 
endomorphisms of the unit object is then canonically identified with $\B^{\Sigma}$. The internal tensor product $\otimes$ is given by 
$(\M\otimes_{\B}\N,\sigma^{\M\otimes\N})$, with $\sigma^{\M\otimes\N}$ defined by
\begin{eqnarray}
\sigma^{\M\otimes\N} &=& \sigma^{\M}\otimes\sigma^{\N}\;,
\end{eqnarray}
where we identify $(\M\otimes_{\B}\N)\otimes_{\B,*}\B_{\sigma}$ with 
$(\M\otimes_{\B,*}\B_{\sigma})\otimes_{\B_{\sigma}}(\N\otimes_{\B,*}\B_{\sigma})$, where $*$ denotes $\sigma$ or 
$i_\sigma$. We say that $\M$ is \emph{trivial} if it is direct sum of the unit object $\mathbb{I}_{\B}$.

\begin{remark}
If $\Sigma$ is reduced to a single element $\sigma$, if $\B=\B_\sigma$, and $i_{\sigma}=\mathrm{Id}$, then we 
obtain the classical definition of $\sigma$-modules. 
\end{remark}

\subsubsection{Matrix of $\sigma^{\M}$.}\label{matrix of sigma} If $\e:=\{e_1,\ldots,e_n\}\subset\M$ is a basis of 
$\M$, the
operator $\sigma^{\M}$ is given by its values on $\e\otimes 1$. If
$\sigma^{\M}(e_j\otimes 1)=\sum_{i=1}^na_{\sigma,i,j}(e_i\otimes 1)$ we call 
\begin{equation}
A_\sigma\;:=\;(a_{\sigma,i,j})_{i,j=1,\ldots,n}\;\in\; GL_n(\B_\sigma)	
\end{equation}
 the matrix of $\sigma^{\M}$ in the basis $\e$.

\subsection{Solutions (formal definition)}
By analogy with the differential setting, we now define the notion of 
solution of a generalized $\Sigma$-module. For this we firstly need  to 
define the notion of \emph{extension of generalized $\Sigma$-algebras}.

\subsubsection{Extension of generalized $\Sigma$-algebras.} 
Let $\Sigma'$ be a family of morphisms, and let $\{\B'\begin{picture}(23,0)
\put(9,-5.5){\begin{tiny}$i_{\sigma'}$\end{tiny}}
\put(3,-2.8){$\longrightarrow$}
\put(9,8.5){\begin{tiny}$\sigma'$\end{tiny}}
\put(3,3.5){$\longrightarrow$}\end{picture}
\B_{\sigma'}'\}_{\sigma'\in\Sigma'}$ be a generalized $\Sigma'$-algebra. Assume given a surjective map 
$\P:\Sigma'\to\Sigma$. Assume moreover that $\B'$ is a $\B$-algebra with structural morphism $j:\B\to\B'$, and assume 
given, for all $\sigma'\in\Sigma'$, a ring morphism $\Delta_{\sigma'}:\B_{\P(\sigma')}\to\B_{\sigma'}$ making 
$\B_{\sigma'}$ a $\B_{\P(\sigma')}$-algebra. We say that the data of \begin{equation}
(\;\B'\begin{picture}(23,0)
\put(9,-5.5){\begin{tiny}$i_{\sigma'}$\end{tiny}}
\put(3,-2.8){$\longrightarrow$}
\put(9,8.5){\begin{tiny}$\sigma'$\end{tiny}}
\put(3,3.5){$\longrightarrow$}\end{picture}
\B_{\sigma'}'\;,\;j\;,\;\Delta_{\sigma'}\;)_{\sigma'\in\Sigma'}
\end{equation} 
is an \emph{extension of generalized $\Sigma$-algebras} if the following diagrams commute for all $\sigma'\in\Sigma'$:
\begin{equation}
\xymatrix{
\ar@{}[dr]|{\odot}\B'\ar[r]^{\sigma'}&\B'_{\sigma'}\ar@{}[rrd]|{;}&&\B'\ar[r]^{i_{\sigma'}}\ar@{}[dr]|{\odot}
&\B'_{\sigma'}&\\
\B\ar[u]^j\ar[r]_{\sigma}&\ar[u]_{\Delta_{\sigma'}}\B_{\sigma}&&\B\ar[u]^j\ar[r]_{i_{\sigma}}&\ar[u]_{\Delta_{\sigma'}}
\B_{\sigma}&,
}
\end{equation}
where $\sigma=\P(\sigma')$.

\subsubsection{Pull back.}\label{PULL-BACJK} With the above notations let $\M\in \Sigma-\Mod(\B)$. The $\B'$-module 
$j^*\M:=\M\otimes_{\B,j}\B'$ is  a $\Sigma'$-module in a canonical way by setting 
$(\sigma')^{j^*\M}:=\Delta_{\sigma'}^*(\sigma^{\M})=\sigma^{\M}\otimes_{\B_{\sigma},\Delta_{\sigma'}}\mathrm{Id}
_{\B_{\sigma'}'}$, with $\sigma=\P(\sigma')$, where $(\sigma')^*j^*\M$ (resp. $i_{\sigma'}^*j^*\M$) is canonically 
identified with $\Delta_{\sigma'}^*\sigma^*\M$ (resp. $\Delta_{\sigma'}^*i_{\sigma}^*\M$). If 
$A_\sigma=(a_{\sigma,i,j})_{i,j}\in GL_n(\B_{\sigma})$ is the matrix of $\sigma^{\M}$ in a fixed basis $\e\subset\M$ 
(cf. Section \ref{matrix of sigma}), then the matrix of $(\sigma')^{j^*\M}$ in the basis $\e\otimes 1$ is given by 
\begin{equation}
A_{\sigma'}\;\;:=\;\;\Delta_{\sigma'}(A_\sigma)\;\;=\;\;(\Delta_{\sigma'}(a_{\sigma,i,j}))_{i,j}\in 
GL_n(\B_{\sigma'}')\;.
\end{equation}
If $\alpha:\M\to\N$ is a morphism in $\Sigma-\Mod(\B)$, then $j^*\alpha:j^*\M\to j^*\N$ commutes with $\Sigma'$ and 
lies hence in $\Sigma'-\Mod(\B')$.

The definition of the scalar extension functor depends highly on the chosen family 
$\Delta:=\{\Delta_{\sigma'}\}_{\sigma'\in\Sigma'}$, so we denote it by 
\begin{equation}
j^*_{\Delta}\;\;:\;\;\Sigma-\Mod(\B)\;\;\xrightarrow{\qquad}\;\;\Sigma'-\Mod(\B')\;.
\end{equation}

\subsubsection{Solutions.} Let $\M\in \Sigma-\Mod(\B)$. A solution of $\M$ with values in $\B$ is an element of $\M$ 
satisfying $\sigma^{\M}(y\otimes 1)=y\otimes 1$, where $\sigma^{\M}:i_\sigma^*\M\simto \sigma^*\M$ for all 
$\sigma\in\Sigma$. Let now $\B'/\B$ be an extension of generalized $\Sigma$-algebras. 
A \emph{solution of $\M$ with values in $\B'$} is an element in 
\begin{equation}
(j^*\M)^{\Sigma'=1}\;\;:=\;\;\{\; y\in\M\otimes\B' \;|\; (\sigma')^{j^*\M}(y\otimes 1)=y\otimes 1,\;\textrm{ for all 
}\sigma'\in\Sigma' \;\}	\;.
\end{equation}
One sees that $(j^*\M)^{\Sigma'=1}$ is canonically a $(\B')^{\Sigma'=1}$-module.
\if{
\subsubsection{Solutions.} Let $\M\in \Sigma-\Mod(\B)$. A \emph{solution of $\M$ with
values in $\B'$} is a $\B$-linear map $\alpha:\M\to\B'$ satisfying $(\sigma')^*\alpha\otimes 1 
=i_{\sigma'}^*(\alpha\otimes 1)\circ \Delta_{\sigma'}^*\sigma^{\M}$, for all $\sigma'\in\Sigma'$, as expressed by the 
following commutative diagram:
\begin{equation}
\xymatrix{
\Delta_{\sigma'}^*\sigma^*\M
\ar[r]^-{\sim}\ar[d]_-{\sigma^{j^*\M}:=\Delta^*_{\sigma'}\sigma^{\M}}&
(\sigma')^*j^*\M\ar[rr]^{(\sigma')^{*}j^*\alpha}\ar@{}[drr]|{\odot}&&(\sigma')^*(\B')\ar[r]^-{\sim}&\B_{\sigma'}
'\ar@{=}[d]\\
\Delta_{\sigma'}^*i_{\sigma}^*\M
\ar[r]_-{\sim}&
i_{\sigma'}^*j^*\M\ar[rr]_-{i_{\sigma'}^{*}j^*\alpha}&&i_{\sigma'}^*(\B')\ar[r]_-{\sim}&\B_{\sigma'}'\\
}
\end{equation}
where $\sigma=\P(\sigma')$. If $\mathbb{I}_{\B}$ denotes the unit object, then the set of solutions is canonically 
identified with $\Hom_{\B'}^{\Sigma'}(j^*_{\Delta}\M,\mathbb{I}_{\B'})$. The solutions of $\M$ with values in $\B'$ 
are hence naturally a $(\B')^{\Sigma'}$-module. 
}\fi
\if{
\begin{lemma}
If $\mathrm{rank_{\B}}\M=n$, then $\Hom_{\B'}^{\Sigma'}(j^*_{\Delta}\M,\mathbb{I}_{\B'})$ is a finite free 
$(\B')^{\Sigma'}$-module of rank $\leq n$.
\end{lemma}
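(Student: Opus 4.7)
The plan is to realize $\Hom_{\B'}^{\Sigma'}(j^*_{\Delta}\M,\mathbb{I}_{\B'})$ as the $(\B')^{\Sigma'}$-submodule of $\Sigma'$-invariants inside the free $\B'$-module $\Hom_{\B'}(j^*\M,\B')$, and then to bound its rank by a Wronskian-type argument. First, I would unravel the condition for a $\B'$-linear morphism $\alpha:j^*\M\to\B'$ to lie in $\Hom_{\B'}^{\Sigma'}$, writing it as the commutativity of a square involving $\sigma^{\M}$, $\sigma'$ and $i_{\sigma'}$ for each $\sigma'\in\Sigma'$. Since $j^*\M=\M\otimes_{\B,j}\B'$ has rank $n$ over $\B'$, the ambient $\B'$-module $\Hom_{\B'}(j^*\M,\B')$ is free of rank $n$; fixing a $\B$-basis $\e$ of $\M$ turns an invariant $\alpha$ into a column vector $(\alpha(e_1\otimes 1),\dots,\alpha(e_n\otimes 1))\in(\B')^n$ satisfying a matrix equation governed by the $A_\sigma$'s of Section \ref{matrix of sigma}.

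Next I would establish the following ``Wronskian lemma'': if $\alpha_1,\dots,\alpha_m\in \Hom_{\B'}^{\Sigma'}(j^*_\Delta\M,\mathbb{I}_{\B'})$ are $(\B')^{\Sigma'}$-linearly independent, then they are $\B'$-linearly independent. Assuming they are not, pick a non-trivial $\B'$-relation $\sum_{i}c_i\alpha_i=0$ of minimal length, say indexed by $i_1<\dots<i_k$ with $c_{i_k}\neq 0$; after normalizing (e.g.\ dividing by $c_{i_k}$) we may assume $c_{i_k}=1$. Applying an arbitrary $\sigma'\in\Sigma'$ to this relation and using that each $\alpha_j$ intertwines $\sigma'$ and $i_{\sigma'}$, we obtain a second relation with coefficients $\sigma'(c_i)$, where $\sigma'(c_{i_k})=1$. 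Subtracting produces a strictly shorter relation with coefficients $c_i-\sigma'(c_i)$; by minimality they all vanish, so each $c_{i_j}$ belongs to $(\B')^{\Sigma'}$, contradicting $(\B')^{\Sigma'}$-independence.

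Combining the lemma with the rank bound on $\Hom_{\B'}(j^*\M,\B')$ immediately gives that any $(\B')^{\Sigma'}$-independent family in $\Hom_{\B'}^{\Sigma'}(j^*_\Delta\M,\mathbb{I}_{\B'})$ has at most $n$ elements, so the $(\B')^{\Sigma'}$-rank of the invariants is at most $n$. For the finite freeness statement I would then invoke the standing assumption that $(\B')^{\Sigma'}$ is a field (which is the case in all applications of this paper, where $\B'$ is a ring of analytic functions and its $\Sigma'$-constants reduce to $K$): a finitely generated torsion-free module over a field is automatically free of finite rank equal to its rank.

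The main subtlety I expect to have to handle carefully is the normalization step ``divide by $c_{i_k}$'' in the Wronskian argument, since $\B'$ need not be a field and $c_{i_k}$ need not be invertible. The standard remedy is to work instead with a relation of minimal \emph{support} and, at the inductive step, to replace the relation $\sum c_i\alpha_i=0$ by $c_{i_k}\cdot\sigma'(\sum c_i\alpha_i)-\sigma'(c_{i_k})\cdot\sum c_i\alpha_i=0$, which cancels the $\alpha_{i_k}$-term without needing an inverse and still forces the coefficients to be $\Sigma'$-invariant. Once this is in place, the rest of the argument proceeds as above.
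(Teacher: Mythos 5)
Your overall strategy is the right one, and it is in fact more explicit than the paper's own argument, which merely reduces to $\B=\B'$, embeds $\Hom_{\B}^{\Sigma}(\M,\mathbb{I}_{\B})$ into $\M^*$, and observes that the $\B$-submodule it generates is $\Sigma$-stable of rank $\leq n$ — leaving the same Wronskian-type input that you isolate entirely implicit. However, two steps of your argument do not go through as written. First, in the generalized setting of this section a morphism $\sigma'\in\Sigma'$ is a \emph{pair} of ring maps $\sigma',i_{\sigma'}:\B'\to\B'_{\sigma'}$ into a possibly different ring, and the solution condition reads $\sigma'(y)=A_{\sigma'}\cdot i_{\sigma'}(y)$. "Applying $\sigma'$" to a relation $\sum_j c_j\alpha_j=0$ therefore produces a relation over $\B'_{\sigma'}$ among the pullbacks $i_{\sigma'}^*(\alpha_j)$, not a new relation over $\B'$ among the $\alpha_j$ themselves; the minimal-support induction only closes up when $i_{\sigma'}=\mathrm{Id}$ and $\B'_{\sigma'}=\B'$ (the classical case), or after adding an injectivity/flatness hypothesis allowing descent. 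Second, and more seriously, your cross-multiplication remedy does not "still force the coefficients to be $\Sigma'$-invariant": minimality only yields $c_{i_k}\sigma'(c_i)=\sigma'(c_{i_k})\,c_i$, i.e. (when $\B'$ is a domain) that the ratios $c_i/c_{i_k}$ are $\Sigma'$-invariant elements of $\mathrm{Frac}(\B')$. To contradict independence you then need either that $c_{i_k}$ is a unit, or that independence over $(\B')^{\Sigma'}$ implies independence over $\mathrm{Frac}(\B')^{\Sigma'}$ — which requires something like $\mathrm{Frac}(\B')^{\Sigma'}=\mathrm{Frac}\bigl((\B')^{\Sigma'}\bigr)$, precisely the condition the paper builds into its (commented-out) notion of categorical non-degeneracy.

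The freeness assertion has the same problem: the lemma's hypotheses nowhere state that $(\B')^{\Sigma'}$ is a field, and the paper explicitly studies degenerate situations where it is not — e.g. the action of $\bs{\mu}_{p^n}$ on an annulus, where $\O(X)^{\sigma_q=1}$ contains non-constant functions. Over such a ring of constants the module of solutions need not be free, so either the field hypothesis must be added to the statement (as it implicitly is in all the applications), or the conclusion must be weakened. In short: your identification of the Wronskian lemma as the key point is correct and matches what the paper's sketch silently uses, but the inductive step as you state it proves invariance of ratios rather than of coefficients, and both the descent along $i_{\sigma'}$ and the freeness conclusion need hypotheses that are absent from the statement.
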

\begin{proof}
Clearly one can assume $\B=\B'$, $\Sigma=\Sigma'$, $j=\mathrm{Id}$,\ldots. Then 
$\Hom_{\B}^{\Sigma}(\M,\mathbb{I}_{\B})\subset \Hom_{\B}(\M,\mathbb{I}_{\B})=\M^*$. The $\B$-module $\N^{*}$ generated 
by $\Hom_{\B}^{\Sigma}(\M,\mathbb{I}_{\B})$ is stable under the action of $\Sigma$ on $\M^{*}$, its $\B$-rank is then 
less than the $\B$-rank of $\M$.
\end{proof}

\begin{lemma}
Let $n:=\mathrm{rank_{\B}}\M=n$. Then 
$\mathrm{rank}_{(\B')^{\Sigma'}}\Hom_{\B'}^{\Sigma'}(j^*_{\Delta}\M,\mathbb{I}_{\B'})=n$ if and only if 
$j^{*}_{\Delta}\M$ is trivial over $\B'$ (i.e. direct sum of copies of $\mathbb{I}_{\B'}$).
 \end{lemma}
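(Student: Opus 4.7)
The direction \emph{$j^*_\Delta\M$ trivial $\Rightarrow$ rank equals $n$} is immediate from functoriality of $\Hom^{\Sigma'}$. If $j^*_\Delta\M\cong\mathbb{I}_{\B'}^{\,n}$ in $\Sigma'-\Mod(\B')$ then
\begin{equation*}
\Hom_{\B'}^{\Sigma'}(j^*_\Delta\M,\mathbb{I}_{\B'})\;\cong\;\Hom_{\B'}^{\Sigma'}(\mathbb{I}_{\B'},\mathbb{I}_{\B'})^{n}\;=\;((\B')^{\Sigma'})^{n},
\end{equation*}
since the endomorphisms of the unit object identify canonically with the ring of $\Sigma'$-constants.

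For the converse, assume that $V:=\Hom_{\B'}^{\Sigma'}(j^*_\Delta\M,\mathbb{I}_{\B'})$ is free of rank $n$ over $(\B')^{\Sigma'}$ and choose an $(\B')^{\Sigma'}$-basis $\alpha_1,\ldots,\alpha_n$ of $V$. Assemble them into a single morphism
\begin{equation*}
\alpha\;:=\;(\alpha_1,\ldots,\alpha_n)\;:\;j^*_\Delta\M\;\longrightarrow\;\mathbb{I}_{\B'}^{\,n}
\end{equation*}
in $\Sigma'-\Mod(\B')$. Showing that $j^*_\Delta\M$ is trivial amounts to proving that $\alpha$ is an isomorphism. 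Fix a $\B$-basis $\e=\{e_1,\ldots,e_n\}$ of $\M$; in the induced basis $\e\otimes 1$ of $j^*_\Delta\M$ and the canonical basis of $\mathbb{I}_{\B'}^{n}$, the morphism $\alpha$ is represented by a square matrix $H=(\alpha_i(e_j\otimes 1))_{i,j}\in M_n(\B')$, so the problem reduces to checking that $\det H\in(\B')^{\times}$.

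The heart of the argument is a Wronskian-type lemma showing that $(\B')^{\Sigma'}$-linear independence of the $\alpha_i$'s forces $\B'$-linear independence of the columns $H_1,\ldots,H_n$ of $H$. Writing out the $\Sigma'$-equivariance of each $\alpha_i$ in coordinates, via the matrix $A_{\sigma'}\in GL_n(\B'_{\sigma'})$ of $(\sigma')^{j^*_\Delta\M}$ in the basis $\e\otimes 1$, one sees that all columns $H_i$ satisfy one and the same functional equation with respect to every $\sigma'\in\Sigma'$. Suppose there is a nontrivial $\B'$-linear relation $\sum_i b_iH_i=0$ of minimal length; normalize a nonzero coefficient, apply both $\sigma'$ and $i_{\sigma'}$ to the relation, and subtract: the common functional equations cancel, producing a strictly shorter nontrivial relation \emph{unless} $\sigma'(b_i)=i_{\sigma'}(b_i)$ for every $i$ and every $\sigma'\in\Sigma'$. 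Minimality forces the latter, i.e.\ $b_i\in(\B')^{\Sigma'}$, and then $\sum_i b_i\alpha_i=0$ contradicts the independence of the $\alpha_i$ in $V$. Hence $H$ has $\B'$-linearly independent columns; combined with the rank bound $\leq n$ of the preceding lemma, this is enough to conclude $H\in GL_n(\B')$, so $\alpha$ is an isomorphism and $j^*_\Delta\M$ is trivial.

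The main technical obstacle is the normalization step in the Wronskian argument, which requires the pivot coefficient to be invertible, at least after a suitable localization. In the concrete situations of the paper $\B'$ is a ring of analytic functions (on a tubular neighborhood of the diagonal, on a quasi-Stein domain, \ldots), and one argues stalk-by-stalk or by passing to the total ring of fractions; this is routine but must be spelled out for the Wronskian step to be fully rigorous.
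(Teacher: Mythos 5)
Your overall strategy --- assemble a constants-basis of $\Hom_{\B'}^{\Sigma'}(j^*_{\Delta}\M,\mathbb{I}_{\B'})$ into a square matrix $H$, prove $\B'$-independence of its columns by a Wronskian descent, and then declare $H$ invertible --- is essentially the paper's own (the paper phrases it via the sub-module generated by the solutions and the bare assertion that ``the base change matrix is invertible''). The Wronskian step is fine in the classical case $\B'_{\sigma'}=\B'$, $i_{\sigma'}=\mathrm{id}$, modulo the normalization issue you already flag; note, however, that in the generalized formalism the shortened relation you produce lives over $\B'_{\sigma'}$ rather than over $\B'$, so minimality of a relation over $\B'$ is not directly contradicted unless $i_{\sigma'}$ is injective (or flat) in a suitable sense --- a hypothesis worth making explicit.

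The genuine gap is the final sentence: ``combined with the rank bound $\leq n$ of the preceding lemma, this is enough to conclude $H\in GL_n(\B')$.'' It is not. Over a commutative ring, $\B'$-linear independence of the $n$ columns of a square matrix only makes $\det H$ a non-zero-divisor (McCoy's theorem), i.e.\ it makes $\alpha$ injective; an injective endomorphism of $(\B')^n$ need not be surjective (multiplication by $T$ on $K[T]$), and the bound $\mathrm{rank}\leq n$ for the solution module over the constants says nothing about surjectivity of $\alpha$. What is needed is a separate argument that $\det H$ is a unit. The natural route is to observe that $H$ is a fundamental solution matrix, so that $\det H$ satisfies a rank-one equation $\sigma'(\det H)=\det(A_{\sigma'})^{\pm 1}\cdot i_{\sigma'}(\det H)$ with unit coefficient, and then to prove, for the specific rings $\B'$ occurring in the paper (analytic functions on discs, tubes, quasi-Stein domains), that a non-zero solution of such an equation has no zeros --- for instance because a zero of $\det H$ propagates along the $\Sigma$-orbit, which accumulates when the action is infinitesimal and non-degenerate. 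Over an arbitrary generalized $\Sigma'$-algebra the implication ``full solution space $\Rightarrow$ trivial'' is not a formal consequence of the rank bound, so this step must be supplied for the ring at hand rather than asserted.
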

\begin{proof}
Clearly one can assume $\B=\B'$, $\Sigma=\Sigma'$, $j=\mathrm{Id}$,\ldots. We consider the sub-$\B$-module $\N$ of 
$\M$ generated by the solutions ???, it is stable under $\Sigma$ ???, trivial over $\B$ ???. By the above lemma one 
has $\mathrm{rank}_{\B}(\N)=n$. Since the base change matrix is invertible ??? then $\M=\N$. 
\end{proof}
\begin{remark}
A generalized $\Sigma$-module $\M$ is trivial (i.e. direct sum of the unit object) if and only if 
$\Hom_{\B}^{\Sigma}(\M,\mathbb{I}_{\B})$ is a finite free $\B^\Sigma$-module having the same rank of $\M$. In other 
words $\M$ is trivial over $\B$ if and only if it has a complete basis of solutions in $\B$. Indeed if $Y\in GL_n(\B)$
\end{remark}
}\fi

\subsubsection{Matrix notation.} With the above notations, fix a $\sigma'\in\Sigma'$, and let $\sigma=\P(\sigma')$. 
Let $\e=\{e_1,\ldots,e_n\}\subset\M$ be a basis of $\M$, and let $A_{\sigma}$ be the matrix of $\sigma^{\M}$ in the 
basis $\e$. We identify $j^*\M$ with $(\B')^n$ by sending the canonical basis of $(\B')^n$ into $\e$. With this 
identification the vector $\Bigl(
\sm{y_1\\
\begin{picture}(0,10)
\put(-2,0){\vdots}
\end{picture}\\
y_n} \Bigr)\in (\B')^n$ is a solution of $\M$ with values in $\B'$, if and only if it verifies
$\Bigl(
\sm{\sigma'(y_1)\\
\begin{picture}(0,10)
\put(-2,0){\vdots}
\end{picture}\\
\sigma'(y_n)} \Bigr)=A_{\sigma'}\cdot\Bigl(
\sm{i_{\sigma'}(y_1)\\
\begin{picture}(0,10)
\put(-2,0){\vdots}
\end{picture}\\
i_{\sigma'}(y_n)} \Bigr)$, where $A_{\sigma'}=\Delta_{\sigma'}(A_\sigma)\in GL_n(\B_{\sigma'})$. By abuse of notation 
we will say that \emph{$\M$ is defined in the basis $\e$ by the family of
$\Sigma$-difference equations}
\begin{equation}\label{sigma(Y)=AY}
\sigma(Y)\;\;=\;\;A_\sigma\cdot i_\sigma(Y)\;,\quad\sigma\in\Sigma \;.
\end{equation}

}\fi
\section{Deformation}
\label{Deformation}
In this section we show how to deform 
a differential equation into a so called $\sigma$-difference 
equation.
\subsection{$\sigma$-difference equations.}
Let $\sigma:X\simto X$ be an automorphism of $X$.
A $\sigma$-difference equation over $X$ is a locally free  
$\O_X$-module $\Fs$ of finite rank, together with an $\O_X$-linear 
isomorphism 
\begin{equation}\label{eq : sigma:F-->F}
\sigma\;:\;\Fs\;\xrightarrow{\;\sim\;}\;\sigma^*\Fs\;.
\end{equation}
A morphism between $\sigma$-difference equations 
$\alpha:(\Fs,\sigma)\to(\Fs',\sigma')$ 
is an $\O_X$-linear map $\alpha:\Fs\to\Fs'$ such that 
$\sigma'\circ\alpha=\sigma^*(\alpha)\circ\sigma$. 
Usual operations of linear algebra exist. The category 
admits an internal tensor product, and a unit object 
which is 
$(\O_X,\mathrm{Id}_{\O_X}:\O_X\simto 
\sigma^*(\O_X))$. 
It not possible to localize such a structure to an analytic 
domain $Y$ in $X$, simply because $Y$ is possibly not 
stable under $\sigma$.

If $\Fs$ is free and $X$ is quasi-Stein, then 
\eqref{eq : sigma:F-->F} corresponds to a 
$\sigma$-semi-linear automorphism 
$\sigma:\Fs(X)\simto\Fs(X)$ (i.e. satisfying 
$\sigma(af)=\sigma(a)\sigma(f)$, for all 
$a\in \O(X)$, $f\in\Fs(X)$). 
If a basis of $\Fs$ is chosen, this 
corresponds to a system of the form:
\begin{equation}\label{eq : sigma(f)=Af}
\sigma(f_1,\ldots,f_r)^t=A_\sigma\cdot(\sigma(f_1),\ldots,\sigma(f_r))^t\;,
%\begin{tiny}
%\left(\sm{\sigma(f_1)\\\vdots\\\\\sigma(f_r)}\right)\;=\;
%A_{\sigma}\cdot\left(\sm{f_1\\\vdots\\\\f_r}\right)\;,
%\end{tiny}
\qquad A_{\sigma}\in GL_r(\O(X))\;.
\end{equation}
\begin{definition}
Let $\Sigma\subseteq\mathfrak{S}(X,S)$ be a family of 
$S$-infinitesimal automorphisms of $X$. A $\Sigma$-module is a 
locally free $\O_X$-module $\Fs$ of finite type, together with a 
structure of $\sigma$-difference equation for all $\sigma\in\Sigma$.
A morphism of $\Sigma$-modules is an $\O_X$-linear morphism 
commuting with the action of all $\sigma\in\Sigma$. We denote by 
$\mathrm{Hom}^{\Sigma}(\Fs,\Fs')$ the group of morphisms.
\end{definition}

\subsection{$\sigma$-compatibility}
Let $\sigma\subseteq\mathfrak{S}(X,S)$ be an 
$S$-infinitesimal automorphism of $X$. 
Let $\Fs$ be a differential equation over $X$. 
We say that 
$\Fs$ is \emph{$\sigma$-compatible} if for all $x\in X$ 
we have $D^+(x,\sigma)\subset D(x,\Fs)$
(cf. \eqref{eq : def of D^+(x,sigma)} and 
\eqref{eq : def of D(x,F)}).
This is equivalent to the condition that for all $x\in X$ the condition
\begin{equation}\label{eq : R_S(x,sigma)<R_S(x,F)}
\R_S(x,\sigma)\;<\;\R_{S,1}(x,\Fs)\;.
\end{equation}
\begin{lemma}\label{Lemma : infinitesimality on S}
Assume that $X$ is connected.
The following conditions are equivalent:
\begin{enumerate}
\item \eqref{eq : R_S(x,sigma)<R_S(x,F)} holds for all 
$x\in X$;
\item \eqref{eq : R_S(x,sigma)<R_S(x,F)} holds for all 
$x\in S$, and 
over all germs of segments at the open 
boundary of $X$.\footnote{As an example, 
if $X=C^-(0;]R_1,R_2[)$ is an open 
annulus with empty weak triangulation, then condition ii) 
asks that there 
exist unspecified $\varepsilon_1,\varepsilon_2>0$ such 
that \eqref{eq : R_S(x,sigma)<R_S(x,F)} holds for all 
$x\in]x_{0,R_2-\varepsilon_2},x_{0,R_2}[$ 
and all $x\in]x_{0,R_1},x_{0,R_1+\varepsilon_1}[$.}
\end{enumerate}
\end{lemma}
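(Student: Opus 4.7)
The direction (i) $\Rightarrow$ (ii) is tautological; the content is the converse, which I plan to prove by propagating the pointwise strict inequality in two steps, exploiting the opposite convexity/monotonicity behaviors of $\R_{S,1}(-,\Fs)$ and $\R_S(-,\sigma)$ supplied by Theorem \ref{Thm : continuity} and by Remark \ref{Rk : convexity of R_S(-,sigma)}. The unifying object will be $h := \log\R_{S,1}(-,\Fs) - \log\R_S(-,\sigma)$, which is continuous on $X$, concave on annulus skeletons, and monotone on disc-branches.

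\textbf{Step 1: propagation along $\Gamma_S$.} Decompose $\Gamma_S = S\cup\bigcup_C \Gamma_C$, where $C$ runs through the connected components of $X-S$ that are virtual open annuli. Fixing such a $C$, I would restrict $h$ to the interval $\Gamma_C$. Theorem \ref{Thm : continuity}(iv) makes $\log\R_{S,1}(-,\Fs)$ concave there, and Remark \ref{Rk : convexity of R_S(-,sigma)}(vi) makes $\log\R_S(-,\sigma)$ convex there, so $h$ is concave on the interval $\Gamma_C$. By assumption (ii), $h$ is strictly positive at each endpoint of $\Gamma_C$ lying in $S$, and on a one-sided neighborhood of each endpoint that corresponds to a germ at the open boundary of $X$. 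A chord argument then forces $h>0$ on all of $\Gamma_C$, and combining over all such $C$ together with (ii) on $S$, the inequality holds throughout $\Gamma_S$.

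\textbf{Step 2: propagation from $\Gamma_S$ into the virtual discs.} Each connected component $D$ of $X-\Gamma_S$ is a virtual open disc. If $\Gamma_S$ meets the connected component of $X$ containing $D$, then $\bar D$ adjoins a single point $z\in\Gamma_S$; otherwise $D$ is itself a connected component of $X$ carrying only open-boundary germs. In the first case Step 1 gives $\R_S(z,\sigma) < \R_{S,1}(z,\Fs)$; in the second case (ii) gives the inequality on a suitable collar. For any $x\in D$, I would then travel along the unique segment from $x$ to that boundary, oriented out of $D$: Theorem \ref{Thm : continuity}(iii) makes $\R_{S,1}(-,\Fs)$ log-decreasing along this orientation, while Remark \ref{Rk : convexity of R_S(-,sigma)}(v) makes $\R_S(-,\sigma)$ log-increasing. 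Using continuity of both functions at the boundary, this gives
\begin{equation}
\R_S(x,\sigma) \;\leq\; \R_S(z,\sigma) \;<\; \R_{S,1}(z,\Fs) \;\leq\; \R_{S,1}(x,\Fs)\;,
\end{equation}
which finishes (i).

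The only mildly delicate point, which I would record as a small auxiliary observation, is the chord argument used for endpoints at the open boundary of $X$ in Step 1: a concave function on an open interval which is strictly positive on one-sided neighborhoods of both ends is strictly positive throughout, since any interior point is a strict convex combination of two such neighborhood points. Everything else is a direct combination of the already-established finiteness and convexity results.
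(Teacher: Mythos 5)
Your proposal is correct and follows essentially the same route as the paper's (one-sentence) proof, which simply invokes the concavity of $\R_{S,1}(-,\Fs)$ and the convexity of $\R_S(-,\sigma)$ from Theorem~\ref{Thm : continuity}(iii)--(iv) and Remark~\ref{Rk : convexity of R_S(-,sigma)}(v)--(vi); your two propagation steps (chord argument on annulus skeletons, then monotonicity into the discs) are exactly what those references are meant to supply. The only cosmetic point you might note is that $\log\R_S(-,\sigma)$ is finite on $\Gamma_S$ whenever $\sigma\neq\mathrm{Id}_X$ (by Theorem~\ref{Thm : finiteness sigma}(iii) the zeros of $\R_S(-,\sigma)$ are rigid points), and the case $\sigma=\mathrm{Id}_X$ is trivial, so the auxiliary function $h$ is well defined where you use it.
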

\begin{proof}
This follows by the concavity properties of $\R_{S,1}(-,\Fs)$ (cf. 
properties iii) and iv) of Theorem \ref{Thm : continuity}), and by the 
convexity properties of $\R_S(-,\sigma)$ 
(cf. properties v) and vi) of Remark 
\ref{Rk : convexity of R_S(-,sigma)}).
\end{proof}

\subsection{Deformation}
We recall that all finite locally free 
$\O_{D(x,\Fs)}$-module 
is free since $\Omega$ is spherically complete.

Our main result is the following:
\begin{theorem}\label{Thm : deformation fredft}
Let $\sigma$ be an $S$-infinitesimal automorphism of 
$X$, and let 
$\Fs$ be a $\sigma$-compatible differential equation. 
Then there exists on $\Fs$ a canonical structure of 
$\sigma$-difference equation characterized by the fact 
that, for all $x\in X$, all solutions $(f_1,\ldots,f_r)^t$ 
of $\Fs_{|D(x,\Fs)}$ in a given basis, 
is also a solution of \eqref{eq : sigma(f)=Af} 
with respect to the same basis. 

If $\alpha:\Fs\to\Fs'$ is an $\O_X$-linear map between 
$\sigma$-compatible differential equations commuting with 
the connections, then $\alpha$ also commutes with the corresponding 
action of $\sigma$ on $\Fs$ and $\Fs'$.
\end{theorem}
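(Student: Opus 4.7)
The plan is to exploit the equivalence between differential equations and stratifications recalled in Section \ref{Stratifications.}, and to obtain the $\sigma$-difference structure as the pullback of the stratification $\chi$ of $\Fs$ along the graph map $\Delta_\sigma : X \to X \times X$, $x \mapsto (\sigma(x), x)$. Formally, if $\chi : (p_2^*\Fs)_{|\mathcal{T}} \xrightarrow{\sim} (p_1^*\Fs)_{|\mathcal{T}}$ is the stratification associated to $(\Fs,\nabla)$ on some admissible neighborhood $\mathcal{T}$ of the diagonal, then, provided $\Delta_\sigma(X) \subseteq \mathcal{T}$, we may set
\begin{equation}
\sigma^{\Fs} \;:=\; \Delta_\sigma^*(\chi) \;:\; \Fs \;=\; \Delta_\sigma^* p_2^* \Fs \;\xrightarrow{\;\sim\;}\; \Delta_\sigma^* p_1^* \Fs \;=\; \sigma^*\Fs\;.
\end{equation}
In local coordinates on an open $U$ where $\Fs$ admits a basis and $Y_\chi(T_1,T_2)$ is the Taylor matrix \eqref{eq : Y_chi} of $\chi$, this translates into $A_\sigma(x) := Y_\chi(\sigma(x), x)$, so that a local fundamental solution $Y(T) = Y_\chi(T,c)$ of $\nabla$ automatically satisfies $Y(\sigma(T)) = A_\sigma(T)\cdot Y(T)$ by the cocycle identity $Y_\chi(\sigma(T),c) = Y_\chi(\sigma(T),T)\cdot Y_\chi(T,c)$; this yields exactly the characterization of the theorem.

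The first step is to show that the above pullback is locally well defined. For each $x \in X$ I would choose an open $U$ from a quasi $\Gamma_S$-covering (Definition \ref{Def : quasi Gamma_S-cov}) together with an étale coordinate $T$ and a basis of $\Fs$. The compatibility condition \eqref{eq : R_S(x,sigma)<R_S(x,F)}, rewritten via Lemma \ref{Lemma: D(t,sigma)} as the inclusion $D^+(y,\sigma) \subseteq D(y,\Fs)$ for all $y \in U$, means precisely that for every $y\in U$ the couple $(\sigma_\Omega(t_y), t_y)$ sits in the convergence locus of $Y_\chi$. Using the continuity and log-linearity properties of $\R_S(-,\sigma)$ from Theorem \ref{Thm : finiteness sigma} and of $\R_{S,1}(-,\Fs)$ from Theorem \ref{Thm : continuity}, together with the rough estimate of Section \ref{eq : rough estimation} for the convergence of $Y_\chi$ on tubes over sub-affinoids, I can cover $U$ by sub-affinoids $U' \subseteq U$ for which $\Delta_\sigma(U') \subseteq \mathcal{T}(U',T,R')$ for some $R' > 0$. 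Over each such $U'$ the matrix $A_\sigma := Y_\chi(\sigma(\cdot),\cdot)$ is a well defined element of $GL_r(\O(U'))$, giving an $\O_{U'}$-linear isomorphism $\sigma^{\Fs} : \Fs_{|U'} \xrightarrow{\sim} \sigma^*\Fs_{|U'}$.

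The second step is to glue these local pieces into a global isomorphism $\sigma^{\Fs} : \Fs \simto \sigma^*\Fs$. Independence of the chosen basis and coordinate, and compatibility on overlaps, both follow from the cocycle condition \ref{eq : cocycle condition stratification} of $\chi$, since $\Delta_\sigma^*$ commutes with the cocycle identity. That $\sigma^{\Fs}$ is an isomorphism follows from $\Delta_\sigma^*(\chi^{-1})$ being its inverse (and from $Y_\chi(T,T) = \mathrm{Id}$, which handles the boundary case when $\sigma = \mathrm{Id}_X$). For the functoriality statement, if $\alpha : \Fs \to \Fs'$ is $\O_X$-linear and commutes with the connections, then by the equivalence of Theorem in Section \ref{Stratifications.} it intertwines the stratifications, $p_1^*(\alpha)\circ \chi = \chi' \circ p_2^*(\alpha)$ on $\mathcal{T}\cap\mathcal{T}'$; applying $\Delta_\sigma^*$ gives $\sigma^*(\alpha)\circ \sigma^{\Fs} = \sigma^{\Fs'}\circ \alpha$, which is the required $\sigma$-equivariance.

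The main obstacle is the first step: the convergence locus of $\chi$ is in general not a tubular neighborhood of the diagonal, so the condition $\Delta_\sigma(X) \subseteq \mathcal{T}$ is not directly implied by the pointwise inequality $\R_S(x,\sigma) < \R_{S,1}(x,\Fs)$. One needs to carefully produce, for each point $x$, a neighborhood $U'$ over which both the radius of convergence of the stratification and the radius of $\sigma$ can be estimated uniformly. This is where Lemma \ref{Lemma : infinitesimality on S} together with the log-concavity (resp. log-convexity) of the two radius functions will be used to reduce the pointwise strict inequality to a uniform inequality over sub-affinoids of a convenient covering, producing the required tubular neighborhood $\mathcal{T}(U',T,R')$ inside the convergence locus.
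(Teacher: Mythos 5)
Your overall strategy is the same as the paper's: build the $\sigma$-difference structure as $\sigma^{\Fs}:=\Delta_\sigma^*(\chi)$ where $\chi$ is the stratification of $(\Fs,\nabla)$, observe that the key technical point is producing an admissible open $\mathcal{T}$ of the diagonal with $\Delta_\sigma(X)\subseteq\mathcal{T}$ on which $\chi$ converges, and derive the characterization and the functoriality from the cocycle condition. The functoriality part of your argument is essentially identical to the paper's.

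The gap is in the construction of $\mathcal{T}$. Your claim that one can ``cover $U$ by sub-affinoids $U'\subseteq U$ for which $\Delta_\sigma(U')\subseteq\mathcal{T}(U',T,R')$'' is strictly stronger than what is needed and, as stated, not achievable near $\Gamma_S$. Two concrete issues. First, since $\mathcal{T}(U',T,R')$ is by definition contained in $U'\times U'$, the inclusion $\Delta_\sigma(U')\subseteq\mathcal{T}(U',T,R')$ forces $\sigma(U')\subseteq U'$; i.e.\ $U'$ must be $\sigma$-stable, which a small affinoid neighborhood of a point is generally not. Second, even on a $\sigma$-stable domain meeting $\Gamma_S$, the strict pointwise inequality $\R_S(y,\sigma)<\R_{S,1}(y,\Fs)$ does not imply a uniform inequality $\sup_{y\in U'}|\delta_{\sigma,T}|(y)<\inf_{y\in U'}\R^{\Fs}(y)$, because along $\Gamma_S$ the two radius functions are merely $\log$-convex/$\log$-concave and need not be monotone, so $\max|\delta_{\sigma,T}|$ and $\min\R^{\Fs}$ can occur at different points and overlap. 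The monotonicity you invoke (Theorem \ref{Thm : continuity}~iii) and Remark \ref{Rk : convexity of R_S(-,sigma)}~v)) holds only on discs $D$ of $X-\Gamma_S$, not across $\Gamma_S$. Relatedly, the rough estimate of Section \ref{eq : rough estimation} does not help here: it gives only a lower bound on the convergence radius of $Y_\chi$, whereas the compatibility inequality is a statement about the actual radius $\R_{S,1}(-,\Fs)$, which can be far larger than the rough bound.

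The paper's Lemma \ref{Lemma : existence of adm neighb of diag} circumvents both problems by only demanding $\Delta_\sigma(X)\subseteq\mathcal{T}$ (with $\mathcal{T}$ a union of tubes, not requiring each tube to contain the $\Delta_\sigma$-image of its base) and by splitting into two cases. For $y\in X-\Gamma_S$ it takes as base the full open disc $D_y$, which is automatically $\sigma$-stable by $S$-infinitesimality, and uses the opposite monotonicities of $|\delta_{\sigma,T_y}|$ and $\R^\Fs$ on $D_y$ to find a single $R_y$ with $|\delta_{\sigma,T_y}|(z)<R_y<\R^{\Fs}(z)$ for all $z\in D_y$. For $y\in\Gamma_S$ no uniform bound over a neighborhood is sought at all: since $\sigma(y)=y$, the point $\Delta_\sigma(y)$ lies on the diagonal and therefore belongs to the tube $\mathcal{T}(Y_y,T_y,R_y^-)$ for any $R_y>0$, and Section \ref{eq : rough estimation} merely certifies that some such $R_y$ with convergence of $Y_\chi$ exists. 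You should replace your ``cover by sub-affinoids'' step by this two-case argument, or else supply a proof that each point of $X$ admits a $\sigma$-stable affinoid neighborhood on which the uniform separation actually holds.
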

\begin{proof}
We consider the map $\Delta_\sigma:X\to X\times X$ 
defined by 
$\Delta_\sigma=(\sigma,\mathrm{Id})$.
By Lemma 
\ref{Lemma : existence of adm neighb of diag} 
below there is an admissible open 
neighborhood $\mathcal{T}$ of the diagonal 
such that $\Delta_\sigma$ factorizes as
\begin{equation}
\Delta_\sigma\;:\;X\;\to\;\mathcal{T}
\;\subseteq\; X\times X
\end{equation}
and that the stratification $\chi$ of 
\eqref{eq : stratif :associated to F},
associated to $\Fs$, is defined over 
$\mathcal{T}$. 
We can consider the pull back of $\chi$ by $\Delta_\sigma$. Since 
$\Delta_\sigma^*\circ p_1^*(\Fs)=\sigma^*\Fs$ and 
$\Delta_\sigma^*\circ p_2^*(\Fs)=\Fs$ we find an isomorphism
\begin{equation}
\sigma^{\Fs}:=
\Delta_\sigma^*(\chi)\;:\;\Fs\xrightarrow{\;\sim\;}\sigma^*\Fs
\end{equation}
that satisfies 
the required properties.
\end{proof}
\begin{remark}\label{Rk : Same Taylor solutions}
Assume that 
$X$ is quasi-Stein, and that $\Omega^1_X$ 
and $\Fs$ are  both free over $X$. 
Let $Y'=G\cdot Y$, $G\in M_r(\O(X))$, be the differential 
equation attached to $\Fs$ in some basis. Then the 
action of $\sigma$ on $\Fs$ is given, in the same basis, 
by the equation $\sigma(Y)=A_\sigma\cdot Y$, where 
\begin{equation}
A_\sigma\;:=\;\Delta_\sigma(Y_\chi)\in GL_r(\O(X))\;,
\end{equation}
and $Y_\chi\in GL_r(\O(\mathcal{T}))$ 
is the matrix of $\chi$ in the same basis of $\Fs$. 
\end{remark}
\begin{lemma}\label{Lemma : existence of adm neighb of diag}
If $\Fs$ is $\sigma$-compatible, 
there exists an admissible open subset 
$\mathcal{T}\subset X\times X$, containing the image of 
$\Delta_\sigma$ and the diagonal, 
on which the stratification $\chi$ associated to $\Fs$ 
converges.
\end{lemma}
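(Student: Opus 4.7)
My plan is to construct $\mathcal{T}$ as a union of locally defined Weierstrass tubes, one centered at each point of $X$, each contained in the domain of convergence of the Taylor matrix \eqref{eq : Y_chi} of $\Fs$ and wide enough to contain the graph of $\sigma$.

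Fix $x \in X$. Using Proposition \ref{Proposition : Localization works well} I would first replace $X$ by a $\sigma$-stable affinoid neighborhood $V_x$ of $x$ of one of the types in Lemma \ref{Lemma : localization}: an affinoid sub-disc of the virtual disc containing $x$ if $x \notin \Gamma_S$, and an affinoid piece of a $\Gamma_S$-covering if $x \in \Gamma_S$. Up to shrinking, I may assume $\Omega^1_{V_x}$ is free with an étale coordinate $T_x$ and that $\Fs_{|V_x}$ is free, with connection matrix $G \in M_r(\O(V_x))$ in a fixed basis. Setting $\delta_{\sigma, T_x} := T_x \circ \sigma - T_x \in \O(V_x)$, the $\sigma$-compatibility hypothesis \eqref{eq : R_S(x,sigma)<R_S(x,F)} unravels (exactly as in the proof of Lemma \ref{Thm : Finiteness : the case of the line}, after dividing out $\rho_{S,T_x}$) to the pointwise bound $|\delta_{\sigma, T_x}|(y) < \R^\Fs(y)$ for every $y \in V_x$.

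The key analytic step is to show that the Taylor matrix $Y_\chi$ of \eqref{eq : Y_chi} converges uniformly on every Weierstrass tube $\mathcal{T}(V_x, T_x, R)$ with $R < R_{V_x}^\Fs := \inf_{y \in V_x} \R^\Fs(y)$, the latter being positive by continuity (Theorem \ref{Thm : continuity}) and compactness of $V_x$. This refines the naive estimate \ref{eq : rough estimation}, which bounds $R$ only by $\omega / \max(\|d/dT_x\|_{V_x}, \|G\|_{V_x})$ and may force a tube too thin to contain $\Delta_\sigma(V_x)$. The sharper statement is the expected $p$-adic coefficient estimate relating $\|G_n\|_{V_x}/|n!|$ to the reciprocal of the pointwise radius of convergence on $V_x$; I expect this to be the main obstacle of the proof, and the place where the finiteness results of \cite{NP-I} and \cite{NP-II} enter decisively.

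Granting this convergence range, by continuity of $|\delta_{\sigma, T_x}|$ and $\R^\Fs$ (Theorems \ref{Thm : continuity} and \ref{Thm : finiteness sigma}) I may shrink $V_x$ further so that $\|\delta_{\sigma, T_x}\|_{V_x} < R_{V_x}^\Fs$, and then pick any $R_x$ strictly between these two quantities. The resulting tube $\mathcal{T}(V_x, T_x, R_x)$ contains the diagonal of $V_x$, contains $\Delta_\sigma(V_x)$, and lies inside the convergence locus of $Y_\chi$. Setting $\mathcal{T} := \bigcup_{x \in X} \mathcal{T}(V_x, T_x, R_x)$ yields an open which is admissible in the sense of \ref{eq : cocycle condition stratification}, contains the diagonal and $\Delta_\sigma(X)$, and on which the local Taylor matrices glue into a single stratification $\chi$ by the cocycle property \ref{eq : cocycle condition stratification}(i), equivalently by analytic continuation from any formal or tubular neighborhood of the diagonal where $\chi$ is already known to be uniquely determined by the connection on $\Fs$.
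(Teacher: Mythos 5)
Your overall structure — a union of Weierstrass tubes indexed by $x \in X$, one per point, wide enough to contain $\Delta_\sigma$ locally — matches the paper's. The treatment for $x \notin \Gamma_S$ is also essentially the same idea, though the paper prefers to work with the whole disc $D_x$ rather than a shrunken affinoid: on $D_x$ both $y \mapsto |\delta_{\sigma,T_x}|(y)$ and $y \mapsto \R^\Fs(y)$ are monotone in opposite directions along segments heading to the boundary, so $\sigma$-compatibility at the boundary point directly yields a single constant $R_x$ separating the two functions over all of $D_x$, without invoking compactness. And on a disc the identity $\R^\Fs(y)=\min\bigl(R_{D_x},\liminf_n\bigl(|G_n|(y)/n!\bigr)^{-1/n}\bigr)$ gives the needed convergence of $Y_\chi$ on $\mathcal{T}(D_x,T_x,R_x^-)$ with no further work.

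The real gap is in your treatment of $x \in \Gamma_S$, where you \emph{postulate} the sharper statement that $Y_\chi$ converges on every tube of radius $R < R^\Fs_{V_x} := \inf_{y\in V_x}\R^\Fs(y)$, acknowledge it as the ``main obstacle,'' and then proceed granting it. This is not something you can take for granted: on a general affinoid neighborhood of a type-$2$ point of $\Gamma_S$ there is no single global coordinate, the cutoff $\rho_{S,T}$ is not constant, and the clean disc formula relating $\R^\Fs$ to the coefficient growth of $Y_\chi$ no longer applies directly. The paper avoids this difficulty entirely by the following simple observation, which is the crux you are missing: an $S$-infinitesimal $\sigma$ \emph{fixes every point of $\Gamma_S$}, so for $x\in\Gamma_S$ the point $\Delta_\sigma(x)=(\sigma(x),x)=(x,x)$ already lies on the diagonal. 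Consequently, for $x\in\Gamma_S$ a \emph{thin} tube is enough to contain $\Delta_\sigma(x)$, and the crude a priori estimate of Section~\ref{eq : rough estimation} (radius controlled by $\omega/\max(\|d/dT\|,\|G\|)$) supplies such a tube with no further analysis. In short: the sharp convergence estimate is needed only on the discs $D_x$, where it holds for free; at the points of $\Gamma_S$ — exactly where it is hard — it is not needed at all because $\sigma$ fixes those points. Without this observation your proposal has a hole precisely where you said it would.
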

\begin{proof}
Let $x\in X-\Gamma_S$ and let $D_x$ be the connected 
component  (necessarily a virtual open disc) containing 
$x$. 
Let $T_x$ be a coordinate on $D_x$, 
and let $R_{D_x}$ be the radius of $D_x$ in that coordinate.  
If $\delta_{\sigma,T_x}:=T_x\circ\sigma-T_x$, 
then for all $y\in D_x$ we have 
$\R_S(y,\sigma)_{|_{D_x}}=
\R_{\emptyset}(y,\sigma_{|D_x})=
|\delta_{\sigma,T_x}|(y)/R_{D_x}$.
%\begin{equation}
%\R_S(y,\sigma)_{|_{D_x}}\;=\;
%\R_{\emptyset}(y,\sigma_{|D_x})\;=\;
%|\delta_{\sigma,T_x}|(y)/R_{D_x}\;.
%\end{equation}

The radius of convergence of $\Fs$ enjoys the same 
properties and for all $y\in D_x$ we have 
$\R_S(y,\Fs)_{|_{D_x}}=
\R_{\emptyset}(y,\Fs_{|D_x})=
\R^{\Fs}(y)/R_{D_x}$,
%\begin{equation}
%\R_S(y,\Fs)_{|_{D_x}}\;=\;
%\R_{\emptyset}(y,\Fs_{|D_x})\;=\;
%\R^{\Fs}(y)/R_{D_x}\;, 
%\end{equation}
where $\R^{\Fs}(y)$ is the radius of the largest open 
disc in 
$(D_x)_\Omega$ containing $t_y$ on which 
$\Fs_\Omega$ is trivial.
The function $y\mapsto|\delta_{\sigma,T_x}|(y)$ is 
increasing on 
the segments in $D_x$ (oriented as towards $+\infty$), 
and $y\mapsto\R^{\Fs}(y)$ is decreasing 
(cf. Theorem \ref{Thm : continuity}, and 
Remark \ref{Rk : convexity of R_S(-,sigma)}).
So, by \eqref{eq : R_S(x,sigma)<R_S(x,F)}, there exists $R_{x}\geq 0$ 
such that for all $y\in D_x$ we have
\begin{equation}\label{eq : inequalitycle over disck}
|\delta_{\sigma,T_x}|(y)< R_{x}<\R^{\Fs}(y)\;. 
\end{equation}
With the notations of Section \ref{eq : tubular n} we consider the following 
admissible open of $D_x\times D_x$
\begin{equation}
\mathcal{T}(D_x,T_x,R_x^-)\;:=\;\{|T_1-T_2|<R_x\}\;
\subseteq\; D_x\times D_x\;.
\end{equation}
It is then easy to check that the stratification $\chi$ 
defined by $\Fs_{|D_x}$ converges 
on $\mathcal{T}(D_x,T_x,R_x^-)$ and 
\begin{equation}\label{eq : Delta(D)subset T(D)}
\Delta_\sigma(D_x)\;\subseteq\;
\mathcal{T}(D_x,T_x,R_x^-)\;.
\end{equation}
Indeed the algebra of functions over 
$\mathcal{T}(D_x,T_x,R_x^-)$ 
is formed by the formal power series 
$f(T_1,T_2)=\sum_{n\geq 0}f_n(T_2)(T_1-T_2)^n$, 
with $f_n\in\O(D_x)$, such that for all virtual 
closed sub-disc $D\subseteq D_x$  one has
$\lim_n\|f_n\|_D\cdot\rho^n=0$, for all 
$\rho<R_{D_x}$.
%\begin{equation}\label{eq : sdfghjkyvtrempq}
%\lim_n\|f_n\|_D\rho^n\;=\;0\;,\quad\textrm{ for all 
%}\rho<R_x\;.
%\end{equation}
We see that 
\begin{equation}\label{eq : grdftehsr}
\Delta_\sigma(f(T_1,T_2))\;=\;f(\sigma\circ T,T)\;=\;
\sum_{n\geq 0}f_n(T)\cdot\delta_{\sigma,T}(T)\;.
\end{equation}
Condition \eqref{eq : inequalitycle over disck} implies that 
$\delta_{\sigma,T}$ is bounded on $D_x$ and 
$\|\delta_{\sigma,T}\|_{D_x}\leq R_x\leq R_{D_x}$, 
so \eqref{eq : grdftehsr} 
converges as a series of functions in $\O(D_x)$. 

On the other hand, with the notations of \eqref{eq : Y_chi}, 
the matrix of $\chi$ is given in some basis by 
\begin{equation}
Y_\chi(T_1,T_2)\;:=\;\sum_{n\geq 0}G_n(T_2)(T_1-T_2)^n/n!\;.
\end{equation}
Its convergence locus is related to $\R^\Fs(-)$ by the relation
\begin{equation}
\R^{\Fs}(y)\;:=\;\min\Bigl(\;R_{D_x}\;,\;
\liminf_n\frac{1}{\sqrt[n]{|G_n|(y)/n!}}\;\Bigr)\;,\qquad
\textrm{ for all }y\in D_x\;.
\end{equation}
So \eqref{eq : inequalitycle over disck} implies that $Y_\chi$ lies in 
$M_{r\times r}(\O(\mathcal{T}(D_x,T_x,R_x^-)))$.

To conclude the proof we proceed as follows. We consider the 
open covering $\{U_x\}_{x\in X}$ of $X$ formed by 
\begin{enumerate}
\item for all $x\in X-\Gamma_S$ we consider the 
connected component $U_x:=D_x$ of $X-\Gamma_S$ containing 
$x$, together with the triplet $(D_x,T_x,R_x^-)$ that we have just 
obtained;
\item for all $x\in\Gamma_S$ we consider an arbitrary  
open neighborhood $U_x:=Y_x$ of $x$, 
of the form $Y_x=\tau_S^{-1}(\Lambda_x)$, 
with $\Lambda_x=\Gamma_S\cap Y_x$, together with 
an arbitrary triplet $(Y_x,T_x,R_x^-)$, 
where $R_x>0$ is such that 
$\chi$ converges on $\mathcal{T}(Y_x,T_x,R_x^-)$ 
(cf. Section \ref{eq : rough estimation}).
\end{enumerate}
Notice that, for all $x\in X$, $U_x\times U_x$ 
is open in $X\times X$, so $\mathcal{T}(U_x,T_x,R_x^-)$ is open in 
$X\times X$. 
We now consider the following open neighborhood of the diagonal of 
$X\times X$:
\begin{equation}\label{eq : big tubular union}
\mathcal{T}\;:=\;\bigcup_{x\in X}\mathcal{T}(U_x,T_x,R_x^-)\;=\;
\Bigl(\bigcup_{x\in X-\Gamma_S}
\mathcal{T}(D_x,T_x,R_x^-)\Bigr)\;
\bigcup\;
\Bigl(\bigcup_{x\in\Gamma_S}\mathcal{T}(Y_x,T_x,R_x^-)\Bigr)
\end{equation}
By construction $\chi$ converges on $\mathcal{T}$. 
On the other hand, for all $x\in X$, we have 
$\Delta_\sigma(x)\in\mathcal{T}$, indeed if 
$x\in X-\Gamma_S$ this 
follows from \eqref{eq : Delta(D)subset T(D)}, and if 
$x\in\Gamma_S$ this is evident since $\sigma(x)=x$, so 
$\Delta_\sigma(x)$ lies in the diagonal.
\end{proof}

\begin{definition}
Let $\Sigma\subseteq\mathfrak{S}(X,S)$ be a family of 
$S$-infinitesimal automorphisms. We say that a
differential equation $\Fs$ is $\Sigma$-compatible if it is 
$\sigma$-compatible, for all $\sigma\in\Sigma$.

The structure of $\Sigma$-module on $\Fs$ obtained by Theorem 
\ref{Thm : deformation fredft} is called the \emph{deformation} 
of the differential equation, and it will be indicated as 
$\Def_{S,\Sigma}(\Fs)$ or simply $\Def_{\Sigma}(\Fs)$.
\end{definition}

Theorem \ref{Thm : deformation fredft} gives a functor
called  \emph{$(S,\Sigma)$-deformation}
\begin{equation}\label{eq : Deformation Functor}
\Def_{S,\Sigma}\;:\;\{\Sigma-\textrm{compatible differential equations}/X\}\;
\xrightarrow{\quad}\;\{\Sigma-\textrm{Modules}/X\}\;.
\end{equation} 
\begin{definition}[\emph{Stratified $\Sigma$-modules}]
We call  the essential image of 
$\Def_{S,\Sigma}$ \emph{stratified $\Sigma$-modules}.
\end{definition}
The functor $\Def_{S,\Sigma}$ is additive, exact, it commutes with tensor 
products, and it is faithful since it is the identity on 
the morphisms. But it is not necessarily fully faithful as shown by the 
following example. 

\begin{example}
Let $q$ be a root of unity satisfying $|q-1|<1$.
Let $X=C^-(0;]R_1,R_2[)$ be an annulus with empty weak 
triangulation, and let $\sigma:=\sigma_q$ be the automorphisms 
sending $T\mapsto qT$. The deformation functor sends the unit 
object $\mathbb{I}=(d:\O(X)\to\O(X))$ 
into the unit object 
$\mathbb{I}=(\sigma_q:\O(X)\simto\O(X))$. 
The endomorphisms of $(\O(X),\sigma_q)$ 
are naturally in bijection with the elements of $\O(X)^{\sigma_q=1}=
\{f\in\O(X),\textrm{ such that }\sigma_q(f)=f\}$. 
Since $q$ is a root of unity there are several non constant function of 
this type. Hence the inclusion $\Def_\sigma:\mathrm{End}(\mathbb{I})\to\mathrm{End}(\Def_\sigma(\mathbb{I}))$ is strict.
\end{example}
\subsection{Fully faithfulness and non degeneracy}
\label{Fully faithfulness and non degeneracy}
In this section we provide a condition, called non degeneracy, on the 
family $\Sigma\subseteq\mathfrak{S}(X,S)$ that guarantee  the fully 
faithfulness of $\Def_\Sigma$.

It is clear that if a differential equation $\Fs$ is $\sigma$-compatible, 
it is also $\sigma^n$-compatible for all $n\in \mathbb{Z}$. 
More generally if $\Sigma$ is a family of $S$-infinitesimal 
automorphisms, and if $\Fs$ is $\sigma$-compatible for all 
$\sigma\in \Sigma$, then it is $\sigma$-compatible for all 
$\sigma$ lying in the subgroup $\lr{\Sigma}$ of 
$\mathfrak{S}(X,S)$ generated by $\Sigma$.

We are then naturally induced to work with groups of 
$S$-infinitesimal automorphisms.

\begin{definition}[Non degeneracy]
\label{Def : non degeneracy}
Let $\Sigma\subseteq\mathfrak{S}(X,S)$ be a family of 
$S$-infinitesimal automorphisms. We say that the action of 
$\Sigma$ is non degenerate if for all connected component $X'$ 
of $X$ there is a point $x\in X'$ such that for all open disc $D$ such 
that $\bigcup_{\sigma\in\Sigma}
D^+(x,\sigma)\subseteq 
D\subseteq D(x,S)$ %
%\begin{equation}
%\bigcup_{\sigma\in\Sigma}D^+(x,\sigma)\;
%\subseteq\; D\;\subseteq\; D(x,S)
%\end{equation}
one has\footnote{Here 
$\Sigma_\Omega:=\{\sigma_\Omega\}_{\sigma\in\Sigma}$, where 
as usual $\sigma_\Omega=\sigma\widehat{\otimes} 
\mathrm{Id}_\Omega$.}
\begin{equation}\label{eq : O(D)^Sigma=Omega}
\O(D)^{\Sigma_\Omega=1}\;=\;\Omega\;.
\end{equation}
\end{definition}
\begin{proposition}[Criterion of non degeneracy]
\label{Prop : criterion non degeneracy ,gf}
Let $x\in X$, and let $\Sigma\subseteq\mathfrak{S}(X,S)$. 
Assume that we have 
a sequence of elements $\{\sigma_n\}_n$ in the group $\lr{\Sigma}$ such that
\begin{enumerate}
\item $\bigcap_n D^+(x,\sigma_n) = \{t_x\}$;
\item For infinitely many $n$ the disc $D^+(x,\sigma_n)$ is not 
reduced to $\{t_x\}$.
\end{enumerate}
Then the action of $\Sigma$ is non degenerate.
\end{proposition}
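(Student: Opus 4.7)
The plan is to show, for any disc $D$ satisfying $\bigcup_{\sigma\in \Sigma} D^+(x,\sigma) \subseteq D \subseteq D(x,S)$, that every $\Sigma_\Omega$-invariant function $f \in \O(D)$ is a constant in $\Omega$. First I would observe that by point ii) of Lemma \ref{Lemma: D(t,sigma)}, the hypothesis $D^+(x,\sigma)\subseteq D\subseteq D(x,S)$ makes $D$ globally stable by every $\sigma \in \Sigma$, so $D$ is stable by the whole subgroup $\lr{\Sigma}$ generated by $\Sigma$. In particular for each $\sigma_n$ we have $\sigma_n(t_x) \in D$, and the minimality of $D^+(x,\sigma_n)$ inside $D(x,S)$ then forces $D^+(x,\sigma_n)\subseteq D$.

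Next, fix a coordinate $T$ on $D\subseteq X_\Omega$ centered at $t_x$ and write $f = \sum_{k\geq 0} a_k T^k$ with $a_k \in \Omega$. The $\lr{\Sigma}$-invariance of $f$ gives $f(\sigma_n(t_x)) = f(t_x) = a_0$ for every $n$, i.e.
\begin{equation}
\sum_{k \geq 1} a_k\, c_n^k \;=\; 0,\qquad c_n := T(\sigma_n(t_x))\;. \label{eq:vanish-proposal}
\end{equation}
By construction $|c_n|$ is exactly the radius $r_n$ of $D^+(x,\sigma_n)$, so hypothesis i) forces $r_n \to 0$ while hypothesis ii) ensures $r_n > 0$ for infinitely many $n$; we are free to pass to such a subsequence.

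The heart of the argument is a standard ultrametric domination. Assume for contradiction that some $a_k$ with $k \geq 1$ is nonzero and let $m \geq 1$ be the smallest such index. Writing $R$ for the radius of $D$ and fixing any $\rho < R$, analyticity of $f$ on $D$ yields a bound $|a_k| \leq M/\rho^k$ for some constant $M > 0$. For $0 < r_n < \rho$ this gives $\sup_{k > m}|a_k c_n^k| \leq M (r_n/\rho)^{m+1}$, which tends to $0$ as $r_n \to 0$, whereas $|a_m c_n^m| = |a_m|\, r_n^m$ does not. Hence for $n$ large the sum \eqref{eq:vanish-proposal} has a unique strictly dominant term, contradicting the ultrametric triangle inequality. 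Therefore $a_m = 0$, and $f = a_0 \in \Omega$. The only subtle point is the bookkeeping: fixing the coordinate $T$ on $D \subseteq X_\Omega$, propagating stability of $D$ from $\Sigma$ to $\lr{\Sigma}$, and turning analyticity on the \emph{open} disc $D$ into the right Taylor-coefficient bound --- but once these are arranged the contradiction is immediate.
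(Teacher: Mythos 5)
Your proof is correct and follows essentially the same route as the paper's: both arguments reduce to the observation that $\sigma_n(t_x)$ gives a sequence of zeros of $g:=f-f(t_x)$ accumulating at $t_x$, whence $g\equiv 0$ by the identity theorem on the disc; you have simply unfolded the identity theorem into the explicit ultrametric-dominance argument on Taylor coefficients. One small caution: your claim that ``hypothesis i) forces $r_n\to 0$'' is not literally true --- (i) gives only $\inf_n r_n=0$, and together with (ii) this still does not formally produce a subsequence with $r_n>0$ and $r_n\to 0$ (e.g.\ $r_1=0$, $r_n=1$ for $n\ge 2$ satisfies both). The intended reading, made explicit in the remark that follows the proposition in the paper, is that $0$ lies in the closure of $\{r_n\}\setminus\{0\}$, and the paper's own one-line proof (``a sequence of zeros \dots\ accumulating to $t_x$'') relies on the same silent strengthening; under that reading your argument is complete.
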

\begin{proof}
Let $T$ be a coordinate on $D(x,S)$. 
Let $D$ be an open disc as in Definition
\ref{Def : non degeneracy}. Let 
$f\in\O(D)$ be a function stable under $\Sigma$, then 
$g(T):=f(T)-f(t_x)$ is also stable under $\Sigma$. 
Now we have a sequence $n\mapsto\sigma_n(t_x)$ of zeros of 
$g$ accumulating to $t_x$. So $g=0$ and $f=f(t_x)$ is constant.
\end{proof}
\begin{remark}
 Let 
$T_x:U\to\mathbb{A}^{1,\mathrm{an}}_K$ 
be a local coordinate on some neighborhood $U$ of $x$ 
verifying Lemma \ref{Lemma : trivial covering}. 
As usual, let $\delta_{\sigma,T_x}:=T_x\circ\sigma-
T_x$. 
Then conditions i) and ii) of Proposition 
\ref{Prop : criterion non degeneracy ,gf} are
equivalent to the condition that the closure in 
$\mathbb{R}$ of the set  
$\{|\delta_{\sigma_n,T_x}|(x)\;,\;n\geq 0\}-\{0\}$ contains $0$.
\end{remark}

\begin{proposition}\label{Prop : non degeneracy --> fully faith}
If the action of $\Sigma$ is non degenerate, then  
$\mathrm{Def}_{S,\Sigma}$ is fully faithful.
\end{proposition}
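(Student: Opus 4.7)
The functor $\Def_{S,\Sigma}$ is already known to be faithful (it is the identity on morphisms), so I only need to prove fullness. Let $\Fs,\Fs'$ be two $\Sigma$-compatible differential equations and let $\alpha:\Def_{S,\Sigma}(\Fs)\to\Def_{S,\Sigma}(\Fs')$ be an $\O_X$-linear map commuting with the actions of every $\sigma\in\Sigma$. By Lemma \ref{Lemma : alpha commutes with nabla loc then glob}, applied componentwise, it suffices to exhibit, in each connected component $X'$ of $X$, a single point of type $2$, $3$ or $4$ at which $\alpha$ commutes with the connection. I may therefore assume $X$ to be connected.

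Let $x\in X$ be the point provided by the non-degeneracy hypothesis (Def. \ref{Def : non degeneracy}). Since $\Fs$ and $\Fs'$ are both $\Sigma$-compatible, for every $\sigma\in\Sigma$ one has $D_S^+(x,\sigma)\subset D(x,\Fs)\cap D(x,\Fs')$, and hence
\begin{equation}
\bigcup_{\sigma\in\Sigma}D_S^+(x,\sigma)\;\subseteq\; D\;:=\; D(x,\Fs)\cap D(x,\Fs')\;\subseteq\; D(x,S)\;.
\end{equation}
The set $D$ is an open disc of $X_\Omega$ centered at $t_x$, globally stable under $\Sigma_\Omega$ by Lemma \ref{Lemma: D(t,sigma)}, and by construction $\Fs_\Omega|_D$ and $\Fs'_\Omega|_D$ are trivial as differential modules. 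Choose horizontal bases $\widetilde{\mathbf{e}}$ of $\Fs_\Omega|_D$ and $\widetilde{\mathbf{e}}'$ of $\Fs'_\Omega|_D$. In the original bases the $\sigma$-difference equation produced by the deformation is $\sigma(Y)=A_\sigma Y$, so if $Y$ denotes the fundamental matrix of horizontal solutions (the change of basis to $\widetilde{\mathbf{e}}$), one finds $\widetilde{A}_\sigma=I$, i.e. the $\sigma$-action fixes the elements of a horizontal basis. In other words, after passing to horizontal bases on $D$, the semi-linear action of each $\sigma\in\Sigma$ acts componentwise on $\O(D)$.

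In these bases $\alpha|_D$ is represented by a matrix $H$ with entries in $\O(D)$. The commutation with $\sigma$ now reads $\sigma(H_{ij})=H_{ij}$ for every $i,j$ and every $\sigma\in\Sigma$, whence
\begin{equation}
H_{ij}\;\in\;\O(D)^{\Sigma_\Omega=1}\;=\;\Omega
\end{equation}
by the non-degeneracy assumption \eqref{eq : O(D)^Sigma=Omega}. Thus $H$ is a constant matrix, and in the chosen horizontal bases the connection is $d$, so $H$ trivially commutes with it: $\alpha_\Omega|_D$ commutes with $\nabla$.

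It remains to descend this statement to $X$. The $\O_X$-linear morphism $\phi:=\alpha\circ\nabla-\nabla'\circ\alpha:\Fs\to\Fs'\otimes\Omega^1_X$ becomes zero on $D\subseteq X_\Omega$ after base change to $\Omega$. Pick any $t\in D$ of the form $x_{t_x,\rho}$ with $\rho$ chosen so that $x'':=\pi_{\Omega/K}(t)\in X$ is of type $2$ or $3$; such $t$ exist because $D$ is an open disc. Since $\H(x'')\hookrightarrow\H(t)$ is a field extension, the vanishing of $\phi_\Omega(t)$ implies the vanishing of $\phi(x'')$. Lemma \ref{Lemma : alpha commutes with nabla loc then glob} then forces $\phi=0$ on the connected component of $x''$, which by assumption is all of $X$. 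Hence $\alpha$ commutes with $\nabla$, proving fullness.

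The main technical point is the identification $\widetilde{A}_\sigma=I$ in a horizontal basis: it is the heart of the argument and is exactly what the characterization of the deformation in Theorem \ref{Thm : deformation fredft} asserts (Taylor solutions of $\nabla$ coincide with solutions of the deformed $\sigma$-difference equation). Once this is in hand, the rest is formal: non-degeneracy turns $\sigma$-invariance into constancy, and constancy gives horizontality.
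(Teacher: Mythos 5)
Your proof is correct and takes essentially the same route as the paper's: localize to the disc $D$ supplied by the non-degeneracy hypothesis, observe that both the differential modules and their deformations are trivial there, and use $\O(D)^{\Sigma_\Omega}=\Omega$ to conclude. The only difference is cosmetic: you verify directly that the matrix of $\alpha$ is constant in horizontal bases, whereas the paper packages the same observation as a dimension count comparing the $\Omega$-vector spaces $\mathrm{Hom}^{\Sigma}(\Fs_{|D},\Fs'_{|D})$ and $\mathrm{Hom}^{\nabla}(\Fs_{|D},\Fs'_{|D})$, which have the same dimension and one of which contains the other.
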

\begin{proof}
The functor is the identity on the morphisms, so it is 
enough to show 
that if a morphism $\alpha:\Fs\to\Fs'$ of 
$\O_X$-modules 
commutes with $\Sigma$, then it also commutes 
with $\nabla$. This is true if and only if it is true for 
$\alpha_\Omega:\Fs_\Omega\to\Fs'_\Omega$.
Now, by Lemma 
\ref{Lemma : alpha commutes with nabla loc then glob}, 
it is enough to show that $\alpha_\Omega$ 
commutes with $\nabla$ over the disc $D$ of Definition 
\ref{Def : non degeneracy}. The ring $\O(D)^{\Sigma}$ 
contains $\Omega$, and the space 
$\mathrm{Hom}^{\Sigma}(\Fs_{|D},\Fs_{|D})$ is an 
$\O(D)^{\Sigma}$-module containing 
$\mathrm{Hom}^{\nabla}(\Fs_{|D},\Fs_{|D})$ as a 
sub-$\Omega$-vector space. 
Since
$\O(D)^{\Sigma_\Omega}=\Omega$,
$\mathrm{Hom}^{\Sigma}(\Fs_{|D},\Fs_{|D})$ is 
an $\Omega$-vector space too.

Now choose $D$ so that the differential equations 
$\Fs_{|D}$ and $(\Fs')_{|D}$ are trivial on $D$ 
(this is possible since $\Fs$ and 
$\Fs'$ are both $\Sigma$-compatible). 
Their deformations over $D$ are hence trivial too, and 
the deformation commutes with the localization to $D$. 
Hence the space 
$\mathrm{Hom}^\Sigma(\Fs_{|D},\Fs'_{|D})$ is an 
$\Omega$-vector space of dimension 
$\mathrm{rank}(\Fs)\cdot\mathrm{rank}(\Fs')$. 
The dimension of
$\mathrm{Hom}^\nabla(\Fs_{|D},\Fs'_{|D})$ is the same, so they 
coincide. Hence $\alpha_\Omega$ commutes with $\nabla$ over $D$, 
and the claim is proved.
\end{proof}
\if{
\subsubsection{Categorical non degeneracy.}
In some cases it may be difficult to find a point $x$ satisfying 
\eqref{eq : O(D)^Sigma=Omega}, or even such a point does not 
exist. 
We then generalize the notion of non degeneracy as follows.
\begin{definition}[Categorical non degeneracy]
We say that the family $\Sigma$ is categorically non degenerate if for 
all connected component $Y$ of $X$ there exists a quasi-Stein 
analytic domain $Z\subseteq Y$, stable by $\Sigma$, 
together with a integral domain $\B$ such that
\begin{enumerate}
\item $\B$ is a $\Omega$-algebra;
\item $\Sigma$ acts by automorphisms on $\B$, and we have 
\begin{equation}
\B^{\Sigma}\;=\;\mathrm{Frac}(\B)^{\Sigma}\;=\;\Omega\;.
\end{equation}
\item There exists an injective $\Omega$-linear ring morphisms 
$\O(Y_\Omega)\to \B$ commuting with $\Sigma$.
\end{enumerate}
\end{definition}
\comment{CONTINUARE ...
BISOGNA ANCHE CHE TUTTE LE EQUAZIONI SIANO TRIVIALIZZATE DA $\B$}
}\fi
\subsection{Analyticity of the action of $\Sigma$.}
\label{Analyticity of the action}
Until now we have studied the action of a family of 
automorphisms on $X$. 
We now consider the action of a $K$-analytic group $G$ 
on it.  
In this section we prove that the deformation of a 
differential equation produces an 
\emph{analytic} (semi-linear) action of $G$ on  $\Fs$ 
lifting the action of $G$ on $X$. 
This kind of object is commonly know as 
\emph{$G$-equivariant sheaf on $X$} 
(cf. \cite[Section 12]{Mumford-AbVar} and 
\cite{Mumford-invariants}). 

The action of a $K$-analytic group 
$G$ on $X$ is a morphism 
\begin{equation}
\mu\;:\;G\times X\;\xrightarrow{\quad}\;X
\end{equation}
satisfying the natural conditions of 
\cite[Section 12]{Mumford-AbVar}, and 
\cite[Section 5.1]{Ber} for the details about the setting. 
In particular, for all complete valued field extension $L/K$ and all 
$L$-rational point $g:\mathscr{M}(L)\to G_L$ 
we have an automorphism 
$\sigma_g:=\mu_L\circ(g\times\mathrm{Id}_{X_L})$ of $X_L$
\begin{equation}
\sigma_g\;:\;X_L\;\xrightarrow{\;\sim\;}\;X_L\;.
\end{equation}
This family verifies 
$\sigma_g\circ\sigma_h=\sigma_{g\cdot h}$ 
for all $g,h\in G(L)$.
So for all  $L/K$ we have a group morphism 
\begin{equation}
G(L)\;\xrightarrow{\quad}\;\mathrm{Aut}(X_L)
\end{equation}
Let un call $\Sigma_L$ the image of $G(L)$ in 
$\mathrm{Aut}(X_L)$.
\begin{definition}
We say that the action of $G$ on $X$ is $S$-infinitesimal 
if for all $L/K$ and for all $g\in G(L)$ the map $\sigma_g$ is an 
$S_L$-infinitesimal automorphism of $X_L$.

We say that the action of $G$ is non degenerate if there exists 
$L/K$ such that the action of $G(L)$ on $X_L$ is non degenerate.

We say that a differential equation $\Fs$ on $X$ 
is $G$-compatible, if for all $L/K$ and all $g\in G(L)$ the 
equation $\Fs_L$ is $\sigma_g$-compatible.\footnote{This is equivalent to saying that for all point 
$g:\mathscr{H}(g)\to G$ the equation 
$\Fs_{\mathscr{H}(g)}$ is $\sigma_g$-compatible.}
\end{definition}
%The compatibility condition can be rephrased as 
%follows: 
%$\Fs$ is $G$-compatible if and only if for all point 
%$g:\mathscr{H}(g)\to G$ the equation 
%$\Fs_{\mathscr{H}(g)}$ is $\sigma_g$-compatible.

\subsubsection{Analytic semi-linear $G$-modules.} 
\label{Analytic semi-linear $G$-modules.} 
We shall now introduce (somehow informally)
the notion of analytic semi-linear $G$-module 
(cf. Definition \ref{Def : An-semilin G mod}). 
In order to do that, for all $L/K$ we interpret the family of automorphism 
$\{\sigma_g:X_L\simto X_L\}_{g\in G(L)}$ as a \emph{covering} of 
$X_L$. A \emph{semi-linear $G(L)$-module} is then a 
\emph{gluing datum} on a family $\{\Fs_g\}_g$ 
of locally free $\O_{X_L}$-modules on the covering,
 where $\Fs_g=\Fs$ for all $g\in G(L)$.
Concretely this amounts to give a locally $\O_{X_L}$-module $\Fs_L$ 
together with a family of 
$\O_{X_L}$-linear isomorphism
\begin{equation}\label{eq: sigma-mod trivial point of view ....}
\{\;\sigma_g^{\Fs_L}\;:\;\Fs_L\xrightarrow{\;\sim\;}
\sigma_g^*(\Fs_L)\;\}_{g\in G(L)}\;,
\end{equation}
subjected to the cocycle condition
\begin{equation}\label{eq : cocycle relation naif}
\sigma_{gh}^{\Fs_L}\;=\;
\sigma_h^*(\sigma_g^{\Fs_L})\circ\sigma_{h}^{\Fs_L}\;.
\end{equation}
We may give the following informal definition:
\begin{definition}
A semi-linear $G$-module is a locally free $\O_X$-module 
$\Fs$ of finite type together with a semi-linear 
$G(L)$-module structure on $\Fs_L$ for all $L/K$, 
which satisfies the evident compatibilities for all 
base change of the ground field $K$.
\end{definition}
More generally we can perform the same construction 
for all base change $S/K$, i.e we regard the objects as functors on 
the category of $K$-analytic spaces. 
So by Yoneda's Lemma we obtain the following definition.

Let $p_X:G\times X\to X$ be the second projection. 
Consider the simplicial object 
\begin{equation}
\label{eq : simplicial object}
G\times G\times X
\begin{picture}(25,20)
\put(5,15){\begin{small}$d_0$\end{small}}
\put(3,0){$\longrightarrow$}
\put(5,5){\begin{small}$d_1$\end{small}}
\put(3,10){$\longrightarrow$}
\put(5,-5){\begin{small}$d_2$\end{small}}
\put(3,-10){$\longrightarrow$}
\end{picture}
G\times X
\begin{picture}(25,0)
\put(5,10){\begin{small}$p_X$\end{small}}
\put(3,3){$\longrightarrow$}
\put(3,-3){$\longrightarrow$}
\put(7,-7){\begin{small}$\mu$\end{small}}
\end{picture}
X\;.
\end{equation}
where $d_0=p_2\times p_3$, $d_1=m_G\times\mathrm{Id}_X$, 
and $d_2=\mathrm{Id}_G\times\mu$, and $p_i$ is the $i$-th 
projection of $G\times G\times X$.  
\begin{definition}\label{Def : An-semilin G mod}
An analytic semi-linear $G$-module is a locally free $\O_X$-module 
$\Fs$ together with an isomorphism
\begin{equation}\label{eq : q}
\sigma_G\;:\;p_X^*(\Fs)\xrightarrow{\;\sim\;}\mu^*(\Fs)\;,
\end{equation}
satisfying the cocycle condition. This means 
that the following diagram commutes
\begin{equation}\label{eq : cocycle condition G-invariant}
\xymatrix{p_3^*(\Fs)
\ar[rd]_{d_1^*(\sigma_G)}
\ar[rr]^{d_0^*(\sigma_G)}&&
(\mu\circ d_0)^*(\Fs)\ar[ld]^{d_2^*(\sigma_G)}\\
&\eta^*(\Fs)&}
\end{equation}
where 
$\eta:=\mu\circ d_1=
\mu\circ d_2$.
\end{definition}
For all $L/K$ and all $g\in G(L)$ we can pull-back $\sigma_G$ by the 
map $g\times\mathrm{Id}_{X_L}:X_L\to G_L\times X_L$ and we 
obtain the family of maps 
\eqref{eq: sigma-mod trivial point of view ....}, and the diagram 
\eqref{eq : cocycle condition G-invariant} gives the cocycle relation 
\eqref{eq : cocycle relation naif}.

\begin{remark}
%In down to earth terms the definition says the 
%following. 
If $X$ is quasi-Stein and if $\Fs$ is free, a 
semi-linear $G(L)$-module corresponds to a 
family of difference equations of the form 
$\sigma_g(\vec{f})=A_{\sigma_g}\cdot \vec{f}$ over 
$X_L$ (cf. \eqref{eq : sigma(f)=Af}). 
The action of $G$ is analytic if and only if 
$A_{\sigma_g}$ is an analytic function on 
$G\times X$, i.e. also with respect to the variable $g$.
\end{remark}

\begin{theorem}\label{Thm : G-semilinear}
Assume that the action of $G$ on $X$ is 
$S$-infinitesimal. There exists a functor 
\begin{equation}
\{G\textrm{-compatible diff. eq.}\}/X
\;\xrightarrow{\quad}\;
\{\textrm{Analytic semi-linear }G\textrm{-modules}\}/X\;.
\end{equation}
The functor associates to a differential equation $\Fs$ 
over $X$ the 
same $\O_X$-module $\Fs$ together with a semi-linear 
action of $G$. If $\Sigma_L$  is the image of $G(L)$ in 
$\mathrm{Aut}(X_L)$, the action of $G$ 
is characterized by the fact that for 
all $L/K$ the action of $\Sigma_L$ on $\Fs_L$ so 
obtained coincides with that of Theorem 
\ref{Thm : deformation fredft}. 

The functor is the identity on the morphisms, in particular 
it is faithful. If the action of $G$ on $X$ is non 
degenerate, it is also fully faithful.
\end{theorem}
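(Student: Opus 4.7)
The plan is to interpret the deformation as a pullback of the stratification along a single analytic map, upgraded from $X$ to $G \times X$. Concretely, consider the ``relative diagonal''
\begin{equation}
\Delta_\mu \;:=\; (\mu, p_X)\;:\;G\times X\;\to\;X\times X\;.
\end{equation}
Since $p_X^*(\Fs) = \Delta_\mu^* p_2^*(\Fs)$ and $\mu^*(\Fs) = \Delta_\mu^* p_1^*(\Fs)$, if we can define $\Delta_\mu^*(\chi)$ (where $\chi$ is the stratification of $\Fs$), we will automatically get the desired isomorphism $\sigma_G := \Delta_\mu^*(\chi) \colon p_X^*(\Fs) \xrightarrow{\sim} \mu^*(\Fs)$ of \eqref{eq : q}. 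This would make the entire construction of Theorem \ref{Thm : deformation fredft} happen at once for the whole family, and analyticity with respect to $g \in G$ is then automatic because it comes from the analyticity of $\Delta_\mu$ and $\chi$.

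The heart of the matter (and the main obstacle) is to exhibit an admissible open neighborhood $\mathcal{T}$ of the diagonal of $X \times X$ on which $\chi$ converges and which contains $\Delta_\mu(G \times X)$. This is the family analogue of Lemma \ref{Lemma : existence of adm neighb of diag}. I would argue as follows: for each point $(g,x) \in (G \times X)(L)$ with $L/K$ sufficiently large, the $G$-compatibility gives $\R_{S_L}(x,\sigma_g) < \R_{S_L,1}(x,\Fs_L)$, so the proof of Lemma \ref{Lemma : existence of adm neighb of diag} applied to $\sigma_g$ provides a tube $\mathcal{T}(U_x,T_x,R_x^-)$ containing $\Delta_{\sigma_g}(x)$ on which $\chi$ converges. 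Taking the union $\mathcal{T}$ of all such tubes as $(g,x)$ varies, one obtains an admissible open neighborhood of the diagonal containing $\Delta_\mu(G\times X)$; the key point is that the tubes depend only on $x$ and on the real number $R_x$, not on the particular $g$, so the union is admissible in the sense of Section \ref{eq : cocycle condition stratification}. Here the continuity of $g \mapsto \R_{S_L}(x,\sigma_g)$ (through the analyticity of $\mu$ and the arguments in the proof of Theorem \ref{Thm : finiteness sigma}) together with the harmonicity/convexity of the functions $\R_S(-,\sigma)$ and $\R_{S,1}(-,\Fs)$ ensures that the local choices of $R_x$ can be made uniformly in $g$.

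Once $\sigma_G = \Delta_\mu^*(\chi)$ is defined on $G\times X$, the cocycle condition \eqref{eq : cocycle condition G-invariant} is derived by pulling back the stratification cocycle
\begin{equation}
p_{1,2}^*(\chi)\circ p_{2,3}^*(\chi)\;=\;p_{1,3}^*(\chi)
\end{equation}
along the three maps $d_0,d_1,d_2\colon G\times G\times X \to X\times X$ of the simplicial object \eqref{eq : simplicial object}, taking into account that $\Delta^*(\chi)=\mathrm{Id}_\Fs$. The compatibility with Theorem \ref{Thm : deformation fredft} is immediate: pulling $\sigma_G$ back along $g\times\mathrm{Id}_{X_L}\colon X_L \to G_L \times X_L$ gives $\Delta_{\sigma_g}^*(\chi)$, which is precisely the $\sigma_g$-module structure of Theorem \ref{Thm : deformation fredft}.

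Finally, functoriality is obtained by the same pullback principle: if $\alpha\colon\Fs\to\Fs'$ is a morphism of differential equations, then it commutes with the associated stratifications $\chi,\chi'$ (by the equivalence recalled in Section \ref{Stratifications.}), and applying $\Delta_\mu^*$ yields $\mu^*(\alpha)\circ\sigma_G = \sigma_G'\circ p_X^*(\alpha)$, which is exactly the compatibility with the semi-linear $G$-action. In particular the functor is the identity on morphisms, hence faithful; and if the action of $G$ on $X$ is non degenerate, full faithfulness is inherited from Proposition \ref{Prop : non degeneracy --> fully faith} by passing to any $L/K$ realizing the non degeneracy.
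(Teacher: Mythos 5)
Your proposal is correct and follows essentially the same route as the paper: you define $\Delta_\mu = (\mu, p_X)$ (the paper's $\Delta_G$), build an admissible neighborhood $\mathcal{T}$ of the diagonal containing its image on which the stratification $\chi$ converges (the paper's Lemma \ref{Lemma : existence of pull-back fo}), pull back to obtain $\sigma_G:=\Delta_\mu^*(\chi)$, and deduce the cocycle condition, functoriality, and full faithfulness from those of $\chi$ and from Proposition \ref{Prop : non degeneracy --> fully faith}. The only presentational difference is in justifying the uniform-in-$g$ choice of the tube radii $R_x$: you invoke continuity of $g\mapsto\R_S(x,\sigma_g)$ and convexity, while the paper's Lemma \ref{Lemma : existence of pull-back fo} passes to a large complete field $\Omega$ where all points become rational and uses stability of the radius functions $\R_S(-,\sigma)$ and $\R_{S,1}(-,\Fs)$ under extension of scalars.
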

\begin{proof}
We consider the map $\Delta_G\;:=\;\mu\times p_X$
\begin{equation}\label{eq : map frdmplqym for strat}
\Delta_G\;:\;
G\times X\;\xrightarrow{\quad}\;X\times X\;.
\end{equation}
By the Lemma \ref{Lemma : existence of pull-back fo} below, 
the image of the map $\Delta_G$ is contained in some 
admissible neighborhood $\mathcal{T}$ of the diagonal over 
which the stratification 
$\chi:p_2^*(\Fs)_{\mathcal{T}}\xrightarrow{\;
\sim\;}p_1^*(\Fs)_{\mathcal{T}}$ associated to the differential 
equation $\Fs$ converges. So by pull-back we have an isomorphism 
$\sigma_G:=\Delta_G^*(\chi)$ as in \eqref{eq : q}. 
It is clear that it gives a structure of analytic semi-linear 
$G$-module on $\Fs$ with the required properties. In particular the 
cocycle condition \eqref{eq : cocycle condition G-invariant} follows 
from the cocycle condition i) of Section \ref{eq : cocycle condition stratification} 
verified by $\chi$.
\end{proof}

\begin{lemma}\label{Lemma : existence of pull-back fo}
There exists an admissible neighborhood of the diagonal 
$\mathcal{T}$ of $X\times X$ on which the stratification 
associated with $\Fs$ converges, such that the image of 
the morphism
\begin{equation}
\Delta_G\;:\;G\times X
\;\xrightarrow{\;\quad\;}\;X\times X
\end{equation}
is contained in $\mathcal{T}$.
\end{lemma}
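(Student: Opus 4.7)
The plan is to construct the admissible neighborhood $\mathcal{T}$ as a union of local tubular pieces, adapting the argument of Lemma~\ref{Lemma : existence of adm neighb of diag} to the parametrized setting. For each pair $(g_0, x_0) \in G \times X$, I will build a tubular neighborhood $\mathcal{T}_{g_0,x_0} = \mathcal{T}(U', T, R^-) \subseteq X \times X$, together with analytic neighborhoods $V_{g_0} \subseteq G$ of $g_0$ and $U_{x_0} \subseteq U'$ of $x_0$, such that $\chi$ converges on $\mathcal{T}_{g_0, x_0}$ and such that $\Delta_G(V_{g_0} \times U_{x_0}) \subseteq \mathcal{T}_{g_0, x_0}$. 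The union $\mathcal{T} := \bigcup_{(g_0, x_0)} \mathcal{T}_{g_0,x_0}$ is then an open subset of $X \times X$ on which $\chi$ converges, which contains $\Delta_G(G \times X)$ by construction, and which is admissible: specialising to $g_0 = e$ for each $x_0 \in X$ provides a tubular neighborhood of the diagonal at $x_0$.

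For the local construction at $(g_0, x_0)$ I would argue as follows. By quasi-smoothness, and using Theorem~\ref{thm:bonvois} to produce an \'etale coordinate when $x_0 \in \Gamma_S$, I choose an affinoid neighborhood $U'$ of $x_0$ equipped with a coordinate $T : U' \to \mathbb{A}^{1,\mathrm{an}}_K$ and such that $U'$ also contains $\sigma_{g_0}(x_0)$; this is possible since $S$-infinitesimality forces $\sigma_{g_0}(x_0) \in D(x_0, S)$. By continuity of $\mu$, the analytic function $\delta := \mu^*(T) - p_X^*(T)$ is defined on a neighborhood of $(g_0, x_0)$ in $G \times X$, and the $G$-compatibility of $\Fs$ translates into the pointwise inequality $|\delta|(g_0, x_0) < \R^\Fs(x_0)$. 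Using the continuity of $\R^\Fs$ (Theorem~\ref{Thm : continuity}) I shrink $U'$ so that $\inf_{y \in U'} \R^\Fs(y) > |\delta|(g_0, x_0)$, and I pick $R$ strictly between the two. The coefficient estimates on the Taylor matrix $Y_\chi$ of \eqref{eq : Y_chi}, which drive the proof of Lemma~\ref{Lemma : existence of adm neighb of diag}, then show that $\chi$ converges on $\mathcal{T}(U', T, R^-)$. Finally, continuity of $|\delta|$ on $G \times X$ yields open $V_{g_0} \subseteq G$ and $U_{x_0} \subseteq U'$ with $|\delta|(g, y) < R$ throughout $V_{g_0} \times U_{x_0}$, which is exactly the inclusion $\Delta_G(V_{g_0} \times U_{x_0}) \subseteq \mathcal{T}(U', T, R^-)$ required.

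The main obstacle is that one cannot hope for a single uniform radius $R$ working on an entire connected component of $X - \Gamma_S$ once $g$ is allowed to vary in $G$: although the compatibility $|\delta_g|(y) < \R^\Fs(y)$ holds pointwise for every $(g, y)$, the ratio $|\delta_g|(y)/\R^\Fs(y)$ can approach $1$ as $g$ moves in $G$. This blocks a direct transposition of the monotonicity argument of Lemma~\ref{Lemma : existence of adm neighb of diag}, which crucially exploited that both $|\delta_\sigma|$ and $\R^\Fs$ are monotone along the segments of a disc for a fixed $\sigma$. The local-and-glue strategy above bypasses this, at the cost of making $\mathcal{T}$ genuinely non-tubular along the orbits $G \cdot x_0$; the admissibility of $\mathcal{T}$ near the diagonal itself, however, follows automatically from the subfamily indexed by $g_0 = e$.
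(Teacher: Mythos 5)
Your proposal is correct, and it takes a genuinely more careful route than the argument printed in the paper. The paper's proof reduces to $\Omega$-rational points and asserts that the neighborhood $\mathcal{T}=\bigcup_{x}\mathcal{T}(U_x,T_x,R_x^-)$ built in Lemma~\ref{Lemma : existence of adm neighb of diag}, with the radii $R_x$ left unchanged, already contains $\Delta_{\sigma_g}(X_\Omega)$ for every $g\in G(\Omega)$. But, as you rightly flag, those $R_x$ were chosen for a single fixed automorphism $\sigma$ (strictly between $\sup_{y\in D_x}|\delta_{\sigma}|(y)$ and $\inf_{y\in D_x}\R^\Fs(y)$), and a choice that is uniform over $g$ need not exist: with $X=D^-(0,1)$ and the empty weak triangulation, $\Fs$ the equation $y'=y$, and $G=D^-(1,\omega)\subset\mathbb{G}_m^{\mathrm{an}}$ acting by dilations, one has $\sup_{g,y}|\delta_{\sigma_g}|(y)=\omega=\inf_y\R^\Fs(y)$, so no single $R$ works on the disc even though the pointwise compatibility $|\delta_{\sigma_g}|(y)<\R^\Fs(y)$ holds everywhere. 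Your construction, indexed by pairs $(g_0,x_0)\in G\times X$ rather than by $x_0$ alone, avoids this entirely: each local radius $R$ only has to dominate the single number $|\delta|(g_0,x_0)$, and continuity of $\mu$, of $|\delta|$ and of $\R^\Fs$ propagates the strict inequality to a neighborhood $V_{g_0}\times U_{x_0}$; admissibility is recovered from the $g_0=e$ slices exactly as you say, and the convergence of $\chi$ on each $\mathcal{T}(U',T,R^-)$ follows from the same estimate $\R^\Fs(y)\leq\liminf_n(|G_n|(y)/|n!|)^{-1/n}$ used in Lemma~\ref{Lemma : existence of adm neighb of diag}, tested on the finitely many Shilov points of each affinoid in $U'$. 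The one step worth making explicit is that $G$-compatibility, which the paper states for rational $g\in G(L)$, yields $|\delta|(g_0,x_0)<\R^\Fs(x_0)$ at \emph{all} Berkovich points of $G\times X$: pass to $L=\H((g_0,x_0))$, lift $(g_0,x_0)$ to a rational pair in $G(L)\times X(L)$, apply $\sigma$-compatibility of $\Fs_L$ there, and use the base-change invariance of both $|\delta|$ and $\R^\Fs$, which is exactly the input the paper also invokes in its reduction to $\Omega$-points. With that noted, your argument is complete, and the resulting $\mathcal{T}$, although no longer tubular along $G$-orbits, is precisely what the lemma requires.
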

\begin{proof}
We can assume $K=\Ka$. 
If $g\in G(L)$ for some $L/K$, we have a commutative 
diagram:
\begin{equation}\label{eq : diag : Delta_G and Delta_g}
\xymatrix{G_L\times X_L\ar[r]^{(\Delta_G)_L}&X_L\times X_L\\
X_L\ar[u]^{g\times\mathrm{Id}_{X_L}}\ar[ur]_{\Delta_{\sigma_g}}&}
\end{equation}
We consider a large field 
$\Omega/K$ such that the vertical maps of the following diagram are 
all surjective
\begin{equation}
\xymatrix{
G(\Omega)\times 
X(\Omega)\ar[r]^{\Delta_G(\Omega)}
\ar@{->>}[d]&X(\Omega)\times 
X(\Omega)\ar@{->>}[d]&
\ar@{}[l]|-{\supset}\mathcal{T}(\Omega)\ar@{->>}[d]\\
G\times X\ar[r]^{\Delta_G}&X\times 
X&\ar@{}[l]|-{\supset}\mathcal{T}}
\end{equation}
This is possible thanks to  \cite[Prop. 2.1.7]{NP-III}. 
It is then enough to prove that 
\begin{equation}\label{eq : deltaG subset}
\Delta_G(\Omega)(\,G(\Omega)\times X(\Omega)\,)
\;\subseteq\; 
\mathcal{T}(\Omega)\;.
\end{equation}
We now show that this is automatic. 
Let $\{U_x\}_{x\in X}$ be the open covering of $X$ 
that we have obtained in the proof of Lemma 
\ref{Lemma : existence of adm neighb of diag}. Then
$\{(U_x)_\Omega\}_{x\in X}$ is again an open covering 
of $X_\Omega$  since the projection 
$\pi_{\Omega/K}:X_\Omega\to X$ gives an 
isomorphism 
$\Gamma_{S_{\Omega}}\simto\Gamma_{S}$.
As in the proof of 
\ref{Lemma : existence of adm neighb of diag} 
define $\mathcal{T}$ as the union 
\eqref{eq : big tubular union} 
of local neighborhoods of the diagonal defined by some conditions of 
type $|(T_{x})_1-(T_x)_2|<R$, where $T_x:U_x\to\mathbb{A}^{1,
\mathrm{an}}_K$ is 
a local étale coordinate on some open $U_x$. 
This implies that $\mathcal{T}_\Omega$ is defined by the 
same conditions, with $(T_x)_\Omega:(U_x)_\Omega\to
\mathbb{A}^{1,\mathrm{an}}_\Omega$, and the same 
$R$. Now the radii $\R_{S}(-,\sigma)$ and 
$\R_{S,1}(-,\Fs)$ are stable by 
scalar extension to $\Omega$, so the proof of Lemma 
\ref{Lemma : existence of adm neighb of diag} shows 
that for all $g\in\Sigma_\Omega=G(\Omega)$ we have 
$\Delta_{\sigma_g}(X_\Omega)\subseteq 
\mathcal{T}_\Omega$.
This implies \eqref{eq : deltaG subset} by the diagram 
\eqref{eq : diag : Delta_G and Delta_g}. 
\end{proof}

\section{Deformation and quasi unipotence over the 
Robba ring}
\label{Deformation over the Robba ring}
Several situations requires an analysis along a germ of 
segment along a Berkovich curve. This corresponds to the study of differential equations over the 
Robba ring $\mathfrak{R}$.
So we now restrict our attention to this case, 
which is the case studied in \cite{An-DV} in the context of $q$-difference equations. 

In sections \ref{Section : Deformation Robba} 
we state the Deformation 
over $\mathfrak{R}$ 
(that needs some ad-hoc definitions). 
In sections \ref{Section : Frob str sol}, 
\ref{Section : Special ext}, 
\ref{Section  Katz-Matsuda can ext}, 
\ref{Section p-adic loc mon th},  we will assume the 
following
\begin{hypothesis}\label{Hyp : K discr}
$K$ is discretely 
valued, of mixed characteristic, and with perfect 
residual field. 
\end{hypothesis}
As a consequence every locally free 
$\mathfrak{R}$-module of finite 
type will be free by \cite[Thm. 4.40]{Christol-Book}.

\begin{definition}
We set $C_\varepsilon:=C^-(0;]1-\varepsilon,1[)$. 
We always consider 
on $C_\varepsilon$ the empty weak triangulation. The 
Robba ring is defined as  
\begin{equation}
\mathfrak{R}\;:=\;
\bigcup_{\varepsilon>0}\O(C_\varepsilon)\;=\;
\Bigl\{\sum_{n\in\mathbb{Z}}a_nT^n\;|\;\exists 
\varepsilon>0 
\textrm{ such that }\lim_{n\to\pm\infty}
|a_n|\rho^n=0\;,\;\forall 
\rho\in]1-\varepsilon,1[ \Bigr\}\;.
\end{equation}
\end{definition}

\subsection{Deformation}\label{Section : Deformation Robba}
Since we are working over a \emph{germ} of annulus, 
some definition have to be adapted. 

If $\sigma$ is an infinitesimal 
automorphism of $C_\varepsilon$, then it is so also for 
all $C_{\varepsilon'}$ with $\varepsilon'<\varepsilon$ 
(cf. Lemma \ref{Lemma : localization}). 
The function 
$x\mapsto\R_{\emptyset}(x,\sigma)$ over $C_\varepsilon$ 
commutes with the restriction to  $C_{\varepsilon'}$, 
$\varepsilon'<\varepsilon$.
An automorphism of $\mathfrak{R}$ is called \emph{infinitesimal} if 
it comes from an infinitesimal automorphism of $C_\varepsilon$ for 
some $\varepsilon>0$ (with respect to the empty weak triangulation). 
If $\Sigma$ is a family of infinitesimal automorphisms of 
$\mathfrak{R}$ we denote by 
$\Sigma_\varepsilon\subseteq\Sigma$
the subfamily of $\Sigma$ of those automorphisms that 
are defined over $C_\varepsilon$, and  are infinitesimal 
on it. 
%\end{definition}

A differential equation over $\mathfrak{R}$ is, 
by definition, a locally 
free $\O_{C_\varepsilon}$-module together with a 
connection, for some 
unspecified $\varepsilon>0$.

On the other hand a $\Sigma$-module $\Fs$ over $\mathfrak{R}$ 
does not necessarily come from a $\Sigma_\varepsilon$-module 
$\Fs_\varepsilon$ over $C_\varepsilon$. 
Indeed we lose the action of each  
$\sigma\in\Sigma-\Sigma_\varepsilon$. The 
definitions are then the following

\begin{definition}
Let $\Fs$ be a differential equation over $\mathfrak{R}$, defined 
over $C_{\varepsilon_\Fs}$ for some $\varepsilon_\Fs>0$.

For all $\varepsilon<\varepsilon_\Fs$ let 
$\Sigma_\varepsilon(\Fs)\subseteq\Sigma_\varepsilon$ 
be the subset formed by those $\sigma\in\Sigma_\varepsilon$ such 
that $\Fs_\varepsilon$ is $\sigma$-compatible over $C_\varepsilon$.\footnote{We recall 
that this happens if and only if 
$\R_{\emptyset}(x_{0,\rho},\sigma)<
\R_{\emptyset,1}(x_{0,\rho},\Fs)$,
for all $\rho\in]1-\varepsilon,1[$.}
%\begin{equation}
%\R_{\emptyset}(x_{0,\rho},\sigma)\;<\;
%\R_{\emptyset,1}(x_{0,\rho},\Fs)\;,\qquad
%\textrm{for all }\rho\in]1-\varepsilon,1[\;.
%\end{equation}
We say that $\Fs$ is $\Sigma$-compatible if for all 
$\varepsilon>0$ we have
\begin{equation}\label{eq : Sigma-compatibility}
\bigcup_{0<\varepsilon'<\varepsilon}
\Sigma_{\varepsilon'}(\Fs)\;=\;\Sigma\;.
\end{equation}
\end{definition}

\if{\comment{Non è la definizione giusta ! Non funziona per 
$\bs{\mu}_{p^\infty}$ !!

$\bs{\mu}_{p^\infty}$ agisce tutto intero su ogni disco massimale, 
ma una equazione solubile non è tutta intera 
$\bs{\mu}_{p^\infty}$-compatibile, 
perché se la sua pendenza $p$-adica è non nulla per ogni $\rho<1$
il disco $D(x_{0,\rho},\Fs)$ è strettamente contenuto in $D(x)$, 
mentre invece $D(x_{0,\rho},\bs{\mu}_{p^{\infty}})=D(x)$ per tutti 
i $\rho<1$.\\

Proposta di definizione : 

Sia $\varepsilon_\Fs$ tale che $\Fs$ proviene per estensione degli 
scalari da una equazione $\Fs_\varepsilon$ su $C_\varepsilon$. 

Per 
ogni $0<\varepsilon<\varepsilon_\Fs$ sia 
$\Sigma_\varepsilon(\Fs)$ il piu' grande sottoinsieme di 
$\Sigma$ tale che $\Fs_\varepsilon/C_\varepsilon$ è 
$\Sigma_\varepsilon(\Fs)$-compatibile. 
E' l'insieme degli automorfismi $\sigma\in\Sigma$ tali che 
$\Fs_\varepsilon$ è $\sigma$-compatibile sopra $C_\varepsilon$.
Diciamo che $\Fs$ è $\Sigma$-compatibile se per ogni $\varepsilon$ 
vicino ad $1$ si ha
\begin{equation}
\bigcup_{0<\varepsilon'<\varepsilon}
\Sigma_{\varepsilon'}(\Fs)\;=\;\Sigma\;.
\end{equation}
(Questo funziona 
per $\bs{\mu}_{p^\infty}$).

Condizione necessaria per avere questa uguaglianza è che per ogni 
$\varepsilon>0$ si abbia 
\begin{equation}
\bigcup_{0<\varepsilon'<\varepsilon}\Sigma_{\varepsilon'}
\;=\;\Sigma
\end{equation}
ove $\Sigma_{\varepsilon'}$ indica l'insieme dei $\sigma$ che sono 
definiti su $C_{\varepsilon'}$. La condizione è necessaria perché 
$\Sigma_{\varepsilon'}(\Fs)\subseteq\Sigma_{\varepsilon'}$ \\

Per la stessa ragione bisogna generalizzare la dimostrazione della 
"fully faithfulness" che non segue in modo straightforward da 
quella che ho fatto per le curve. 
Qui infatti non c'è nessun punto in cui $D(x_{0,\rho},
\bs{\mu}_{p^{\infty}})\subseteq D(x,\Fs)$.
Pero' dovrebbe funzionare lo stesso !

Solo che ora non riesco a capire completamente perché ... l'idea è la 
seguente. Sia $\Sigma_\varepsilon(\Fs_\varepsilon)$ 
il sottoinsieme di $\Sigma_\varepsilon$ tale che $\Fs_\varepsilon$ è 
$\Sigma_\varepsilon(\Fs_\varepsilon)$-compatibile. 
Voglio far tendere $\rho$ verso $1$, e considerare il morfismo
di specializazione 
$\mathfrak{R}\to\cup_\rho D(x_{0,\rho},
\Sigma_\varepsilon(\Fs_\varepsilon))$
Vorrei dimostrare che se $\alpha:\Fs\to\Fs'$ commuta con $\Sigma$, 
allora $\alpha$ commuta con $\nabla$... vorrei usare un argomento 
di uguaglianza di dimensioni come prima, ma forse potrei anche 
procedere per assurdo e mostrare che se $\alpha$ non commutasse 
con $\nabla$ allora non commuta con nabla in nessun disco 
$D(x_{0,\rho},
\Sigma_\varepsilon(\Fs_\varepsilon))$ per il Lemma 
\ref{Lemma : alpha commutes with nabla loc then glob}. L'idea è di 
trovare un assurdo per $\rho\to 1$.

Quel che è certo è che funziona per l'equazione triviale :-)
}
}\fi

Theorem \ref{Thm : deformation fredft} furnishes an 
faithful functor associating to a $\Sigma$-compatible differential 
equation, a $\Sigma$-module over $\mathfrak{R}$. As 
usual we call the essential image of 
$\mathrm{Def}_{\Sigma}$ \emph{stratified} 
$\Sigma$-modules.

\begin{definition}
Let $\Sigma$ be a family of infinitesimal automorphisms of 
$\mathfrak{R}$. We say that $\Sigma$ is non degenerate if for all 
$\varepsilon>0$ there exists $0<\varepsilon'<\varepsilon$ such that 
$\Sigma_{\varepsilon'}$ is 
non degenerate over $C_{\varepsilon'}$.
\end{definition}
Since the family $\Sigma_{\varepsilon}$ is not necessarily 
contained in $\Sigma_\varepsilon(\Fs)$, we 
then proceed as follows:
\begin{proposition}[Fully faithfulness]
\label{Prop: fully faithfulness over Robba}
Assume that $\Sigma$ is a non degenerate family of automorphisms 
of $\mathfrak{R}$. 
Let $\{\varepsilon_n\}_{n\geq 0}$ 
be a strictly decreasing sequence of 
positive non zero real numbers. For all $n$ we consider a 
sub-family 
$\Sigma_{n}\subseteq\Sigma_{\varepsilon_n}$ which 
is non degenerate over $C_{\varepsilon_n}$.
 
We say that a differential equation is $(\Sigma_n)_n$-compatible if 
for all $n$ one has 
$\Sigma_n\subseteq\Sigma_{\varepsilon_n}(\Fs)$.

Let $\mathcal{C}$ be the category formed 
by differential equations $\Fs$ that are 
$(\Sigma_n)_n$-compatible.\footnote{It is a 
full subcategory of the category of all differential 
equations.} 
Then the restriction of the deformation functor to 
$\mathcal{C}$ 
is fully faithful.\hfill$\Box$
\end{proposition}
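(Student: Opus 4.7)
The plan is to reduce the statement to the curve-level Proposition~\ref{Prop : non degeneracy --> fully faith} applied on each annulus $C_{\varepsilon_n}$. Since $\mathrm{Def}_{\Sigma}$ is the identity on underlying $\O_X$-linear maps, fully faithfulness amounts to the following: if $\Fs,\Fs'\in\mathcal{C}$ and $\alpha:\Fs\to\Fs'$ is an $\O$-linear morphism commuting with the full action of $\Sigma$ on the two deformations, then $\alpha$ also commutes with the connections $\nabla,\nabla'$.

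First I would pick an index $N$ large enough so that $\Fs$, $\Fs'$ and $\alpha$ are all defined over $C_{\varepsilon_N}$. For every $n\geq N$, restrict to $C_{\varepsilon_n}$ (endowed with the empty weak triangulation) so that $\Fs_{|C_{\varepsilon_n}}$ and $\Fs'_{|C_{\varepsilon_n}}$ become genuine differential equations on a quasi-smooth curve. By the hypothesis $\Sigma_n\subseteq\Sigma_{\varepsilon_n}(\Fs)\cap\Sigma_{\varepsilon_n}(\Fs')$, both restrictions are $\Sigma_n$-compatible in the sense of Section~\ref{Deformation}. The key point is that, because the deformation is defined via pull-back of the stratification by $\Delta_\sigma$ (see Remark~\ref{Rk : Same Taylor solutions}), for any $\sigma\in\Sigma_n$ the $\sigma$-action produced by $\mathrm{Def}_{\Sigma}$ over $\mathfrak{R}$ coincides after restriction with the $\sigma$-action produced by $\mathrm{Def}_{\Sigma_n}$ over $C_{\varepsilon_n}$. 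Hence $\alpha_{|C_{\varepsilon_n}}$ commutes with the $\Sigma_n$-action on $\mathrm{Def}_{\Sigma_n}(\Fs_{|C_{\varepsilon_n}})$ and $\mathrm{Def}_{\Sigma_n}(\Fs'_{|C_{\varepsilon_n}})$.

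At this point I would invoke Proposition~\ref{Prop : non degeneracy --> fully faith} with $X=C_{\varepsilon_n}$, $S=\emptyset$, and the family $\Sigma_n$, which is non-degenerate on $C_{\varepsilon_n}$ by assumption. This yields that $\alpha_{|C_{\varepsilon_n}}$ commutes with the connections over $C_{\varepsilon_n}$ for every $n\geq N$. Finally, since $\mathfrak{R}=\bigcup_{n\geq N}\O(C_{\varepsilon_n})$ and a morphism of differential modules over $\mathfrak{R}$ is entirely determined by its restrictions to the $\O(C_{\varepsilon_n})$'s, the commutation relation $\alpha\circ\nabla=\nabla'\circ\alpha$ propagates to a relation over $\mathfrak{R}$.

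The main obstacle I foresee is verifying the compatibility between the two deformation constructions — the global one over $\mathfrak{R}$ and the local one over $C_{\varepsilon_n}$. This boils down to the naturality of the pull-back $\Delta_\sigma^*(\chi)$ under restriction of the stratification $\chi$ to an open sub-domain, which is transparent from the explicit local formula of Remark~\ref{Rk : Same Taylor solutions}: the matrix $A_\sigma=\Delta_\sigma(Y_\chi)$ is computed from the Taylor matrix $Y_\chi$ of $\chi$ on any admissible tubular neighborhood containing $\Delta_\sigma(C_{\varepsilon_n})$, so it does not depend on the choice of $\varepsilon_n$ within the allowed range.
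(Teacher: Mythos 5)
Your proof is correct and spells out exactly the reduction the paper has in mind (the statement in the paper is closed with $\Box$ with no written proof, precisely because the argument is this direct reduction to Proposition~\ref{Prop : non degeneracy --> fully faith}). Your key observations — that the deformation over $\mathfrak{R}$ is constructed locally on the $C_\varepsilon$'s so the $\Sigma_n$-action restricts compatibly, that $\Sigma_n\subseteq\Sigma_{\varepsilon_n}(\Fs)\cap\Sigma_{\varepsilon_n}(\Fs')$ gives $\Sigma_n$-compatibility on $C_{\varepsilon_n}$ so Proposition~\ref{Prop : non degeneracy --> fully faith} applies there, and that the identity $\alpha\circ\nabla=\nabla'\circ\alpha$ then propagates from $C_{\varepsilon_n}$ back to $\mathfrak{R}$ via injectivity of the restriction map — are the content of the argument. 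One trivial remark: a single index $n\geq N$ already suffices; you do not need the conclusion for all $n\geq N$.
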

\begin{remark}
This equivalence does not require any additional assumption on the 
objects as solvability, Frobenius structure, or non Liouville exponents.
\end{remark}
\if{\subsubsection{A conjecture on non degeneracy.}
Assume that  
$\sup_{\sigma\in\Sigma}\R_{\emptyset}(x_{0,\rho},\sigma)<1$.
Then all solvable differential equations are $\Sigma$-compatible (cf. 
\eqref{eq : solvability over Robba} just below).
Moreover for a solvable differential equation we have 
$\Sigma_\varepsilon=\Sigma_\varepsilon(\Fs)$ for all 
$\varepsilon>0$. So the condition given in 
Proposition \ref{Prop: fully faithfulness over Robba}
only involves $\Sigma_\varepsilon$. We are induce to give the 
following definition
\begin{definition}
We say that $\Sigma$ is \emph{weakly non degenerate} 
if for all $\varepsilon>0$ there exists $0<\varepsilon'<\varepsilon$
such that the family $\Sigma_{\varepsilon'}$ is non degenerate over 
$C_{\varepsilon'}$.
\end{definition}
\begin{conjecture}
If the family $\Sigma$ is weakly non degenerate, then the 
deformation functor is fully faithful.
\end{conjecture}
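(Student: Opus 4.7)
The plan is to reduce the statement to the curve version already proved, namely Proposition \ref{Prop : non degeneracy --> fully faith}, applied separately to each annulus $C_{\varepsilon_n}$ with the non-degenerate family $\Sigma_n$. Since the functor $\Def_{\Sigma}$ is the identity on underlying $\O$-modules and is already faithful, fully faithfulness amounts to the following assertion: any $\O$-linear morphism $\alpha:\Fs\to\Fs'$ between objects of $\mathcal{C}$ that commutes with every $\sigma\in\Sigma$ commutes with the connections. This is a ``local-at-the-boundary'' statement, so the natural approach is to test it on a decreasing exhaustion by annuli $C_{\varepsilon_n}$.

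Concretely, fix $\varepsilon_\alpha>0$ small enough that $\Fs$, $\Fs'$ and $\alpha$ all descend to genuine objects and a genuine $\O_{C_{\varepsilon_\alpha}}$-linear morphism over $C_{\varepsilon_\alpha}$. For every $n$ with $\varepsilon_n<\varepsilon_\alpha$, the hypothesis $\Sigma_n\subseteq \Sigma_{\varepsilon_n}(\Fs)\cap\Sigma_{\varepsilon_n}(\Fs')$ says that the restrictions $\Fs_{|C_{\varepsilon_n}}$ and $\Fs'_{|C_{\varepsilon_n}}$ are both $\Sigma_n$-compatible on the annulus $C_{\varepsilon_n}$ (endowed with the empty weak triangulation). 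Since $\Sigma_n\subseteq\Sigma$, the restriction $\alpha_{|C_{\varepsilon_n}}$ commutes with every element of $\Sigma_n$. Because $\Sigma_n$ is non-degenerate over $C_{\varepsilon_n}$ by assumption, Proposition \ref{Prop : non degeneracy --> fully faith} applied on the curve $C_{\varepsilon_n}$ yields that $\alpha_{|C_{\varepsilon_n}}$ commutes with $\nabla$ over $C_{\varepsilon_n}$. Gluing as $n\to\infty$, or equivalently using that $\mathfrak{R}=\varinjlim_{n}\O(C_{\varepsilon_n})$ together with the analytic continuation argument of Lemma \ref{Lemma : alpha commutes with nabla loc then glob}, one concludes that $\alpha$ commutes with $\nabla$ as a morphism over $\mathfrak{R}$.

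The only delicate point, which is the main place to be careful, is to check that the proof of Proposition \ref{Prop : non degeneracy --> fully faith} really carries over with this choice of data. That proof requires a point $x$ and an open disc $D\subseteq D(x,\emptyset)$ containing the union $\bigcup_{\sigma\in\Sigma_n}D^+(x,\sigma)$, on which both $\Fs$ and $\Fs'$ are simultaneously trivial, and such that $\O(D)^{\Sigma_{n,\Omega}}=\Omega$. The non-degeneracy of $\Sigma_n$ on $C_{\varepsilon_n}$ supplies such a point $x$ with $\O(D)^{\Sigma_{n,\Omega}}=\Omega$ for any $D$ between the union of the $D^+(x,\sigma)$ and $D(x,\emptyset)$; simultaneous triviality of $\Fs$ and $\Fs'$ on some such $D$ follows from $\Sigma_n$-compatibility of both equations, which forces $\bigcup_{\sigma\in\Sigma_n}D^+(x,\sigma)\subseteq D(x,\Fs)\cap D(x,\Fs')$, so any small enough $D$ inside this intersection does the job. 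The dimension-counting argument then produces the desired equality $\Hom^{\Sigma_n}(\Fs_{|D},\Fs'_{|D})=\Hom^{\nabla}(\Fs_{|D},\Fs'_{|D})$, exactly as in the curve case, and we are done.
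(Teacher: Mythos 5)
The statement you are trying to prove is stated in the paper as an open \emph{conjecture}, not a proved result; the paper only proves the weaker Proposition~\ref{Prop: fully faithfulness over Robba}, which restricts to the sub-category of $(\Sigma_n)_n$-compatible equations for a \emph{fixed} sequence $(\Sigma_n)_n$. Your proposal does not close the gap between the two statements; in fact it silently re-imports the extra hypothesis of the proposition.

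The critical sentence in your argument is the invocation of ``the hypothesis $\Sigma_n\subseteq\Sigma_{\varepsilon_n}(\Fs)\cap\Sigma_{\varepsilon_n}(\Fs')$.'' That containment is the definition of $(\Sigma_n)_n$-compatibility, which is an assumption of Proposition~\ref{Prop: fully faithfulness over Robba} but is \emph{not} an assumption of the conjecture. The conjecture only gives you: (i) $\Sigma$ weakly non degenerate, i.e.\ for all $\varepsilon$ some $\Sigma_{\varepsilon'}$ is non degenerate over $C_{\varepsilon'}$ with $\varepsilon'<\varepsilon$; and (ii) $\Fs,\Fs'$ are $\Sigma$-compatible in the limiting sense $\bigcup_{\varepsilon'<\varepsilon}\Sigma_{\varepsilon'}(\Fs)=\Sigma$. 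Neither of these gives you a non degenerate family sitting \emph{inside} $\Sigma_{\varepsilon'}(\Fs)\cap\Sigma_{\varepsilon'}(\Fs')$. The $\varepsilon'$ produced by weak non degeneracy has nothing to do with $\Fs$ or $\Fs'$, so $\Sigma_{\varepsilon'}$ may contain elements $\sigma$ for which the compatibility inequality $\R_\emptyset(x_{0,\rho},\sigma)<\R_{\emptyset,1}(x_{0,\rho},\Fs)$ fails on $C_{\varepsilon'}$; removing those can destroy non degeneracy. Dually, the $\varepsilon'$ for which a given $\sigma$ enters $\Sigma_{\varepsilon'}(\Fs)$ depends on $\sigma$, so the union condition in $\Sigma$-compatibility cannot be converted into a single uniform $\varepsilon'$ working for a full non degenerate subfamily. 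This is exactly the obstruction the paper itself flags just before the conjecture (using the $\bs{\mu}_{p^\infty}$ example to illustrate that $\Sigma_\varepsilon(\Fs)$ can degenerate even when $\Sigma_\varepsilon$ does not), and it is why the statement is left as a conjecture rather than derived from Proposition~\ref{Prop: fully faithfulness over Robba}.

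The ``delicate point'' you single out (simultaneous triviality of $\Fs$ and $\Fs'$ on a common disc) is not the real issue and is handled correctly in your sketch; the actual missing step is producing, for every small $\varepsilon$, a single family that is non degenerate over $C_{\varepsilon'}$ \emph{and} lies in $\Sigma_{\varepsilon'}(\Fs)\cap\Sigma_{\varepsilon'}(\Fs')$. Until you have an argument for that (which would require using properties of the specific equations, such as the concavity/slope structure of $\R_{\emptyset,1}(-,\Fs)$, to show that $\Sigma_{\varepsilon'}(\Fs)$ still contains a sequence $\sigma_n$ with $D^+(x_{0,\rho},\sigma_n)\to\{t_{x_{0,\rho}}\}$ as in Proposition~\ref{Prop : criterion non degeneracy ,gf}), your proposal proves Proposition~\ref{Prop: fully faithfulness over Robba}, not the conjecture.
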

We clarify the meaning of the conjecture, by the following example.

If $\bs{\mu}_{p^{\infty}}$ acts  on $\mathfrak{R}$ as 
$\sigma_q:f(T)\mapsto f(qT)$, $q\in\bs{\mu}_{p^{\infty}}$ ($q$-
difference with $q$ being a root of unity). 
Then the action of $\bs{\mu}_{p^{\infty}}$ is infinitesimal, and we 
have $\bs{\mu}_{p^{\infty}}=\Sigma=\Sigma_\varepsilon$ for all 
$\varepsilon >0$. A differential equation is $\Sigma$-compatible 
if and only if it is solvable (cf. \eqref{eq : solvability over Robba} 
below). Indeed it is known that 
$\R_{\emptyset,1}(x_{0,\rho},\Fs)=\rho^\beta$, for all $\rho$ close 
to $1$, while $\R_{\emptyset}(x_{0,\rho},\sigma_q)=|q-1|$. 

Now if the slope of the radius function $\R_{S,1}(-,\Fs)$ is not $0$, 
then $\Sigma_\varepsilon(\Fs)=\bs{\mu}_{p^n}$, for some 
$n\neq\infty$. 
Hence   
the action of $\Sigma_\varepsilon(\Fs)$ is never non degenerate. 
Indeed there is no point $x$ of $C_\varepsilon$ such that 
\eqref{eq : O(D)^Sigma=Omega} holds.

However, if $\Fs$ is defined over some larger annulus 
$A_{\varepsilon}=\{1-\varepsilon<|T|(x)<1+\varepsilon\}$, 
then $\Fs$ remains $\Sigma$-compatible also at $x_{0,1}$, and 
$\bs{\mu}_{p^\infty}$ is always non degenerate over 
$A_{\varepsilon}$ (we have \eqref{eq : O(D)^Sigma=Omega} 
at $x=x_{0,1}$).

The existence of a lattice over $A_\varepsilon$ will be realized for 
finite free differential equations with a Frobenius structure 
\smallcomment{REF}. In this case we will prove that the deformation 
is an equivalence \smallcomment{REF}. 
The conjecture is true for this class of equations.
}\fi

\subsection{Solvability and Frobenius structure}
\label{Section : Frob str sol}
We now assume hypothesis \ref{Hyp : K discr}.
For $\varepsilon>0$ we set
$A_\varepsilon:=\{1-\varepsilon<|T|<1+\varepsilon\}$, 
and  $A_0:=\{|T|=1\}$. 
Similarly as in the case of Robba ring we set 
$\O^\dag(A_0):=\cup_{\varepsilon>0}\O(A_\varepsilon)$. 
Its elements are power series 
$f=\sum_{i\in\mathbb{Z}}a_iT^i$, $a_i\in K$, such that 
there is $\varepsilon>0$ such that 
$\lim_{n\to\pm\infty}|a_i|(1+\varepsilon)^n=0$,
and $\lim_{n\to\pm\infty}|a_i|
(1-\varepsilon)^n=0$. 
A differential equation $\Fs$ over $\O^\dag(A_0)$ or 
$\mathfrak{R}$ is called solvable if 
\begin{equation}\label{eq : solvability over Robba}
\lim_{\rho\to 1^-}\R_{\emptyset,1}(x_{0,\rho},\Fs)\;=\;1\;.
\end{equation}

%\subsubsection{Frobenius Structure.}
We now focus on the Frobenius structure. Let $\phi_K:K\to K$ be a 
lifting of the $p$-th power map $x\mapsto x^p$ of the residual field 
$\widetilde{K}$ of $K$. 
Let $A$ be one of the rings $\Od$ or $\mathfrak{R}$. 
Let $\phi(T)\in A$ be a function such that 
$x_{0,\rho}(\phi(T)-T^p)<\rho$ for all $\rho$ close to $1$. The 
setting $\sum a_i T^i\mapsto \sum \phi_K(a_i)\phi(T)^i$ is a ring 
endomorphism of $A$ called a \emph{Frobenius}.
We say that a differential equation $\Fs$ over $A$ has a 
Frobenius structure of 
order $n>0$ if there is an isomorphism of differential 
modules $(\phi^n)^*(\Fs)\xrightarrow{\sim}\Fs$.
%It is well known that  
A differential equation admitting an 
unspecified Frobenius structure is solvable, and enjoy 
several nice properties, one of them is the quasi unipotence that we will explain 
in the next sections.

\if{
\begin{remark}
We can transport this to the category of stratified 
$\Sigma$-modules. Indeed since the underling $\O_X$-module of 
$\Def_\Sigma(\Fs)$ is $\Fs$ we have
\begin{equation}
\phi^*\Def_\Sigma(\Fs)\;=\;\Def_\Sigma(\phi^*\Fs)\;.
\end{equation}
The Frobenius isomorphism 
$(\phi^n)^*(\Fs)\xrightarrow{\;\sim\;}\Fs$ 
being a morphism of differential modules, 
it is also a morphism of the $\Sigma$-modules obtained by 
deformation.
\end{remark}
}\fi
\subsection{Special extensions}
\label{Section : Special ext}
By a result of Katz (cf. \cite{Katz-local-to-global}), 
 finite separable Galois extensions of 
$\widetilde{K}(\!(t)\!)$ correspond to the so called 
\emph{special coverings} of 
$\mathbb{G}_{m,\widetilde{K}}=
\mathrm{Spec}(\widetilde{K}[t,t^{-1}])$. 
We now recall the definitions of 
\cite{Matsuda-unipotent}.

A finite étale coverings $\widetilde{V}\to\mathbb{G}_{m,
\widetilde{K}}$ 
is \emph{special } if it is tame at $0$ and if its geometric monodromy 
group has a unique $p$-Sylow subgroup 
(cf. \cite{Katz-local-to-global}). One proves that $\widetilde{V}$ 
is affine. If $\widetilde{\B}$ is its algebra, we say that 
$\widetilde{\B}/\widetilde{K}[t,t^{-1}]$ is a 
\emph{special extension}.
By the theory of Monsky-Washnitzer 
(cf. \cite{Monsky-Washnitzer-F1}), 
\emph{special extensions} of $\widetilde{K}[t,t^{-1}]$ can be 
lifted (preserving the Galois group) to the so called Special 
extensions of $K^{\circ}[T,T^{-1}]^{\dag}$, where 
\begin{equation}\label{eq : Od^circ}
K^{\circ}[T,T^{-1}]^{\dag}\;=\;\{
f=\sum_{i\in\mathbb{Z}}a_iT^i,\;a_i\in K^{\circ},\;
\exists\varepsilon>0\;\lim_{n\to\pm\infty}|a_i|
(1+\varepsilon)^n=0\;,\;\lim_{n\to\pm\infty}|a_i|
(1-\varepsilon)^n=0\}
\end{equation} 
is the Monsky-Washnitzer's weak completion of 
$K^{\circ}[T,T^{-1}]$. 
Special extensions of $K^{\circ}[T,T^{-1}]^{\dag}$ 
produce (by scalar extension) the so called 
\emph{Special extensions} of 
$\Od=
K^{\circ}[T,T^{-1}]^{\dag}\otimes_{K^\circ}K$. 

The $\mathfrak{R}$-algebras obtained by scalar extension from 
\emph{Special extensions} of $\Od$ 
will be called  
\emph{étale extensions} of $\mathfrak{R}$. 
We need to introduce the sub-ring of bounded functions in 
$\mathfrak{R}$:
\begin{equation}
\Ed\;\;:=\;\;\{\;f\in\mathfrak{R}\;|\; 
\lim_{\rho\to 1^{-}}x_{0,\rho}(f)<+\infty\;\}\;.
\end{equation}
The ring $\Ed$ has two topologies. 
The first one arises by restriction from that of $\mathfrak{R}$ (which 
is a $\mathcal{LF}$ space as inductive limit of the Frechet spaces 
$\O(C_\varepsilon)$). 
For this topology $\Ed$ is dense in $\mathfrak{R}$. 
The second topology on $\Ed$ is given by the Gauss norm $x_{0,1}$, 
for which $\Ed$ is \emph{not complete}. 
By the fact that the valuation of $K$ is discrete one  
proves that $(\Ed,x_{0,1})$ is a \emph{Henselian} 
field with residual field 
$\widetilde{K}(\!(t)\!)$. 
One has the following inclusions 
\begin{equation}
\Od\;\subset\; 
\Ed\;\subset\;\mathfrak{R}\;.
\end{equation}
We have introduced $\Ed$ because it is a field, and 
because it is an intermediate object between 
$\Od$ and $\mathfrak{R}$. 
Special extensions of $\Od$ and 
$\mathfrak{R}$ correspond bijectively to 
unramified extensions of $\Ed$. 
The situation is resumed in the following diagram 
(for more details we refer to \cite{An-DV}, \cite{Matsuda-unipotent}):
\begin{equation}\label{eq : diagram special extensions}
\xymatrix{ \protect{\left\{
\begin{array}{l}
\textrm{Special}\\
\textrm{extensions of }\Od
\end{array}
\right\}
\ar[r]_-{\sim}^-{-\otimes\Ed}\ar@{}[dr]|-{\odot} }&%\ar[d]_{-\otimes k}^{\wr}  }&%
\protect{\left\{\begin{array}{l}
\textrm{Finite unramified}\\
\textrm{extensions of }\Ed
\end{array}
\right\}}\ar[r]_-{\sim}^-{-\otimes\mathfrak{R}}&\protect{\left\{\begin{array}{l}
\textrm{Special}\\
\textrm{extensions of }\mathfrak{R}
\end{array}
\right\}}\\
\protect{\left\{
\begin{array}{l}
\textrm{Special extensions}\\
\textrm{of }K^\circ[T,T^{-1}]^\dag
\end{array}
\right\} \ar[r]_-{\sim}^-{-\otimes(\Ed)^\circ}\ar@{}[dr]|-{\odot}
\ar[u]_{\wr}^{-\otimes K}
\ar[d]_{-\otimes \widetilde{K}}^{\wr}  }&%
\protect{\left\{\begin{array}{l}
\textrm{Finite unramified}\\
\textrm{extensions of }(\Ed)^\circ
\end{array}
\right\}\ar[d]^{-\otimes \widetilde{K}}_{\wr}\ar[u]_{-\otimes K}^{\wr} } &\\
\protect{\left\{
\begin{array}{l}
\textrm{Special}\\
\textrm{coverings of }\widetilde{K}[t,t^{-1}]
\end{array}
\right\} \ar[r]_-{\sim}^-{\textrm{Pull-back}} }& \protect{\left\{
\begin{array}{l}
\textrm{Finite separable}\\
\textrm{extensions of }\widetilde{K}(\!(t)\!)
\end{array}
\right\}}.& }
\end{equation}
\begin{hypothesis}
From now on we fix an algebraic closure $\mathrm{Frac}(\mathfrak{R})^{\mathrm{alg}}$ 
of $\mathrm{Frac}(\mathfrak{R})$ and we consider only 
Special (resp. unramified, étale) extension of 
$\Od$, (resp. $\Ed$, $\mathfrak{R}$) inside it. 
\end{hypothesis}

\begin{remark}\label{Rk : power series as in N}
An unramified extension of $\Ed$ is again a ring of power series 
of the same type as those in 
$\Ed$ with respect to another variable, and another base field $L$ 
which is a finite unramified extension of $K$ 
(cf. \cite{Matsuda-unipotent}). 
The same holds for special extensions of $\mathfrak{R}$.
\end{remark}

%\subsubsection{Extension to $\widehat{K^{\mathrm{alg}}}$.}
The results we are going to use hold after replacing the base field $K$ 
by a finite unspecified extension $L/K$. For any $K$-algebra $\Ring$ 
and all finite field extension $L/K$ we set
\begin{equation}
\Ring_L\;:=\;\Ring\otimes_KL\;,\qquad 
\Ring_{K^{\mathrm{alg}}}\;:=\;\bigcup_{L/K \textrm{ finite}} 
\Ring_L\;.
\end{equation}
If $\Ring$ is one of the above differential rings  $\Od$, 
$\Ed$, or $\mathfrak{R}$, 
then a differential module $\Fs$ over 
$\Ring_{K^{\mathrm{alg}}}$ comes by scalar extension 
from a 
differential module over $\Ring_L$ for some finite 
extension $L/K$. 
So, by deformation, the same holds for \emph{stratified} 
$\Sigma$-modules.

\subsubsection{Deformation over $\Od$.}
Below we work with differential equations and $\Sigma$-modules 
over the rings $\mathfrak{R}$, $\Ed$, 
$\Od$. Since we need to interchange the 
base ring, moving along the first line of 
\eqref{eq : diagram special extensions}, we fix once for all a family of 
infinitesimal automorphisms of $\Od$. 
The definition of infinitesimal automorphisms of $\Od$, 
and related ones, are obtained imitating the 
definitions  
of Section \ref{Section : Deformation Robba}, by replacing 
everywhere $C_\varepsilon$ by $A_\varepsilon$. 
We only notice that a differential equation $\Fs$ over $\Od$ is 
$\Sigma$-compatible if and only if for all $\sigma\in\Sigma$ we have 
$\R_{\emptyset}(x_{0,1},\sigma)<
\R_{\emptyset,1}(x_{0,1},\Fs)$.
Indeed by continuity the inequality remains true over 
some unspecified segment 
$]x_{0,1-\varepsilon},x_{0,1+\varepsilon}[$ of 
$A_\varepsilon$. 

\begin{remark}
An infinitesimal  automorphism of $\mathfrak{R}$ 
naturally acts on $\Ed$. Indeed it induces an automorphism of 
$C_\varepsilon$, so the composition of $\sigma$ 
with a bounded functions on $C_\varepsilon$ remains bounded.

If an automorphism $\sigma$ of  
$\Od$ is infinitesimal, 
then it is also an infinitesimal 
automorphism of $\mathfrak{R}$ (cf. Lemma 
\ref{Lemma : localization}), and hence of $\Ed$.

If $\Sigma\subseteq \mathrm{Aut}(\Od)$ 
is non degenerate as a family of automorphisms of $\mathfrak{R}$, 
then it is so also as a family of automorphisms of $\Od$. 
The converse is unclear. 
We pay attention to the fact that it does not seem 
automatic that \emph{non degeneracy} translates as well 
from $\Od$ to $\mathfrak{R}$.
\end{remark}

\subsubsection{Extension of $\Sigma$ to Special extensions.}
\label{Action of Sigma on the logarithm.}
Let $\Ring$ be one of the rings $\Od$, 
$\Ed$, $\mathfrak{R}$.
Let $\sigma$ be an infinitesimal automorphism of 
$\Od$. 
We will need to apply $\sigma$ to the formal symbol $\log(T)$.
For this we write  $\sigma(T)/T= 1+\frac{\sigma(T)-T}{T}$. Since 
$\sigma$ is infinitesimal 
$x_{1,1}((\sigma(T)-T)/T)=x_{1,1}(\delta_\sigma(T)/T)<1$. 
Then $\sigma(T)/T$ takes values in the disc $\mathrm{D}^-(1,1)$, 
and the composite function $\log(\sigma(T)/T)$ converges in the 
annulus of definition of $\sigma(T)$. 
We are then allowed to define it as
\begin{equation}\label{eq : sigma acts on log(T)}
\log(\sigma(T))\;\;:=\;\;\log(T)+\log(\sigma(T)/T)\;,\quad 
\textrm{with }\log(\sigma(T)/T)\in \Ring\;.
\end{equation}
The action of $\Sigma$ on Special extensions is described by the 
following
\begin{lemma}\label{Lemma : Extension of sigma}
Let $\sigma$ be an infinitesimal automorphism of 
$\Od$, let 
$\widetilde{\B}/\widetilde{K}[t,t^{-1}]$ 
be a Galois Special covering, and let $\B/\Od$ (resp. 
$\B\otimes_{\Od}\Ed/\Ed$) be the corresponding Special (resp. 
unramified) extension.  

Then $\sigma$ extends uniquely, up to composition with Galois 
automorphisms of $\mathrm{Gal}(\B/\Od)
\simto\mathrm{Gal}(\widetilde{\B}/\widetilde{K}[t,t^{-1}])$, 
to a continuous automorphism of 
$\B/\Od$ and to $\B[\log(T)]$ as in 
\eqref{eq : sigma acts on log(T)}. 

In particular there exists a unique extension of $\sigma$ inducing the 
identity on the residual ring $\widetilde{\B}$ of 
$\Od$. By uniqueness $\sigma$ commutes with the action of the 
Galois group. 

The same holds for $(\B\otimes_{\Od}\Ed)/\Ed$.
\end{lemma}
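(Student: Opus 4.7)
My approach is to reduce the extension problem to the standard lifting property of étale covers in the Monsky--Washnitzer setting, by exploiting the fact that an infinitesimal automorphism reduces to the identity on the residue.

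First I will verify that $\sigma$ preserves the unit ball $(\Od)^{\circ}=K^\circ[T,T^{-1}]^\dag$ and reduces to the identity on $\widetilde{K}[t,t^{-1}]$. Since $\sigma$ is $S$-infinitesimal for the empty weak triangulation of $A_\varepsilon$, it fixes the Gauss point $x_{0,1}\in\Gamma_S$, and the maximal open disc $D(x_{0,1},S)=D^-(t_{x_{0,1}},1)$ is stable under $\sigma_\Omega$; consequently $\sigma$ preserves the Gauss norm, and $x_{0,1}(\sigma(T)-T)<1$. This already forces $\sigma$ to stabilize $(\Od)^\circ$ and to send $T$ to an element congruent to $T$ modulo $K^{\circ\circ}[T,T^{-1}]^\dag$; combined with $K$-linearity this gives that the induced map on $\widetilde{K}[t,t^{-1}]$ is the identity.

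Next I will invoke the middle column of \eqref{eq : diagram special extensions}: the integral model $\B^{\circ}$ is the unique étale $(\Od)^\circ$-algebra lifting $\widetilde{\B}/\widetilde{K}[t,t^{-1}]$. Consider the twisted algebra $\sigma^{\ast}\B^{\circ}:=\B^{\circ}\otimes_{(\Od)^\circ,\sigma}(\Od)^\circ$. It is again étale over $(\Od)^\circ$, and by the previous step its reduction modulo $K^{\circ\circ}$ is still $\widetilde{\B}$. Étale rigidity therefore produces an $(\Od)^\circ$-algebra isomorphism $\sigma^{\ast}\B^{\circ}\simto\B^{\circ}$, which amounts to a continuous lift $\widetilde{\sigma}$ of $\sigma$ to $\B^{\circ}$. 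Any two lifts differ by an element of $\mathrm{Gal}(\B/\Od)$, and exactly one reduces to $\mathrm{Id}_{\widetilde{\B}}$. Tensoring with $K$ gives the required extension to $\B/\Od$; its extension to $\B[\log(T)]$ is then defined by \eqref{eq : sigma acts on log(T)}, which makes sense because $\log(\sigma(T)/T)$ already lies in $\Od\subset\B$.

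Finally, for the commutation with Galois, I observe that for $\tau\in\mathrm{Gal}(\B/\Od)$ the conjugate $\widetilde{\sigma}\,\tau\,\widetilde{\sigma}^{-1}$ is an $\Od$-automorphism of $\B$ and hence lies in $\mathrm{Gal}(\B/\Od)$; its reduction modulo $K^{\circ\circ}$ equals $\widetilde{\tau}$ (since $\widetilde{\sigma}$ reduces to the identity), and the reduction isomorphism $\mathrm{Gal}(\B/\Od)\simto\mathrm{Gal}(\widetilde{\B}/\widetilde{K}[t,t^{-1}])$ from \eqref{eq : diagram special extensions} then forces $\widetilde{\sigma}\,\tau\,\widetilde{\sigma}^{-1}=\tau$. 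The $\Ed$-version is obtained by base change along $\Od\hookrightarrow\Ed$, the unique extension of the previous paragraph being automatically $\Ed$-linear after tensoring. The main delicacy is the precise form of étale rigidity for the \emph{weakly} complete ring $(\Od)^\circ$ rather than for its $\pi$-adic completion; but this is exactly what is encoded by the middle row of \eqref{eq : diagram special extensions}, so I can simply quote it.
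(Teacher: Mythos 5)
Your argument is correct and is essentially a detailed unpacking of the paper's one-line appeal to Raynaud: the étale rigidity you invoke for the Henselian pair $\bigl((\Od)^{\circ},\,K^{\circ\circ}[T,T^{-1}]^{\dag}\bigr)$ is precisely the ``formal property of Henselian couples'' the paper has in mind (it is the content of the middle row of \eqref{eq : diagram special extensions}), and your preliminary check --- that an infinitesimal $\sigma$ fixes the Gauss point $x_{0,1}$, hence preserves $(\Od)^{\circ}$ and satisfies $x_{0,1}(\sigma(T)-T)<1$, so reduces to the identity on $\widetilde{K}[t,t^{-1}]$ --- is the unstated input that makes the twisted algebra $\sigma^{\ast}\B^{\circ}$ have the right reduction. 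One wording slip worth fixing: in the $\Ed$-case the extension to $\B\otimes_{\Od}\Ed$ is $\sigma$-\emph{semi}-linear rather than ``$\Ed$-linear'', and it presupposes the remark immediately preceding the lemma that $\sigma$ extends to an automorphism of $\Ed$ preserving $(\Ed)^{\circ}$; alternatively one may simply rerun the same Henselian argument directly for the Henselian local ring $(\Ed)^{\circ}$, which is what the third column of \eqref{eq : diagram special extensions} provides.
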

\begin{proof}
It follows from the formal properties of the Henselian couples 
\cite{Ray}.
\end{proof}

\subsection{Katz-Matsuda's canonical extension}
\label{Section  Katz-Matsuda can ext}
As above we assume hypothesis \ref{Hyp : K discr}.
Now we show how to obtain the analogues of the results of 
\cite{Matsuda-unipotent} about the canonical extension by 
$\Sigma$-deformation. 

\begin{notation}\label{Notation : d-Mod and settings}
For any ring with derivation $d:\Ring\to\Ring$, we denote by 
$d-\Mod(\Ring)$ the category of locally free $\Ring$-modules of finite 
type $\Fs$ together with a connection $\nabla:\Fs\to\Fs$ satisfying 
the Leibnitz rule with respect to $d$. 

If a Frobenius 
$\phi:\Ring\to\Ring$ is given, we denote by 
$d-\Mod(\Ring)^{(\phi)}$ the full subcategory formed by those $\Fs$ 
admitting an isomorphism $\phi^*\Fs\simto\Fs$ commuting with the 
connections (morphisms are not 
supposed to commute with the Frobenius).

If $\mathcal{C}(\Ring)$ is one of the above categories, by 
$\mathcal{C}(\Ring_{K^{\mathrm{alg}}})$ we mean
the inductive limit 
$\mathcal{C}(\Ring_{K^{\mathrm{alg}}}):=
\varinjlim_{L/K}\mathcal{C}(\Ring_L)$, where $L/K$ runs in the set 
of finite extensions of $K$.
\end{notation}

We quickly recall the context. 
In \cite{Katz-local-to-global} and \cite{Katz-Can} Katz proved that, 
if $F$ is an arbitrary field of characteristic $0$, each formal differential 
equation over $F(\!(T)\!)$ comes by scalar extension from a 
differential equation over $F[T,T^{-1}]$. More precisely 
there exists a full sub-category of $d/dT-\Mod(F[T,T^{-1}])$, 
formed by the so called Special objects, which is equivalent to 
$d/dT-\Mod(F(\!(T)\!))$ via the scalar extension functor 
$d/dT-\Mod(F[T,T^{-1}])\to d/dT-\Mod(F(\!(T)\!))$.  
The section of the scalar extension functor so obtained is called 
\emph{canonical extension}: 
\begin{equation}\label{eq : cans ext Katz}
\mathrm{Can}\;:\;d/dT-\Mod(F(\!(T)\!))
\;\xrightarrow{\quad}\;
d/dT-\Mod(F[T,T^{-1}])\;.
\end{equation}

Now the rings $\Od$ and $\mathfrak{R}$ are considered 
as natural liftings in characteristic $0$ of 
$\widetilde{K}[t,t^{-1}]$ and 
$\widetilde{K}(\!(t)\!)$ respectively. 
Differential equations over $\Od$ 
and $\mathfrak{R}$ with (unspecified) Frobenoius 
structure are considered as $p$-adic analogues of the 
Katz's context. Along this analogy S.Matsuda 
(cf. \cite{Matsuda-unipotent}) shows the $p$-adic 
analogue of the above result for quasi-unipotent differential modules.
\begin{definition}
\label{Def : quasi unip}
Let $\Ring$ be one of the rings $\Od$, $\Ed$, 
$\mathfrak{R}$.

We denote by $U_n$ the free $\Ring$-module of rank $n$ with 
connection $\nabla^{U_n}:U_n\to U_n$ given in the basis 
$e_1,\ldots,e_n$ by $\nabla^{U_n}(e_i)=T^{-1}e_{i+1}$ for all 
$i=1,\ldots,n$ and $\nabla^{U_n}(e_n)=0$.
A differential module $\Fs$ over $\Ring$ is called \emph{unipotent} if 
it is isomorphic to a direct sum of modules of type $U_n$.

$\Fs$ is called \emph{quasi-unipotent} if its scalar 
extension to an unspecified special extension $\B$ of $\Ring$ is 
isomorphic to $U\otimes_\Ring\B$, where $U$ is 
unipotent over $\Ring$.
\end{definition}
The result of 
S. Matsuda states the analogous of \eqref{eq : cans ext Katz} 
for quasi-unipotent differential modules with unspecified Frobenius 
structure. 
By the $p$-adic local monodromy theorem 
(cf. \cite[Section 7.3]{An}, 
\cite{Ked}, \cite{Me}) each differential module in 
$d/dT-\Mod(\mathfrak{R})^{(\phi)}$ becomes 
quasi-unipotent 
after an unspecified scalar extension of the ground field 
$K$. Putting these two results together 
we obtain that, the scalar extension functor 
$d/dT-\Mod(\Od_{K^{\mathrm{alg}}})^{(\phi)} 
\to d/dT-\Mod(\mathfrak{R}_{K^{\mathrm{alg}}})^{(\phi)}$ 
admits a 
section called \emph{canonical extension} 
\begin{equation}
\mathrm{Can}\;\;:\;\;   
d/dT-\Mod(\mathfrak{R}_{K^{\mathrm{alg}}})^{(\phi)} \;
\to\;d/dT-\Mod(\Od_{K^{\mathrm{alg}}})^{(\phi)}\;,
\end{equation}
which is an equivalence of categories with its essential image (cf. \cite[7.3]{An}). 
In particular, this implies that after 
base change to some finite extension $L/K$ all 
differential module 
with Frobenius structure over $\mathfrak{R}$ admits a 
basis in which the matrix of the connection lies in 
$M_n(\Od)$. 

\subsubsection{Deformation of the canonical extension.}
We now resume the straightforward consequence of the above 
results.

Let $\Sigma$ be a family of infinitesimal operators of 
$\Od$. For $\Ring=\Od$, or $\Ring=\mathfrak{R}$ let 
$d/dT-\Mod(\Ring)^{(\phi),\mathrm{comp}(\Sigma)}$ be the 
full sub-category of $d/dT-\Mod(\Ring)^{(\phi)}$ formed by 
$\Sigma$-compatible differential equations over $\Ring$ (cf. 
\eqref{eq : Sigma-compatibility}). 
This is also a full-sub-category of the category
$d/dT-\Mod(\Ring)^{\mathrm{comp}(\Sigma)}$ of 
$\Sigma$-compatible differential equations, hence
the scalar extension functor 
$d/dT-\Mod(\Od)^{(\phi),\mathrm{comp}(\Sigma)} 
\to d/dT-\Mod(\mathfrak{R})^{(\phi),\mathrm{comp}(\Sigma)}$ 
commutes with the deformation functors. 
As a consequence the canonical extension functor also commutes with 
the deformations as soon as the deformations are equivalences. 
Namely assume that $\Sigma$ is a family of infinitesimal 
automorphisms of $\Od$ which is non degenerate as a family of 
automorphisms of $\mathfrak{R}$. 
Fix a sequence $(\Sigma_n)_n$, as in Proposition 
\ref{Prop: fully faithfulness over Robba}, where $\Sigma_n$ is non 
degenerate over $C_{\varepsilon_n}$. 
We have a commutative diagram:
\begin{equation}\label{Thm : Can ext Mats diff}
\xymatrix{d/dT-\Mod(\Od)^{(\phi),\mathrm{comp}
(\Sigma_n)_n}
\ar[d]^-{\wr}_-{\Def_{\Sigma}}
\ar[rr]^-{\mathrm{Can}}&&
d/dT-\Mod(\mathfrak{R})^{(\phi),\mathrm{comp}
(\Sigma_n)_n}
\ar[d]_-{\wr}^-{\Def_{\Sigma}}\\
(\Sigma_n)_n-\Mod(\Od)^{(\phi),
\mathrm{strat}}\ar@{-->}[rr]^-{\mathrm{Can}}&&(\Sigma_n)_n-\Mod(\mathfrak{R})^{(\phi),
\mathrm{strat}}}
\end{equation}
where, for $\Ring=\Od,\mathfrak{R}$, we denote by 
$d/dT-\Mod(\Ring)^{(\phi),\mathrm{comp}(\Sigma_n)_n}$ 
the full subcategory formed by $(\Sigma_n)_n$-compatible 
differential equations, and by 
$(\Sigma_n)_n-\Mod(\Ring)^{(\phi),\mathrm{strat}}$ we 
denote the corresponding category of stratified 
$(\Sigma_n)_n$-modules (i.e. its essential image by Deformation). 

\subsection{$p$-adic local monodromy Theorem}
\label{Section p-adic loc mon th}
We maintain the assumption \ref{Hyp : K discr}.
In this section we prove the $p$-adic local monodromy theorem for 
\emph{stratified} $(\Sigma_n)_n$-modules over $\mathfrak{R}$.

\begin{setting}\label{Setting : plmt-diff}
Let $\phi$ be a Frobenius of $\Od$, and let $\Sigma$ be a non 
degenerate family of infinitesimal automorphisms of $\Od$ which is 
non degenerate as a family of infinitesimal automorphisms of  
$\mathfrak{R}$. Fix a sequence $(\Sigma_n)_n$, as in Proposition 
\ref{Prop: fully faithfulness over Robba}, where $\Sigma_n$ is non 
degenerate over $C_{\varepsilon_n}$.
\end{setting}
\begin{theorem}[$p$-adic local monodromy theorem for stratified 
$\Sigma$-modules]
\label{plmt-diff}
Each stratified $(\Sigma_n)_n$-module over $\mathfrak{R}$ 
admitting an (unspecified) Frobenius structure 
becomes quasi-unipotent after a base change to 
$\mathfrak{R}_L$, for some unspecified finite extension $L/K$.
\end{theorem}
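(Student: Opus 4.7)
The plan is to transport the classical $p$-adic local monodromy theorem of Andr\'e--Kedlaya--Mebkhout through the commutative diagram (\ref{Thm : Can ext Mats diff}). Given a stratified $(\Sigma_n)_n$-module $(\Fs,\sigma^{\Fs})$ over $\mathfrak{R}$ with a Frobenius structure, Proposition \ref{Prop: fully faithfulness over Robba} yields a \emph{unique} $(\Sigma_n)_n$-compatible differential equation $(\Fs,\nabla)$ with compatible Frobenius structure such that $\Def_\Sigma(\Fs,\nabla)=(\Fs,\sigma^{\Fs})$; the Frobenius comparison is automatic since $\Def_\Sigma$ is the identity on underlying $\O_X$-modules and on morphisms, and commutes with Frobenius pull-back. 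The classical theorem then furnishes a finite extension $L/K$, a special extension $\B/\mathfrak{R}_L$, and a unipotent differential module $U$ over $\mathfrak{R}_L$ together with an isomorphism of differential modules $\Fs_L\otimes_{\mathfrak{R}_L}\B\simto U\otimes_{\mathfrak{R}_L}\B$.

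To push this conclusion from differential equations to stratified $\Sigma$-modules, I first extend every $\sigma\in\Sigma$ to $\B$ (and to $\B[\log(T)]$) by Lemma \ref{Lemma : Extension of sigma}, imposing the normalization that $\sigma$ induces the identity on the residual ring $\widetilde{\B}$. I then verify that the deformation functor commutes with scalar extension along $\mathfrak{R}_L\to\B$, so that $(\Fs,\sigma^\Fs)_L\otimes_{\mathfrak{R}_L}\B\simto\Def_\Sigma(\Fs_L\otimes_{\mathfrak{R}_L}\B)$. This compatibility follows from the characterization of $\Def_\Sigma$ recalled in Remark \ref{Rk : Same Taylor solutions}: the matrix $A_\sigma=\Delta_\sigma^*(Y_\chi)$ is a universal polynomial in the Taylor solution $Y_\chi$ of $\nabla$, and Taylor solutions are invariant under scalar extension along \'etale morphisms.

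The notion of \emph{quasi-unipotence} for stratified $\Sigma$-modules over $\mathfrak{R}$ should be defined by mimicking Definition \ref{Def : quasi unip}: a stratified $\Sigma$-module is unipotent if it is a successive extension of copies of $\Def_\Sigma(\mathbb{I})$ by $\Def_\Sigma(U_n)$'s, and quasi-unipotent if it becomes unipotent after scalar extension to some special extension of $\mathfrak{R}_L$. With this definition, combining the previous steps gives the theorem. The action of $\sigma$ on $\Def_\Sigma(U_n)$ is explicit: using (\ref{eq : sigma acts on log(T)}), the matrix $A_\sigma$ in the basis $e_1,\ldots,e_n$ is upper-triangular unipotent, with entries polynomial in $\log(\sigma(T)/T)\in\mathfrak{R}$.

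The main obstacle will be the verification, in the second step, that $\Def_\Sigma$ commutes with scalar extension to an \'etale $\B/\mathfrak{R}_L$. The delicate point is that $\Sigma$-compatibility and the concavity estimates underlying Lemma \ref{Lemma : infinitesimality on S} must be preserved under the base change. This should follow from Remark \ref{Rk : power series as in N}, which identifies $\B$ with a Robba ring over an unramified extension $L'/L$ in a new coordinate: the radii $\R_\emptyset(-,\sigma)$ and $\R_{\emptyset,1}(-,\nabla)$ transform isometrically under the finite \'etale cover, so the inequality (\ref{eq : R_S(x,sigma)<R_S(x,F)}) survives the extension. Once this is in place, the admissible neighborhood of the diagonal produced in Lemma \ref{Lemma : existence of adm neighb of diag} transports to $\B$, the pull-back by $\Delta_\sigma$ commutes with base change, and the quasi-unipotence of $\Fs_L\otimes\B$ as a stratified $\Sigma$-module follows from that of $U\otimes\B$ as a differential module.
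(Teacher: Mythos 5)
Your plan follows the same high-level strategy as the paper's proof: transport the classical local monodromy theorem through the canonical-extension diagram, extend $\sigma$ to the special extension $\B$ by Lemma \ref{Lemma : Extension of sigma}, and show that the resulting semi-linear action on \'etale solutions agrees with the one furnished by deformation. But the step you yourself single out as ``the main obstacle'' --- that $\Def_{\Sigma}$ commutes with scalar extension $\mathfrak{R}_L\to\B$ --- is where all of the real work lies, and your justification does not close it. The assertions that ``Taylor solutions are invariant under scalar extension along \'etale morphisms'' and that ``the radii transform isometrically under the finite \'etale cover'' ignore the change of coordinate. Via Remark \ref{Rk : power series as in N}, $\B$ is a ring of overconvergent power series in a \emph{new} variable $U$ over an unramified extension $L'/L$, so the deformation of $\Fs_L\otimes\B$ would be computed from a stratification expressed in $U$, whereas the matrix $A_\sigma=\Delta_\sigma^*(Y_\chi)$ you wish to compare it with is the specialization of the stratification in the original coordinate $T=\psi(U)$, a degree-$d$ \'etale function. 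Nothing in your sketch reconciles the two: $\Delta_\sigma$ and the tubular neighborhood $\mathcal{T}$ of Lemma \ref{Lemma : existence of adm neighb of diag} are defined in $T$, so ``the pull-back by $\Delta_\sigma$ commutes with base change'' is exactly the statement that must be proved, not assumed.

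The paper closes this gap with a concrete geometric argument that your plan should incorporate. Using Andr\'e's structure theorem it reduces to two cases: the unipotent $\U_m$, handled by embedding $\Od[\log(T)]$ into $\O(D^-(1,1))$ and appealing to Remark \ref{Rk : Same Taylor solutions}; and a module $\N$ with finite local monodromy. For $\N$ it passes through the canonical extension $\N_0=\mathrm{Can}(\N)$ over $\Od$, takes a global \'etale solution matrix $Y_\B\in GL_n(\B)$, and shows $A_{\sigma_\B}:=\sigma_\B(Y_\B)\cdot Y_\B^{-1}\in GL_n(\Od)$ by a Galois-descent argument (the Galois group commutes with $d/dT$ and with $\sigma_\B$, hence acts on $Y_\B$ by constant matrices). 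Then Bosch--L\"utkebohmert lifts the residual points of $\widetilde V$ above $\widetilde x$ to open discs $D_i\subset V^\dag$; since $\psi:V^\dag\to\mathbb{G}_{m,K}^\dag$ is finite of degree $d$ with $d$ such components over the maximal disc $D$, each restriction $\psi:D_i\simto D$ is an isomorphism commuting with $\sigma_\B$ and $\sigma$. Uniqueness of Taylor solutions gives $Y=Y_\B\cdot C$ with $C$ constant, whence $A_{\sigma_\B}=A_\sigma$; the base change by $Y_\B$ then trivializes $\nabla$ and the entire $\Sigma$-action simultaneously over $D_i$, and analytic continuation extends the trivialization to all of $V^\dag$. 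If you want to retain the clean ``$\Def_\Sigma$ commutes with \'etale base change'' formulation, this disc-restriction argument together with the Galois-descent step putting $A_{\sigma_\B}$ over $\Od$ is precisely the content that must be supplied to make step five correct.
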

\begin{proof}
The claim says that each object $(\Fs,\Sigma)$ in 
$(\Sigma_n)_n-\Mod(\mathfrak{R}_{K^{\mathrm{alg}}})^{(\phi),
\mathrm{strat}}$ is trivialized by 
$\mathfrak{R}'[\log(T)]$, where $\mathfrak{R'}/\mathfrak{R}_L$ is 
some Special extension of $\mathfrak{R}_L$. 
This means that $(\Fs,\Sigma)$ has a complete basis of 
solutions in $\mathfrak{R}'[\log(T)]$. We know that deformation 
preserves Taylor solutions (cf. Remark 
\ref{Rk : Same Taylor solutions}), the strategy is to prove that 
deformation also preserves \emph{étale} solutions, i.e. solutions in 
some $\mathfrak{R}'[\log(T)]$.

By \cite[Cor.7.1.6]{An}, up to enlargements of  $K$, every differential 
module $\Fs$ with Frobenius structure over $\mathfrak{R}$ 
is a direct sum of sub-modules of the form $\N\otimes\U_m$ where 
$\N$ is trivialized by an étale extension $\mathfrak{R}'/\mathfrak{R}$ 
(without logarithm) of $\mathfrak{R}$, 
and $(\U_m,\nabla^{\U_m})$ is the $m$-dimensional unipotent 
differential module over $\mathfrak{R}$ 
(cf. Def. \ref{Def : quasi unip}). 
Since the deformation equivalence preserves this decomposition, we 
can assume $\Fs=\N$ or $\Fs=\U_m$. 
If we are in the first case $\Fs=\N$, we will say that $\Fs$ has 
\emph{finite local monodromy}.

We first prove the result for $\U_m$. It is well 
known that $\U_m$ is trivialized by $\Od[\log(T)]$, where 
$\log(T)$ is an indeterminate (i.e. merely a symbol).
We now consider $\log(T)$ as a function over the disc $D^-(1,1)$. It 
is not algebraic over $\Od$, so the restriction map
\begin{equation}
\Od[\log(T)]\;\xrightarrow{\quad}\;\O(D^-(1,1))
\end{equation}
is an injective ring morphism commuting with $d/dT$, 
and $\Sigma$. 
This map identifies Taylor solutions of $\U_m$ at $T=1$ with 
``\emph{abstract}'' solutions of $\U_m$ in $\Od[\log(T)]$.
Since on the right hand side these solutions are simultaneously 
solutions of the differential equation $\U_m$ and of its deformation 
$\Def_\Sigma(\U_m)$ (cf. Remark \ref{Rk : Same Taylor solutions}), 
the same holds on the left hand side. Hence 
$\Def_\Sigma(\U_m)$ is trivialized by $\Od[\log(T)]$.

We now focus on differential modules $\N$ 
with finite local monodromy. 
As for $\U_m$ we now embed the Special extension trivializing $\N$ 
into $\O(D^-(1,1))$ and we will compare Taylor solutions with étale 
solutions as above. 

Up to enlarge $K$, we may assume that $(\N,\nabla)$ is 
an $n$-dimensional differential module over $\mathfrak{R}$ 
trivialized by some étale extension $\mathfrak{R}'/\mathfrak{R}$.
Let $\B/\Od$ be the corresponding Special extension of $\Od$.
By canonical extension (cf. \cite[Cor. 5.12]{Matsuda-unipotent}) 
$\N$ comes, by scalar extension, from a differential module 
$\N_0:=\mathrm{Can}(\N)$ over $\Od$ which is trivialized by $\B$.
Let $Y_{\B}\in GL_n(\B)$ be a complete basis of solutions of 
$\N_0$ with values in $\B$. 

For all $\sigma\in\Sigma$, let $\sigma_{\B}:\B\to\B$ be the 
corresponding endo-morphism of $\B$ 
(cf. Lemma \ref{Lemma : Extension of sigma}). 
We define the matrix $A_{\sigma_{\B}}$ 
of the action of $\sigma_{\B}$ by 
\begin{equation}
\sigma_{\B}(Y_{\B})
\;=\;A_{\sigma_{\B}}\cdot Y_{\B}\;.
\end{equation}
Since $Y_{\B}$ is invertible, so does 
$A_{\sigma_{\B}}=\sigma_{\B}(Y_{\B})\cdot Y_{\B}^{-1}$. 
Moreover $A_{\sigma_{\B}}\in GL_n(\Od)$ because
the Galois group commutes with the unique extension of $d/dT$ 
to $\B$, so it acts on $Y_{\B}$ by right multiplication by matrices in 
$GL_n(K)$. Hence, since the Galois group 
also commutes with $\sigma_{\B}$,
$A_{\sigma_{\B}}$ is stable by Galois.

Now we denote by $A_\sigma\in GL_n(\Od)$ the matrix of the action 
of $\sigma$ on $\N_0$ obtained by deformation of $\nabla$. 
Namely let $x\in A_0$ be any rational point, 
and let $D\subseteq A_0$ be the open disc with radius 
$1$, centered at $x$. 
Since $\N_0$ is $\Sigma$-compatible, 
$(\N_0,\nabla)$ is trivial on $D$. 
If $Y\in GL_n(D)$ is a Taylor solution matrix of $\N_0$, 
then $A_\sigma$ is defined by 
\begin{equation}
\sigma(Y)\;=\;A_\sigma\cdot Y\;.
\end{equation}

Consider now the reduction $\widetilde{x}$ of $x$
in $\mathbb{G}_{m,\widetilde{K}}$. 
If $\widetilde{V}\to \mathbb{G}_{m,\widetilde{K}}$ is the Special 
covering 
corresponding to $\B$, then its fiber at $\widetilde{x}$ is a finite 
étale covering of $\widetilde{x}$. Up to replacing $K$ by a finite 
extension, this is given by a trivial covering of finite degree $d$. Let 
$\widetilde{y}_1,\ldots,\widetilde{y}_d$ be the points of 
$\widetilde{V}$ over $\widetilde{x}$. 

By \cite[Prop. 2.2, (i)]{Bosh-Lutk-Stable-Reduction-I} 
each $\widetilde{y}_i\in \widetilde{V}$ 
lifts into an open disc $D_i$ 
contained in the dagger affinoid $V^\dag$ 
corresponding to $\B$. Since the morphisms $\psi:V^\dag\to\mathbb{G}_{m,K}^{\dag}$ 
is finite with same degree $d$, it induces a trivial covering over $D$. In particular $\psi$ induces an isomorphism 
$\psi:D_i\simto D$ for all $D$.

Now $\sigma_{\B}$ induces an automorphism of each $D_i$ because 
the reduction of $\sigma_{\B}$ is the identity on $\widetilde{\B}$. 
Since $\sigma_{\B}$ lifts $\sigma$, then $\psi:D_i\simto D$ 
commutes with $\sigma_{\B}$ and $\sigma$.
Moreover if we identify in this way $D_i$ with $D$, 
then $Y=Y_{\B}\cdot C$, for some $C\in GL_n(K)$, by the 
uniqueness of Taylor solutions. Hence 
\begin{equation}
A_{\sigma_{\B}}\;=\;A_\sigma\;.
\end{equation}

Now the Taylor solutions of a differential equation are also solutions 
of its $\Sigma$-deformation by Remark 
\ref{Rk : Same Taylor solutions}. 
The base change by the matrix $Y$ in $(\N_0)_{|D}$ trivializes the differential equation over 
$D$, and hence simultaneously all the actions of $\sigma$ obtained 
by deformation. In the new basis we find $A_\sigma=\mathrm{Id}$ 
for all $\sigma\in\Sigma$. 

We now look at $\psi^*\N_0:=\N_0\otimes_{\Od}\B$ over $V^\dag$. 
The entries of $Y_{\B}$ are global sections on $V^\dag$ 
that coincide with $Y$ over $D_i$.
Since $A_{\sigma_{\B}} = A_\sigma$, the base change by 
$Y_{\B}$ (which trivializes the differential equation) gives a new matrix $A_{\sigma_{\B}}'$ 
which is the identity over $D$. By analytic continuation, 
the matrix $A_{\sigma_{\B}}'$ is the identity everywhere 
over $V^\dag$, so this base change 
trivializes the entire action of 
$\Sigma$ over $V^\dag$.
\end{proof}

The proof shows in particular the following result:
\begin{corollary}
Let $\M$ be a differential module over $\Od$ with unspecified 
Frobenius structure. 
Assume that $\M$ is trivialized by some Special extension 
of $\Od_L$, for some finite extension $L/K$. 
Then $\M$ has bounded Taylor solutions on each disc 
$D\subset A_0$.
\end{corollary}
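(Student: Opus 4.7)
The plan is to extract the key geometric step from the proof of Theorem \ref{plmt-diff} and observe that it already delivers the boundedness. By hypothesis, after replacing $K$ by a finite extension $L$, there is a Special extension $\B/\Od_L$ over which $\M\otimes_{\Od_L}\B$ is trivial. Fixing a basis of $\M$, the trivialization supplies a full matrix of horizontal solutions $Y_{\B}\in GL_n(\B)$. Since $\B$ arises from a Special extension of $K^\circ[T,T^{-1}]^\dag$ (cf. \eqref{eq : Od^circ}), every entry of $Y_{\B}$ is an analytic function bounded on some annulus $A_{\varepsilon}=\{1-\varepsilon<|T|<1+\varepsilon\}$; this is the source of the boundedness.

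Next I would fix a disc $D\subset A_0$ of radius one, say $D=D^-(x,1)$, with $x\in A_0(L)$ (enlarging $L$ if necessary). Let $\widetilde{x}$ be its reduction in $\mathbb{G}_{m,\widetilde{K}}$ and let $\widetilde{V}\to\mathbb{G}_{m,\widetilde{K}}$ be the Special covering associated to $\B$ via the diagram \eqref{eq : diagram special extensions}. After a further finite extension of $L$ I may assume the fiber of $\widetilde{V}$ over $\widetilde{x}$ is a trivial set of points $\widetilde{y}_1,\ldots,\widetilde{y}_d$. By \cite[Prop.~2.2(i)]{Bosh-Lutk-Stable-Reduction-I} each $\widetilde{y}_i$ lifts to an open disc $D_i$ inside the dagger affinoid $V^\dag\subset A_{\varepsilon}$ corresponding to $\B$, and the finite étale map $\psi:V^\dag\to \mathbb{A}^{1,\mathrm{an},\dag}_L$ induces an isomorphism $\psi:D_i\xrightarrow{\;\sim\;}D$.

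Pulling $Y_{\B}|_{D_i}$ back along $\psi$ produces a matrix $Y\in GL_n(\O(D))$. Since $\psi$ commutes with $d/dT$ and $Y_{\B}$ is a basis of solutions of $\M_{\B}$, the matrix $Y$ is a matrix of Taylor solutions of $\M_{|D}$. Because $D_i\subset V^\dag\subset A_{\varepsilon}$ and the entries of $Y_{\B}$ are bounded on $A_{\varepsilon}$, their restrictions to $D_i$ are bounded, hence so is $Y$ on $D$. Any other matrix of Taylor solutions on $D$ differs from $Y$ by right multiplication by an element of $GL_n(L)$ (uniqueness of the Cauchy problem with prescribed initial value), and is therefore bounded as well.

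I do not expect a serious obstacle: all the ingredients already appear in the proof of Theorem \ref{plmt-diff}. The only point that deserves care is checking that the local isomorphism $\psi:D_i\simto D$ identifies the ambient differential structures, so that $\psi^*(Y_{\B}|_{D_i})$ really is a solution of the original equation defining $\M$; this is immediate from the fact that $\M_0:=\mathrm{Can}(\M)$ (or equivalently $\M$ itself over $\Od_L$) is obtained by descent through $\psi$ and that $\psi$ is a morphism of differential rings.
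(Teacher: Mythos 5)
Your proof is correct and follows the same route as the paper's one-line argument, which simply observes that $Y_\B$ is the restriction of a global section over $V^\dag$, hence bounded on each $D_i$. One small imprecision: $V^\dag$ is a finite covering of an annulus, not a subdomain of $A_\varepsilon$, so the chain $D_i\subset V^\dag\subset A_\varepsilon$ is not literally correct; but the boundedness you use comes from the entries of $Y_\B$ lying in $\B$ (bounded on $V^\dag$), and $D_i\subset V^\dag$ is all that is needed.
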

\begin{proof}
Indeed $Y_{\B}$ is the restriction of a global solution over $V^\dag$, 
so it is bounded on each $D_i$.
\end{proof}
Such differential modules are unit-root by 
\cite{Matsuda-unipotent}. The fact that a unit root differential 
modules has bounded solutions is a well known result, at least since 
\cite{Katz-Travaux-de-Dwork} (see also 
\cite{Chi-Tsu-log-growth-1}, \cite{Chi-Tsu-log-growth-2}).

The proof also gives the following nice result, that could be helpful to 
work explicitly with Special extensions of $\Od$:
\begin{proposition}
If $\B$ is a Special extension of $\Od$, 
there exists an injective ring morphism
\begin{equation}
\mathrm{Tay}_1\;:\;\B_{K^{\mathrm{alg}}}\;\xrightarrow{\quad}\;
\O(D^-(1,1))_{K^{\mathrm{alg}}}
\end{equation}
commuting with $d/dT$, the Frobenius, and $\Sigma$.
\end{proposition}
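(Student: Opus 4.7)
The plan is to mimic, for a general Special extension, the ``Taylor expansion at $T=1$'' construction that appeared inside the proof of Theorem \ref{plmt-diff}. Writing $\B=\O(V^\dag)$ for the corresponding finite \'etale dagger covering $\psi : V^\dag \to \mathbb{G}_{m,K}^\dag$, I would first specialize at the rational point $x=1\in A_0$ whose reduction $\widetilde{1}\in\mathbb{G}_{m,\widetilde{K}}$ is fixed by the residual Frobenius. Up to replacing $K$ by a finite extension inside $K^{\mathrm{alg}}$, the fiber $\psi^{-1}(\widetilde{1})$ consists of $d$ geometric points $\widetilde y_1,\dots,\widetilde y_d$, and by \cite[Prop.~2.2]{Bosh-Lutk-Stable-Reduction-I} each $\widetilde y_i$ lifts to an open disc $D_i\subset V^\dag$ such that $\psi$ induces an isomorphism $\psi_i:D_i\xrightarrow{\sim} D:=D^-(1,1)$ (this is already the statement used in the proof of Theorem~\ref{plmt-diff}).

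Fix $\widetilde y_1$ so as to be a fixed point of the residual Frobenius (possible after a further finite extension of $K$). Define
\begin{equation}
\mathrm{Tay}_1\;:\;\B\;\xrightarrow{\quad}\;\O(D^-(1,1))\;,\qquad
f\;\longmapsto\;(\psi_1^{-1})^*\bigl(f_{|D_1}\bigr)\;,
\end{equation}
and take the filtered colimit over $L/K$ finite to obtain $\mathrm{Tay}_1$ on $\B_{K^{\mathrm{alg}}}$. Injectivity follows from the fact that $V^\dag$ is connected (since $\B$ is a domain, being an unramified extension of the Henselian field $\Ed$) and analytic functions on a connected dagger affinoid that vanish on a non-empty open subdomain vanish everywhere (cf.\ \cite[3.3.21]{Ber}); equivalently, $\B$ is finite free over $\Od$, and the composition with the canonical injective Taylor expansion $\Od\hookrightarrow\O(D)$ is already injective fiberwise.

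The compatibility with $d/dT$ is automatic: since $\psi$ is \'etale, the derivation $d/dT$ on $\Od$ extends uniquely to $\B$, and $\psi_1:D_1\simto D$ intertwines the two copies of $d/dT$ by construction. For $\Sigma$, the unique extension given by Lemma~\ref{Lemma : Extension of sigma} is the one inducing the identity on $\widetilde{\B}$, hence it preserves each residue disc $D_i$ of $V^\dag$, and under the isomorphism $\psi_1$ it corresponds to the action of $\sigma$ on $D\subset A_0$ (which is preserved because $\sigma$ is infinitesimal).

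The step I expect to be the most delicate is the Frobenius compatibility. One needs to check that (i) the chosen Frobenius $\phi$ on $\Od$ stabilizes $D^-(1,1)$ (which follows from $\phi(T)\equiv T^p\pmod{\mathrm{small}}$, since then $\phi(T)-1=(T-1)^p+p\cdot(\cdots)+\mathrm{small}$ has absolute value $<1$ on $D$), and (ii) the canonical Frobenius lift $\phi_\B$ on $\B$ from the Monsky--Washnitzer theory stabilizes $D_1$; the latter is exactly the condition that the residual Frobenius on $\widetilde{V}$ fixes $\widetilde y_1$, which was arranged above. Once both statements hold, $\psi_1$ again intertwines the two Frobenii by uniqueness of the lift in the \'etale situation, and the three compatibilities together yield the required morphism.
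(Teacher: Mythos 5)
Your construction — restrict $\B$ to the residue disc $D_1\subset V^\dag$ lying over $\widetilde{1}$ and pull back along $\psi^{-1}:D^-(1,1)\simto D_1$ — is exactly what the paper does; its one-line proof simply invokes the notation of the proof of Theorem~\ref{plmt-diff} with $D=D^-(1,1)$. One small overcaution in your Frobenius step: the reduction of $\phi_\B$ is the absolute $p$-th power Frobenius on $\widetilde{V}$, which is the identity on the underlying topological space and hence fixes every closed point $\widetilde{y}_i$, so $\phi_\B$ automatically preserves each residue disc $D_i$ and no further extension of $K$ or special choice of $\widetilde{y}_1$ is needed for the Frobenius compatibility.
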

\begin{proof}
With the notations of the proof of Theorem \ref{plmt-diff}, if 
$D=D^-(1,1)$, we first consider the restriction from $\B$ to 
$\O(D_i)$, and then we apply the pull-back by 
$\psi^{-1}:D\simto D_i$.
\end{proof}
As a last result, we give the following converse of Theorem 
\ref{plmt-diff}, which is a characterization of the category of stratified 
$(\Sigma_n)_n$-modules:
\begin{corollary}
\label{Corollary : Essential image Case of Robba}
We preserve the assumption \ref{Setting : plmt-diff}.
Let $\Fs$ be a $(\Sigma_n)_n$-compatible differential equation over 
$\mathfrak{R}$, together with an action of $\Sigma$. 
Assume that there exists a finite extension $L/K$ and an étale 
extension $\mathfrak{R'}/\mathfrak{R}_L$ such that 
$\Fs\otimes_{\mathfrak{R}}\mathfrak{R}'[\log(T)]$ 
has a basis on which the connection and the action of $\Sigma$ 
are both trivial.
Then the action of $\Sigma$ coincides with that obtained by 
deformation from $\nabla$.
\end{corollary}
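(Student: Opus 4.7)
The plan is to compare the given $\Sigma$-action, call it $\sigma^{\Fs}$, with the action $\sigma^{\Def} := \Def_\Sigma(\nabla)(\sigma)$ produced by the deformation functor of Theorem~\ref{Thm : deformation fredft}. Since both are $\sigma$-semilinear isomorphisms of the same $\mathfrak{R}$-module $\Fs$, their discrepancy
\[
\tau_\sigma \;:=\; \sigma^{\Fs}\circ (\sigma^{\Def})^{-1}\;\in\;\mathrm{Aut}_{\mathfrak{R}}(\Fs)
\]
is $\mathfrak{R}$-linear, and the statement is equivalent to $\tau_\sigma=\mathrm{Id}_{\Fs}$ for every $\sigma\in\Sigma$.

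First I would extend scalars to $\Ring':=\mathfrak{R}'[\log(T)]$, on which by Lemma~\ref{Lemma : Extension of sigma} the action of $\Sigma$ extends uniquely (after fixing the trivial lift on $\widetilde{\mathfrak{R}}'$). By hypothesis there is a basis $\mathbf{e}$ of $\Fs\otimes_{\mathfrak{R}}\Ring'$ on which both $\nabla$ and the given action $\sigma^{\Fs}$ are trivial. Because the deformation is constructed as the pull-back by $\Delta_\sigma$ of the stratification $\chi$ associated to $\nabla$, and since in a basis with $\nabla(\mathbf{e})=0$ one has $G=0$, formula \eqref{eq : Y_chi} reduces $Y_\chi$ to the identity matrix; hence $\Def_\Sigma(\nabla)$ also fixes $\mathbf{e}$. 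Equivalently: the deformation functor commutes with the scalar extension to a ring trivializing $\nabla$, and sends the trivial differential equation to the trivial $\sigma$-difference equation. Consequently $\sigma^{\Def}$ is trivial on $\mathbf{e}$ as well, so
\[
\tau_\sigma\otimes_{\mathfrak{R}}\mathrm{Id}_{\Ring'}\;=\;\mathrm{Id}\qquad\text{in}\quad \mathrm{End}_{\Ring'}(\Fs\otimes_{\mathfrak{R}}\Ring')\;.
\]

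Second, I would descend back to $\mathfrak{R}$. Since $K$ is discretely valued and of mixed characteristic (Hypothesis~\ref{Hyp : K discr}), the module $\Fs$ is free over $\mathfrak{R}$, so $\mathrm{End}_{\mathfrak{R}}(\Fs)\simeq M_n(\mathfrak{R})$. The étale extension $\mathfrak{R}\hookrightarrow\mathfrak{R}'$ is faithfully flat (by Remark~\ref{Rk : power series as in N}, $\mathfrak{R}'$ is itself a Robba ring and is free over $\mathfrak{R}_L$), and adjoining the transcendental symbol $\log(T)$ preserves injectivity. Therefore the natural map
\[
\mathrm{End}_{\mathfrak{R}}(\Fs)\;\hookrightarrow\;\mathrm{End}_{\Ring'}(\Fs\otimes_{\mathfrak{R}}\Ring')
\]
is injective, and the previous paragraph gives $\tau_\sigma=\mathrm{Id}_\Fs$, proving the corollary.

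The only non-routine point is verifying that the deformation functor is compatible with the scalar extension from $\mathfrak{R}$ to $\Ring'=\mathfrak{R}'[\log(T)]$: one must check that the admissible neighborhood of the diagonal on which the stratification $\chi$ for $\Fs$ converges, and the morphism $\Delta_\sigma$ itself (using the canonical extension of $\sigma$ to $\Ring'$ provided by Lemma~\ref{Lemma : Extension of sigma}), behave coherently under this extension -- but both are formal consequences of the construction, together with the fact that $\log(T)$, viewed as a convergent function on $D^-(1,1)$, is a Taylor solution at $T=1$ of the trivial differential equation $dy=dT/T$, so that all $\sigma$-compatibility inequalities \eqref{eq : R_S(x,sigma)<R_S(x,F)} used to build the deformation transfer from $\mathfrak{R}$ to $\Ring'$ without change.
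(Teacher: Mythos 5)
Your overall strategy (compare $\sigma^{\Fs}$ with the deformed action $\sigma^{\Def}$, check they coincide after base change to $\Ring'=\mathfrak{R}'[\log(T)]$, then descend by injectivity of $\mathrm{End}_{\mathfrak{R}}(\Fs)\hookrightarrow\mathrm{End}_{\Ring'}(\Fs\otimes_{\mathfrak{R}}\Ring')$) is exactly what the paper does, and your descent step is correct. The gap is in the part you yourself flag as ``the only non-routine point'' and then dismiss as formal: the claim that $\Def_{\Sigma}(\nabla)$ becomes trivial on the basis $\mathbf{e}$ because $Y_\chi=\mathrm{Id}$ in that basis. This does \emph{not} follow formally. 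The stratification $\chi$ and the deformation functor are defined over $\mathfrak{R}$ --- more precisely over germs of annuli $C_\varepsilon$ and tubular neighborhoods of the diagonal in $C_\varepsilon\times C_\varepsilon$ --- while the basis $\mathbf{e}$ exists only over $\Ring'=\mathfrak{R}'[\log(T)]$, which is not an analytic ring and over which the deformation is not defined. Writing ``$Y_\chi$ in the basis $\mathbf{e}$'' therefore has no a priori meaning. What actually has to be shown is that the étale solution matrix of $\nabla$ (a basis of $\Fs\otimes\mathfrak{R}'[\log(T)]$ on which $\nabla$ is trivial) is simultaneously a solution matrix of the $\sigma$-difference system $\sigma(Y)=A^{\Def}_\sigma\,Y$ produced over $\mathfrak{R}$ by deformation.

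This is precisely what the proof of Theorem~\ref{plmt-diff} establishes --- ``deformation preserves étale solutions'' --- and it is a substantive argument, not a formal one: it uses Matsuda's canonical extension to descend to $\Od$, the dictionary between Special extensions and covers of $\mathbb{G}_{m,\widetilde{K}}$, the Bosch--Lütkebohmert lifting of the residual fiber $\{\widetilde{y}_1,\dotsc,\widetilde{y}_d\}$ to discs $D_i\subset V^\dag$, and the identification of the étale solution $Y_{\B}$ with the Taylor solution $Y$ via the isomorphism $\psi\colon D_i\simto D$, after which one checks $A_{\sigma_{\B}}=A_\sigma$ by analytic continuation from $D$. You should simply cite Theorem~\ref{plmt-diff} (more precisely, the fact proved there that $\Def_{\Sigma}(\nabla)$ becomes trivial in the same basis trivializing $\nabla$ over $\mathfrak{R}'[\log(T)]$) at the point where you assert the compatibility; noting that any two trivializing bases of the (now trivial) differential module over $\Ring'$ differ by a constant matrix, which commutes with the trivial $\Sigma$-action, then gives that $\mathbf{e}$ itself trivializes $\sigma^{\Def}$, and the rest of your proof goes through as written.
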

\begin{proof}
By Theorem \ref{plmt-diff}, the action of $\Sigma$ obtained by 
deformation becomes trivial in the same basis trivializing the 
connection. So the two actions of $\Sigma$ coincide after base 
change, hence they were equal before base change.
\end{proof}

\section{Difference equations over the affine line.}
\label{Difference equations over the affine line.}
We now investigate a particular class of automorphism, 
those of the form  $\sigma_{q,h}(T)= qT+h$,
for some $q,h\in K$. If $q\neq 0$, this is an automorphism of 
$\mathbb{A}^{1,\mathrm{an}}_K$ with inverse 
$\sigma_{q^{-1},-q^{-1}h}$. 

If $q=1$ we say that $\sigma_{1,h}$ is a 
finite difference operator, and 
if $h=0$ we say that $\sigma_{q,0}$ is a 
$q$-difference operator. 
In general we say that $\sigma_{q,h}$ is a 
difference operator.

\subsection{$S$-infinitesimality of $\sigma_{q,h}$.}
\label{Section : infinitesimality of sigma_qh}
In general, for $q_1,q_2\neq 0$, 
we have
\begin{equation}
\sigma_{q_1,h_1}\circ\sigma_{q_2,h_2}\;=\;
\sigma_{q_1q_2,q_2h_1+h_2}\;.
\end{equation}
We can define a group operation on 
$\mathcal{G}:=\mathbb{G}_m^{\mathrm{an}}
\times \mathbb{A}_K^{1,\mathrm{an}}$ by 
$(q_1,h_1)(q_2,h_2):=(q_1q_2,q_2h_1+h_2)$. Since the operation 
are given by polynomials, this is a $K$-analytic group. 
Also the action $\mathcal{G}\times \mathbb{A}^{1,\mathrm{an}}_K\to 
\mathbb{A}^{1,\mathrm{an}}_K$ given 
by $((q,h);T)\mapsto qT+h$ is 
given by polynomials, so it is a morphism of $K$-analytic 
spaces 
as in Section \ref{Analyticity of the action}.
If $q\neq 1$, $\sigma_{q,h}$ has a unique fixed rigid point which is 
\begin{equation}\label{eq : a=-h/(q-1)}
a\;:=\;-h/(q-1)\;.
\end{equation}
Moreover by a translation sending 
$a$ into $0$, $\sigma_{q,h}$ 
become just the multiplication by $q$: 
$\sigma_{q,h}(T-a)=q(T-a)$. 
This often  permits to reduce to the case where $h=0$. 
We then deduce the following
\begin{lemma}
$\sigma_{q,h}$ extends to an automorphism of 
$\mathbb{P}^{1,\mathrm{an}}_K$.
If $q=1$, then $+\infty$ is its unique fixed rational point in 
$\mathbb{P}^{1,\mathrm{an}}_K(K)$. 
If $q\neq 1$, then $a$ and 
$+\infty$ are its unique fixed points in 
$\mathbb{P}^{1,\mathrm{an}}_K(K)$. \hfill$\Box$
\end{lemma}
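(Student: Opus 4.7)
The lemma is essentially an elementary computation, so my plan is straightforward.

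First, I would establish the extension to $\mathbb{P}^{1,\mathrm{an}}_K$. The map $\sigma_{q,h}$ corresponds to the M\"obius transformation given by the matrix $\bigl(\begin{smallmatrix} q & h \\ 0 & 1\end{smallmatrix}\bigr) \in \mathrm{GL}_2(K)$, which is invertible since $q\neq 0$. Any element of $\mathrm{PGL}_2(K)$ defines an algebraic automorphism of $\mathbb{P}^1_K$, and the analytification of this automorphism gives the desired extension $\sigma_{q,h} : \mathbb{P}^{1,\mathrm{an}}_K \xrightarrow{\sim} \mathbb{P}^{1,\mathrm{an}}_K$, restricting to the given polynomial map on $\mathbb{A}^{1,\mathrm{an}}_K$.

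Next, I would identify the rational fixed points. The point $\infty = \mathbb{P}^{1,\mathrm{an}}_K - \mathbb{A}^{1,\mathrm{an}}_K$ is fixed by any affine map (in matrix terms, $\bigl(\begin{smallmatrix} q & h \\ 0 & 1\end{smallmatrix}\bigr)$ fixes the line $K\cdot\bigl(\begin{smallmatrix}1\\0\end{smallmatrix}\bigr)$), so $+\infty$ is always in the fixed locus. The remaining fixed rational points are the solutions $T_0\in K$ of $qT_0+h=T_0$, i.e., $(q-1)T_0=-h$. If $q=1$, then (under the implicit hypothesis $\sigma_{q,h}\neq \mathrm{Id}_X$, so $h\neq 0$) this has no solution in $K$, so $+\infty$ is the unique rational fixed point. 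If $q\neq 1$, the unique solution is $T_0 = -h/(q-1) = a$, which together with $+\infty$ exhausts the rational fixed points.

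There is no real obstacle here: the two assertions both reduce to direct inspection of the M\"obius action, with the extension being formal and the fixed-point equation being linear in $T_0$. The only subtle point worth flagging is the tacit exclusion of $\sigma_{q,h}=\mathrm{Id}_X$ in the statement for $q=1$, which is consistent with the $S$-infinitesimal framework where such a restriction is harmless.
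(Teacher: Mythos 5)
Your proof is correct. The paper treats this lemma as immediate (it carries the $\Box$ with no displayed proof; a commented-out proof exists in the source), and the approach there is hands-on: it realizes $\mathbb{P}^{1,\mathrm{an}}_K$ explicitly as two closed unit discs glued along $\{|T|=1\}$ (or $\{|T-a|=1\}$ when $q\neq 1$), writes out $\sigma_{1,h}$ in the inverted coordinate $Z=T^{-1}$ as the convergent power series $Z\sum_{n\geq 0}(-hZ)^n$, and for $q\neq 1$ translates $a$ to the origin so that $\sigma_{q,h}$ becomes multiplication by $q$ on $D$ and by $q^{-1}$ on $D_\infty$. Your route instead goes through $\mathrm{PGL}_2(K)$: you observe that $\sigma_{q,h}$ is the M\"obius transformation with matrix $\bigl(\begin{smallmatrix} q & h \\ 0 & 1\end{smallmatrix}\bigr)$, invoke analytification of the algebraic automorphism, and then read off the rational fixed points from the linear equation $(q-1)T_0=-h$. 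Both arguments are elementary and reach the same conclusion; yours is more conceptual and avoids any coordinate patching, while the paper's version is more explicit about how the map behaves near $\infty$ (which is arguably useful given that the surrounding discussion cares about discs and annuli near boundary points). You also correctly flag the tacit exclusion of $\sigma_{1,0}=\mathrm{Id}$, without which the $q=1$ clause of the statement would be false.
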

\if{
\begin{proof}
If $q=1$, express $\mathbb{P}^{1,\mathrm{an}}_K$ as 
$D^+(0,1)$ pasted with itself via the map $T\mapsto T^{-1}=Z$ 
along $\{|T|(x)=1\}$. Call $D_\infty$ the unit closed disc at 
$+\infty$. 
Then $\sigma_{1,h}(Z)=\sigma_{1,h}(T^{-1})=(T+h)^{-1}=T^{-1}(1+h/T)^{-1}= Z\sum_{n\geq 0}(-hZ)^n$.

Assume now that $q\neq 1$. Let $a=-h/(q-1)$. 
Express $\mathbb{P}^{1,\mathrm{an}}_K$ as a closed unit disc $D$
centered at $a$ pasted with itself by the map 
$(T-a)\mapsto (T-a)^{-1}$ along the annulus $|T-a|=1$. 
Denote by $D_\infty$ the corresponding closed unit disc at the infinity 
so obtained. Now $\sigma_{q,h}$ acts as a 
multiplication by $q$ on $D$ (with respect to the coordinate 
$T-a$), 
and by multiplication by $q^{-1}$ on $D_\infty$ (with respect to 
$(T-a)^{-1}$).
\end{proof}
}\fi
\begin{lemma}[Disks that are stable under 
$\sigma_{q,h}$]
\label{Lemma : discs stable by sigma_q,h}
The following hold:
\begin{enumerate}
\item If $|q|\neq 1$, then the restriction of 
$\sigma_{q,h}$ to a disc in 
$\mathbb{P}^{1,\mathrm{an}}_\Omega$ 
is never an automorphism of the disc. 
In particular $\sigma_{q,h}$ is not isometric.
\item If $|q|=1$, and if $|q-1|=1$, 
then the family formed by the open/closed discs 
$D^\pm(a,\rho)$, $\rho\geq 0$, $a=-h/(q-1)$, and by their 
complements in $\mathbb{P}^{1,\mathrm{an}}_\Omega$,
is the unique family of (open or closed) discs in 
$\mathbb{P}^{1,\mathrm{an}}_\Omega$ on which 
$\sigma_{q,h}$ induces an automorphism.
\item If $|q-1|<1$, the family of discs in 
$\mathbb{P}^{1,\mathrm{an}}_\Omega$ on which 
$\sigma_{q,h}$ induces an automorphism is formed by the discs 
$D^-(c,\rho)\subseteq \mathbb{A}^{1,\mathrm{an}}_\Omega$ 
(resp. $D^+(c,\rho)\subseteq \mathbb{A}^{1,\mathrm{an}}_\Omega$) 
satisfying  
\begin{equation}
|\sigma_{q,h}(c)-c|\;<\;\rho\qquad
(\textrm{resp. }|\sigma_{q,h}(c)-c|\;\leq\; \rho)\;,
\end{equation}
and by their complements in 
$\mathbb{P}^{1,\mathrm{an}}_\Omega$.
\item
In particular, if $D$ is a virtual open (resp. closed) disc with 
boundary $x$ which is stable under $\sigma_{q,h}$, then
each virtual open disc with boundary in $[x,+\infty[$ 
(resp. $]x,+\infty[$) is stable by $\sigma_{q,h}$.
\hfill$\Box$
\end{enumerate}
\end{lemma}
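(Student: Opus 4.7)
The plan is to reduce everything to the case $h=0$ and then to simple computations with ultrametric norms. First, if $q\neq 1$, set $a:=-h/(q-1)$ and change variables via $u:=T-a$. In the new coordinate, $\sigma_{q,h}$ becomes the pure dilatation $u\mapsto qu$. If $q=1$, then $\sigma_{1,h}(T)=T+h$ is a translation, and its restriction to an (open or closed) disc $D^{\pm}(c,\rho)\subseteq\mathbb{A}^{1,\mathrm{an}}_\Omega$ is an automorphism iff $|h|<\rho$ (resp. $|h|\leq\rho$), which is already of the form claimed in iii) since $|\sigma_{1,h}(c)-c|=|h|$ and $|q-1|=0<1$. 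So we may assume $q\neq 1$ and, after translation, reduce to studying the multiplication $m_q:u\mapsto qu$.

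For i): under $m_q$, every open (resp. closed) disc $D^{\pm}(c',\rho)\subseteq\mathbb{A}^{1,\mathrm{an}}_\Omega$ is sent to $D^{\pm}(qc',|q|\rho)$. If $|q|\neq 1$, the radius changes, so $m_q$ cannot stabilize any such disc of positive radius; taking complements in $\mathbb{P}^{1,\mathrm{an}}_\Omega$ (which, in the coordinate $u^{-1}$, are again discs on which $m_q$ acts by multiplication by $q^{-1}$ with $|q^{-1}|\neq 1$) yields the same conclusion. In particular $\sigma_{q,h}$ cannot be isometric.

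For ii) and iii): if $|q|=1$, then $D^{\pm}(c',\rho)$ is stable under $m_q$ iff $D^{\pm}(qc',\rho)=D^{\pm}(c',\rho)$, i.e. iff $|qc'-c'|=|q-1|\,|c'|\leq\rho$ (resp. $<\rho$). Translating back, with $c':=c-a$, this gives precisely the condition
\begin{equation*}
|\sigma_{q,h}(c)-c|\;=\;|(q-1)(c-a)|\;\leq\;\rho \quad (\text{resp. }<\rho).
\end{equation*}
In case ii) ($|q-1|=1$), the inequality reduces to $|c-a|\leq\rho$ (resp. $<$), which is equivalent to $D^{\pm}(c,\rho)=D^{\pm}(a,\rho)$, proving the characterization. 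In case iii) ($|q-1|<1$), the inequality is genuinely weaker, yielding the family described in the statement. For both cases, the complements in $\mathbb{P}^{1,\mathrm{an}}_\Omega$ are handled by the same computation after inverting the variable, or equivalently by noting that a disc is stable iff its complement is.

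For iv), the claim is a direct consequence of iii) (case ii) being a degenerate subcase). Suppose $D=D^-(c,\rho_0)$ is stable, so $|q-1|\,|c-a|<\rho_0$, and let $y$ be a point of $[x,+\infty[$ with $x=x_{c,\rho_0}$. Then $y=x_{c,\rho}$ for some $\rho\geq\rho_0$, and any virtual open disc with boundary $y$ is of the form $D^-(c'',\rho)$ with $|c''-c|\leq\rho$ (the inward discs) or the complement in $\mathbb{P}^{1,\mathrm{an}}_\Omega$ of $D^+(c,\rho)$ (the outward branch). For the inward discs, the ultrametric inequality gives $|c''-a|\leq\max(|c''-c|,|c-a|)\leq\max(\rho,|c-a|)$, hence $|q-1|\,|c''-a|\leq\max(|q-1|\rho,|q-1|\,|c-a|)<\rho$, so they are stable by iii). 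For the outward disc we need the closed condition $|q-1|\,|c-a|\leq\rho$, which holds a fortiori since $|q-1|\,|c-a|<\rho_0\leq\rho$. The closed-disc variant $D=D^+(c,\rho_0)$ is treated identically, replacing strict by non-strict inequalities at $\rho_0$ and observing that strict inequalities are recovered for $\rho>\rho_0$, hence for $y\in\,]x,+\infty[$. The main point to be careful about is the bookkeeping of branches out of a point of type $2$, to make sure that every virtual open disc with boundary on the ray is indeed covered by these two possibilities.
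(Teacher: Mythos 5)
The paper states this lemma with a terminal $\Box$ and gives no proof at all, treating it as immediate from Lemma~\ref{lem:isometry} and the ultrametric inequality. Your proof is correct and is the natural one: translating the fixed point $a$ to the origin when $q\neq 1$, reducing to the dilatation $u\mapsto qu$, and using that $m_q$ sends $D^\pm(c',\rho)$ to $D^\pm(qc',|q|\rho)$ together with the characterization $D^\pm(c_1,\rho)=D^\pm(c_2,\rho)\Leftrightarrow|c_1-c_2|\leq\rho$ (resp.\ $<\rho$). The only wrinkle worth naming is in part~i): if one admits the degenerate disc $D^+(a,0)=\{a\}$, it \emph{is} stabilized when $|q|\neq 1$, so the claim must tacitly restrict to discs of positive radius; your radius argument ($|q|\rho\neq\rho$) silently uses $\rho>0$, which is the right reading. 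Also, in ii)--iv) you occasionally write $|q-1|\,|c-a|$ where $|\sigma_{q,h}(c)-c|$ would cover $q=1$ uniformly; this is harmless since you dispatched $q=1$ at the start, but using the uniform expression would make the case split unnecessary. Otherwise the bookkeeping of inward discs versus the single outward disc at a point of $[x,+\infty[$ in~iv), and the use of $|q-1|<1$ to absorb $|q-1|\rho<\rho$, is exactly what is needed.
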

In the situation of point ii) of the Lemma \ref{Lemma : discs stable by sigma_q,h}, 
the unique differential equation which is 
$\sigma_{q,h}$-compatible over a discs centered at $a$, or over its complement, is the trivial 
one. So this case is not interesting from the point of view of this paper.

\begin{hypothesis}
From now on we assume $|q-1|<1$. 
In particular, if $q\neq 1$, the absolute 
value of $K$ is not trivial.
\end{hypothesis}

\begin{proposition}
%[Analytic domains that are stable under $\sigma_{q,h}$]
\label{Prop.  : characterization of analytic domains stable by sigma_q,h}
Let $X\subseteq \mathbb{P}^{1,\mathrm{an}}_K$ 
be a connected 
analytic domain distinct from 
$\mathbb{P}^{1,\mathrm{an}}_K$, and 
$\mathbb{P}^{1,\mathrm{an}}_K-\{t\}$ 
for any point $t\in \mathbb{P}^{1,\mathrm{an}}_K$ of type $1$ or $4$. Then:
\begin{enumerate}
\item
%Let $\Gamma_X\subseteq X$ be the subset of 
%points $x\in X$ such that there is no virtual open discs 
%in $X$ containing $x$. Then 
%$\Gamma_X$ is a locally finite tree inside $X$ which is the skeleton 
%of a (not unique) weak triangulation (cf. Section \ref{Quasi-smooth Berkovich curves}). 
The analytic skeleton $\Gamma_X$ of $X$ is 
the skeleton 
of a (not unique) weak triangulation (cf. Section 
\ref{Quasi-smooth Berkovich curves}).
Each other weak triangulation 
$S$ of $X$ verifies $\Gamma_X\subseteq\Gamma_S$.
\item Let $Y$ be a connected component of $X_\Omega$. 
Then each connected component of the complement of $Y$ in 
$\mathbb{P}^{1,\mathrm{an}}_\Omega$ is either 
an open or closed disc, or it is reduced to 
a point $x$ such that 
$x\in\overline{\Gamma_Y}-\Gamma_Y$, where 
$\overline{\Gamma_Y}$ is the closure of 
$\Gamma_Y$ in $\mathbb{P}^{1,\mathrm{an}}_K$. In particular 
$x$ is an end point of $\overline{\Gamma_X}$ of type $1$ or $4$.
\item $\sigma_{q,h}$ induces an automorphism of 
$X$ if and only if it induces an automorphism of the complement of $X_\Omega$ in 
$\mathbb{P}^{1,\mathrm{an}}_\Omega$.
Moreover, if $S$ is a weak triangulation of $X$, the action of 
$\sigma_{q,h}$ on $X$ is $S$-infinitesimal if and 
only if for all connected component $Y$ of $X_\Omega$ the following hold
\begin{enumerate}
\item $\sigma_{q,h}$ induces an automorphism of 
each connected component of $\mathbb{P}^{1,\mathrm{an}}_\Omega-Y$;
\item If $D$ is a virtual open disc in $X$ such that 
$S\cap D\neq\emptyset$, then for each point 
$x\in S_\Omega\cap D_\Omega$ which is an end-point of 
$\Gamma_{S_\Omega}$ there exists an open disc 
$D_x\subset D_\Omega$ with boundary $x$ which is globally 
fixed by $(\sigma_{q,h})_\Omega$.\hfill$\Box$
\end{enumerate}
\end{enumerate}
\end{proposition}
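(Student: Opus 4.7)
For part (i), I would first invoke the structure theory of quasi-smooth Berkovich curves (as in \cite{Duc}) to see that the analytic skeleton $\Gamma_X$, defined as the set of points of $X$ without a virtual open disc neighborhood, is a locally finite graph in $X$; the hypotheses excluding $X = \mathbb{P}^{1,\mathrm{an}}_K$ and its complement by a single type-1 or type-4 point are exactly what guarantees $\Gamma_X \neq \emptyset$ (otherwise $X$ would itself be a virtual open disc). I would then build a weak triangulation $S$ with $\Gamma_S = \Gamma_X$ by selecting a locally finite subset of $\Gamma_X$ containing all branch points and type-1/4 endpoints of $\Gamma_X$, and placed densely enough along each edge so that the connected components of $X - S$ are either virtual open annuli (with skeleton inside $\Gamma_X$) or virtual open discs (sitting off $\Gamma_X$). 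The inclusion $\Gamma_X \subseteq \Gamma_{S'}$ for an arbitrary weak triangulation $S'$ of $X$ is then automatic: $X - \Gamma_{S'}$ is a disjoint union of virtual open discs, and by definition no point of $\Gamma_X$ can lie in such a disc.

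For part (ii), I base-change to $\Omega$, which is spherically complete and algebraically closed. The space $\mathbb{P}^{1,\mathrm{an}}_\Omega$ carries a natural $\mathbb{R}$-tree structure, and a connected analytic domain $Y$ is a connected sub-tree. Each connected component $V$ of $\mathbb{P}^{1,\mathrm{an}}_\Omega - Y$ is attached to $Y$ by a single boundary branch. Classifying $V$ according to the nature of this branch yields three cases: if the branch ends at a type-2 or type-3 point lying in $Y$, then $V$ is a virtual open disc with that point as boundary; if it ends at a type-2 or type-3 point lying in $\overline{Y}\setminus Y$, then $V$ is a virtual closed disc; and if $V$ is reduced to a single point $x$, then necessarily $x \in \overline{\Gamma_Y} - \Gamma_Y$ and $x$ must be of type 1 or 4 (a type-2 or type-3 limit point of $\Gamma_Y$ would carry a full disc-branch into the complement, contradicting $V=\{x\}$). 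This is the genus-0 classification of connected components of the complement of an analytic domain of $\mathbb{P}^{1,\mathrm{an}}$.

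For the first claim of part (iii), I would simply observe that $\sigma_{q,h}$ is a global automorphism of $\mathbb{P}^{1,\mathrm{an}}_K$, so it preserves $X$ if and only if it preserves $\mathbb{P}^{1,\mathrm{an}}_K - X$, and after base change this amounts to $\sigma_{q,h,\Omega}$ inducing an automorphism of the complement of $X_\Omega$ in $\mathbb{P}^{1,\mathrm{an}}_\Omega$. For the $S$-infinitesimality characterization, I would argue by direct reduction to the description of maximal discs. In the forward direction, if $\sigma$ is $S$-infinitesimal, then it fixes $\Gamma_{S_\Omega}$ pointwise and stabilizes each maximal disc $D(x,S)$; applying item iv) of Lemma \ref{Lemma : discs stable by sigma_q,h} at each boundary point of $Y$ shows that the attached complement component is $\sigma$-stable, giving (a), while condition (b) is read off from the definition of $D(x,S)$ at a skeleton endpoint $x$ lying inside a virtual open disc $D$ meeting $S$. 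For the converse, given (a) and (b), I would verify $S$-infinitesimality at each $x \in X_\Omega$ in two cases: for $x \notin \Gamma_{S_\Omega}$, the maximal disc $D(x,S)$ is the virtual open disc of $X_\Omega - \Gamma_{S_\Omega}$ containing $t_x$, whose stability follows from (a) applied to the complement components adjacent to its boundary combined with Lemma \ref{Lemma : discs stable by sigma_q,h}; for $x \in \Gamma_{S_\Omega}$, the stability of the generic disc $D(x) = D(x,S)$ is obtained by combining (b) (for the branch directions off the skeleton) with (a) (for the complement-attachment directions).

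The main obstacle will be the careful bookkeeping in part (iii): matching up the stability condition for maximal discs $D(x,S)$ (stated intrinsically in $X_\Omega$) with the complement-component stability in (a) and the end-point disc condition in (b), and in particular verifying that (b) is both necessary and sufficient for $S$-infinitesimality at the endpoints of $\Gamma_{S_\Omega}$ that sit inside virtual open discs of $X - \Gamma_X$. The hypothesis $|q-1| < 1$, through Lemma \ref{Lemma : discs stable by sigma_q,h}, is what keeps this bookkeeping clean.
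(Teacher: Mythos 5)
Your proposal takes essentially the same route as the paper's (commented-out) proof, and parts (i) and (ii) match closely. Two points in part (iii) are where the paper is noticeably more careful than your sketch, and where the bookkeeping you rightly flag as the main obstacle actually gets resolved.

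First, in the forward direction of (iii)(a) you say item iv) of Lemma~\ref{Lemma : discs stable by sigma_q,h} applied at ``each boundary point of $Y$'' shows each complement component is $\sigma$-stable. That works for an \emph{open} disc component $C$: its boundary $x$ lies in $\Gamma_{X_\Omega}\subseteq\Gamma_{S_\Omega}$, hence is fixed, and almost all discs out of $x$ are maximal discs of $X_\Omega-\Gamma_{X_\Omega}$, hence stable, so item iv) propagates stability to $C$. But when $C$ is a closed disc or a single point, its Shilov boundary (or the point itself) does \emph{not} lie in $X_\Omega$ and in particular not in $\Gamma_{S_\Omega}$; item iv) is not directly applicable. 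The paper handles this with a continuity argument: one finds a small segment $]x,y[$ in $\Gamma_{X_\Omega}$ accumulating at $x$ (possible because any disc meeting such a segment would contain $C$), which is fixed by $\sigma_{q,h}$; by continuity $x$ is fixed, so $\sigma_{q,h}(C)\cap C\neq\emptyset$ and hence $\sigma_{q,h}(C)=C$ since $\sigma_{q,h}$ permutes the complement components. You should separate out this case.

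Second, in the converse direction your plan is to verify stability of $D(x,S)$ pointwise, but the key issue is to see that $\Gamma_{S_\Omega}$ is fixed pointwise and that each maximal disc has its boundary sitting on a segment along which stability can be propagated from a known stable disc. The paper resolves this cleanly by first removing $\infty$ (a fixed rational point), which orients everything toward $\infty$, and then writing $\Gamma_{S_\Omega}$ as a union of segments $I_x=[x,\infty[\cap Y$ (or $]x,\infty[\cap Y$) where $x$ runs over the boundaries/Shilov boundaries of the complement components from (a) and the boundaries of the discs $D_x$ from (b). Each such $I_x$ originates at a stable disc, so item iv) gives stability of every virtual open disc with boundary in $\Gamma_{S_\Omega}$, which is exactly $S$-infinitesimality. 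Your version needs this decomposition made explicit; otherwise it is not clear why the boundary of an arbitrary maximal disc is downstream (toward $\infty$) of a stable disc.
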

\if{
\begin{proof}
i)
If $S$ is any weak triangulation of $X$, then 
$X-\Gamma_S$ is a disjoint 
union of discs in $X$, so 
$\Gamma_X\subseteq\Gamma_S$. 
By assumption 
$\mathbb{P}^{1,\mathrm{an}}_K-X$ has at least two points.
This implies that $\Gamma_X\neq\emptyset$. Let $x_0$ be one of them.
If $x\in\Gamma_X$ then the whole 
segment 
$[x,x_0[\cap X$ is contained in $\Gamma_X$ 
because if a virtual open disc $D\subseteq X$ intersects $[x,x_0[\cap X$, 
then $D$ contains $x$. So $\Gamma_X$ is a tree, and since 
$\Gamma_S$ is locally finite, so is $\Gamma_X$. 

If now $D$ is a virtual open disc in $X$ with boundary in 
$x\in\Gamma_X$ such that $\Gamma_S\cap D\neq\emptyset$, the 
set $S':=(S\cup\{x\})-(S\cap D)$ is again a weak triangulation of $X$ 
that do not intersect $D$. In this way one sees that 
there exists a weak triangulation $S'$ of $X$ such that 
$\Gamma_X=\Gamma_{S'}$. This proves i).

ii) 
By the arguments of Section \ref{Extension of scalars.},
the existence of the map $\sigma_L : X_{\Ka}\to X_L$ implies a bijection 
between $\Gamma_{S_L}$ and $\Gamma_{S_{\Ka}}$. In particular
each connected component $Y$ of $X_{\Ka}$ remains connected after scalar 
extension to $\Omega$. 

Since $Y$ is an analytic domain, for all $x\in \Gamma_Y$ there are only 
a finite number of branches out of $x$ that do not belong to $Y$. 
Each such a branch corresponds to a disc in 
$\mathbb{P}^{1,\mathrm{an}}_K$. The claim ii) follows.

iii) $\sigma_{q,h}$ is an automorphism of $X$ if and only if 
$(\sigma_{q,h})_\Omega$ is an automorphism of $X_\Omega$.
The complement of $X_\Omega$ in 
$\mathbb{P}_\Omega^{1,\mathrm{an}}$ is a
$\Omega$-analytic space. 
Since $(\sigma_{q,h})_\Omega$ is an automorphism, $X_\Omega$ is 
stable under $(\sigma_{q,h})_\Omega$ if and only if its complement 
is.

The assertions (a) and (b) are proved as follows. 
Assume first that $\sigma_{q,h}$ acts $S$-infinitesimally on $X$. 
Then $\Gamma_S$ (resp. $\Gamma_{S_\Omega}$) 
is fixed by $\sigma_{q,h}$ (resp. $(\sigma_{q,h})_\Omega$). 

(a)
Let $C$ be a connected component of 
$\mathbb{P}^{1,\mathrm{an}}_\Omega-Y$ 
(either a open/closed disc or a point). 

If $C$ is an open disc, then its 
boundary $x$ in $\mathbb{P}^{1,\mathrm{an}}_\Omega$ 
lies in $\Gamma_{X_\Omega}$ by definition. Since 
$\Gamma_{X_\Omega}\subseteq\Gamma_{S_\Omega}$, 
then $x$ is fixed by $\sigma_{q,h}$. 
Moreover almost all open discs of 
$\mathbb{P}^{1,\mathrm{an}}_\Omega$ with boundary 
$x$ are connected component of 
$X_\Omega-\Gamma_{X_\Omega}$. So they are stable by 
$\sigma_{q,h}$ by the infinitesimality condition. 
Hence $C$ is stable too by point iv) of Lemma 
\ref{Lemma : discs stable by sigma_q,h}.

If now $C$ is a closed disc (resp. a point $x$), 
and if $x$ is its Shilov boundary, 
there is $y\in X_\Omega$ such that $]x,y[\subset X_\Omega$. 
If $X_\Omega$ is the complement of $C$ in 
$\mathbb{P}^{1,\mathrm{an}}_\Omega$, there is nothing to prove.
Otherwise $X_\Omega$ is not a disc, so any disc intersecting 
$]x,y[$ contains $C$. Hence we may find $y$ close 
enough to $x$ in order to have 
$]x,y[\subseteq\Gamma_{X_\Omega}$. 
Now the segment $]x,y[$ is fixed by $\sigma_{q,h}$, and by 
continuity, so is $x$. 
Then $\sigma_{q,h}(C)\cap C$ is not empty, and since $C$ is a 
connected component of the complement of $X_\Omega$, 
we have $\sigma_{q,h}(C)=C$.

(b) If $\Gamma_S=\Gamma_X$ we are done. 
Otherwise the disc $D_x$ is a connected component of 
$X_\Omega-\Gamma_{S_\Omega}$ so it is stable by the 
infinitesimality condition.

We now assume that (a) and (b) both hold, and we prove 
that $\sigma_{q,h}$ acts $S$-infinitesimally. 
Since $\infty$ is a fixed point by $\sigma_{q,h}$, 
we can replace $X$ by 
$X':=X\cap\mathbb{A}^{1,\mathrm{an}}_K$. 
If $\sigma_{q,h}$ acts infinitesimally on $X'$ then so it 
does on $X$. 

If $D_x$ is stable by $\sigma_{q,h}$ as in $(b)$, then 
$[x,+\infty[\cap Y\subseteq \Gamma_{S_\Omega}$. 

Otherwise if $C$ is a connected component of 
$\mathbb{A}^{1,\mathrm{an}}_\Omega-Y$ as in (a), 
then $C$ is an open disc (resp. a closed disc, or a point), and if $x$ is 
its boundary (resp. Shilov boundary), then
$I_x:=[x,+\infty[\cap Y\subseteq 
\Gamma_{S_\Omega}$
(resp. $I_x:=]x,+\infty[\cap Y\subseteq 
\Gamma_{S_\Omega}$). 

Now $\Gamma_{S_\Omega}$ is the union of all those 
segments $I_x$, and by point iv) of 
Lemma \ref{Lemma : discs stable by sigma_q,h} one sees that 
each open disc $D$ with boundary in $\Gamma_{S_\Omega}$ is stable by 
$\sigma_{q,h}$. So $\sigma_{q,h}$ acts infinitesimally on $X$.
\end{proof}
}\fi
\if{
\subsection{Deformation.}
Let $X$ be an analytic domain of 
$\mathbb{P}^{1,\mathrm{an}}_K$, with a weak 
triangulation $S$. 
Assume that $\sigma_{q,h}$ acts $S$-infinitesimally on 
$X$ (cf. Prop. 
\ref{Prop.  : characterization of analytic domains stable 
by sigma_q,h}). 

If $\Fs$ is a differential equation over $X$, the 
compatibility condition
is $\R_{S,1}(x,\Fs)>\R_{S}(x,\sigma)$ for all $x\in X$. 
We recall that we can test such a condition on a locally 
finite set of 
points and germs of segments 
(cf. Lemma \ref{Lemma : infinitesimality on S}). 

Moreover, if $\infty\notin X$ we have a global coordinate 
$T$ on $X$, 
and we notice that the function $\R_{S}(-,\sigma)$ is 
given by
\begin{equation}
\R_{S}(x,\sigma)\;=\;\frac{|(q-1)T+h|(x)}{\rho_{S,T}(x)},
\end{equation}
where $\rho_{S,T}(x)$ is defined as in Remark 
\ref{Remark : trivializing Radius over A^1}.
}\fi

\subsection{Non degeneracy and fully faithfulness}
\subsubsection{$q$-Taylor expansion.}
%\subsubsection{$q$-monomials.}
For all natural number  $n\geq 1$ we set 
\begin{equation}
[n]_q\;:=\;1+q+q^2+\cdots+q^{n-1}\;,\qquad
[n]_q^!\;:=\;[1]_q[2]_q[3]_q\cdots[n]_q\;.
\end{equation}
\if{
For $0\leq i\leq n$, we define the $q$-binomial $\tbinom{n}{i}_q$ by 
the relation
\begin{equation}
(1-T)(1-qT)\cdots(1-q^{n-1}T)\;=\;
\sum_{i=0}^n(-1)^i
\tbinom{n}{i}_q\cdot q^{\frac{i(i-1)}{2}}\cdot T^i\;,
\end{equation}
where it is understood that for $i=0,1$ the symbol 
$q^{\frac{i(i-1)}{2}}$ is equal to $1$. 
It follows from the definition that for all $1\leq i\leq n$ we have
\begin{equation}
\tbinom{n}{i}_q\;=\;\tbinom{n-1}{i-1}_q+q^i\tbinom{n-1}{i}_q
\;=\; 
q^{n-i}\tbinom{n-1}{i-1}_q+\tbinom{n-1}{i}_q\;.
\end{equation}
}\fi
For all $c\in \Omega$ we set $(T-c)_{q,h}^{[0]}=1$ and, for all 
$n\geq 1$ we set
\begin{equation}
(T-c)_{q,h}^{[n]}\;:=\;(T-c)(T-\sigma_{q,h}(c))
(T-\sigma_{q,h}^2(c))\cdots(T-\sigma_{q,h}^{n-1}(c))\;.
\end{equation}
We define the twisted $(q,h)$-derivative as 
$d_{q,h}(f):=
\frac{f\circ\sigma_{q,h}-f}{(q-1)T+h}$.
%\begin{equation}
%d_{q,h}(f)\;:=\;
%\frac{f\circ\sigma_{q,h}-f}{(q-1)T+h}\;.
%\end{equation}
In particular $d_{q,h}$ is a $K$-linear map satisfying 
$d_{q,h}((T-c)_{q,h}^{[n]})=[n]_q(T-c)_{q,h}^{[n-1]}$, for all 
$n\geq 1$.
The denominator of $d_{q,h}$ has a zero. 
The following Proposition shows, in particular, that $d_{q,h}$ is 
well defined around that zero.
\begin{proposition}\label{(q,h)-Taylor expansion}
Let $D^-(c,R)\subset\mathbb{A}^{1,\mathrm{an}}_\Omega$ 
be a disc invariant under $\sigma_{q,h}$.
Let $f(T):=\sum_{n\geq 0}a_i(T-c)^i\in\O(D^-(c,R))$. 
Then
\begin{enumerate}
\item $f(T)$ can be uniquely written as 
$f(T)=\sum_{n\geq 0}
\widetilde{a}_n(T-c)^{[n]}_{q,h}\in\O(D^-(c,R))$. 
In particular, if $q\neq 1$, $d_{q,h}$ is defined around 
$a=-h/(q-1)$;
\item For all $\rho$ satisfying $|(q-1)c+h|<\rho<R$ one has $|f|(x_{c,\rho}):=
\sup_{n\geq 0}|a_n|\rho^n=
\sup_{n\geq 0}|\widetilde{a}_n|\rho^n$;
%\begin{equation}
%|f|(x_{c,\rho})\;\;:=\;\;\sup_{n\geq 0}|a_n|\rho^n\;
%\;=\;\;\sup_{n\geq 0}|\widetilde{a}_n|\rho^n\;;
%\end{equation}
\item The radius of convergence of $f$ at $c$ is given by the formula:
\begin{equation}
Radius(f(T))\;\;:=\;\;\liminf_{n}|a_n|^{-1/n}\;\;=\;\;\liminf_{n}|\widetilde{a}_n|^{-1/n}\;.
\end{equation}
\item  If $q=1$ assume that $h\neq 0$, if $q\neq 1$ assume that 
$q$ is not a root of unity. 
Then one has the $(q,h)$-Taylor expansion formula 
$f(T)=
\sum_{n\geq 0}d_{q,h}^n(f)(c)\cdot 
\frac{(T-c)^{[n]}_{q,h}}{[n]_q^!}$.
%\begin{equation}
%\label{eq : (q,h)-Taylor expansion of f}
%f(T)\;\;=\;\;
%\sum_{n\geq 0}d_{q,h}^n(f)(c)\cdot 
%\frac{(T-c)^{[n]}_{q,h}}{[n]_q^!}\;.
%\end{equation}
\end{enumerate}
\end{proposition}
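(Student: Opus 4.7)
\textbf{Proof plan for Proposition \ref{(q,h)-Taylor expansion}.}

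The plan is first to reduce everything to a single quantitative estimate: if $\mu:=|(q-1)c+h|$, then all iterates satisfy $|\sigma_{q,h}^k(c)-c|\leq \mu$. For $q=1$ this is immediate since $\sigma_{1,h}^k(c)-c=kh$. For $q\neq 1$ I would write $\sigma_{q,h}(T)-a=q(T-a)$ with $a=-h/(q-1)$, so $\sigma_{q,h}^k(c)-c=(q^k-1)(c-a)$; since $|q-1|<1$ gives $|q^k-1|\leq|q-1|$, we get $|\sigma_{q,h}^k(c)-c|\leq|q-1|\cdot|c-a|=\mu$. Since the disc $D^-(c,R)$ is stable under $\sigma_{q,h}$, Lemma \ref{Lemma : discs stable by sigma_q,h} forces $\mu<R$, so for every $\rho\in(\mu,R)$ and every $k$ one has $|T-\sigma_{q,h}^k(c)|(x_{c,\rho})=\max(\rho,|\sigma_{q,h}^k(c)-c|)=\rho$. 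Hence $|(T-c)^{[n]}_{q,h}|(x_{c,\rho})=\rho^n$.

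Next I would control the change of basis between $\{(T-c)^n\}_n$ and $\{(T-c)^{[n]}_{q,h}\}_n$. Writing $(T-c)^{[n]}_{q,h}=\sum_k M_{n,k}(T-c)^k$ and $(T-c)^n=\sum_k N_{n,k}(T-c)^{[k]}_{q,h}$, both are upper triangular with $M_{n,n}=N_{n,n}=1$. The recursion $(T-c)^{[n+1]}_{q,h}=(T-c)^{[n]}_{q,h}\cdot((T-c)-c_n)$ with $c_n:=\sigma_{q,h}^n(c)-c$, combined with its mirror $(T-c)\cdot(T-c)^{[k]}_{q,h}=(T-c)^{[k+1]}_{q,h}+c_k(T-c)^{[k]}_{q,h}$, yield by a straightforward induction the estimates $|M_{n,k}|\leq\mu^{n-k}$ and $|N_{n,k}|\leq\mu^{n-k}$.

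From this all of (i), (ii), (iii) follow by purely formal manipulation. Given $f=\sum a_i(T-c)^i$ convergent on $D^-(c,R)$, set $\widetilde{a}_n:=\sum_{i\geq n}a_iN_{i,n}$; for $\rho\in(\mu,R)$ the ultrametric estimate $|a_iN_{i,n}|\rho^n\leq|a_i|\rho^i\cdot(\mu/\rho)^{i-n}$ shows the sum converges and gives $\sup_n|\widetilde{a}_n|\rho^n\leq\sup_i|a_i|\rho^i$. Rearranging $\sum_n\widetilde{a}_n(T-c)^{[n]}_{q,h}=\sum_k(T-c)^k\sum_{n\geq k}\widetilde{a}_nM_{n,k}$ recovers $\sum a_i(T-c)^i$ and yields the reverse inequality, proving (i) and (ii). Uniqueness follows because $\{(T-c)^{[n]}_{q,h}\}$ is a $K$-basis of $K[T]$ by the triangularity. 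Statement (iii) is then the Cauchy--Hadamard formula combined with (ii) applied on the range $(\mu,R)$ (which determines the radius whenever the radius exceeds $\mu$; in the contrary case both $\liminf$'s exceed $R$ and agree by the same norm equalities restricted to a smaller disc). Finally, the formula $d_{q,h}(\sum_n\widetilde{a}_n(T-c)^{[n]}_{q,h})=\sum_n\widetilde{a}_n[n]_q(T-c)^{[n-1]}_{q,h}$, together with the estimate on norms, shows $d_{q,h}$ preserves $\O(D^-(c,R))$, which is the last assertion of (i).

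For (iv), the hypotheses imply $[n]_q\neq 0$ for every $n\geq 1$, so $[n]_q^!$ is invertible in $K$. A direct computation from the definition gives $d_{q,h}\bigl((T-c)^{[n]}_{q,h}\bigr)=[n]_q(T-c)^{[n-1]}_{q,h}$, and iterating $d_{q,h}^n\bigl((T-c)^{[k]}_{q,h}\bigr)$ evaluated at $T=c$ equals $[n]_q^!\,\delta_{k,n}$ (the case $k>n$ vanishes because $(c-c)^{[k-n]}_{q,h}=0$, and the case $k<n$ because $d_{q,h}$ strictly lowers degree). Applying $d_{q,h}^n$ termwise to $f=\sum_k\widetilde{a}_k(T-c)^{[k]}_{q,h}$ (legitimate by the continuity estimates above) and evaluating at $c$ gives $d_{q,h}^n(f)(c)=\widetilde{a}_n[n]_q^!$, which is exactly (iv). The main obstacle is establishing the bound $|N_{n,k}|\leq\mu^{n-k}$ uniformly; once that is in hand, all four statements reduce to manipulations that are straightforward in the ultrametric setting.
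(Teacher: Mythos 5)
Your proof is correct, and — since the paper itself gives no proof for this proposition but instead defers to \cite[Lemma 5.3]{Pu-q-Diff} — there is nothing internal to compare it against; the route you take is the natural one and almost certainly the one the cited reference follows. The two decisive ingredients are exactly the ones you isolate: first the uniform estimate $|\sigma_{q,h}^k(c)-c|\leq\mu:=|(q-1)c+h|$ for all $k$ (trivial for $q=1$, and for $q\neq 1$ a consequence of $\sigma_{q,h}^k(c)-c=(q^k-1)(c-a)$ together with $|q^k-1|\leq|q-1|$), and second the resulting triangular change-of-basis bounds $|M_{n,k}|,|N_{n,k}|\leq\mu^{n-k}$ obtained from the two shift recursions $(T-c)^{[n+1]}_{q,h}=(T-c)^{[n]}_{q,h}\bigl((T-c)-c_n\bigr)$ and $(T-c)(T-c)^{[k]}_{q,h}=(T-c)^{[k+1]}_{q,h}+c_k(T-c)^{[k]}_{q,h}$. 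Once those are in place, parts (i)--(iii) are ultrametric bookkeeping, the well-definedness of $d_{q,h}$ near $a$ follows from $d_{q,h}\bigl((T-c)^{[n]}_{q,h}\bigr)=[n]_q(T-c)^{[n-1]}_{q,h}$ together with the norm control, and (iv) is the termwise application of $d_{q,h}^n$ legitimised by continuity of $d_{q,h}$ on each Banach subdisc.

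One small presentational remark: your parenthetical in the treatment of (iii) about ``the contrary case'' where the radius does not exceed $\mu$ is vacuous here, since $f\in\O(D^-(c,R))$ with $R>\mu$ forces $\liminf_n|a_n|^{-1/n}\geq R>\mu$ from the start. The cleaner justification — which you essentially already have — is the direct bound $|\widetilde a_n|\rho^n\leq\sup_{i\geq n}|a_i|\rho^i\to 0$ for any $\rho\in(\mu,\liminf|a_n|^{-1/n})$, giving $\liminf|\widetilde a_n|^{-1/n}\geq\liminf|a_n|^{-1/n}$, and the reverse inequality by the symmetric bound on $M$. No need for a case split.
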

\begin{proof}
The proof follows closely \cite[Lemma 5.3]{Pu-q-Diff}, we omit it for 
expository reasons.
\end{proof}
\begin{corollary}[Non degeneracy of $\sigma_{q,h}$]
\label{Cor : Non degeneracy}
Let $X$ be an analytic domain of 
$\mathbb{P}^{1,\mathrm{an}}_K$, together with a weak 
triangulation $S$, such that   
$\sigma_{q,h}$ acts $S$-infinitesimally on $X$. 
Assume that $X$ is not an open 
disc centered at $\infty$ with empty weak triangulation. 
If $q=1$ assume that $h\neq 0$, and if $q\neq 1$ 
assume that $q$ is not a root of unity. Then the action of 
$\sigma_{q,h}$ is non degenerate on $X$.

Under the same assumptions, 
$\sigma_{q,h}$ is non degenerate if we replace $X$ by 
the Robba ring. 
\end{corollary}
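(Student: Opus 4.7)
The strategy is to verify Definition \ref{Def : non degeneracy} directly by means of the $(q,h)$-Taylor expansion of Proposition \ref{(q,h)-Taylor expansion}.iv). First I would pick, in each connected component $X'$ of $X$, a point $x\in X'$ of type $2$ or $3$ such that the maximal disc $D(x,S)\subseteq X_\Omega$ sits as an ordinary (affine) open disc $D^-(c_0,R_0)\subset \mathbb{A}^{1,\mathrm{an}}_\Omega$ with respect to the coordinate $T$. The hypothesis that $X$ is not an open disc centered at $\infty$ with empty weak triangulation is precisely what guarantees the existence of such a point; in every other configuration $\Gamma_S$ is non-empty, so one can pick $x\in \Gamma_S$ of type $2$ or $3$ such that $D(x,S)$ admits the required affine description.

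The next step is to verify that for every open disc $D$ with $D^+_S(x,\sigma_{q,h})\subseteq D\subseteq D(x,S)$, the family $\{D^+_S(x,\sigma_{q,h}^n)\}_{n\in\mathbb{Z}}$ is contained in $D$, so that checking \eqref{eq : O(D)^Sigma=Omega} reduces to the single automorphism $\sigma_{q,h}$. This is immediate from Lemma \ref{Lemma: D(t,sigma)}.ii): the disc $D$ is globally stable under $\sigma_{q,h}$, hence under each iterate $\sigma_{q,h}^n$, and therefore contains the entire $\sigma_{q,h}$-orbit of $t_x$, i.e. all of $D^+_S(x,\sigma_{q,h}^n)$ for $n\in\mathbb{Z}$.

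The central step is then the application of Proposition \ref{(q,h)-Taylor expansion}.iv) with center $c:=t_x$. The compatibility condition $|(q-1)c+h|<R$ (where $R$ is the radius of $D$) holds automatically since the left-hand side measures the radius of $D^+_S(x,\sigma_{q,h})$ while $D$ is a strictly larger open disc. The hypothesis that $q$ is not a root of unity (respectively that $h\neq 0$ when $q=1$) is exactly what Proposition \ref{(q,h)-Taylor expansion}.iv) requires. Given $f\in\mathcal{O}(D)$ with $\sigma_{q,h}(f)=f$, one has $d_{q,h}(f)=(f\circ\sigma_{q,h}-f)/((q-1)T+h)=0$, and inductively $d_{q,h}^n(f)=0$ for every $n\geq 1$. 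The $(q,h)$-Taylor expansion of $f$ at $c$ therefore collapses to $f=f(c)\in\Omega$, establishing $\mathcal{O}(D)^{\sigma_{q,h}=1}=\Omega$ and so the non-degeneracy of $\sigma_{q,h}$ on $X$.

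The Robba-ring case is handled identically by applying the same argument over some $C_\varepsilon$: for $\varepsilon>0$ small enough the annulus $C_\varepsilon$ is $\sigma_{q,h}$-stable, every maximal disc $D(x_{0,\rho},\emptyset)$ with $\rho\in\,]1-\varepsilon,1[\,$ is an affine open disc, and the $(q,h)$-Taylor argument gives $\mathcal{O}(D)^{\sigma_{q,h}=1}=\Omega$ for any suitable disc $D\subseteq D(x_{0,\rho},\emptyset)$. I do not expect any serious obstacle: the only delicate point is the initial choice of $x$ making $D(x,S)$ affine in the coordinate $T$, which the exclusion clause rules in; the rest is a direct consequence of the $(q,h)$-Taylor expansion already proved in Proposition \ref{(q,h)-Taylor expansion}.
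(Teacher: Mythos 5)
Your proof is correct and follows essentially the same route as the paper: choose a point $x$ with $D(x,S)$ affine, take an admissible open disc $D$ with $D^+(x,\sigma_{q,h})\subset D\subseteq D(x,S)$, and apply the $(q,h)$-Taylor expansion of Proposition \ref{(q,h)-Taylor expansion}(iv) to conclude that a $\sigma_{q,h}$-invariant $f\in\O(D)$ has $d_{q,h}(f)=0$ and is therefore constant. Two minor remarks: the claim that $\Gamma_S\neq\emptyset$ in every configuration other than the excluded one is not quite right (an affine open disc with empty weak triangulation is a counterexample), though the conclusion that a suitable $x$ exists still holds in that case, so nothing breaks; and the intermediate step about the iterates $\sigma_{q,h}^n$ is superfluous, since non-degeneracy for $\Sigma=\{\sigma_{q,h}\}$ only involves $D^+(x,\sigma_{q,h})$ and the invariance condition $\O(D)^{\sigma_{q,h}=1}=\Omega$ directly.
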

\begin{proof}
By the assumption there exists a point $x\in X$ 
such that $\infty\notin D(x,S)$. 
Let $D\subseteq X_\Omega$ be an open disc such that 
$D^+(x,\sigma)\subset D\subseteq D(x,S)$. 
We can write each $f\in\O(D)$ as $f(T)=
\sum_{n\geq 0}d_{q,h}^n(f)(c)\cdot 
\frac{(T-c)^{[n]}_{q,h}}{[n]_q^!}$. Now $\sigma_{q,h}(f)=f$ 
means $d_{q,h}(f)=0$, which holds if and only if $f$ is constant. 
\end{proof}
\begin{remark}
The action of the group $\bs{\mu}_{p^n}$ of $p^{n}$-th roots of unity, 
and also of $\bs{\mu}_{p^{\infty}}=\cup_n\bs{\mu}_{p^n}$, are 
always degenerate. This is due to the existence of the 
function $\ell_x(T):=\log(T/t_x)$, 
for all $x\in\mathbb{A}^{1,\mathrm{an}}_K$, 
that verifies $\ell_x(qT)=\ell_x(T)$ for all 
$q\in\bs{\mu}_{p^{\infty}}$. 
So \eqref{eq : O(D)^Sigma=Omega} can not occur.
\end{remark}

\subsection{Confluence.}\label{confluence}
In this section we give a characterization of the essential image of the 
deformation functor $\Def_{\sigma_{q,h}}$, and we define a quasi 
inverse functor \emph{confluence}. For this we need to fix a global 
coordinate on $X$, so we are induced to make the following 
assumption:

\begin{hypothesis}
From now on we assume that $X$ is an analytic domain of 
$\mathbb{A}^{1,\mathrm{an}}_K$.
\end{hypothesis}

\subsubsection{$(q,h)$-Taylor solution.}
Let $X$ be an analytic domain of 
$\mathbb{A}^{1,\mathrm{an}}_K$, with a weak 
triangulation $S$, and an $S$-infinitesimal action of 
$\sigma_{q,h}$.  We now give a criterion for a 
$\sigma_{q,h}$-difference module 
$(\Fs,\sigma_{q,h})$ to be the deformation of a 
differential equation $\nabla$, under the assumptions of 
non degeneracy of Corollary \ref{Cor : Non degeneracy}. 
We will need to use the action of $d_{q,h}$ on $\Fs$ 
(cf. \eqref{eq : Delta(Y)=GY}), this 
introduces a pole at $a=-h/(q-1)$ 
(cf. \eqref{eq : a=-h/(q-1)}), 
so we assume that $a\notin X$ 
(i.e. that $\sigma_{q,h}$ has no fixed points in $X$).

In this situation 
we can always find a $\Gamma_S$-covering of $X$ 
formed by quasi-Stein analytic domains of $X$ 
on which $\Fs$ and 
$\Omega^1_X$ are free. 
By Corollary 
\ref{Cor : Non degeneracy} if $\sigma_{q,h}$ is non 
degenerate on $X$, it is so on each open of the covering. 
So, by fully-faithfulness, the differential equation will be 
unique on the intersections. So the local pieces 
glue to a global 
differential equation over $X$.\footnote{Namely the 
intersections are 
quasi-Stein, and the matrix of $\nabla$ is given by 
$G=d/dT_1(Y_\chi)\cdot Y_\chi^{-1}$ 
(cf. \eqref{eq : Y_chi}). Over an intersection 
the two matrices of the stratifications differs by 
multiplication by a matrix killed by $d/dT_1$, 
so they furnishes the same $G$.} 
\begin{hypothesis}
We 
assume that $\Fs$ and $\Omega^1_X$ are free, 
that $X$ is quasi-Stein\footnote{Conjecturally every  
connected analytic domain of 
$\mathbb{A}^{1,\mathrm{an}}_K$ is quasi-Stein.},
and that $a\notin X$. 
\end{hypothesis}
%More generally let $U$ be a quasi-Stein 
%analytic domain in $X$ such that 
%$\Gamma_S\cap U$ is the skeleton of a weak 
%triangulation $S_U$, and such that $\Fs_{|U}$ 
%and $(\Omega^1_X)_{|U}$ are free. 
%Over $U$ t

With this assumption the action of $\sigma_{q,h}$ 
corresponds (in a basis $\e$ of $\Fs$) to 
an equation $\sigma_{q,h}(Y)=A(q,h;T)\cdot Y$, 
with $A(q,h;T)\in GL_n(\O(X))$. 
Equivalently we have an equation of type 
\begin{equation}\label{eq : Delta(Y)=GY}
d_{q,h}(Y)\;=\;G_{[1]}(q,h;T)\cdot Y\;,
\end{equation}
where $G_{[1]}:=
\frac{A-\mathrm{Id}}{(q-1)T+h}\in 
M_n(\O(X))$.\footnote{Notice 
that $G_{[1]}$, and also $G_{[n]}$, has a denominator. 
It belong however to $M_n(\O(X))$ because $a\notin X$.} 
If \eqref{eq : Delta(Y)=GY} 
admits a complete basis of solutions 
$Y_D\in GL_n(\O(D))$, 
over some $\sigma_{q,h}$-invariant open disc $D$, 
then we can express it as $Y_D(T)=
\sum_{n\geq 0}d_{q,h}^n(Y_D)(c)\cdot 
\frac{(T-c)^{[n]}_{q,h}}{[n]_q^!}$. 
Now, iterating \eqref{eq : Delta(Y)=GY}, we find 
$d_{q,h}^n(Y_D)=G_{[n]}(q,h;T)\cdot Y_D$, where 
$G_{[n]}$ are inductively defined by the relations 
$G_{[0]}=\mathrm{Id}$, and 
$G_{[n+1]}=\sigma_{q,h}(G_{[n]})\cdot 
G_{[1]}+d_{q,h}(G_{[n]})$. 

Assume for a moment that $(\Fs,\sigma_{q,h})=
\mathrm{Def}_{\sigma_{q,h}}(\Fs,\nabla)$ is obtained 
by $\sigma_{q,h}$-deformation from a differential 
equation. Then the matrix of the stratification 
$\chi$ associated to $\nabla$ can be written as
\begin{equation}\label{eq : (q,h)-cocycle}
Y_\chi\;=\;\sum_{n\geq 0}G_{[n]}(q,h;T_2)
\frac{(T_1-T_2)_{q,h}^{[n]}}{[n]^!_q}\;
\in\;GL_n(\O(\mathcal{T}))\;,
\end{equation}
where $\mathcal{T}$ is an 
admissible open neighborhood of the diagonal containing 
$\Delta_{\sigma_{q,h}}(X)$.

We now come back to the general case of a possibly not stratified 
equation \eqref{eq : Delta(Y)=GY}. In this case we consider 
\eqref{eq : (q,h)-cocycle} as a (possibly divergent) series of functions 
over some unspecified admissible open neighborhood $\mathcal{T}$ 
of the diagonal. 
We now investigate whether \eqref{eq : (q,h)-cocycle} converges to the matrix 
of a stratification corresponding to a $\sigma_{q,h}$-compatible 
differential equation. 
%We also need to carry out the weak triangulation 
%of $X$ so we assume that $\Gamma_S\cap U$ is the 
%skeleton of a weak triangulation $S_U$ of $U$. 
We define for all $x\in X$
\begin{equation}\label{eq : (q,h)-Radius non intrinsic}
\R^{\Fs,\sigma_{q,h}}(x,\e)\;:=\;
\min\Bigl(\;\rho_{S,T}(x)\;,\;
\liminf_n\Bigl(|G_{[n]}(q,h;T)|(x)/
[n]^!_q\Bigr)^{-1/n}\Bigr)\;,
\end{equation}
where $\rho_{S,T}(x)$ in the function of Remark 
\ref{Remark : trivializing Radius over A^1}.
%The function $x\mapsto\R^{\Fs,\sigma_{q,h}}(x,\e)$ 
%depends the chosen basis $\e$ of $\Fs$.  

\begin{corollary}\label{Corollary : quasi-stein conf}
Assume that, for 
all $x\in X$, we have (cf. \eqref{eq : R_S(x,sigma)})
\begin{equation}\label{eq : condition for confluence}
\R^{\Fs,\sigma_{q,h}}(x,\e)\;>\;\R^{\sigma_{q,h}}(x)\;.\footnote{Recall that 
$\R^{\sigma_{q,h}}(x)=
\R_{S}(x,\sigma_{q,h})\cdot\rho_{S,T}(x)=
|(q-1)t_x+h|(x)$, as in the proof of Lemma 
\ref{Thm : Finiteness : the case of the line}.}
\end{equation}
Then \eqref{eq : (q,h)-cocycle} converges in 
$M_n(\O(\mathcal{T}))$, for some admissible neighborhood of the 
diagonal containing $\Delta_{\sigma_{q,h}}(X)$,
and it lies in $GL_n(\O(\mathcal{T}))$. 
Moreover it is the matrix of a stratification corresponding 
to a $\sigma_{q,h}$-compatible differential 
 equation $(\Fs,\nabla)$
whose $\sigma_{q,h}$-deformation is 
$(\Fs,\sigma_{q,h})$, and one has
\begin{equation}
\R^{(\Fs,\nabla)}(x)\;=\;\R^{\Fs,\sigma_{q,h}}(x,\e)\;.
\end{equation}
\end{corollary}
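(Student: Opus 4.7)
The plan is to construct a differential equation $(\Fs, \nabla)$ whose classical stratification is the prescribed $(q,h)$-series $Y_\chi$, and to verify that its deformation recovers the given $\sigma_{q,h}$-structure. The argument breaks into three stages: convergence of $Y_\chi$ on an admissible neighborhood of the diagonal; recognition of $Y_\chi$ as a classical stratification; and explicit computation of the deformation.

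\textbf{Convergence.} By hypothesis \eqref{eq : condition for confluence}, for every $x \in X$ I can choose an affinoid neighborhood $U_x$ and a real number $R_x$ with $\R^{\sigma_{q,h}}(y) < R_x < \R^{\Fs,\sigma_{q,h}}(y,\e)$ for all $y \in U_x$. On the tube $\mathcal{T}(U_x, T, R_x^-)$ the key bound $|(T_1 - T_2)^{[n]}_{q,h}|(y_1,y_2) \leq R_x^n$ holds, because
$$|\sigma_{q,h}^k(T_2) - T_2| \;\leq\; |(q-1)T_2 + h| \;=\; \R^{\sigma_{q,h}}(y_2) \;<\; R_x$$
for every $k$ (using $|q|=1$ and the identity $\sigma_{q,h}^k(T) - T = (q^k - 1)(T + h/(q-1))$ when $q \neq 1$, or $\sigma_{1,h}^k(T) - T = kh$ when $q=1$). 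Combined with the decay $|G_{[n]}|/[n]^!_q \leq r^{-n}$ valid for any $r < \R^{\Fs,\sigma_{q,h}}(y,\e)$, the series \eqref{eq : (q,h)-cocycle} converges absolutely on $\mathcal{T}(U_x, T, R_x^-)$. The strict inequality $\R^{\sigma_{q,h}}(y) < R_x$ also yields $\Delta_{\sigma_{q,h}}(U_x) \subset \mathcal{T}(U_x, T, R_x^-)$. Gluing the local tubes exactly as in the proof of Lemma \ref{Lemma : existence of adm neighb of diag} produces an admissible neighborhood $\mathcal{T}$ of the diagonal, containing $\Delta_{\sigma_{q,h}}(X)$, on which $Y_\chi \in M_n(\O(\mathcal{T}))$.

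\textbf{Recognition as classical stratification.} Since $Y_\chi|_\Delta = G_{[0]} = \mathrm{Id}$, the function $G := d/dT_1(Y_\chi)|_{T_1 = T_2}$ defines an element of $M_n(\O(X))$; let $(\Fs, \nabla)$ be the differential equation with this matrix in the basis $\e$, and let $Y'_\chi$ be its classical stratification in the sense of \eqref{eq : Y_chi}. I claim $Y_\chi = Y'_\chi$. For each $c \in X_\Omega$ lying in a $\sigma_{q,h}$-invariant maximal disc $D$, the function $T_1 \mapsto Y_\chi(T_1, c)$ is by construction the $(q,h)$-Taylor expansion of the unique solution on $D$ of the difference equation $d_{q,h, T_1}(Y) = G_{[1]}(T_1) Y$ with $Y(c,c) = \mathrm{Id}$. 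By Proposition \ref{(q,h)-Taylor expansion}, this same analytic function admits a classical Taylor expansion $\sum_n H_n(c)(T_1 - c)^n/n!$ on $D$, with the same radius of convergence; as $c$ varies, the $H_n(T_2) = (d/dT_1)^n(Y_\chi)|_{T_1 = T_2}$ are analytic on $X$, with $H_0 = \mathrm{Id}$ and $H_1 = G$. Translating the $(q,h)$-twisted recursion $G_{[n+1]} = \sigma_{q,h}(G_{[n]}) G_{[1]} + d_{q,h}(G_{[n]})$ through the change of expansion basis of Proposition \ref{(q,h)-Taylor expansion} between $(T-c)^{[n]}_{q,h}/[n]^!_q$ and $(T-c)^n/n!$ yields the classical recursion $H_{n+1} = d/dT(H_n) + H_n G$. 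Hence $Y_\chi$ coincides with $Y'_\chi$; in particular it is invertible on all of $\mathcal{T}$, satisfies the cocycle conditions of Section \ref{eq : cocycle condition stratification}, and its radius of convergence in the $T_1$-direction is exactly $\R^{(\Fs,\nabla)}(x) = \R^{\Fs,\sigma_{q,h}}(x,\e)$ by Proposition \ref{(q,h)-Taylor expansion}(iii).

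\textbf{Deformation.} The identity $\R^{(\Fs,\nabla)}(x) = \R^{\Fs,\sigma_{q,h}}(x,\e) > \R^{\sigma_{q,h}}(x)$ is exactly the $\sigma_{q,h}$-compatibility of $\nabla$ (after normalizing by $\rho_{S,T}$ as in Remark \ref{Remark : trivializing Radius over A^1}). The matrix of $\Def_{\sigma_{q,h}}(\Fs, \nabla)$ in the basis $\e$ is
$$\Delta_{\sigma_{q,h}}^*(Y_\chi) \;=\; \sum_{n \geq 0} G_{[n]}(T) \cdot \frac{(\sigma_{q,h}(T) - T)^{[n]}_{q,h}}{[n]^!_q}.$$
The product $(\sigma_{q,h}(T) - T)^{[n]}_{q,h} = \prod_{k=0}^{n-1}(\sigma_{q,h}(T) - \sigma_{q,h}^k(T))$ vanishes for $n \geq 2$ (the factor $k = 1$ is zero), so the sum collapses to $\mathrm{Id} + G_{[1]} \cdot ((q-1)T + h) = A(q,h;T)$, recovering the original $\sigma_{q,h}$-action. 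The main obstacle lies in the second stage, namely the passage from the $(q,h)$-twisted recursion for $G_{[n]}$ to the classical recursion for $H_n$: the two Taylor expansions represent the same bivariate analytic function, but the transformation between their coefficient sequences is the genuine analytic content of \emph{confluence}, requiring careful bookkeeping of how multiplication and differentiation interact under the change from the basis $\{(T-c)^{[n]}_{q,h}\}$ to the basis $\{(T-c)^n\}$.
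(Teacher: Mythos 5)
Your plan is sensible in outline — convergence on a tube, then recognition as a classical stratification, then a telescoping computation of the deformation — and the first and third stages are essentially correct. The telescoping observation in stage three, namely that $(\sigma_{q,h}(T)-T)^{[n]}_{q,h}$ vanishes for $n\geq 2$ so the pullback collapses to $\mathrm{Id}+G_{[1]}\cdot((q-1)T+h)=A(q,h;T)$, is a clean way to see that the deformation recovers the given difference structure.

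However, there is a genuine gap in the middle stage, and you flag it yourself. You assert that translating the twisted recursion $G_{[n+1]}=\sigma_{q,h}(G_{[n]})G_{[1]}+d_{q,h}(G_{[n]})$ through the change of basis from $\{(T-c)^{[n]}_{q,h}/[n]^!_q\}$ to $\{(T-c)^n/n!\}$ produces the classical recursion $H_{n+1}=d/dT(H_n)+H_nG$. That assertion is exactly the analytic content of confluence, not bookkeeping: the $H_n$ are \emph{infinite} linear combinations of the $G_{[m]}$, the transition coefficients are a $q$-analogue of Stirling numbers, and establishing the classical recursion from the twisted one by direct computation is a serious $q$-combinatorial exercise that you do not carry out. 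Equivalently, what is missing is the proof that $d/dT_1(Y_\chi)(T_1,T_2)\cdot Y_\chi(T_1,T_2)^{-1}$ is independent of $T_2$; without that, $G:=d/dT_1(Y_\chi)|_{T_1=T_2}$ does not yet define a differential equation of which $Y_\chi$ is the stratification, and the cocycle axiom of Section \ref{eq : cocycle condition stratification} has not been verified. The standard way around this is not combinatorial at all: one observes that for fixed $c,c'$ the two matrices $Y_\chi(\cdot,c)$ and $Y_\chi(\cdot,c')Y_\chi(c',c)$ are both analytic solutions of $d_{q,h}(Y)=G_{[1]}Y$ agreeing at $T_1=c'$, and then the \emph{non-degeneracy} hypothesis (Corollary \ref{Cor : Non degeneracy}, which is in force in this section but which you never invoke) forces any $\sigma_{q,h}$-invariant element of $\O(D)$ to be constant, giving the cocycle relation $Y_\chi(T_1,T_2)=Y_\chi(T_1,T_3)Y_\chi(T_3,T_2)$ and hence $T_2$-independence of $G$.

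Incidentally the paper does not take your route at all: it reduces to $h=0$ by translation and then invokes the $q$-difference case from \cite[Lemma 5.16]{Pu-q-Diff} on each affinoid of a $\Gamma_S$-covering (Lemma \ref{Lemma : q-tay is a cocycle}), before gluing, so the stratification axiom is absorbed into a citation rather than re-derived. Your approach, patched as above with the uniqueness-plus-non-degeneracy argument for the cocycle, would have the merit of being self-contained, but as written the key recognition step is only asserted.
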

\begin{proof}
We begin by the following 

\begin{lemma}\label{Lemma : q-tay is a cocycle}
Assume that $X$ is an affinoid domain, and that 
there exists $R$ such that 
\begin{equation}\label{eq : Raiidiid}
\max_{x\in X}\R_{S}(x,\sigma_{q,h})\cdot
\rho_{S,T}(x)\;<\;R\;<\;
\min_{x\in X} 
\{\textrm{Radius of }(D(x,S))\}\;.\footnote{$X_{\Ka}$ 
is a disjoint union of affinoid domains of the type 
$Y=D^+(c_0,R_0)-\cup_{i=1}^sD^-(c_i,R_i)$, for 
which 
$\min_{x\in Y} \{\textrm{Radius of }(D(x,S))\}=
\min(R_0,R_1,\ldots,R_s)$.}
\end{equation}
%\begin{enumerate}
%\item[(a)] 
%$U$ is an affinoid domain of $X$, stable under 
%$\sigma_{q,h}$, such that $\Gamma_S\cap U$ 
%is the skeleton of a weak triangulation $S_U$ of $U$;
%\item[(b)] There exists $R$ such that 
%$\max_{x\in U}\R_{S_U}(x,\sigma_{q,h})\cdot
%\rho_{S_U,T}(x)<R<\min_{x\in X} 
%\{\textrm{Radius of }
%(D(x,S))\}$.~\footnote{$X_{\Ka}$ 
%is a disjoint union of affinoid domains of the type 
%$Y=D^+(c_0,R_0)-\cup_{i=1}^sD^-(c_i,R_i)$, for 
%which 
%$\min_{x\in Y} \{\textrm{Radius of }(D(x,S))\}=
%\min(R_0,R_1,\ldots,R_s)$.}
%\end{enumerate}	
Then the following are equivalent:
\begin{enumerate}
\item For all $x\in X$ we have 
$\R^{\Fs,\sigma_{q,h}}(x)>R$;
\item \eqref{eq : (q,h)-cocycle} converges in 
$M_n(\O(\mathcal{T}))$, with 
$\mathcal{T}:=\mathcal{T}(X,T,R)$, 
and it lies in $GL_n(\O(\mathcal{T}))$. 
Moreover it is the matrix of a stratification over $X$ 
corresponding to 
a $\sigma_{q,h}$-compatible differential equation 
$(\Fs,\nabla)$ whose $\sigma_{q,h}$-deformation 
is $(\Fs,\sigma_{q,h})$.
\end{enumerate}
\end{lemma}
\begin{proof}
If $q\neq 1$, by a translation we can assume that $h=0$ 
(cf. \eqref{eq : a=-h/(q-1)}). In this case the Proposition 
is proved in \cite[Lemma 5.16]{Pu-q-Diff}. If $q=1$ and 
$h\neq0$, the proof follows similarly.
\end{proof}

The proof of Corollary 
\ref{Corollary : quasi-stein conf} then goes as follows.
We find a covering of $X$ on which Lemma 
\ref{Lemma : q-tay is a cocycle} applies. More precisely, 
if $x\in\Gamma_S$, we consider a 
star-shaped affinoid neighborhood of $x$ in $X$ of the form 
$Y_x=\tau_{S}^{-1}(\Lambda_x)$, similarly as in Definition 
\ref{Def : Gamma_S-cov}. 
By construction $Y_x$ is stable by $\sigma_{q,h}$, and 
for all $y\in Y_x$ we have 
$\rho_{S,T}(y)=\rho_{S_{Y_x},T}(y)$,  
$\R^{\Fs,\sigma_{q,h}}(y)=
\R^{\Fs_{|Y_x},\sigma_{q,h}}(y)$, and also 
$\R_{S_{Y_x}}(y,\sigma_{q,h})=\R_S(y,\sigma_{q,h})$. 
Up to shrinking $\Lambda_x$,  by continuity we have 
\eqref{eq : Raiidiid}, and i). 
This proves the existence of a good differential equation 
over $Y_x$. 
Since we are in the affine line, this is actually a covering 
of $X$ as soon as $X$ is not an open disc with empty 
weak triangulation (i.e. $\Gamma_S=\emptyset$).  
And one sees that we can assume that the intersection of 
three affinoids of the covering is empty, so the local data 
glue over $X$.

If $\Gamma_S=\emptyset$, by 
Lemmas \ref{Lemma : discs stable by sigma_q,h} 
and \ref{Lemma: D(t,sigma)}, 
$\sigma_{q,h}$ is actually $S'$-infinitesimal with respect to a 
convenient weak triangulation given by a point $x\in X$. If $x$ is 
close enough to the open boundary of the disc $X$, 
replacing $S=\emptyset$ by $S'=\{x\}$ doesn't cause 
any trouble. And we can apply the above proof.
\end{proof}

\begin{corollary}
Under the assumptions of Corollary \ref{Corollary : quasi-stein conf}, 
the matrix of $\nabla$ is given by the formula
\begin{equation}
G(T)\;:=\;\lim_{n\to+\infty}
\frac{A(q^{p^n},[p^n]_q\cdot h;T)-\mathrm{Id}}
{(q^{p^n}-1)T+[p^n]_q\cdot h}\;.
\end{equation}
\end{corollary}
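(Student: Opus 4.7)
The plan is to exploit the stratification $\chi$ associated with $\nabla$, whose matrix (cf. \eqref{eq : Y_chi} and Remark \ref{Rk : Same Taylor solutions}) is
\begin{equation*}
Y_\chi(T_1, T_2) \;=\; \sum_{k\geq 0} G_k(T_2) \frac{(T_1-T_2)^k}{k!}, \qquad G_0=\mathrm{Id},\; G_1=G,\; G_{k+1}=\tfrac{d}{dT}(G_k)+G_k G.
\end{equation*}
By Corollary \ref{Corollary : quasi-stein conf} this series converges on an admissible neighborhood $\mathcal{T}$ of the diagonal containing $\Delta_{\sigma_{q,h}}(X)$.

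First I verify that, for every $n\geq 1$, the iterate $\sigma_{q,h}^{p^n} = \sigma_{q^{p^n},[p^n]_q h}$ is again an $S$-infinitesimal automorphism of $X$ and that $\Fs$ is $\sigma_{q,h}^{p^n}$-compatible. The compatibility is immediate from $D_S^+(x,\sigma_{q,h}^{p^n}) \subseteq D_S^+(x,\sigma_{q,h})$ (because $D_S^+(x,\sigma_{q,h})$ is $\sigma_{q,h}$-stable by Lemma \ref{Lemma: D(t,sigma)}), whence $\R_S(x,\sigma_{q,h}^{p^n}) \leq \R_S(x,\sigma_{q,h}) < \R_{S,1}(x,\Fs)$. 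By the cocycle property of $\chi$---equivalently, by functoriality of the deformation, since the $p^n$-fold iterate of the $\sigma_{q,h}$-action coincides with the action obtained by deforming $\sigma_{q,h}^{p^n}$ directly---the matrix of the $\sigma_{q,h}^{p^n}$-action on $\Fs$ is
\begin{equation*}
A(q^{p^n}, [p^n]_q h; T) \;=\; Y_\chi\!\left(\sigma_{q,h}^{p^n}(T),\,T\right) \;=\; \sum_{k\geq 0} G_k(T) \frac{\epsilon_n(T)^k}{k!},
\end{equation*}
where $\epsilon_n(T) := (q^{p^n}-1)T + [p^n]_q h$. Dividing by $\epsilon_n$ and separating the first two terms gives, as an identity in $M_r(\O(X))$,
\begin{equation*}
\frac{A(q^{p^n}, [p^n]_q h; T) - \mathrm{Id}}{\epsilon_n(T)} \;=\; G(T) + \sum_{k\geq 2} G_k(T) \frac{\epsilon_n(T)^{k-1}}{k!}.
\end{equation*}

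It remains to show that the error series tends to $0$ in $M_r(\O(X))$. Since $|q-1|<1$, a standard $p$-adic iteration of the binomial formula gives $|q^{p^n}-1| \to 0$ as $n\to\infty$; and from $[p^n]_q h = (q^{p^n}-1)h/(q-1)$ one also gets $|[p^n]_q h| \to 0$. Hence on any affinoid $X'\subseteq X$, $\|\epsilon_n\|_{X'}\to 0$. Choose $\rho>0$ such that $Y_\chi$ converges on $\mathcal{T}(X',T,\rho)\subseteq\mathcal{T}$; the boundedness of $Y_\chi$ on this tube yields a constant $C=C(X',\rho)$ with $\|G_k\|_{X'}\,\rho^k/|k!| \leq C$ for all $k\geq 0$. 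Then for $n$ large enough that $\|\epsilon_n\|_{X'}<\rho$, each term of the tail is bounded by $C\cdot(\|\epsilon_n\|_{X'}/\rho)^{k-1}/\rho$, so the maximum (attained at $k=2$) gives
\begin{equation*}
\Bigl\| \sum_{k\geq 2} G_k(T) \frac{\epsilon_n^{k-1}}{k!} \Bigr\|_{X'} \;\leq\; \frac{C\,\|\epsilon_n\|_{X'}}{\rho^2} \;\xrightarrow[n\to\infty]{}\; 0.
\end{equation*}
Since, under the quasi-Stein hypothesis, $\O(X)$ is the Fréchet inverse limit of $\O(X')$ over an exhaustion by affinoids, this convergence implies convergence in $M_r(\O(X))$.

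The main delicate point is the identification of the iterated $\sigma_{q,h}$-action on $\Fs$ with the direct deformation of $\sigma_{q,h}^{p^n}$; this is a pure consequence of the cocycle relation for $\chi$ but must be stated cleanly. Everything else is an explicit Taylor-tail estimate of $Y_\chi$ on a fixed affinoid, powered by the radius $R'$ supplied by Corollary \ref{Corollary : quasi-stein conf}.
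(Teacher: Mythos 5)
Your proof is correct and follows essentially the same route as the paper. The paper's proof is a two-line remark: it observes that $A(q,h;T)=Y_\chi(qT+h,T)$, that $G(T)=\tfrac{d}{dT_1}(Y_\chi)\cdot Y_\chi^{-1}$ at the diagonal, and then invokes $\lim_{\sigma\to 1}\tfrac{\sigma-1}{\sigma(T)-T}=\tfrac{d}{dT}$ together with $\sigma_{q,h}^{p^n}\to\mathrm{Id}$. What you did is spell out that difference quotient explicitly as a Taylor-tail estimate, which is exactly the content the paper suppresses. Two small points of hygiene (not errors): the bound $\|G_k\|_{X'}\rho^k/|k!|\leq C$ requires working at some $\rho'<\rho$ strictly inside the radius of convergence, since convergence on the tube only gives that the terms tend to $0$ for each $\rho'<\rho$; and $q^{p^n}\to 1$, $[p^n]_q h\to 0$ implicitly use that the residue characteristic is $p>0$, which the paper also assumes tacitly at this point.
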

\begin{proof}
If $Y_\chi(T_1,T_2)$ is the matrix of the stratification, we 
have $G(T)=\frac{d}{dT_1}(Y_\chi)\cdot Y_\chi^{-1}$ and 
$A(q,h;T)=Y_\chi(qT+h,T)$. The claim follows from the fact that 
$\lim_{\sigma\to 1}\frac{\sigma-1}{\sigma(T)-T}=\frac{d}{dT}$, 
and recalling that $\sigma_{q,h}^{p^n}=
\sigma_{q^{p^n},[p^n]_q\cdot h}$ tends to $1$ as $n\to +\infty$.
\end{proof}
\begin{remark}\label{Remark : decreasing on discs}
If $D\subset (X-\Gamma_S)$ is a disc, and if $I$ is a 
segment in $D$ oriented as outside $D$, then 
$\R^{\Fs,\sigma_{q,h}}(x,\e)$ is logarithmically 
not increasing along $I$ since each function 
$x\mapsto|G_{[n]}|(x)$ is not $\log$-decreasing, and 
$\rho_{S,T}$ is locally constant outside 
$\Gamma_S$. 
If $X$ is connected, and if $\Gamma_S\neq\emptyset$, 
this shows that \eqref{eq : condition for confluence} 
holds for all $x\in X$ if and only if it holds for all 
$x\in\Gamma_S$. If $\Gamma_S=\emptyset$, then $X$ 
is a virtual open disc and it is enough to 
test \eqref{eq : condition for confluence} at the 
open boundary of the disc.
\end{remark}
\subsubsection{}
Let $\mathcal{G}$ be the group structure on 
$\mathbb{G}_m^{\mathrm{an}}\times
\mathbb{A}_m^{1,\mathrm{an}}$ defined in 
section \ref{Section : infinitesimality of sigma_qh}.
For all $0<\tau< 1$ and $\nu>0$ the product of discs 
$\mathcal{G}_{\tau,\nu}:=
D^-(1,\tau)\times D^-(0,\nu)$ is a 
$K$-analytic subgroup.
The results of section \ref{Analyticity of the action} 
apply to $\mathcal{G}_{\tau,\nu}$. If its action is 
$S$-infinitesimal,
by Corollary \ref{Cor : Non degeneracy} it is also non degenerate.

The following proposition shows how to recover the 
differential equation from its 
$\mathcal{G}_{\tau,\nu}$-deformation.
\begin{proposition}\label{Prop : 9.13}
Assume that $\mathcal{G}_{\tau,\nu}$ acts
 $S$-infinitesimally on $X$, 
let $Y'=G(T)Y$ be a $\mathcal{G}_{\tau,\nu}$-compatible 
differential equation,
and let $\{\sigma_{q,h}(Y)=A(q,h;T)Y\}_{(q,h)\in 
\mathcal{G}_{\tau,\nu}}$ be 
the corresponding  
$\mathcal{G}_{\tau,\nu}$-deformation. Then
for all $(a,b)\in K^{2}-\{(0,0)\}$ one has
\begin{equation}\label{eq : G(T)= d/dq A+d/dh A}
G(T)\;\;=\;\; 
(aT+b)^{-1}\cdot\Bigl[\;a\cdot\frac{\partial}{\partial 
q}\Bigr.A(q,h;T)\Bigr|_{q=1,h=0}\;+\;b\cdot
\frac{\partial}{\partial h}\Bigr.A(q,h;T)\Bigr|_{q=1,h=0}\;\Bigr] \;.
\end{equation}
In particular $G(T)= T^{-1}[\frac{d}{dq}A(q,h;T)]_{|q=1,h=0} = 
[\frac{d}{dh}A(q,h;T)]_{|q=1,h=0}$.
\end{proposition}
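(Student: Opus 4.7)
The plan is to derive the formula directly from the explicit expression of the deformation as a pull-back of the stratification along $\Delta_{\sigma_{q,h}}$, exploiting the analyticity provided by Theorem~\ref{Thm : G-semilinear}.

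First, I would set up the key identity. By Theorem~\ref{Thm : G-semilinear} (applied to $G = \mathcal{G}_{\tau,\nu}$) and Remark~\ref{Rk : Same Taylor solutions}, once a basis $\e$ of $\Fs$ is fixed the matrix of the $\mathcal{G}_{\tau,\nu}$-deformation is obtained by pulling back the stratification $\chi$ associated to $(\Fs,\nabla)$ along the morphism $\Delta_{\mathcal{G}_{\tau,\nu}}=\mu\times p_X$. Concretely, if $Y_\chi(T_1,T_2)\in GL_n(\O(\mathcal{T}))$ is the matrix of $\chi$ (given by \eqref{eq : Y_chi}) then
\begin{equation}\label{eq : A=Y(qT+h,T)}
A(q,h;T)\;=\;Y_\chi(qT+h,\,T)\;,
\end{equation}
as an analytic function of $(q,h,T)\in\mathcal{G}_{\tau,\nu}\times X$. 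This is the content of Lemma~\ref{Lemma : existence of pull-back fo}: the image of $\Delta_{\mathcal{G}_{\tau,\nu}}$ lies inside an admissible neighborhood of the diagonal on which $\chi$ converges.

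Next I would compute the two partial derivatives of \eqref{eq : A=Y(qT+h,T)} by the chain rule. Since $\partial_{T_1}Y_\chi(T_1,T_2)=G(T_1)\cdot Y_\chi(T_1,T_2)$ (this is precisely the relation satisfied by $Y_\chi$, see the end of Section~\ref{eq : tubular n}), one has
\begin{equation}
\frac{\partial}{\partial q}A(q,h;T)\;=\;T\cdot G(qT+h)\cdot Y_\chi(qT+h,T)\;,\qquad
\frac{\partial}{\partial h}A(q,h;T)\;=\;G(qT+h)\cdot Y_\chi(qT+h,T)\;.
\end{equation}
Specializing at $(q,h)=(1,0)$ and using the cocycle condition~ii) of Section~\ref{eq : cocycle condition stratification}, namely $\Delta^*(\chi)=\mathrm{Id}$, which translates into $Y_\chi(T,T)=\mathrm{Id}$, I obtain
\begin{equation}\label{eq : derivees}
\left.\frac{\partial A}{\partial q}\right|_{q=1,h=0}\;=\;T\cdot G(T)\;,\qquad
\left.\frac{\partial A}{\partial h}\right|_{q=1,h=0}\;=\;G(T)\;.
\end{equation}

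Finally, multiplying the first relation of \eqref{eq : derivees} by $a$ and the second by $b$ and adding yields
\begin{equation}
a\left.\frac{\partial A}{\partial q}\right|_{q=1,h=0}+b\left.\frac{\partial A}{\partial h}\right|_{q=1,h=0}\;=\;(aT+b)\cdot G(T)\;,
\end{equation}
which is \eqref{eq : G(T)= d/dq A+d/dh A} after dividing by the (meromorphic) function $aT+b$. The two special cases $(a,b)=(1,0)$ and $(a,b)=(0,1)$ are then the stated final assertions. The only delicate point is justifying the chain-rule computation in the Berkovich analytic setting; this follows from the fact that $Y_\chi$ is an honest analytic function on the admissible neighborhood $\mathcal{T}$ of the diagonal constructed in Lemma~\ref{Lemma : existence of pull-back fo}, so no further obstacle arises.
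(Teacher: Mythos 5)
Your proof is correct and follows essentially the same route as the paper: both rely on the identity $A(q,h;T)=Y_\chi(qT+h,T)$ together with the relations $\partial_{T_1}Y_\chi=G(T_1)\,Y_\chi$ and $Y_\chi(T,T)=\mathrm{Id}$. The only cosmetic difference is that the paper differentiates along the one-parameter path $r\mapsto(1+ar,br)$ (using $\lim_{r\to0}\tfrac{\sigma_{1+ar,br}-\mathrm{Id}}{r}=(aT+b)\tfrac{d}{dT}$), whereas you compute $\partial_q A$ and $\partial_h A$ separately and then take the linear combination $a\,\partial_q A+b\,\partial_h A$ — the two are equivalent by the chain rule, so this is the same argument.
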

\begin{proof}
As in the proof of Proposition 
\ref{Prop : non degeneracy --> fully faith}, 
we can assume that $X$ is equal to the open disc $D$ 
centered at $t_x$ as in Definition 
\ref{Def : non degeneracy}. 
If $Y_\chi(T_1,T_2)$ is the matrix of the stratification, 
then $A(q,h;T)=Y_\chi(qT+h,T)$. If 
$\gamma_{(a,b)}:D^-(0,\varepsilon)\to 
\mathcal{G}_{\tau,\nu}$ is 
the path $r\mapsto (1+ar,br)$, then the limit 
$\frac{d}{dr}(\sigma_{\gamma_{(a,b)}(r)}):=
\lim_{r\to 0}\frac{\sigma_{1+ar,br}-\mathrm{Id}}{r}$ 
converges to  $(aT+b)\frac{d}{dT}$.
The claim then follows  
from 
$G=\frac{d}{dT_1}(Y_\chi)\cdot Y_\chi^{-1}$.
\end{proof}

\begin{remark}
If an individual action of $\sigma_{q,h}$ 
satisfying  
Corollaries \ref{Cor : Non degeneracy} and 
\ref{Corollary : quasi-stein conf} is given, 
this produces a $\sigma_{q,h}$-compatible differential 
equation $(\Fs,\nabla)$. 
If now $X$ is an affinoid domain in 
$\mathbb{A}^{1,\mathrm{an}}_K$ as in 
\cite{Pu-q-Diff} (though this work also over a 
more general class of analytic domains), 
the differential equation so obtained is 
$\mathcal{G}_{\tau,\nu}$-compatible for some pair $(\tau,\nu)$, 
and so, by $\mathcal{G}_{\tau,\nu}$-deformation of $\nabla$, the original action of $\sigma_{q,h}$ 
extends to an action of $\mathcal{G}_{\tau,\nu}$. 
\end{remark}

\subsection{An example on the Tate curve}
A Tate curve $X_a$ is obtained as a quotient of 
$\mathbb{G}_{m,K}^{\mathrm{an}}$ by the action of 
$a^{\mathbb{Z}}$, where $a\in K$ has norm $|a|<1$.
The analytic skeleton of $X_a$ is a loop, and it 
is the skeleton of a weak 
triangulation $S$.
Now, $\mathbb{G}_{m,K}^{\mathrm{an}}\subseteq
\mathbb{A}^{1,\mathrm{an}}_K$ is stable under 
the action of $\mathcal{G}_{1,0}=D^-(1,1)$. 
That action commutes with the multiplication by $a$, and 
it defines an $S$-infinitesimal non degenerate 
action on $X_a$.

It has been shown in \cite{NP-III} that all differential 
equation $\Fs$ over $X_a$ has constant radius 
$\R_{S,1}(-,\Fs)$.
On the other hand it is easy to see that for all 
$q\in D^-(1,1)$, one has 
$\R_{S}(-,\sigma_{q,0})=|q-1|$. The action of 
$\sigma_{q,0}$ is visibly non degenerate as soon as $q$ 
is not a root of unity. 
This permits to describe all differential equations of 
$X_a$ as semi-linear analytic $G_{\tau,0}$-modules for 
some $\tau>0$.

\section{Morita's $p$-adic Gamma function and 
Kubota-Leopolodt's $L$-functions}\label{Gamma}

In this section we apply the previous theory to a 
particular difference equation satisfied by the Morita's 
$p$-adic $\Gamma$-function $\Gamma_p$. 
We firstly prove some useful results in section 
\ref{Section : smallradius}.

\subsection{Small radius for rank one $(q,h)$-difference 
equations}\label{Section : smallradius}
In section \ref{eq : rough estimation} we have defined 
the number $\omega=\lim_n|n!|^{1/n}$. 
For all $q\in K^{\times}$, $|q-1|<1$, we now set 
$\omega_q := \lim_{n\to\infty}|[n]_q^!|^{1/n}$. 
One verifies (cf. \cite[3.5]{DV-Dwork}) that, 
if $\kappa\geq 1$ is the smallest integer such that 
$|q^\kappa-1|<\omega$, then $\omega_q=\omega$ if 
$\kappa=1$ and
$\omega_q=\bigl([\kappa]_q\cdot\omega\bigr)^{1/\kappa}$ 
if $\kappa\geq 2$. In particular $\omega_1=\omega$.
%\begin{equation}
%\omega_q=\left\{
%\sm{
%\omega&\mathrm{ if }& \kappa=1 \\&&\\
%\bigl([\kappa]_q\cdot\omega\bigr)^{1/\kappa} 
%&\mathrm{ if }& \kappa\geq 2 }\right.
%\end{equation}

Let $X$ be an affinoid domain of 
$\mathbb{A}^{1,\mathrm{an}}_K$, with a weak 
triangulation $S$, and an $S$-infinitesimal non 
degenerate action of $\sigma_{q,h}$ 
(cf. Corollary \ref{Cor : Non degeneracy}). 
\if{
If $K=\Ka$, 
$X$ is a disjoint union of connected affinoid 
domains of the form 
$X=D^+(c_0,R_0)-\cup_{i=1}^nD^-(c_i,R_i)$,
where $c_0,\ldots,c_n\in K$ satisfy $|c_i-c_0|\leq R_0$ 
for all $i$, and $0<R_1,\ldots,R_n\leq R_0$. 

If $K$ is general, an affinoid domain $X$ of 
$\mathbb{A}^{1,\mathrm{an}}_K$ is the quotient of 
$X_{\Ka}$ by $\mathrm{Gal}(\Ka/K)$ 
(cf. \cite[2.2.2]{Ber}). Without 
loss of generality we will always assume that $X$ is 
connected. 
So the holes of $X_{\Ka}$ form an orbit 
under $\mathrm{Gal}(\Ka/K)$,\footnote{Including the 
holes of 
$X_{\Ka}$ at $+\infty$, i.e. the complements in 
$\mathbb{P}^{1,\mathrm{an}}_{\Ka}$ of the disc 
$D^+(c_0,R_0)$.} 
which acts isometrically.
Hence we can speak about the holes of $X$, and of their 
radii $R_1,\ldots,R_n$. And also about the larger virtual 
disc containing $X$ whose radius is $R_0$. 
Such notations are fixed from now on.
}\fi
Let $x\in X$ be a point such that 
$D(x)$ is fixed by 
$(\sigma_{q,h})_\Omega$.\footnote{By Lemma 
\ref{Lemma : discs stable by sigma_q,h} this 
means $|(q-1)t_x+h|<r(x)$.} Since $\sigma_{q,h}$ is an 
automorphism of $D(x)$, its composite with a bounded 
function remains bounded. 
So $d_{q,h}$ acts on the ring $\mathcal{B}(D(x))$ of 
bounded functions over $D$. We have an isometric 
inclusion $\H(x)\to\mathcal{B}(D(x))$ given by the 
Taylor expansion at $t_x$: 
$f\mapsto\sum_{n\geq 0}f^{(n)}(t_x)(T-t_x)^n/n!$.

Let $d$ be $d_{q,h}$ or  $d/dT$. 
We set 
$\|d\|_{\mathcal{B}(D(x))}:=
\max_{f\in \mathcal{B}(D(x)), f\neq 0}
\|d(f)\|_{D(x)}/\|f\|_{D(x)}$, 
where $\|.\|_{D(x)}$ is the sup-norm on $D(x)$. 
%It follows from Proposition \ref{(q,h)-Taylor expansion}  
%that $\|d\|_X\leq r_X^{-1}$, 
%where $r_X=\min(R_0,R_1,\ldots,R_n)$ 
%(see also \cite[Lemma 1.9, Section 5.2]{Pu-q-Diff}). 
%Moreover if $c_0,\ldots,c_n\in K$, then equality holds. 
It follows from Proposition \ref{(q,h)-Taylor expansion}  
that $\|d\|_{\mathcal{B}(x)}=r(x)^{-1}$.
\begin{lemma}[explicit small radius]
\label{Lemma : small radius}
Let $\Fs$ be a $\sigma_{q,h}$-module.  
With the notation of 
\eqref{eq : (q,h)-Radius non intrinsic} we have 
$\liminf_{n}(|G_{[n]}|(x)/|[n]_q^!|
)^{-1/n}\geq
\frac{\omega_q}{\max(|G_{[1]}|(x),
\|d_{q,h}\|_{\mathcal{B}(D(x))})}$.
Moreover if $\mathrm{rank}(\Fs)=1$, then 
$| G_{[1]}|(x)> 
\|d_{q,h}\|_{\mathcal{B}(D(x))}$ if and only if 
$\liminf_n(|G_{[n]}|(x)/
|[n]^!_q|)^{-1/n}<\omega_q\cdot
\|d_{q,h}\|_{\mathcal{B}(D(x))}^{-1}$. In this case 
\begin{equation}
\liminf_{n}\Bigl(|G_{[n]}|(x)/|[n]_q^!|
\Bigr)^{-1/n}\;=\;\frac{\omega_q}{|G_{[1]}|(x)}\;.
\end{equation}

The same statements hold for rank one differential 
equations replacing 
$d_{q,h},\omega_q,[n]^!_q,G_{[1]}$ 
with $d/dT,\omega,n!,G_1$, where $G_1$ is the 
matrix of \eqref{eq : Y_chi}.
\end{lemma}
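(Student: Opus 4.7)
The plan is to exploit the recursive definition $G_{[n+1]} = \sigma_{q,h}(G_{[n]})\cdot G_{[1]} + d_{q,h}(G_{[n]})$ together with the two facts that $\sigma_{q,h}$ is an isometry of $\mathcal{B}(D(x))$ (since by hypothesis $D(x)$ is stable under $\sigma_{q,h}$, see Lemma \ref{Lemma : discs stable by sigma_q,h}) and that $d_{q,h}$ acts with operator norm $\|d_{q,h}\|_{\mathcal{B}(D(x))} = r(x)^{-1}$ on $\mathcal{B}(D(x))$.

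First, I would prove the inequality in the general (arbitrary rank) case. Setting $M := \max(|G_{[1]}|(x), \|d_{q,h}\|_{\mathcal{B}(D(x))})$, the recursion together with isometry of $\sigma_{q,h}$ and the ultrametric inequality yields
\begin{equation*}
|G_{[n+1]}|(x) \;\leq\; \max\bigl(\,|G_{[n]}|(x)\cdot|G_{[1]}|(x),\;\|d_{q,h}\|_{\mathcal{B}(D(x))}\cdot|G_{[n]}|(x)\,\bigr)\;\leq\;M\cdot|G_{[n]}|(x)\;.
\end{equation*}
An immediate induction gives $|G_{[n]}|(x)\leq M^n$, so $\bigl(|G_{[n]}|(x)/|[n]_q^!|\bigr)^{1/n}\leq M\cdot|[n]_q^!|^{-1/n}$. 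Passing to the limit superior and recalling $\lim_n|[n]_q^!|^{1/n}=\omega_q$ produces the desired lower bound $\omega_q/M$ for the liminf.

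Next I handle the rank one case. The ``if'' direction of the equivalence is the contrapositive of the inequality just proved: if $|G_{[1]}|(x)\leq \|d_{q,h}\|_{\mathcal{B}(D(x))}$ then $M=\|d_{q,h}\|_{\mathcal{B}(D(x))}$, so the liminf is $\geq\omega_q/\|d_{q,h}\|_{\mathcal{B}(D(x))}$. For the ``only if'' direction, assume $|G_{[1]}|(x) > \|d_{q,h}\|_{\mathcal{B}(D(x))}$. The key claim is that $|G_{[n]}|(x) = |G_{[1]}|(x)^n$ for all $n\geq 0$; I would prove this by induction using the \emph{strict} ultrametric inequality. Indeed, assuming $|G_{[n]}|(x)=|G_{[1]}|(x)^n$, isometry of $\sigma_{q,h}$ gives $|\sigma_{q,h}(G_{[n]})\cdot G_{[1]}|(x)=|G_{[1]}|(x)^{n+1}$, while the operator norm bound gives
\begin{equation*}
|d_{q,h}(G_{[n]})|(x)\;\leq\;\|d_{q,h}\|_{\mathcal{B}(D(x))}\cdot|G_{[1]}|(x)^n\;<\;|G_{[1]}|(x)^{n+1}\;.
\end{equation*}
Because the two summands in the recursion have strictly different norms, the ultrametric equality applies, and we conclude $|G_{[n+1]}|(x)=|G_{[1]}|(x)^{n+1}$. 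Then $\bigl(|G_{[n]}|(x)/|[n]_q^!|\bigr)^{-1/n}=|[n]_q^!|^{1/n}/|G_{[1]}|(x)$ converges to $\omega_q/|G_{[1]}|(x)$, which is strictly smaller than $\omega_q/\|d_{q,h}\|_{\mathcal{B}(D(x))}$, giving the other direction of the equivalence and the exact value.

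The differential-equation statement is formally identical, substituting $(d/dT,\omega,n!,G_1)$ for $(d_{q,h},\omega_q,[n]_q^!,G_{[1]})$: the recursion $G_{n+1}=G_n\cdot G_1+d/dT(G_n)$ has the same shape, $d/dT$ has operator norm $r(x)^{-1}$ on $\mathcal{B}(D(x))$, and scalar extension along the identity is trivially isometric. The main delicate point, which is where I would be most careful, is the strict ultrametric step in rank one: it relies on the two summands having \emph{incomparable} norms, which in turn requires the scalar isometry of $\sigma_{q,h}$ (or the identity) on $\mathcal{B}(D(x))$ and the fact that in rank one $G_{[1]}$ is a commutative scalar, so that no cancellation in the norm can arise from non-commutativity.
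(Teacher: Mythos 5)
Your proof is correct and takes essentially the same route as the paper, which simply states the inductive bound $|G_{[n]}|(x)\leq\max(|G_{[1]}|(x),\|d_{q,h}\|_{\mathcal{B}(D(x))})^n$, notes equality holds in rank one when $|G_{[1]}|(x)>\|d_{q,h}\|_{\mathcal{B}(D(x))}$, and concludes via $|[n]_q^!|^{1/n}\to\omega_q$. You have supplied the details the paper leaves implicit: the isometry of $\sigma_{q,h}$ on $\mathcal{B}(D(x))$, the strict ultrametric inequality step, and the role of rank one in turning the product estimate into an equality.
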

\begin{proof} By
$G_{[n+1]}=
d_{q,h}(G_{[n]})+\sigma_q(G_{[n]})G_{[1]}$ 
we inductively have
$|G_{[n]}|(x)\leq \max(|G_{[1]}|(x),
\|d_{q,h}\|_{\mathcal{B}(D(x))})^n$, 
and equality holds if 
$|G_{[1]}|(x)>\|d_{q,h}\|_{\mathcal{B}(D(x))}$ and if 
$\Fs$ has rank one. Now, since the
sequence $|[n]_q^{!}|^{1/n}$ is convergent to 
$\omega_q$, one has
$\liminf_{n}(|G_{[n]}|(x)/|[n]_q^!|)^{-1/n} = 
\omega_q\cdot
\liminf_{n}|G_{[n]}|(x)^{-1/n}$. 
\end{proof}

\if{
\begin{remark}
The analogous lemma is well known for differential equations (cf. \cite[Cor.6.7]{Astx}).
\end{remark}
}\fi

\subsection{Morita's $p$-adic Gamma function as solution of a difference equation}\label{diff eq of Gamma sections}

In this section $K=\mathbb{Q}_p$. 
Assume that $p\neq 2$ is a prime number. 
The Morita's
$p$-adic Gamma function 
$\Gamma_p:\mathbb{Z}_p\to\mathbb{Z}_p$, 
is the unique continuous function on
$\mathbb{Z}_p$ verifying $\Gamma_p(0)=1$, and 
the functional equation $\Gamma_p(x+1)=\left\{\sm{-
x\Gamma_p(x)&\textrm{ if }& |x|=1\phantom{\;.}\\ 
&&\\
-\Gamma_p(x)&\textrm{ if }& |x|<1\;,}\right.$.
Its values on natural numbers $n\geq 1$ are 
given by $\Gamma_p(n)=(-1)^{n}\cdot
\prod_{i=1, (i,p)=1}^{n-1} i$. 
It is known since \cite{Morita} 
that $\Gamma_p(T)$ is
locally analytic with local radius greater than $|p|$. 
Subsequently Dwork  \cite{Dwork-Radius-Gamma}, 
applying non cohomological methods introduced by 
D.Barsky \cite{Barsky-Gamma}, was able to compute the 
exact radius of convergence of $\Gamma_p(T)$ 
in a neighborhood of the points $0,\ldots,p-1$, which is 
$\omega|p|^{1/p}$. Denote by 
$\Gamma_p^{i}(T):=1+
\sum_{n\geq 1}\gamma_{n}^{i}\cdot
(T-i)^n$ the Taylor expansion of $\Gamma_p$ at 
$T=i\in\{0,\ldots,p-1\}$.
Clearly $\Gamma_p^{i}(T+i)=
(-1)^{i}(T+1)(T+2)\cdots(T+i-1)
\Gamma_p^{0}(T)$. 
From the functional equation, for all $n\geq 1$, we have
\begin{equation}\label{iterated equation Gamma}
\sigma_{1,p^n}(\Gamma^{0}_p(T))\;\;=\;\;A(1,p^n;T)\cdot\Gamma_p^{0}(T)\;,\qquad\qquad
A(1,np;T)\;=\;-\prod_{i=1,(i,p)=1}^{np-1}(T+i)\;.
\end{equation}
\if{
These equations are defined everywhere since 
$A(1,p^n;T)$ is a polynomial. But each 
one of them verifies the assumptions of Theorem \ref{Theorem over an annulus ...} over a disc $\mathrm{D}^-(0,r_n)$, centered at $0$ (cf. section 
\ref{Formal cocycle}). 

By Theorem \ref{Theorem over an annulus ...} and section \ref{Formal cocycle} the function $\Gamma^0_p(T)$ is then solution of a differential equation
\begin{equation}
\Gamma_p^0(T)'\;\;=\;\; g_0(T)\cdot\Gamma_p^0(T)\;.
\end{equation}
Let $Y_0(x,y)$ denote the analytic cocycle attached to this differential equation. In this section we are interested in 
\begin{enumerate}
\item finding the convergence disc of $g_0(T)$,
\item describing the functions $\rho\mapsto |g_0(T)|_{\rho}$,
\item describing the functions $\rho\mapsto Rad_x(Y_0(x,y),|.|_{\rho})$.
\end{enumerate}

The disc $\mathrm{D}^-(0,r_n)$ can be defined as the biggest disc over which $Y_0(x,y)$ is $\sigma_{1,p^n}$-compatible. Since the $\sigma_{1,p^n}$-compatibility 
condition is $|\delta_{\sigma_{1,p^n}}|_\rho=|p|^n<Rad_x(Y_0(x,y),\rho)$, we have $\mathrm{D}^-(0,r_n)\subset\mathrm{D}^-(0,r_{n+1})$ for all $n\geq 1$. 
Moreover, if $Y_{p^n}(x,y)$ denotes the restriction of $Y_0(x,y)$ to $\mathrm{D}^-(0,r_n)\times\mathrm{D}^-(0,r_n)$, then $Y_{p^n}(x,y)$ is by Theorem 
\ref{Theorem over an annulus ...}, solution of the equation $Y_{p^n}(\sigma_{1,p^n}(x),y)=A(1,p^n;x)\cdot Y_{p^n}(x,y)$. We will prove that 
\begin{equation}
\cup_{n\geq 1}\mathrm{D}^-(0,r_n) \;\;=\;\; \mathrm{D}^-(0,1)\;.
\end{equation} 
As a matter of facts $Y_0(x,y)$ will be constructed by gluing the analytic cocycles $Y_{p^n}(x,y)$ defined step by step, over 
$\mathrm{D}^-(0,r_n)\times\mathrm{D}^-(0,r_n)$, by the $(1,p^h)$-th equation $\sigma_{1,p^n}(Y)=A(1,p^n;T)\cdot Y$ by the expression 
\eqref{cocycle of q-h equation}. 
The function $\Gamma_p^0(T)$ will hence be  equal to $Y_0(T,0)$.
}\fi
\begin{theorem}\label{Gammmammmma}
The function $\Gamma_p^0(T)$ is the Taylor solution at 
$T=0$ of a rank one differential equation 
$Y'=g_0(T)\cdot Y$ such that 
$g_0(T)\in\O(D^-(0,1))\subset\mathbb{Q}_p[[T]]$. 
If $\Fs$ is the associated differential module, and if 
$D^-(0,1)$ has the empty weak triangulation, then:
\begin{equation}
\R^\Fs(x_{0,\rho})=\left\{\sm{
\omega |p|^{1/p}&\textrm{ if }&0&\leq&\rho&\leq& 
r_0 &\\
\frac{\omega|p|^{n}}{\rho^{p^{n-1}(p-1)}}&
\textrm{ if }&r_{n-1}&\leq&\rho&\leq&
r_n\;,&\quad n\geq 1.
}\right.
\end{equation}
where $r_{0}=|p|^{1/p}$, and  
$r_n=\omega^{\frac{1}{p^{n-1}(p-1)}}$, 
for all $n\geq 1$.
Moreover, the Small Radius Lemma 
\ref{Lemma : small radius} gives
\begin{equation}\label{eq : norm-rho of g_0}
|g_0(T)|(x_{0,\rho})=\left\{\sm{
|g_0(T)|(x_{0,\rho})\leq\rho^{-1}&
\textrm{ if }&0&\leq&\rho&\leq& r_0 &\\
\rho^{p^{n-1}(p-1)}/|p|^{n}    &\textrm{ if }&\quad
r_{n-1}&\leq&\rho&\leq&r_n\;,& \quad n\geq 1.
}\right.
\end{equation}
\end{theorem}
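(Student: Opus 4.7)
The strategy is to apply the confluence functor of Corollary \ref{Corollary : quasi-stein conf} to each $\sigma_{1,p^n}$-difference equation \eqref{iterated equation Gamma} \emph{separately}. For fixed $n$, equation \eqref{iterated equation Gamma} is defined on all of $\mathbb{A}^{1,\mathrm{an}}_K$; I will identify the largest disc $D^-(0,r_n)$ centered at $0$ on which $\sigma_{1,p^n}$ is $\emptyset$-infinitesimal and the $\sigma_{1,p^n}$-compatibility
$$\R^{\Fs,\sigma_{1,p^n}}(x_{0,\rho})\;>\;\R^{\sigma_{1,p^n}}(x_{0,\rho})\;=\;|p|^n$$
holds. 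Confluence then yields a rank one differential equation $Y'=g_0^{(n)}(T)Y$ over $D^-(0,r_n)$ whose Taylor solution at $0$ with value $1$ is precisely $\Gamma_p^0(T)$, by Remark \ref{Rk : Same Taylor solutions} (the deformation preserves Taylor solutions). Since $\sigma_{1,p^n}$ is non-degenerate on $D^-(0,r_n)$ by Corollary \ref{Cor : Non degeneracy}, fully-faithfulness (Proposition \ref{Prop : non degeneracy --> fully faith}) shows that the differential equation $g_0^{(n)}$ is uniquely determined by its solution $\Gamma_p^0$. Hence $g_0^{(n)}=g_0^{(n+1)}$ on $D^-(0,r_n)$ and the pieces glue to a single $g_0\in\O(D^-(0,\sup_n r_n))$; since $r_n=\omega^{1/(p^{n-1}(p-1))}\to 1$, we obtain $g_0\in\O(D^-(0,1))$.

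The technical heart is computing $r_n$ and $\R^\Fs(x_{0,\rho})$ explicitly. By the rank one version of the Small Radius Lemma \ref{Lemma : small radius} applied to the difference equation $\sigma_{1,p^n}(Y)=A(1,p^n;T)Y$, as long as
$$|G_{[1]}^{(n)}|(x_{0,\rho})\;=\;|p|^{-n}\,|A(1,p^n;T)-1|(x_{0,\rho})\;>\;\|d_{1,p^n}\|_{\mathcal{B}(D(x_{0,\rho}))}\;=\;\rho^{-1},$$
one has $\R^{\Fs,\sigma_{1,p^n}}(x_{0,\rho})=\omega/|G_{[1]}^{(n)}|(x_{0,\rho})$. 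This reduces the problem to estimating the Gauss norm $|A(1,p^n;T)-1|(x_{0,\rho})$ of the polynomial
$$A(1,p^n;T)-1\;=\;-1-\prod_{i=1,(i,p)=1}^{p^n-1}(T+i),$$
which is a combinatorial $p$-adic computation: via Wilson's congruence and the congruences satisfied by $\Gamma_p(p^n)$, one determines the Newton polygon of this polynomial. For $n=1$ this recovers Dwork's computation giving radius $\omega|p|^{1/p}$ around $T=0$; for $n\geq 2$ one proceeds inductively using $A(1,p^{n+1};T)\equiv A(1,p^n;T)\cdot A(1,p^n;T+p^n)\cdots$ modulo the factors $(T+i)$ with $p^n\le i<p^{n+1}$, $(i,p)=1$.

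The main obstacle is precisely this $p$-adic Newton polygon estimate, which produces the dominant slope $p^{n-1}(p-1)$ on the range $\rho\in[r_{n-1},r_n]$ and the constant piece on $[0,r_0]$. Once this is in hand, the formula for $\R^\Fs(x_{0,\rho})$ in the theorem follows immediately from the Small Radius Lemma, and \eqref{eq : norm-rho of g_0} follows from the rank one identity $\R^\Fs(x_{0,\rho})=\min(\rho,\omega/|g_0|(x_{0,\rho}))$ (again via Lemma \ref{Lemma : small radius} applied to $Y'=g_0Y$). Continuity of $\R^\Fs$ at the breakpoints $\rho=r_n$ is forced by Theorem \ref{Thm : continuity} and can be verified by the identity $\omega^{p-1}=|p|$, giving an internal consistency check that the piecewise formula is correct.
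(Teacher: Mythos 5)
Your overall scaffolding -- confluence of each $\sigma_{1,p^n}$-difference equation to a rank-one differential equation over a disc $D^-(0,r_n)$, gluing the rank-one connections by observing that $\Gamma_p^0$ is their common solution, and the Small Radius Lemma~\ref{Lemma : small radius} to convert $\R^\Fs(x_{0,\rho})$ into an estimate on $|G^{(n)}_{[1]}|(x_{0,\rho})=|p|^{-n}|A(1,p^n;T)-1|(x_{0,\rho})$ -- is the same as the paper's. The divergence, and the gap, is in how the key estimate is obtained. You propose to prove $|A(1,p^n;T)-1|(x_{0,\rho})=\rho^{p^{n-1}(p-1)}$ directly for every $\rho$ down to $r_{n-1}$, i.e.\ a full Newton-polygon computation of $A(1,p^n;T)-1$. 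That is a genuine cancellation problem: expanding $A(1,p^{n+1};T)-1=\prod_{j=0}^{p-1}\bigl(1+p^nG^{(n)}_{[1]}(T+jp^n)\bigr)-1$, the first-order term $p^n\sum_jG^{(n)}_{[1]}(T+jp^n)$ has Gauss norm on the order of $\rho^{p^{n-1}(p-1)}$ while the top term contributes $\rho^{p^n(p-1)}$, and for $\rho<1$ the former beats the latter unless there is heavy $p$-divisibility in the lower coefficients. Wilson's congruence handles only the $n=1$ cyclotomic reduction; your sketch does not produce the required divisibilities for $n\geq 2$, and the phrase ``congruences satisfied by $\Gamma_p(p^n)$'' does not indicate how to get them. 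So the proposal names the obstacle but does not overcome it.

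The paper avoids this combinatorics entirely. It proves the formula for $|G^{(n)}_{[1]}|(x_{0,\rho})$ only where it is trivial -- $\rho\geq 1$, where the coefficients of $A(1,p^n;T)-1$ are integers with unit leading coefficient (Lemma~\ref{verrii}); plus the $n=1$ case for $\rho\geq\omega$ by cyclotomic reduction -- and then exploits log-concavity of $\rho\mapsto R(n+1,\rho)$ from Theorem~\ref{Thm : continuity}. Concretely, one knows $R(n+1,\rho)=\omega|p|^{n+1}\rho^{-p^n(p-1)}$ exactly for $\rho\geq 1$, and from the inductive identity $R(n+1,\rho)=R(n,\rho)$ for $\rho\leq r_n$ one knows $R(n+1,r_n)=|p|^{n}$, which lies precisely on the log-linear extension of the $\rho\geq 1$ piece. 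A log-concave function that coincides with an affine function on $\rho\geq 1$ and again at $\rho=r_n$ must equal it on the whole interval $[r_n,1]$, because slopes of a concave function are non-increasing. The same mechanism pins down the constant plateau on $[0,r_0]$, where the endpoint value $\omega|p|^{1/p}$ is supplied by Dwork's computation of the exact local radius of $\Gamma_p^0$ at $T=0$ (you invoke this, but do not explain why it extends to all $\rho\leq r_0$). In short: the concavity argument is the missing idea that replaces your Newton-polygon computation -- you notice the breakpoint consistency $\omega^{p-1}=|p|$ as a ``check'', but it is in fact the input that, combined with concavity, proves the formula on $(r_{n-1},1)$ without ever touching the intermediate coefficients of $A(1,p^n;T)-1$.
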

\begin{proof} 
Let $(\Fs,\sigma_{1,p^n})$ be the difference 
module associated to \eqref{iterated equation Gamma} 
in the basis $\e\in\Fs$.
Every $A(1,p^n;T)$ converges everywhere, because
it is a polynomial. We may think that 
$(\Fs,\sigma_{1,p^n})$ is defined over a conveniently 
large disc with empty weak triangulation, 
so $\rho_{S,T}$ does not play any role in 
\eqref{eq : (q,h)-Radius non intrinsic}, which 
is always computed by the infimum limit.
So, for all $\rho\geq 0$ we set 
$R(n,\rho):=\liminf_s(|G_{[s]}(1,p^n;T)|_\rho/|[s]_q^!|
)^{-1/s}$.
By Remark \ref{Remark : decreasing on discs}, the locus 
of points where \eqref{eq : condition for confluence} 
holds is an open disc $D^-(0,r_n)$. And $r_n$ is the 
supremum value of $\rho$ satisfying 
$R(n,\rho)>\R^{\sigma_{1,p^n}}(x_{0,\rho})$.
Over that disc Corollary 
\ref{Corollary : quasi-stein conf} applies, and 
we have a differential equation $(\Fs,\nabla_n)$, whose 
$\sigma_{1,p^n}$-deformation is 
$(\Fs,\sigma_{1,p^n})$, 
satisfying $\R^{(\Fs,\nabla_n)}(x_{0,\rho})=
R(n,\rho)$ for all $\rho<r_n$.
By definition of the deformation, the 
$\sigma_{1,p^{n+1}}$-deformation of $(\Fs,\nabla_n)$ 
is $(\sigma_{1,p^{n}})^p:\Fs\simto\Fs$. 
Hence $R(n+1,\rho)=R(n,\rho)$ for all $\rho<r_n$, and 
by concavity of $R(n+1,\rho)$ we have $r_n<r_{n+1}$.
Now, for $n+1$, the range of application of Corollary 
\ref{Corollary : quasi-stein conf} is the disc 
$D^-(0,r_{n+1})$, and it is clear that 
$(\Fs,\nabla_{n+1})_{|D^-(0,r_n)}=(\Fs,\nabla_n)$. 
Since $\Gamma_p^0(T)$ is a solution of 
$\sigma_{1,p^n}$ and of $\sigma_{1,p^{n+1}}$, 
it is also a solution of 
$\nabla_n$ and of $\nabla_{n+1}$, 
hence the matrix of the 
two connections in the basis $\e$ of $\Fs$ coincide.
This proves that the matrix of $\nabla_n$ in the basis 
$\e$ actually lies over $D^-(0,r_{n+1})$. 
We now prove that $\lim_nr_n=1$, and 
inductively compute the function $R(n,\rho)$.
The proof consists in computing inductively 
the small values of the 
radii $R(n,\rho)$ by Lemma \ref{Lemma : small radius}, 
and also $r_n$, and $|g_0|(x_{0,\rho})$ by the same 
Lemma.

\begin{lemma}\label{verrii}
Let $G_{[1]}(1,p^n;T):=\frac{A(1,p^n;T)-1}{p^n}$.
For all $n\geq 1$ one has
\begin{equation}
\label{valuation of H_1((1,p),T)=rho^p^n-p^n-1}
|G_{[1]}(1,p^n;T)|(x_{0,\rho})\;=\;
\frac{\rho^{\mathrm{deg}(G_{[1]}(1,p^n;T))}}{|p|^n}=\frac{\rho^{p^{n-1}(p-1)}}{|p|^n}\;,\qquad\textrm{for all }\rho\geq
1\;,
\end{equation}
and for $n=1$ the equality holds for all 
$\rho\geq \omega$.
\end{lemma}
\begin{proof}
If $\sum a_iT^i\in\mathbb{Z}[T]$ is a polynomial of 
degree $n$, with $|a_n|=1$, then 
$x_{0,\rho}(f)=\rho^n$ for all $\rho\geq 1$. 
Then \eqref{valuation of H_1((1,p),T)=rho^p^n-p^n-1} 
follows from the fact that the degree of 
$G_{[1]}$ is $p^{n-1}(p-1)$. 
For $n=1$, the reduction of $A(1,p;T)$ in 
$\mathbb{F}_p[T]$ is the cyclotomic polynomial $1-T^{p-1}$. 
Then $A(1,p;T)-1=-T^{p-1}+a_{p-2}T^{p-2}+\cdots+a_{0}$, with
$|a_i|\leq |p|$, for all $i=0,\ldots,p-2$.  Hence $|A(1,p;T)-1|_{\rho}=\max(\rho^{p-1},|a_{p-2}|\rho^{p-2},\ldots,|a_0|)$,
and $\rho^{p-1}\geq |p|\rho^i$ for all $i=0,\ldots,p-2$ if and
only if $\rho\geq \omega=|p|^{1/(p-1)}$.
\end{proof}

\begin{lemma}
We have $R(1,\rho)=\omega |p|^{1/p}$ for 
$\rho\leq|p|^{1/p}$, and 
$R(1,\rho)=\omega|p|/\rho^{p-1}$, for all 
$\rho\geq |p|^{1/p}$.
\end{lemma}
\begin{proof}
For all $n\geq 1$, the function 
$R(p^n,\rho)=\R^{(\Fs,\nabla)}(x_{0,\rho})$ is 
constant over $D^-(0,\omega|p|^{1/p})$, 
with value $\omega|p|^{1/p}$,
because this is the disc of convergence of 
$\Gamma_p^0(T)$. By Lemmas \ref{verrii} and 
\ref{Lemma : small radius} we have 
$R(1,\rho)=\omega|p|^{1/p}/\rho^{p-1}$, for all 
$\rho> |p|^{1/p}$. Now, since $\ln(\rho)\mapsto 
\ln(R(1,\rho))$ is concave and continuous, it must be 
constant for all $\rho\leq |p|^{1/p}$ since 
$R(1,|p|^{1/p})=\lim_{\rho\to(|p|^{1/p})^+}R(1,\rho)=\omega|p|^{1/p}=R(1,0)$. 
\end{proof}

As explained $r_1=\sup(\rho\textrm{ such that }
R(1,\rho)>
|p|)=\omega^{\frac{1}{p-1}}$.

We now inductively assume that, for $n\geq 1$, 
%that  $r_n=\omega^{\frac{1}{p^{n-1}(p-1)}}$, and that
$R(n,\rho)=\omega|p|^{n}/\rho^{p^{n-1}(p-1)}$, 
for all $r_{n-1}\leq\rho\leq r_n$. 

Now we know that $R(n+1,\rho)=R(n,\rho)$ for all 
$\rho\leq r_n$, and by Lemmas \ref{verrii} and 
\ref{Lemma : small radius} we have 
$R(n+1,\rho)=\omega|p|^{n+1}/\rho^{p^n(p-1)}$, for 
all $\rho\geq 1$. The values for $\rho\in [r_n,1]$ are 
deduced by continuity and concavity. Indeed the function 
$\rho\mapsto \omega|p|^{n+1}/\rho^{p^n(p-1)}$ 
is logarithmically a line, and its value at $\rho=r_n$ is
$|p|^{n}=R(n+1,r_n)$. So we have 
$\R(n+1,\rho)=\omega|p|^{n+1}/\rho^{p^n(p-1)}$, 
for all $\rho\geq r_n$. 
Again Corollary \ref{Corollary : quasi-stein conf} 
 implies $r_{n+1}=\sup(\rho\textrm{ such that }
R(n+1,\rho)>
|p|^{n+1})=\omega^{\frac{1}{p^n(p-1)}}$. 
This concludes the computation of the radius.
Now \eqref{eq : norm-rho of g_0} is an 
immediate consequence of Lemma 
\ref{Lemma : small radius}.
\end{proof}

Define the Newton polygon of 
$g_0(T):=\sum_{n\geq 0}
a_nT^n\in\mathbb{Q}_p[[T]]$ as the convex hull 
of the points $\{(n,v_p(a_n))\}_{n\geq 0}\cup
\{(0,+\infty)\}$, where $v_p$ is the $p$-adic valuation 
normalized by $v_p(p)=1$.

\begin{corollary}\label{Newton Polygon}
The wedges $(i,x_i)$ of the Newton polygon of 
$g_0(T)$ such that $i\geq p-1$ are the points
$\{(p^{n-1}(p-1),-n)\}_{n\geq 1}$. In particular $v_p(a_{p^{n-1}(p-1)})=-n$, for all $n\geq 1$.\hfill\CVD
\end{corollary}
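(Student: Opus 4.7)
The plan is to read off the Newton polygon from the family of Gauss norms $\rho\mapsto|g_0(T)|(x_{0,\rho})$ computed in Theorem \ref{Gammmammmma}, using the standard Legendre-duality between the Newton polygon of a power series and its spectral function on open annuli centered at $0$.

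First I would recall the following elementary fact. For $g_0(T)=\sum_{n\geq 0}a_nT^n$ converging on $D^-(0,1)$ one has
\begin{equation}
|g_0|(x_{0,\rho})\;=\;\sup_{n\geq 0}|a_n|\rho^n\;,
\end{equation}
and, writing $\rho=p^{-s}$, the function $\phi(s):=-\log_p|g_0|(x_{0,p^{-s}})=\inf_{n\geq 0}(v_p(a_n)+ns)$ is concave and piecewise affine in $s$. The indices $i$ and values $v_p(a_i)$ at which the infimum is attained on a full sub-interval are exactly the vertices $(i,v_p(a_i))$ of the Newton polygon; moreover on each linearity interval the slope of $\phi$ in $s$ equals the first coordinate $i$ of the corresponding vertex, while the $y$-intercept equals $v_p(a_i)$.

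Next I would plug in the formula of Theorem \ref{Gammmammmma}: on the annulus $r_{n-1}\leq\rho\leq r_n$ (for $n\geq 1$) we have
\begin{equation}
|g_0|(x_{0,\rho})\;=\;\rho^{p^{n-1}(p-1)}/|p|^n\;,\qquad\textrm{i.e.}\qquad\phi(s)\;=\;sp^{n-1}(p-1)-n\;,
\end{equation}
with $s$ ranging in the (non-degenerate, since $r_{n-1}<r_n$) interval $[-\log_pr_n,-\log_pr_{n-1}]$. Thus on this interval $\phi$ is affine with slope $p^{n-1}(p-1)$ and $y$-intercept $-n$, which by the previous paragraph forces the Newton polygon to have a vertex at $\bigl(p^{n-1}(p-1),-n\bigr)$ with $v_p(a_{p^{n-1}(p-1)})=-n$. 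Letting $n$ run through $\mathbb{Z}_{\geq 1}$ produces the full list of vertices with first coordinate $\geq p-1$; these are moreover the only vertices in that range since the slopes $p^{n-1}(p-1)$ already fill out the piecewise-affine description of $\phi$ on the interval $s\in(-\infty,-\log_pr_0]$, and between consecutive slopes $p^{n-1}(p-1)$ and $p^n(p-1)$ no other vertex can arise.

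No real obstacle is expected here; the only minor point requiring care is to verify that the breakpoints $r_n$ are genuinely distinct (which follows from $\omega^{1/(p^{n-1}(p-1))}$ being strictly increasing in $n$ and tending to $1$), so that each linearity interval of $\phi$ has positive length and thus genuinely detects a vertex of the Newton polygon rather than a point strictly below it.
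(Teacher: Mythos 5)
Your proof is correct and takes the same route as the paper, which treats Corollary \ref{Newton Polygon} as an immediate consequence of the Gauss-norm formula \eqref{eq : norm-rho of g_0}; you simply make explicit the standard Legendre duality between the Newton polygon and $\rho\mapsto|g_0|(x_{0,\rho})$. One minor slip: the parameter $s=-\log_p\rho$ ranges over $(0,-\log_pr_0]$ as $n$ runs through the annuli (not $(-\infty,-\log_pr_0]$), since $r_n\uparrow 1$ forces $-\log_pr_n\downarrow 0$; the argument is unaffected because concavity of $\phi$ still rules out any slope $\geq p-1$ for $s>-\log_pr_0$.
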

\begin{remark}
For all $k\geq 0$ one has
\begin{equation}
v_p(a_k)\;\;\geq\;\;\left\{
\begin{array}{rcrcccl}
0&\textrm{ if }&0&\leq&k&\leq&p-2\;,\\
-n&\textrm{ if }&p^{n-1}(p-1)&\leq&k&<&p^{n}(p-1)\;,\quad n\geq 1\;.
\end{array}
\right.
\end{equation}
as illustrated in the following picture:
\begin{center}
\begin{picture}(400,38)
\put(0,-6){
\begin{picture}(400,53)
\put(-30,40){\vector(1,0){450}}
\put(15,-10){\vector(0,1){60}}

\put(28,38){\begin{scriptsize}$\bullet$\end{scriptsize}}\put(25,45){\begin{scriptsize}$(p-1)$\end{scriptsize}}
\put(68,38){\begin{scriptsize}$\bullet$\end{scriptsize}}\put(65,45){\begin{scriptsize}$p(p-1)$\end{scriptsize}}
\put(178,38){\begin{scriptsize}$\bullet$\end{scriptsize}}\put(175,45){\begin{scriptsize}$p^2(p-1)$\end{scriptsize}}
\put(398,38){\begin{scriptsize}$\bullet$\end{scriptsize}}\put(395,45){\begin{scriptsize}$p^3(p-1)$\end{scriptsize}}

\put(28,28){\begin{scriptsize}$\bullet$\end{scriptsize}}\qbezier[5](30,30)(30,35)(30,40)
\put(68,18){\begin{scriptsize}$\bullet$\end{scriptsize}}\qbezier[10](70,20)(70,30)(70,40)
\put(178,8){\begin{scriptsize}$\bullet$\end{scriptsize}}\qbezier[15](180,10)(180,25)(180,40)
\put(398,-2){\begin{scriptsize}$\bullet$\end{scriptsize}}\qbezier[20](400,0)(400,20)(400,40)

%\qbezier(15,64)(22.5,62)(30,60)
\qbezier[30](-30,45)(0,37.5)(30,30)

\qbezier(30,30)(50,25)(70,20)
\qbezier(70,20)(125,15)(180,10)
\qbezier(180,10)(290,5)(400,0)
%\qbezier[5](400,30)(405,30)(410,30)
%%%%%%%%%%%%%%%%%%%%%%%%%%%%%%%%%%%%%%%%%%%%%%%%%%%%
\qbezier[7](15,30)(22.5,30)(30,30)\put(13,28){\begin{scriptsize}$\bullet$\end{scriptsize}}
\qbezier[22](15,20)(42.5,20)(70,20)\put(13,18){\begin{scriptsize}$\bullet$\end{scriptsize}}
\qbezier[82](15,10)(97.5,10)(180,10)\put(13,8){\begin{scriptsize}$\bullet$\end{scriptsize}}
\qbezier[200](15,0)(215,0)(415,0)\put(13,-2){\begin{scriptsize}$\bullet$\end{scriptsize}}

\put(0,27){\begin{scriptsize}$-1$\end{scriptsize}}
\put(0,17){\begin{scriptsize}$-2$\end{scriptsize}}
\put(0,7){\begin{scriptsize}$-3$\end{scriptsize}}
\put(0,-3){\begin{scriptsize}$-4$\end{scriptsize}}
\end{picture}
}
\end{picture}
\end{center}

\end{remark}

\subsection{Applications to Kubota-Leopoldt's $p$-adic 
$L$-functions}
\label{Section : Applications -KL}
We preserve the assumption $p\neq 2$. It has been 
known since Y.Morita \cite{Morita} and J.Diamond 
\cite[Theorem 10]{Diamond} (see also \cite[p.376]{Robert}) that 
$\log(\Gamma_p)$ admits the 
following Taylor expansion for $|T|\leq |p|$:
\begin{equation}
\log(\Gamma_p^0(T))=\lambda_0 T - \sum_{m\geq 1}
L_p(1+2m,\overline{\omega}_p^{2m})\frac{T^{1+2m}}{1+2m}
\end{equation}
where $\overline{\omega}_p$ 
is the inverse of the Teichm\"uller Dirichlet 
character corresponding to the prime $p$ and where 
$L_p(1+2m,\overline{\omega}_p^{2m})$ is the value at 
$s=1+2m$ of 
the $p$-adic Kubota-Leopoldt's $L$-function 
corresponding to the character $\overline{\omega}_p^{2m}$. 
The constant $\lambda_0$ is the constant coefficient 
appearing in the Taylor expansion of the Zeta function 
$\zeta_p(s)$ at $s=1$: 
$\zeta_p(s)=\sum_{n\geq -1}\lambda_n(s-1)^n$.
We note that 
\begin{equation}\label{...L...L...L}
g_0(T)=\frac{d}{dT}(\log(\Gamma_p^0(T)))=\lambda_0  - \sum_{m\geq 1}
L_p(1+2m,\overline{\omega}_p^{2m})T^{2m}\;.
\end{equation}
The Newton polygon of
$g_{0}(T)$ have been computed in Corollary 
\ref{Newton Polygon}. It gives the following estimate on 
the values of the $L$-functions appearing in 
\eqref{...L...L...L}:
\begin{corollary}\label{corollary abs value of zeta at 1+e}
For all $n\geq 1$ one has
\begin{equation}
v_p(\;L_p(1+p^{n-1}(p-1),\overline{\omega}_p^{p^{n-1}(p-1)})\;)\;\;=\;\;
v_p(\;\zeta_p(1+p^{n-1}(p-1))\;)\;\;=\;\;
-n\;.
\end{equation}
Moreover for all $m\geq 0$
\begin{equation*}
v_p(\;L_p(1+2m,\overline{\omega}_p^{2m})\;)\;\;\geq\;\; 
\left\{ 
\begin{array}{rcrcccl}
0&\textrm{ if }& 0&\leq &2m&\leq&(p-1)\\
-n&\textrm{ if }&p^{n-1}(p-1)&\leq &2m&<&p^{n}(p-1)\;,\quad n\geq 1\;.
\end{array}
\right.
\end{equation*}
\end{corollary}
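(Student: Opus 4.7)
The plan is to deduce the corollary directly from the description of the Newton polygon of $g_0(T)$ obtained in Corollary \ref{Newton Polygon} (and the accompanying remark), by identifying the coefficients $a_k$ of $g_0(T) = \sum_{k\geq 0} a_k T^k$ with values of $L$-functions via \eqref{...L...L...L}.

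First I would spell out the dictionary. From \eqref{...L...L...L} the power series $g_0(T)$ has only even-degree terms, with $a_0=\lambda_0$ and $a_{2m} = -L_p(1+2m,\overline{\omega}_p^{2m})$ for $m\geq 1$, while $a_k = 0$ whenever $k$ is odd. Since $p\neq 2$, the exponent $p^{n-1}(p-1)$ is even for every $n\geq 1$, so setting $2m = p^{n-1}(p-1)$ is allowed and identifies
\begin{equation*}
a_{p^{n-1}(p-1)} \;=\; -L_p(1+p^{n-1}(p-1),\overline{\omega}_p^{p^{n-1}(p-1)})\;.
\end{equation*}

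Next I would invoke the two parts of the Newton polygon result. Corollary \ref{Newton Polygon} computes exactly the valuation at the wedges, giving $v_p(a_{p^{n-1}(p-1)}) = -n$, which by the above identification yields the equality $v_p(L_p(1+p^{n-1}(p-1),\overline{\omega}_p^{p^{n-1}(p-1)}))=-n$; the remark following Corollary \ref{Newton Polygon} gives the general lower bound on $v_p(a_k)$ from the slopes of the Newton polygon, which translates into exactly the inequality stated in Corollary \ref{corollary abs value of zeta at 1+e} for $m\geq 1$.

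For the identification with $\zeta_p$, I would note that the Teichm\"uller character $\overline{\omega}_p$ has order $p-1$; as $(p-1)\mid p^{n-1}(p-1)$, the character $\overline{\omega}_p^{p^{n-1}(p-1)}$ is trivial, so $L_p(s,\overline{\omega}_p^{p^{n-1}(p-1)}) = \zeta_p(s)$ by definition of the Kubota--Leopoldt $p$-adic zeta function. This yields the second equality of the first displayed assertion.

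There is no real obstacle: the statement is a direct translation of Corollary \ref{Newton Polygon} via the explicit Taylor expansion of $\log(\Gamma_p^0(T))$ recalled from Morita/Diamond at the beginning of Section \ref{Section : Applications -KL}. The only minor point to verify is the parity observation that every exponent appearing as a wedge of the Newton polygon is even, which is automatic since $p-1$ is even.
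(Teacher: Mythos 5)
Your proof is correct and matches the paper's (implicit) argument: the paper gives no explicit proof but simply asserts that the corollary follows from Corollary~\ref{Newton Polygon} applied to the expansion~\eqref{...L...L...L}, which is exactly the dictionary you spell out, together with the identification $L_p(s,\overline{\omega}_p^{k})=\zeta_p(s)$ when $(p-1)\mid k$. The only small caveat, which concerns the paper's statement rather than your reasoning, is that the first line of the inequality in the corollary includes $2m=p-1$, where in fact the Newton polygon (and the first equality of the corollary with $n=1$) give valuation exactly $-1$; the remark preceding the corollary correctly has $0\leq k\leq p-2$ there.
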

Indeed, this constitutes a proof of the existence of a pole 
of $\zeta_p$ at $s=1$.

\subsection{An application to sums of powers.} 
\label{Section : Applications -SP}
We now apply the above computations to some sums of 
powers. The following computations have been obtained 
in collaboration with Daniel Barsky.

For all integers $\ell,k\geq 1$,  set
\begin{equation}
S_\ell(k)\;\;:=\;\;
\sum_{i=1, (i,p)=1}^{k-1}\frac{1}{i^\ell}\;.
\end{equation}
This and similar sums have been studied by several 
authors modulo powers of $p$ 
\cite[pp. 95-103]{Dickson}. 
A result of L.Washington \cite{Washington} expresses 
it as sum of values at certain positives integers of some 
Kubota-Leopoldt's $p$-adic $L$-functions. Similar 
expressions have been found by D.Barsky 
\cite{Barsky-cong}. 

The following corollary gives another 
proof of \cite[Theorem 1,(a)]{Washington} 
(cf. Remark \ref{Rk : link with Was}).  

\begin{corollary}\label{Cor : congruences}
For all integers $n,\ell\geq 1$ we have
\begin{equation}\label{eq : S_l}
\frac{(-1)^{\ell-1}}{\ell}\cdot S_\ell(np)
\;\;=\;\;
-\sum_{m\geq \ell/2}\binom{1+2m}{\ell}(np)^{(1+2m-\ell)}\cdot
\frac{L_p(1+2m,\overline{\omega}_p^{2m})}{1+2m}\;.
\end{equation}
Moreover for $\ell=1$ we have 
$S_1(np)=g_0(np)-g_0(0)$. In particular, for 
$n=p^{k-1}$, we recover the relation 
$\lim_{k\to\infty}p^{-k}
\sum_{i=0,(i,p)=1}^{p^k-1}i^{-1}=0$ 
because $g_0'(0)=0$.
\if{
\begin{enumerate}
\item One has 
$\log(\Gamma_p(np))=np\lambda_0-\sum_{m\geq 1}(np)^{1+2m}\cdot \frac{
L_p(1+2m,\overline{\omega}_p^{2m})}{1+2m}$;
\item For $\ell=1$ one has
\begin{equation}\label{eq : S_1}
S_1(np) \;\;:=\;\;\sum_{i=1, (i,p)=1}^{np-1}\frac{1}{i}\;\;=\;\; 
-\sum_{m\geq 1}(np)^{2m}\cdot L_p(1+2m,\overline{\omega}_p^{2m}) \;\;=\;\;
g_0(np)-g_0(0)\;.
\end{equation}
In particular, for $n=p^{k-1}$, we recover the relation $\lim_{k\to\infty}p^{-k}\sum_{i=0,(i,p)=1}^{p^k-1}i^{-1}=0$ because $g_0'(0)=0$.
\item For all $\ell\geq 2$ one has
\begin{equation}\label{eq : S_l}
%\!\!\!\!\!\!\!\!\!\!\!\!
\frac{(-1)^{\ell-1}}{\ell}\cdot S_\ell(np)
\;\;=\;\;
-\sum_{m\geq \ell/2}\binom{1+2m}{\ell}(np)^{(1+2m-\ell)}\cdot
\frac{L_p(1+2m,\overline{\omega}_p^{2m})}{1+2m}\;.
\end{equation}
\end{enumerate}
}\fi
\end{corollary}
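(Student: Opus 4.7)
The strategy is to transfer the functional equation for $\Gamma_p^0$ into an identity for $g_0$ on the open unit disc, and then read off the $S_\ell(np)$ as Taylor coefficients.

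First, I iterate the difference equation \eqref{iterated equation Gamma} at level $n=1$. Since for $p$ odd the set of integers in $\{1,\ldots,np-1\}$ coprime to $p$ is the disjoint union, for $k=0,\ldots,n-1$, of the shifted sets $\{kp+j:1\le j\le p-1\}$, one obtains
\begin{equation}
\Gamma_p^0(T+np)\;=\;(-1)^n\prod_{i=1,\,(i,p)=1}^{np-1}(T+i)\cdot\Gamma_p^0(T)\;.
\end{equation}
Dividing this by $\Gamma_p^0(T+np)$ after differentiating, and using that $g_0=(\Gamma_p^0)'/\Gamma_p^0$ by the differential equation of Theorem \ref{Gammmammmma}, gives
\begin{equation}\label{eq: plan step 1}
g_0(T+np)-g_0(T)\;=\;\sum_{i=1,\,(i,p)=1}^{np-1}\frac{1}{T+i}\;.
\end{equation}
Both sides are analytic on the open unit disc (the left hand side by Theorem \ref{Gammmammmma}, the right hand side because the poles at $T=-i$ lie in the unit circle).

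Next, I apply $(d/dT)^{\ell-1}$ to \eqref{eq: plan step 1} and evaluate at $T=0$. The right hand side yields $(-1)^{\ell-1}(\ell-1)!\,S_\ell(np)$. The left hand side is $g_0^{(\ell-1)}(np)-g_0^{(\ell-1)}(0)$, and I compute it by differentiating the explicit expansion
$g_0(T)=\lambda_0-\sum_{m\ge 1}L_p(1+2m,\overline{\omega}_p^{2m})T^{2m}$
term by term. Only the monomials with $2m\ge\ell$ contribute to the difference, and $(d/dT)^{\ell-1}T^{2m}\big|_{T=np}=(2m)!/(2m-\ell+1)!\cdot(np)^{2m-\ell+1}$. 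Combining and using $(2m)!/[\,\ell!(2m-\ell+1)!\,]=\binom{1+2m}{\ell}/(1+2m)$ gives exactly the claimed formula \eqref{eq : S_l}.

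For $\ell=1$, the identity reduces to $S_1(np)=g_0(np)-g_0(0)$, since the extraction of derivatives is trivial. Finally, to obtain the last assertion, specialize $n=p^{k-1}$ so that $np=p^k$ and divide by $p^k$: as $k\to\infty$, the quantity $p^{-k}S_1(p^k)=(g_0(p^k)-g_0(0))/p^k$ tends to $g_0'(0)$, which vanishes because $g_0$ contains only even powers of $T$ beyond its constant term. The whole argument is essentially formal once \eqref{eq: plan step 1} is in place; no step presents a serious obstacle, the mildly delicate point being the verification that the logarithmic derivative procedure is legitimate on the open unit disc (handled by Theorem \ref{Gammmammmma}, which guarantees that $g_0$ is analytic there).
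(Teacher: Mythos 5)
Your argument is correct, and it is essentially the same calculation as the paper's, but packaged one derivative down. The paper takes $\log$ of both sides of the functional equation $\Gamma_p^0(T+np)/\Gamma_p^0(T)=A(1,np;T)$ and matches Taylor coefficients of $T^\ell$; you instead take the logarithmic derivative first, which turns the polynomial $A(1,np;T)$ into the sum of simple poles $\sum_{(i,p)=1}1/(T+i)$, and then compare $(\ell-1)$-th derivatives at $T=0$. The two are dual (comparing the $\ell$-th Taylor coefficient of a primitive is the same as comparing the $(\ell-1)$-th derivative at the center, up to a factor $\ell!$), but your version has a small cleanliness advantage: the constant $\log\Gamma_p(np)$, which the paper has to justify (``Since $|\Gamma_p(np)-1|\le|p|$, it has a meaning to consider $\log(\Gamma_p(np))$''), and the precise sign $(-1)^n$ of the leading coefficient (which the paper states imprecisely as $-1$) both disappear automatically upon differentiating. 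The one point worth making explicit, which you handle correctly but compress, is the cancellation of the constant term when $\ell$ is odd: $g_0^{(\ell-1)}(0)$ is then nonzero (it equals $-(\ell-1)!\,L_p(\ell,\overline\omega_p^{\ell-1})$), but the same value appears in $g_0^{(\ell-1)}(np)$ and cancels in the difference, which is why the sum in \eqref{eq : S_l} correctly starts at $2m\ge\ell$ and not $2m\ge\ell-1$.
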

\begin{proof}
We have
%As formal power series $\Gamma^0_p(T)$ can be 
%written as 
$\Gamma^0_p(T)=\exp\Bigl(\lambda_0T-\sum_{m\geq
1}L_p(1+2m,\overline{\omega}_p^{2m})\cdot\frac{T^{1+2m}}{1+2m}\Bigr)$.
%\begin{equation}
%\Gamma^0_p(T)=\exp\Bigl(\lambda_0T+
%\sum_{m\geq 1}L_p(1+2m,\omega_p^{2m}) 
%\cdot\frac{T^{1+2m}}{1+2m}\Bigr)
%\end{equation}
The functional equation gives
\begin{equation}\label{function al djklh}
\Gamma_p^0(T+np)/\Gamma_p^0(T)\;\;=\;\;A(1,np;T)\;\;=\;\;
-\prod_{\begin{smallmatrix}i=1,\\(i,p)=1\end{smallmatrix}}^{np-1}(T+i)\;.
\end{equation}
On the left hand side one finds
\begin{equation}
\Gamma_p^0(T+np)/\Gamma_p^0(T)=
\exp\Bigl(\lambda_0np-\sum_{m\geq
1}L_p(1+2m,\overline{\omega}_p^{2m})\cdot\frac{(T+np)^{1+2m}-T^{1+2m}}{1+2m}\Bigr)\;.
\end{equation}
We now compute the argument of the exponential. To 
simplify the notations let 
$b_{1+2m}:=L_p(1+2m,\overline{\omega}_p^{2m})/(1+2m)$, 
then
\begin{eqnarray}
\sum_{m\geq
1}b_{1+2m}\cdot((T+np)^{1+2m}-T^{1+2m})&=&\sum_{m\geq
1}b_{1+2m}\cdot\sum_{k=1}^{1+2m}\binom{1+2m}{k}(np)^{k}T^{1+2m-k}\\
%&=&\sum_{m\geq 1}\sum_{k=1}^{1+2m}\binom{1+2m}{k}p^{nk}\frac{L_p(1+2m,\omega_p^{2m})}{1+2m}T^{1+2m-k}\nonumber\\
&=&\sum_{\ell\geq 0}\Bigl(\sum_{m\geq\max(1,
\ell/2)}\binom{1+2m}{\ell}(np)^{(1+2m-\ell)}\cdot b_{1+2m}\Bigr)T^\ell\nonumber
\end{eqnarray}
Now taking $\log$ of both sides of \eqref{function al djklh} one finds
\begin{equation}
\lambda_0np-\sum_{\ell\geq 0}\;\Bigl(\;\sum_{m\geq\max(
\ell/2,1)}\binom{1+2m}{\ell}(np)^{(1+2m-\ell)}\cdot b_{1+2m}\;\Bigr)\;T^{\ell}=
\log\Bigl(-\prod_{\begin{smallmatrix}i=1,\\(i,p)=1\end{smallmatrix}}^{np-1}(T+i)\Bigr)\;.
\end{equation}
Then write $-\prod_{i=1,(i,p)=1}^{np-1}(T+i)=\Gamma_p(np)\cdot\prod_{i=1,(i,p)=1}^{np-1}(1+\frac{T}{i})$.
\if{
\begin{equation*}
-\prod_{i=1,(i,p)=1}^{p^n-1}(T+i)\;\;=\;\;\Gamma_p(p^n)\cdot\prod_{i=1,(i,p)=1}^{p^n-1}(1+\frac{T}{i})\;.
\end{equation*}
}\fi
Since $|\Gamma_p(np)-1|\leq|p|$,  it has a meaning to consider $\log(\Gamma_p(np))$. Then
\begin{eqnarray}
\log\Bigl(-\prod_{\begin{smallmatrix}i=1,\\(i,p)=1\end{smallmatrix}}^{np-1}(T+i)\Bigr)
&\;\;=\;\;&\log(\Gamma_p(np))+\sum_{i=1,(i,p)=1}^{np-1}\log(1+\frac{T}{i})\\
&=&\log(\Gamma_p(np))+\sum_{\ell\geq
1}\frac{(-1)^{\ell-1}}{\ell}S_\ell(np)T^\ell
\end{eqnarray}
This  proves the corollary. 
\end{proof}

\if{
\begin{corollary}\label{Cor : congruences}
For all $n\geq 1$ the following holds:
\begin{enumerate}
\item For $\ell=1$ one has
\begin{equation}\label{eq : S_1}
S_1(p^n) \;\;:=\;\;\sum_{i=1, (i,p)=1}^{p^n-1}\frac{1}{i}\;\;=\;\; 
-\sum_{m\geq 1}p^{2mn}\cdot L_p(1+2m,\overline{\omega}_p^{2m}) \;\;=\;\;
g_0(p^{n})-g_0(0)\;.
\end{equation}
In particular we recover the relation $\lim_{n\to\infty}p^{-n}\sum_{i=0,(i,p)=1}^{p^n-1}i^{-1}=0$ because $g_0'(0)=0$.
\item For all $\ell\geq 2$ one has
\begin{equation}\label{eq : S_l}
%\!\!\!\!\!\!\!\!\!\!\!\!
\frac{(-1)^{\ell-1}}{\ell}\cdot S_\ell(p^n)
\;\;=\;\;
-\sum_{m\geq \ell/2}\binom{1+2m}{\ell}p^{n(1+2m-\ell)}\cdot
\frac{L_p(1+2m,\overline{\omega}_p^{2m})}{1+2m}\;.
\end{equation}
\end{enumerate}
\end{corollary}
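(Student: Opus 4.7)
The plan is to derive this identity by comparing two expressions for $\log\bigl(\Gamma_p^0(T+np)/\Gamma_p^0(T)\bigr)$ as formal power series in $T$, and reading off coefficients.

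First I would take the Taylor expansion of $\log(\Gamma_p^0(T))$ recalled just before the corollary, namely
\begin{equation*}
\log(\Gamma_p^0(T))\;=\;\lambda_0 T-\sum_{m\geq 1}L_p(1+2m,\overline{\omega}_p^{2m})\cdot\frac{T^{1+2m}}{1+2m}\;,
\end{equation*}
valid on a disc of radius $|p|^{1/p}$ around $0$. Since $|np|\leq|p|$, both $T$ and $T+np$ lie in this disc when $|T|\leq|p|$. Writing $b_{1+2m}:=L_p(1+2m,\overline{\omega}_p^{2m})/(1+2m)$, I subtract the expansions to get
\begin{equation*}
\log\bigl(\Gamma_p^0(T+np)/\Gamma_p^0(T)\bigr)\;=\;\lambda_0 np\;-\;\sum_{m\geq 1}b_{1+2m}\bigl((T+np)^{1+2m}-T^{1+2m}\bigr)\;.
\end{equation*}
Expanding via the binomial theorem $(T+np)^{1+2m}-T^{1+2m}=\sum_{k=1}^{1+2m}\binom{1+2m}{k}(np)^k T^{1+2m-k}$ and re-indexing by $\ell=1+2m-k$ gives a power series in $T$ whose coefficient of $T^\ell$ is the quantity on the right-hand side of \eqref{eq : S_l}.

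Next I would compute the same quantity using the functional equation \eqref{iterated equation Gamma}, which gives $\Gamma_p^0(T+np)/\Gamma_p^0(T)=A(1,np;T)=-\prod_{i=1,(i,p)=1}^{np-1}(T+i)$. Factoring out $\Gamma_p(np)=-\prod_{i=1,(i,p)=1}^{np-1}i$ leaves
\begin{equation*}
A(1,np;T)\;=\;\Gamma_p(np)\cdot\prod_{\begin{smallmatrix}i=1\\ (i,p)=1\end{smallmatrix}}^{np-1}\Bigl(1+\frac{T}{i}\Bigr)\;.
\end{equation*}
Since $|\Gamma_p(np)-1|\leq|p|$, the symbol $\log(\Gamma_p(np))$ is well defined, and expanding each $\log(1+T/i)=\sum_{\ell\geq 1}\frac{(-1)^{\ell-1}}{\ell}(T/i)^\ell$ yields
\begin{equation*}
\log(A(1,np;T))\;=\;\log(\Gamma_p(np))\;+\;\sum_{\ell\geq 1}\frac{(-1)^{\ell-1}}{\ell}S_\ell(np)\,T^\ell\;.
\end{equation*}
Comparing the coefficient of $T^\ell$ in the two expressions for $\ell\geq 1$ yields \eqref{eq : S_l}.

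For the statement $S_1(np)=g_0(np)-g_0(0)$, I would specialize $\ell=1$ and recall from \eqref{...L...L...L} that $g_0(T)=\lambda_0-\sum_{m\geq 1}L_p(1+2m,\overline{\omega}_p^{2m})T^{2m}$, so that the $\ell=1$ coefficient identity reads $S_1(np)=-\sum_{m\geq 1}(np)^{2m}L_p(1+2m,\overline{\omega}_p^{2m})=g_0(np)-g_0(0)$. Taking $n=p^{k-1}$ and letting $k\to\infty$, continuity of $g_0$ at $0$ gives $\lim_k p^{-k}S_1(p^k)=0$ after dividing (and using $g_0'(0)=0$, which holds since $g_0$ is even by \eqref{...L...L...L}). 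The only real technicality is checking convergence at each stage, but since $|np|\leq|p|<\omega|p|^{1/p}$, both sides converge absolutely and the formal comparison is rigorous. There is no serious obstacle — the identity is a bookkeeping consequence of Theorem \ref{Gammmammmma} together with the Morita--Diamond formula and the functional equation for $\Gamma_p$.
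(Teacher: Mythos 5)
Your proof is correct and follows essentially the same route as the paper: take logarithms of both sides of the functional equation $\Gamma_p^0(T+np)/\Gamma_p^0(T)=A(1,np;T)$, expand the left side via the Morita--Diamond series and the binomial theorem, expand the right side by factoring out $\Gamma_p(np)$ and using $\log(1+T/i)$, and match coefficients of $T^\ell$. The only cosmetic difference is that the paper writes $\Gamma_p^0=\exp(\cdots)$ before taking $\log$, while you work with $\log\Gamma_p^0$ directly; this is immaterial.
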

\begin{proof}
We have
%As formal power series $\Gamma^0_p(T)$ can be 
%written as 
$\Gamma^0_p(T)=\exp\Bigl(\lambda_0T-\sum_{m\geq
1}L_p(1+2m,\overline{\omega}_p^{2m})\cdot\frac{T^{1+2m}}{1+2m}\Bigr)$.
%\begin{equation}
%\Gamma^0_p(T)=\exp\Bigl(\lambda_0T+
%\sum_{m\geq 1}L_p(1+2m,\omega_p^{2m}) 
%\cdot\frac{T^{1+2m}}{1+2m}\Bigr)
%\end{equation}
The functional equation gives
\begin{equation}\label{function al djklh}
\Gamma_p^0(T+p^n)/\Gamma_p^0(T)\;\;=\;\;A(1,p^n;T)\;\;=\;\;
-\prod_{\begin{smallmatrix}i=1,\\(i,p)=1\end{smallmatrix}}^{p^n-1}(T+i)\;.
\end{equation}
On the left hand side one finds
\begin{equation}
\Gamma_p^0(T+p^n)/\Gamma_p^0(T)=\exp\Bigl(\lambda_0p^n-\sum_{m\geq
1}L_p(1+2m,\overline{\omega}_p^{2m})\cdot\frac{(T+p^n)^{1+2m}-T^{1+2m}}{1+2m}\Bigr)
\end{equation}
We now compute the argument of the exponential. To 
simplify the notations let 
$b_{1+2m}:=L_p(1+2m,\overline{\omega}_p^{2m})/(1+2m)$, 
then
\begin{eqnarray}
\sum_{m\geq
1}b_{1+2m}\cdot((T+p^n)^{1+2m}-T^{1+2m})&=&\sum_{m\geq
1}b_{1+2m}\cdot\sum_{k=1}^{1+2m}\binom{1+2m}{k}p^{nk}T^{1+2m-k}\\
%&=&\sum_{m\geq 1}\sum_{k=1}^{1+2m}\binom{1+2m}{k}p^{nk}\frac{L_p(1+2m,\omega_p^{2m})}{1+2m}T^{1+2m-k}\nonumber\\
&=&\sum_{\ell\geq 0}\Bigl(\sum_{m\geq\max(1,
\ell/2)}\binom{1+2m}{\ell}p^{n(1+2m-\ell)}\cdot b_{1+2m}\Bigr)T^\ell\nonumber
\end{eqnarray}
Now taking $\log$ of both sides of \eqref{function al djklh} one finds
\begin{equation}
\lambda_0p^n-\sum_{\ell\geq 0}\;\Bigl(\;\sum_{m\geq\max(
\ell/2,1)}\binom{1+2m}{\ell}p^{n(1+2m-\ell)}\cdot b_{1+2m}\;\Bigr)\;T^{\ell}=
\log\Bigl(-\prod_{\begin{smallmatrix}i=1,\\(i,p)=1\end{smallmatrix}}^{p^n-1}(T+i)\Bigr)\;.
\end{equation}
Then write $-\prod_{i=1,(i,p)=1}^{p^n-1}(T+i)=\Gamma_p(p^n)\cdot\prod_{i=1,(i,p)=1}^{p^n-1}(1+\frac{T}{i})$.
\if{
\begin{equation*}
-\prod_{i=1,(i,p)=1}^{p^n-1}(T+i)\;\;=\;\;\Gamma_p(p^n)\cdot\prod_{i=1,(i,p)=1}^{p^n-1}(1+\frac{T}{i})\;.
\end{equation*}
}\fi
Since $|\Gamma_p(p^n)-1|\leq|p|$,  it has a meaning to consider $\log(\Gamma_p(p^n))$. Then
\begin{eqnarray}
\log\Bigl(-\prod_{\begin{smallmatrix}i=1,\\(i,p)=1\end{smallmatrix}}^{p^n-1}(T+i)\Bigr)
&\;\;=\;\;&\log(\Gamma_p(p^n))+\sum_{i=1,(i,p)=1}^{p^{n}-1}\log(1+\frac{T}{i})\\
&=&\log(\Gamma_p(p^n))+\sum_{\ell\geq
1}\frac{(-1)^{\ell-1}}{\ell}S_\ell(p^n)T^\ell
\end{eqnarray}
This  proves the corollary. 
\end{proof}
}\fi
\begin{remark}
\label{Rk : link with Was}
Equality \eqref{eq : S_l} is
equivalent to the following relation given in 
\cite[Theorem 1,(a)]{Washington}: for all integers $n,\ell\geq 1$ one has
\begin{equation}\label{eq : Was}
S_\ell(np+1)\;=\;\sum_{i=1,(i,p)=1}^{np}\frac{1}{i^{\ell}}\;=\;
-\sum_{k\geq 1}\binom{-\ell}{k}(np)^k\cdot 
L_p(\ell+k,\omega_p^{1-k-\ell})\;.
\end{equation}
We now prove the equivalence with  
\eqref{eq : S_l}. First notice that  
$S_\ell(np+1)=S_\ell(np)$ because it is a sum over 
natural numbers that are prime to $p$. 
Moreover observe that
$(-1)^k\cdot\binom{-\ell}{k}
=(-1)^{\ell-1}\cdot\binom{k-1}{\ell-1}$, that 
$L_p(s,\overline{\omega}_p^k)=0$ if $k$ is odd, and 
that by definition $\omega_p^{1-k-\ell} 
=\overline{\omega}_p^{\ell-k-1}$.
Equation \eqref{eq : Was} is then equivalent to
\begin{equation}\label{eq : Was-2}
S_\ell(np)\;=\;
-\sum_{k\geq 1}(-1)^{k}\binom{\ell+k-1}{k}(np)^k\cdot 
L_p(\ell+k,\overline{\omega}_p^{\ell+k-1})\;.
\end{equation}
Since $L_p(\ell+k,\overline{\omega}_p^{\ell+k-1})=0$ 
if $\ell+k-1$ is odd, we have 
$L_p(\ell+k,\overline{\omega}_p^{\ell+k-1})=
(-1)^{\ell+k-1}
L_p(\ell+k,\overline{\omega}_p^{\ell+k-1})$, moreover 
$\binom{\ell+k-1}{k}=\binom{\ell+k-1}{\ell-1}=\binom{\ell+k}{\ell}\cdot\frac{\ell}{\ell+k}$. 
Equation \eqref{eq : Was-2} is then equivalent to
\begin{eqnarray}
S_\ell(np)&\;=\;&
-\sum_{k\geq 1}(-1)^{2k+\ell-1}\ell\binom{\ell+k}{\ell}
(np)^k\cdot 
\frac{L_p(\ell+k,\overline{\omega}_p^{\ell+k-1})}{\ell+k}\\
&=&
-\frac{\ell}{(-1)^{\ell-1}}\cdot
\sum_{k\geq 1}\binom{\ell+k}{\ell}
(np)^k\cdot 
\frac{L_p(\ell+k,\overline{\omega}_p^{\ell+k-1})}{\ell+k}\\
&=&-\frac{\ell}{(-1)^{\ell-1}}\cdot
\sum_{s\geq \ell}\binom{s+1}{\ell}
(pn)^{s-\ell+1}\frac{L_p(s+1,
\overline{\omega}_p^{s})}{1+s}\;.
\end{eqnarray}
Since $L_p(s+1,
\overline{\omega}_p^{s})=0$ for $s$ odd, this is 
equivalent to \eqref{eq : S_l}.
\if{
\begin{equation}
\frac{(-1)^{\ell-1}}{\ell}S_\ell(np)\;=\;
-
\sum_{m\geq \ell/2}\binom{2m+1}{\ell}
(pn)^{2m-\ell+1}\frac{L_p(2m+1,
\overline{\omega}_p^{2m})}{1+2m}
\end{equation}
}\fi
\end{remark}

\subsection{Note}
Examples of $ \Sigma$-deformation appear in 
several  process. Here are some examples:

The deformation appears  in \cite[7.1]{Astx} to 
show the independence to the Frobenius.

The $\sigma_{q,0}$-deformation, with 
$q\in\bs{\mu}_p$ a $p$-th root of unity, appears in \cite[10.4.2]{Kedlaya-book} 
under the name of ``\emph{Taylor series}'' to show the 
existence of the antecedent by Frobenius of a differential 
equation $\Fs$, which is the 
sub-space of $\Fs$ fixed points under the action of 
$\bs{\mu}_p$ on $\Fs$ obtained by deformation.
 
It also appears in \cite[3.2.2, 3.2.6]{Kedlaya-draft} to 
show the existence of rank one submodules. 

\if{
%\vspace{-0.2cm}

\section{\Huge{Compliance with Ethical Standards.}}
%\hspace{-0.2cm}
\begin{center}
\textbf{\Huge {Disclosure of potential conflicts of interest}}.
\end{center}
{\Huge{\textbf{Conflict of Interest}: 
The author declares that he has no conflict of interest.}
}
}\fi
\bibliographystyle{amsalpha}
\bibliography{bib}

\end{document}